\documentclass[11pt]{article}

\usepackage{fullpage}
\usepackage[dvipsnames]{xcolor}  
\usepackage{tipa}
\usepackage{dsfont}
\usepackage{tikz-cd}
\usepackage{tikz, tikz-3dplot, pgfplots}
\usetikzlibrary{decorations.markings, patterns, decorations.pathmorphing}

\tikzset{->-/.style={decoration={
markings,
mark=at position #1 with {\arrow{>}}},postaction={decorate}}}

\usepackage{graphicx}

\usepackage{amsxtra}
\usepackage{amsmath}
\usepackage{amssymb}
\usepackage{amsfonts}
\usepackage{mathrsfs}
\usepackage{amsthm}
\usepackage[all]{xy}
\usepackage{enumitem}
\usepackage{hyperref}
\usepackage{subfigure}
\usepackage{mathabx}
\usepackage{euscript}
\usepackage{xargs}                      

\usepackage[colorinlistoftodos,prependcaption,textsize=tiny]{todonotes}
\newcommandx{\note}[2][1=]{\todo[linecolor=Plum,backgroundcolor=Plum!25,bordercolor=Plum,#1]{#2}}

\newtheoremstyle{example}{\topsep}{\topsep}%
     {}
     {}
     {\bfseries}
     {.}
     {2pt}
     {\thmname{#1}\thmnumber{ #2}\thmnote{ #3}}

\theoremstyle{example}
\newtheorem{exa}[equation]{Example}
\newtheorem{exas}[equation]{Examples}
\newtheorem{ex}[equation]{Example}
\newtheorem{rem}[equation]{Remark}
\newtheorem{rems}[equation]{Remarks}
\newtheorem{defi}[equation]{Definition}

\newtheorem{thm}[equation]{Theorem}
 \newtheorem{cor}[equation]{Corollary}
\newtheorem{lem}[equation]{Lemma}
\newtheorem{prop}[equation]{Proposition}

\hypersetup{%
    colorlinks,%
    linkcolor={red!50!black},%
    citecolor={blue!50!black},%
    urlcolor={blue!80!black}%
}

\setcounter{tocdepth}{3}

\graphicspath{{svg/}}


\setlist[enumerate,1]{label=(\arabic{*})}
\setlist[enumerate,2]{label=(\roman{*})}
\setlist[enumerate,3]{label=(\alph{*})}



\def\A{ {\EuScript A}}

\def\Ab{{\EuScript{A}b}}
\def\Ac{\mathcal {A}}

\def\Aut{\on{Aut}}


\def\Bd{\overrightarrow{B}}
\def\be{\begin{equation}}
\def\bef{\begin{figure}}

\def\btp{\begin{tikzpicture}}

 
\def\C{{\EuScript C}}

\def\CC{\mathbb{C}}

\def\CDK{C_{\operatorname{DK}}}

 \def\Co{{\on{C}}}

\def\colim{\operatorname*{colim}}
\def\varinjlim{\operatorname*{colim}}

\def\cm{\langle m \rangle}
\def\cn{\langle n \rangle}


\def\D{{\EuScript D}}
\def\Dd{{\overrightarrow{D}}}
\def\Dp{{\EuScript D}^+\!}
\def\Di{{\EuScript D}}

\def\Dc{{\mathcal{D}}}
\def\DC{{\on{DC}}}
\def\DD{{\mathbb{D}}}
\def\del{{\partial}}

\def\dg{{\on{dg}}}

 \def\Disko{{\on{Disk}^o}}
 \def\Diskc{{\on{Disk}^c}}
 \def\Diskoc{{\on{Disk}^{oc}}}
 \def\drarr{\draw  [decoration={markings,mark=at position 0.9 with
{\arrow[scale=1.5,>=stealth]{>}}},postaction={decorate},
line width=.2mm]}

 
\def\E{{\EuScript E}}
\def\Ec{\mathcal{E}}
\def\ee{\end{equation}}

\def\Emb{\on{Emb}}
\def\Entr{\on{Entr}}

\def\enf{\end{figure}}

\def\eps{{\varepsilon}}
\def\Eq{{\on{Eq}}}
\def\etp{\end{tikzpicture}}

\def\Exit{\on{Exit}}


\def\F{{ \EuScript F}}

\def\Fun{\operatorname{Fun}}


\def\G{{ \EuScript G}}


\def\Ho{\on{Ho}}

\def\Hom{\operatorname{Hom}}

\def\hra{\hookrightarrow}


\def\I{{\mathcal I}}
\def\id{\on{id}}
\def\Id{\on{Id}}

 \def\iso{{ \operatorname{iso}}}


\def\J{{\mathcal J}}
\def\J{ {\EuScript J}}


\def\Ker{{\on{Ker}}}


\def\Lambdad{{\overrightarrow{\Lambda}\!}}
\def\Lambdap{{\Lambda\!^{+}\!}}

\def\L{{\EuScript L}}

\def\lla{\longleftarrow}

\def\lra{\longrightarrow}
\def\lim{\operatorname*{lim}}
\def\varprojlim{\operatorname*{lim}}


\def\Md{\overrightarrow{M}}
\def\Mdp{\overrightarrow{M}^+\!}
\def\Map{\operatorname{Map}}

\def\Mor{\on{Mor}} 
 \def\mcS{{\mathcal S}}


\def\N{\operatorname{N}}
\def\NN{\mathbb{N}}

\def\NDK{\operatorname{N_{DK}}}


\def\Ob{{\on{Ob}}}

\def\on{\operatorname}
\def\one{ {\bf 1}}
\def\ol{\overline}
\def\oo{{\infty}}
\def\op{{\operatorname{op}}}
\def\Op{{\mathfrak{O}}}

\def\ord{{\on{ord}}}


 \def\P{{\EuScript P}}

\def\phi{{\varphi}}

\def\PS{{\on{PS}}}


\def\Qd{\overrightarrow{Q}}


\def\RG{{\on{R}\!\Gamma}}

\def\Ran{\on{Ran}}

\def\RR{\mathbb{R}}


\def\Sd{\overrightarrow{S}}

 \def\Set{ {\operatorname{Set}}}

\def\Sh{\on{Sh}}

 \def\Sing{\on{Sing}}
\def\Sp{{\on{Sp}}}

 \def\sSet{{\Set}_{\Delta}}

\def\surj{{\on{surj}}}
 
 \def\Sym{{\on{Sym}}}



\def\U{ {\EuScript U}}

\def\ul{\underline}


\def\Vd{\overrightarrow{V}}


\def\wt{\widetilde}




\def\ZZ{\mathbb{Z}}

 \def\={{\,\, \simeq\,\, }}
 \def\-{{\setminus}}
\def\<<{\langle {}\hskip -.1cm {}\langle}
\def\>>{\rangle \hskip -.1cm \rangle}

\def\dot{\includegraphics[scale=.18]{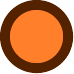}}

\newcommand{\disk}[2]{\raisebox{-.4\height}{\includegraphics[scale=.18]{svg/disk_#1_#2.pdf}}}

\newcommand{\bdisk}[2]{\raisebox{-.4\height}{\includegraphics[scale=.3]{svg/disk_#1_#2.pdf}}}
\newcommand{\bdiskdual}[2]{\raisebox{-.4\height}{\includegraphics[scale=.3]{svg/disk_#1_#2_dual.pdf}}}

\newcommand{\rotdisk}[3]{\rotatebox[origin=c]{#3}{\raisebox{-.4\height}{\includegraphics[scale=.18]{svg/disk_#1_#2.pdf}}}}

\title{Perverse sheaves on Riemann surfaces as Milnor sheaves}
\author{Tobias Dyckerhoff, Mikhail Kapranov,  Yan Soibelman}

\begin{document}

\maketitle

\begin{abstract}
	Constructible sheaves of abelian groups on a stratified space can be equivalently
	described in terms of representations of the exit-path category. In this work, we provide a
	similar presentation of the abelian category of perverse sheaves on a stratified surface in
	terms of representations of the so-called paracyclic category of the surface. The category
	models a hybrid exit-entrance behaviour with respect to chosen sectors of direction, placing
	it ``in between'' exit and entrance path categories. In particular, this perspective yields an
	intrinsic definition of perverse sheaves as an abelian category without reference to derived
	categories and t-structures. 
\end{abstract}

\tableofcontents

\addcontentsline{toc}{section}{Introduction}

\numberwithin{equation}{section}

\section*{Introduction}

\paragraph{Contents and future applications.} 
This paper is the first step in a larger project
devoted to a systematic development of the theory of {\em perverse schobers}. The latter
are categorical analogs of perverse sheaves, in which vector spaces are replaced by (enhanced)
triangulated categories. The idea of perverse schobers was proposed in
\cite{kapranov-schechtman:schobers} based on the features of various ``elementary'' descriptions of
perverse sheaves in terms of quivers. Namely, these descriptions are often of such form that a
natural categorical analog (quiver representations formed by categories instead of vector spaces)
suggests itself readily.  For example, for the classical description \cite{beil-gluing, GGM} of
perverse sheaves on the disk in terms of diagrams
\begin{equation}\label{eq:ab}
		\begin{tikzcd}
			\Phi \ar[bend left=20]{r}{a} & \Psi \ar[bend left=20]{l}{b},
		\end{tikzcd}
	\end{equation}
with $\id - ab$ and $\id - ba$ invertible, such a categorical analog is found in the concept of a
{\em spherical adjunction}, see 
\cite{kapranov-schechtman:schobers}. 
  
However, the quiver descriptions do not give satisfying {\em definitions} of the category of
perverse sheaves, since they depend on auxiliary choices. For example, in the above case,  a
choice of a direction at the origin is needed to define vanishing and nearby cycles.  On the other
hand, from the customary point of view, a perverse sheaf is an object of an abelian category
that arises as the heart of a certain t-structure on the derived category of constructible sheaves
on a stratified topological space. It is not clear whether such an approach can be categorified
directly. 

In this paper, we identify perverse sheaves (not yet schobers) on a stratified surface $X$ with
so-called {\em Milnor sheaves} (Theorem \ref{thm:milnorsheaves}). Similarly to the description of
constructible sheaves as representations of the exit path category (see \cite{treumann}), our result
follows from an alternative parametrization in terms of a hybrid of the exit and entrance path
categories, called the {\em Milnor category of the surface}. Its objects, {\em Milnor disks}, are
given by disks in $X$ together with a choice of a finite number of {\em boundary intervals}. 
These intervals determine the interaction with the stratification: a disk may move on the surface via 
isotopy such that the points in the $0$-dimensional stratum exit the disk through the chosen boundary intervals 
and enter the disk through their complement. In addition, the boundary intervals themselves can
interact in a way familiar from Connes' cyclic category (see below for more details). A Milnor sheaf
is then defined as a representation  of the Milnor category subject to certain natural gluing
conditions that arise from cutting Milnor disks into pieces. 

As a result, we obtain an intrinsic definition of perverse sheaves on Riemann surfaces that is
internal to the framework of abelian categories, without reference to derived categories, and which
can therefore serve as an alternative to the definition given in \cite{BBD}. Our main incentive is
that the definition has a comparatively straightforward categorification offering a good framework
for perverse schobers. This approach will be elaborated in sequels to this paper.

Even in the uncategorified context of perverse sheaves, Milnor sheaves provide a novel perspective on
classical aspects of the theory. For example, one motivation for the introduction of perverse
sheaves is the fact that, in contrast to constructible sheaves, they are preserved under Verdier
duality. This phenomenon becomes almost self-evident in the Milnor sheaf model. Namely, it is a
direct consequence of a canonical self-duality of the Milnor category obtained by swapping the
boundary intervals with their complements (generalizing the well-known self-dualities of the cyclic
and paracyclic categories).

In higher complex dimensions, a possible generalization could involve mimicking more closely the
topology related to forming perverse sheaves of vanishing cycles associated to holomorphic
functions.  When such a perverse sheaf  is supported at a single point (the ``isolated microlocal
singularity" case), it reduces to a single vector space so we have purity just like for Riemann
surfaces.  We hope to explore this approach in future work.

\paragraph{Details of the main result.}
Fundamental for us is the concept of a {\em Milnor disk}, a pair $(A,A')$ where $A \subset X$ is a
closed disk, containing at most one point from the $0$-dimensional stratum $N$, and $A'\subset
\partial A$ is a finite nonempty disjoint union of closed intervals. These Milnor disks will be
depicted by the symbols
\[
	\disk{0}{1}, \quad \disk{0}{2},\quad \disk{0}{3},\quad \dots 
\]
We call the points in the $0$-dimensional stratum $N$ {\em special} and signify them via the symbol
$\raisebox{.3\height}{\dot}$. For example, a Milnor disk $(A,A')$ with one boundary interval
containing a special point will be referred to as
\[
 (A,A') = \disk{1}{1}
\]
leaving the embedding of $A$ into the surface $X$ implicit. Milnor disks form the objects of the
{\em Milnor category} $M(X,N)$ where a morphism from $(A,A')$ to $(B,B')$ is given by an equivalence 
class of isotopies $H: I \times \DD \to X$ with $H_0: \DD \cong A$ and $H_1: \DD \cong B$, together 
with a choice of bordism $P \subset I \times S^1$ from $H_0^{-1}(A')$ to $H_1^{-1}(B')$ such that
the inclusion $H_1^{-1}(B') \subset P$ is a homotopy equivalence (see Figure \ref{eq:bordsurface}).
\begin{figure}[ht] \label{eq:bordsurface}
	\centering
	\def\svgwidth{13cm}
	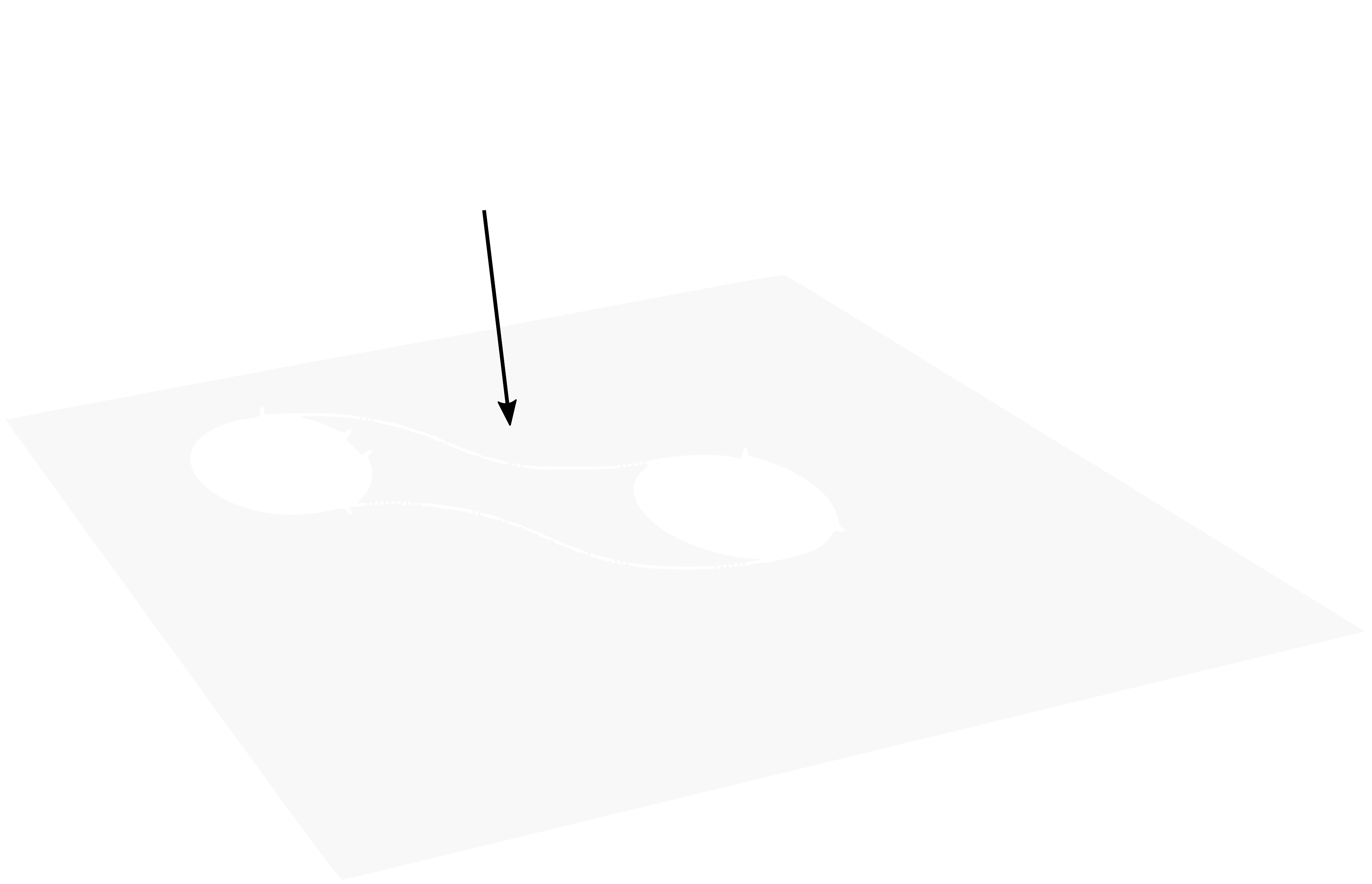
\caption{A morphism in $M(X,N)$ from $(A,A')$ to $(B,B')$ represented given by the isotopy $H$}
\end{figure}
Here, roughly speaking, the trajectories $H^{-1}(N)$ of the special points are required to enter the
cylinder through $(I \times S^1) \setminus P$ and exit through $P$. This hybrid exit-entry behaviour 
puts the Milnor category ``in between'' the exit and entrance path categories of $(X,N)$. As will be
explained in the main body of this work, this phenomenon can be regarded as a geometric
manifestation of the fact that the perverse $t$-structure lies ``in between'' the standard
$t$-structure and its Verdier dual. 

In particular, while the exit and entrance path categories are dual to one another, the Milnor
category is self-dual: On objects, the duality is given by 
\[
	(A,A') \mapsto (A, \overline{\partial(A) \setminus A'})
\]
on morphisms, it is obtained by replacing the bordism $P$ by the closure of $(I \times S^1)
\setminus P$ and reversing the direction of the isotopy $H$. For example, the action of the
self-duality associates to the morphism
\begin{equation}\label{eq:a}
		\begin{tikzcd}[row sep={1em,between origins}]
			& \includegraphics[width=3cm]{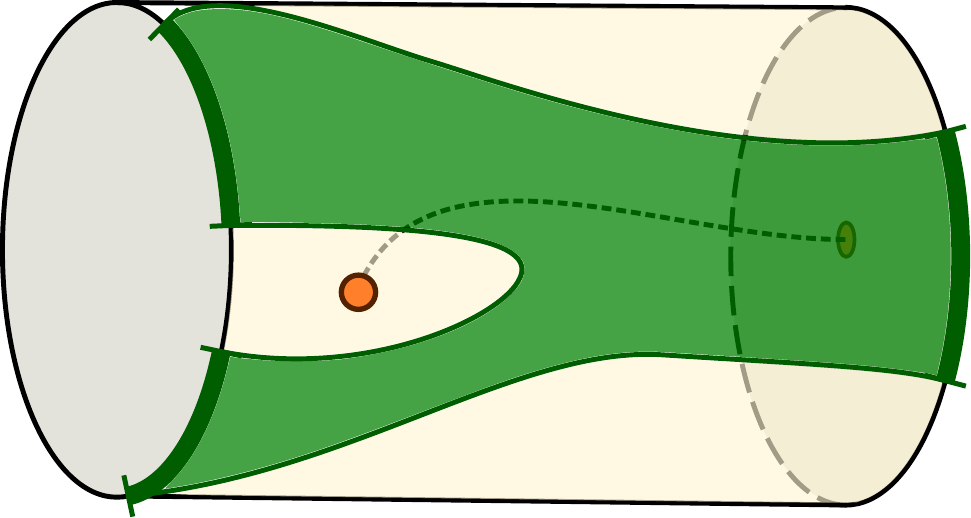} & \\
			\bdisk{0}{2} \ar{rr} & & \bdisk{1}{1}
		\end{tikzcd}
	\end{equation}
depicted in Figure \ref{eq:bordsurface}, the morphism
\begin{equation}\label{eq:b}
		\begin{tikzcd}[row sep={1em,between origins}]
			& \includegraphics[width=3cm]{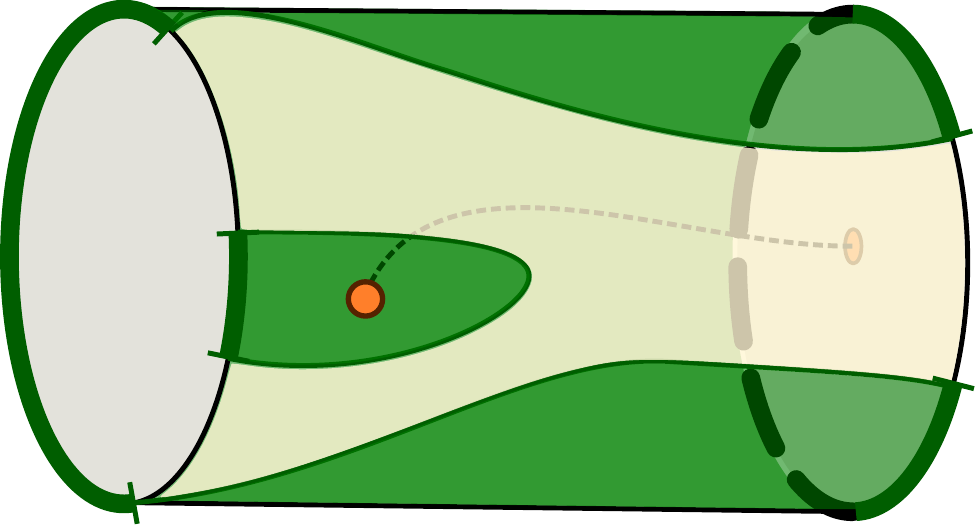} & \\
			\bdiskdual{0}{2} & &\ar{ll}  \bdiskdual{1}{1}
		\end{tikzcd}
	\end{equation}

Given an object $\F$ of the derived constructible category $D(X,N;\A)$ and a morphism $(H,P): (A,A')
\to (B,B')$ of Milnor disks, we obtain a correspondence on
relative (hyper) cohomology
\begin{equation}\label{eq:correspondence}
		\begin{tikzcd}
			\RG(A,A';\F) & \ar{l} \RG(I \times \DD,P;H^*\F) \ar{r}{\simeq} & \RG(B,B';\F)
		\end{tikzcd}
\end{equation}
and hence a functor
\begin{equation}\label{eq:milnorfunctor}
	\RG(-;\F): M(X,N)^{\op} \lra D(\A).
\end{equation}
We note that $\RG(A,A';\F)$ can be identified with $\Phi_f(\F)$, the sheaf of vanishing cycles for
$\F$ with respect to an appropriate holomorphic function $f$ (possibly with a zero of arbitrary
order). Hence the name ``Milnor disk'',  modelled after ``Milnor fibers" in singularity theory. In
particular, we may now express the local classification data \eqref{eq:ab} at a special point
$\dot \in N$ in terms of our terminology:
\begin{enumerate}[label=\arabic*.]
	\item The space of {\em vanishing cycles}: 
		\[
			\Phi \cong  \F(\disk{1}{1})
		\]
	\item The space of {\em nearby cycles}:
		\[
			\Psi \cong  \F(\disk{0}{2})
		\]
	\item The {\em variation} map 
		\[
			a = \on{var}: \Phi  \to \Psi
		\]
		is the value of $\RG(-;\F)$ on the morphism \eqref{eq:a}.
	\item The {\em canonical} map 
		\[
			b = \on{can}: \Psi  \to \Phi
		\]
		is the value of $\RG(-;\F)$ on the morphism \eqref{eq:b}.
\end{enumerate}
See \S \ref{subsec:milntoclass} for a discussion of how to recover the relations $T_{\Psi} = \id -
ab$ and $T_{\Phi} = \id - ba$, expressing the monodromy in terms of this data. 

Our main result is based on the observations that 
\begin{enumerate}
	\item\label{obs:1} Perverse sheaves can be characterized by the fact that their relative (hyper) cohomology on
		  Milnor disks is concentrated in degree $0$,  
	\item\label{obs:2} A perverse sheaf $F$ is completely described by its values $\F(A,A')=H^0(A,A';F)$
	on Milnor disks.
\end{enumerate}
Observation \ref{obs:1} immediately implies that, for a perverse sheaf $\F$, the functor $\RG(-,\F)$
from \eqref{eq:milnorfunctor} takes values in the {\em abelian} category $\A \subset D(\A)$ given by
the heart of the standard $t$-structure.
Observation \ref{obs:2} then leads to the main result of this work: Theorem \ref{thm:milnorsheaves}
establishes that the association $\F \mapsto \RG(-;\F)$ provides an equivalence between the abelian
category of perverse sheaves on the stratified Riemann surface $(X,N)$ and the category of Milnor
sheaves: $\A$-valued presheaves on the Milnor category $M(X,N)$ that satisfy descent conditions with
respect to the cutting and pasting Milnor disks.
	
\paragraph{Method of proof: $\oo$-categorical Kan extension.}  Although the statement of Theorem
\ref{thm:milnorsheaves} is ``purely abelian'', the proof utilizes the ambient derived category and
relies on $\infty$-categorical techniques. That is, we establish a result (Corollary
\ref{cor:milnorparam}) identifiying constructible sheaves with values in a stable $\oo$-category
$\D$, and appropriately defined Milnor sheaves valued in $\D$. When $\D = \D(\A)$ is the
$\oo$-categorical enhancement of the derived category of a Grothendieck abelian category $\A$, then
perverse sheaves are recovered among all constructible complexes via the observation \ref{obs:1}
above. 
	
The method of proof of Corollary \ref{cor:milnorparam} is as follows. In general, identifying two
given $\oo$-categories is hard to achieve by hand, due to the infinite amount of coherence data
involved. The technique of Kan extensions allows for an efficient means of handling such data and
``mediating'' it across parametrizing diagram categories (see Proposition \ref{prop:kanres}).  
Using this technique, we produce equivalences between representations of various subcategories of
the larger paracyclic category $\Lambda(X,N)$ to mediate the subcategories of {\em standard disks},
{\em Milnor disks}, and {\em bounded disks}. In this framework, we provide an alternative
construction of the presheaf $\RG(-,\F)$ on the Milnor category $M(X,N)$ as a Kan extension from the
category of standard disks (cf. \S \ref{sec:milnor}).  
	
Corollary \ref{cor:milnorparam} and various technical tools developed for its proof, provide not
only a stepping stone for the more classical-looking Theorem \ref{thm:milnorsheaves} but present a
possible framework for the generalization to perverse schobers. In that generalization, a stable
$\oo$-categorical enhancement of triangulated categories is important from the very beginning.

\paragraph{The role of paracyclic Segal objects.} 

Our approach to perverse sheaves via Milnor sheaves naturally involves structures familiar in the
theory of cyclic homology \cite{connes, elmendorf, loday}. One of them is the {\em paracyclic
category} $\Lambda_\oo$ which can be regarded as the universal central extension (by $\ZZ$) of the
cyclic category $\Lambda$ of Connes \cite{connes}.   

Namely, in the most classical case, when $(X,N)$ is the disk $(\DD,\{0\})$ with the origin as special point,
a Milnor sheaf can be uniquely recovered from its values on Milnor disks containing $0$. These disks
form a subcategory of $M(\DD,\{0\})$ equivalent to the paracyclic category $\Lambda_{\oo}$, and 
our approach identifies $\A$-valued perverse sheaves, with the following structures: 
{\em paracyclic objects $Y: \Lambda_\oo^\op\to\A$ whose restriction to
$\Delta^\op \subset \Lambda_\oo^\op$ is a Segal \cite{bergner, HSS} simplicial objects} (see
Corollary \ref{cor:local}). Further, the equivalence of such structures with the more customary
classification data \eqref{eq:ab} can be understood as a special instance the duplicial 
Dold-Kan correspondence (see \S \ref{subsection:para-DK}).

This point of view turns out to be important for the generalization to perverse schobers. The
corresponding analog of a perverse sheaf on the disk is, as mentioned above, a spherical adjunction.
It turns out that any such adjunction gives, via a variant of the relative Waldhausen
$S_{\bullet}$-construction \cite{waldhausen}, rise to a {\em paracyclic object whose restriction to
$\Delta^\op$ is 2-Segal}, i.e., satisfies a 2-dimensional generalization of the Segal condition
introduced in \cite{HSS}. Such data then forms the local data comprising the structure of a perverse
schober, as will be explained in subsequent work. 
	 
\paragraph{Relation to previous work.} The dream of defining perverse sheaves in a way that would
be at the same time topological (avoiding analysis and $D$-modules) and abelian-categorical
(avoiding derived categories) is of course as old as the theory of perverse sheaves itself. We
should particularly mention the 1990 preprint of MacPherson \cite{macpherson:intersection} that
introduced (in arbitrary dimension) the concept of {\em Fary sheaves} which are certain ``cohomology
theory'' data on an appropriate class of pairs $(U_+, U_-)$ of opens in a stratified manifold.  Our
concept of a Milnor sheaf can be seen as an adaptation and a simplification of that of a Fary sheaf
to the case of two real dimensions, when instead of a functor associating a  {\em graded} vector
space  (i.e., {\em several} cohomology groups) to a  pair of opens, we have a functor associating a
single vector space, more in line with the idea of a ``sheaf''.

\paragraph{Acknowledgements.} We would like to thank J. Francis, D. Gaitsgory and R. D. MacPherson
for useful discussions that influenced the direction of this work. We are further grateful to V.
Schechtman who has contributed to this work but wishes to not be listed as a coauthor. 
T.D. acknowledges the support of the VolkswagenStiftung through the Lichtenberg
Professorship Programme. The research of T.D. is further supported by the Deutsche Forschungsgemeinschaft 
under Germany‘s Excellence Strategy -- EXC 2121 ``Quantum Universe'' -- 390833306. The research of M.K. was supported by 
World Premier International Research Center Initiative (WPI Initiative),
 MEXT, Japan. The work of Y.S. was supported by Munson-Simu Star award. 

\numberwithin{equation}{subsection}
 
\section{Perverse sheaves on stratified surfaces}\label{subsec:per-sh}

\subsection{Perverse sheaves with values in abelian categories} \label{par:sheaf-abel}

\paragraph{Sheaves with values in abelian categories.} 

Let $\A$ be an Grothendieck abelian category. In particular, $\A$ has arbitrary products and projective limits. 
 
For any topological space $X$ we denote by $\Sh(X,\A)$ the
category of $\Ac$-valued sheaves over $X$. By definition, such a sheaf $\F$ is a contravariant functor
from the poset of opens in $X$ into $\Ac$, satisfying descent.  That is, for any open covering $\{U_i\}$ of an open set $U$ the map
\[
	\F(U) \lra \Ker \biggl\{ \prod_i \F(U_i) \lra \prod_{i,j} \F(U_i\cap U_j)\biggr\}
\]
is an isomorphism.
 
By $D(X, \Ac)$ we denote the
(unbounded) derived category of $\Sh(X,\A)$.  We consider it as a triangulated category. 

For any continuous map $f: X\to Y$ of topological spaces we have
 the standard adjoint functors
   \[
 f^*: D(Y,\A)\to D(X, \A), \quad Rf_*: D(X,\A)\to D(Y, \A). 
 \]
If $X,Y$ are locally compact, we also have the functors
  \[
 Rf_!: D(X,\A)\to D(Y, \A), \quad  f^!: D(Y,\A)\to D(X, \A),
 \]
with their standard adjunctions, cf. \cite{kashiwara-schapira}.


\paragraph{Decompositions, stratifications and exit paths.} 
Concerning stratified spaces, we follow the terminology of
 \cite{GM-morse} pt.II \S 1.1-2.
 
 Thus, a {\em decomposition} of a topological space $X$ is a collection $\mcS$
 of locally closed subsets $S\in\mcS$ called {\em strata} such that 
 $X=\bigsqcup_{S\in\mcS} S$ is a disjoint decomposition and the closure
 of a stratum is a union of strata. The set $\mcS$ acquired then a partial
 order $\preceq$ by inclusion of the closures, i.e.,   $S\preceq S'$ if $S\subset\ol{S'}$.   For each $x\in X$ we denote by  $S_x\in \mcS$ the stratum
 containing $x$. A {\em decomposed space} $(X,\mcS)$ is a space equipped
with a decomposition.
 
   The concept of decomposition is identical to that
of an $(\mcS, \preceq)$-stratification in the sense of \cite{HA} Def. A.5.1.  
Recall that the latter defined as a continuous map $f: X\to\mcS$,
where the poset $\mcS$ is given the topology consisting of 
{\em upwardly closed sets}, i.e., of  $\I\subset\mcS$  
  such that  $S\in \I$ implies $S'\in \I$ whenever $S\preceq S'$. 
 Explicitly,  the map $f$ is given by $f(x)=S_x$.

 Let $(X,\mcS)$ be a decomposed space.  We denote the inclusions of the
strata by $i_S: S \to X$. 
By $\Sh(X,\mcS, \A)\subset \Sh(X,\A)$ we denote the category of sheaves 
$\F$ which are constructible with respect to $\mcS$,
i.e.,  such that each $i_S^*\F$ is locally constant on $S$. 
  By $D(X, \mcS; \A)\subset D(X,\A)$
we denote the subcategory of complexes  of sheaves $\F$  whose cohomology sheaves 
$\ul H^i(\F)$ are constructible
with respect to $\mcS$.

  Let us recall the concept of exit paths for $(X,\mcS)$, originally introduced by MacPherson,
 see \cite{treumann} for a more detailed treatment. 
For $x\in X$ we denote by $S_x\in\mcS$ the stratum containing $x$. This gives a partial
order $\preceq$ on $X$ (as a set) given by $x\preceq y$, if
$S_x\preceq S_y$, i.e.,  $S_x\subset\ol{S_y}$. 
An {\em exit path} for $(X,\mcS)$ is a continuous parametrized path $\gamma: [0,1]\to X$
which is monotone with respect to $\prec$, i.e., such that for $t_1\leq t_2$ we have
$\gamma(t_1)\preceq\gamma(t_2)$. The {\em category of exit paths} $\Exit(X,\mcS)$
has, as objects, all points $x\in X$, with $\Hom_{\Exit(X,\mcS)}(x,y)$ being the
set of isotopy classes of exit paths $\gamma$ with $\gamma(0)=x$ and $\gamma(1)=y$. 
Thus $\Exit(X,\mcS)$ can be considered as a stratified version of the fundamental
groupoid of $X$ (to which it reduces in the particular case when $\mcS$ consists of just one
stratum $X$). By reversing the direction of the paths (or passing to
the opposite category) we get the {\em category of entrance paths}
$\Entr(X,\mcS) = \Exit(X,\mcS)^\op$. 

We will use some particular types of decompositions in which one imposes
various ``conicity'' conditions describing the neighborhood of a stratum 
in the closure of a larger stratum:

\begin{itemize}
\item[(1)]  Whitney stratifications, see 
 \cite{GM-morse} pt.II \S 1.2. In this case the strata are $C^\oo$-manifolds.
 
 \item[(2)]  Topological stratifications, see \cite{GM-IC-II} and \cite{treumann} \S 3.1. 
 In this case the strata are topological manifolds. 
 
 \item[(3)]  Conical stratifications, see \cite{HA} Def. A.5.5. In this case strata
 are not required to be manifolds, but   near a stratum $S$, the space $X$
 is  locally  identified with the product of   $S$ and the cone over another decomposed
 space with strata labelled by $S'\in\mcS$ with $S\prec S'$. 
 
 \end{itemize}

It is known that these three conditions  are of increasing generality, i.e.,
(1)$\Rightarrow$(2)$\Rightarrow$(3).

\begin {prop}\label{prop:exit}
Let $(X,\mcS)$ be a space with a conical stratification. The 
  category $\Sh(X,\mcS, \A)$ is equivalent to $\Fun(\Exit(X,\mcS), \A)$
(the category of covariant functors). 
\end{prop}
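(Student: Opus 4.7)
The plan is to construct a pair of mutually quasi-inverse functors
\[
\Phi: \Sh(X,\mcS,\A) \lra \Fun(\Exit(X,\mcS),\A), \qquad \Psi: \Fun(\Exit(X,\mcS),\A) \lra \Sh(X,\mcS,\A),
\]
generalizing the set-valued statement of \cite{treumann} and the $\oo$-categorical statement \cite{HA} Theorem A.9.3 to Grothendieck abelian targets $\A$.

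The forward functor $\Phi$ sends a constructible sheaf $\F$ to the functor $x \mapsto \F_x$. To evaluate on an exit path $\gamma$ from $x$ to $y$, I use conicity: by shrinking via isotopy, $\gamma$ may be assumed to lie in a conical neighborhood of $x$ of the form $U \times C(L)$. Constructibility forces $\F$ to be pulled back from the cone factor, and then the natural restriction map from the cone point to a nearby stratum yields the desired generalization map $\F_x \to \F_y$. Independence of the choice of chart and isotopy-invariance are verified by subdividing any isotopy between exit paths into small steps each supported inside a single chart, where the assertion is routine.

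The inverse functor $\Psi$ is defined by
\[
\Psi(F)(U) = \lim_{x \in U} F(x),
\]
the limit taken over the full subcategory of $\Exit(X,\mcS)$ spanned by the points of $U$. Sheafiness follows from a cofinality argument: for any open cover $\{U_i\}$ of $U$, the category of exit paths in $U$ is the colimit of the exit path categories of the $U_i$'s and their overlaps. Constructibility and the identity $\Phi \Psi \simeq \id$ reduce to the fact that within a conical chart around $x$, the point $x$ is an initial object of the exit path subcategory, so that $F(x)$ is recovered as the stalk of $\Psi(F)$ at $x$. The identity $\Psi \Phi \simeq \id$ reduces, by descent, to checking $\F(U) \cong \lim_{x \in U} \F_x$ on conical charts, where it is explicit from the conical structure.

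The main obstacle is the cofinality analysis underlying both the well-definedness of $\Phi$ on isotopy classes and the sheafiness of $\Psi$: these rely delicately on the conical hypothesis to convert global isotopy data into local information supported in product-cone charts. A more formal alternative is to invoke \cite{HA} Theorem A.9.3 directly, applied to $\A$ viewed as a $1$-category, and to check that the $\oo$-categorical equivalence between constructible hypersheaves and functors on the exit path $\oo$-category degenerates to an equivalence of $1$-categories in this situation. This degeneration uses the fact that $\Exit(X,\mcS)$ is a genuine $1$-category and that $\A$, being Grothendieck abelian, is complete, so that the hypersheaf condition collapses to the ordinary sheaf condition.
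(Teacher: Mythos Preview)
Your final paragraph is precisely the paper's proof: the paper does not construct $\Phi$ and $\Psi$ at all, but simply cites \cite{treumann} Th.~1.2 for topological stratifications (MacPherson's original result) and \cite{HA} Th.~A.9.3 for the conical case. So your ``formal alternative'' is in fact the entire argument the paper gives.

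Your direct construction is a reasonable sketch, and more ambitious than what the paper offers, but several steps you describe as routine are exactly the substantive content of the references being cited. In particular: (i) the isotopy-invariance of the generalization map $\F_x \to \F_y$, which you reduce to subdividing into small steps in conical charts, is the core of Treumann's argument and is not short; (ii) your claim that the exit path category of $U$ is the colimit of those of the $U_i$ is a Seifert--van Kampen statement for exit paths, which is essentially \cite{HA} Th.~A.3.1 and again not a triviality. There is also a subtlety in your degeneration argument: for a general conical stratification one must check that Lurie's exit path $\infty$-category is equivalent to the $1$-category $\Exit(X,\mcS)$ defined via isotopy classes, i.e., that the relevant mapping spaces have contractible components; the paper sweeps this under the phrase ``$\oo$-categorical upgrade'' without further comment, and you should not assume it is automatic.
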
 

 \noindent{\sl Proof:}  For topological stratifications, this 
 is the original result of MacPherson, see 
  \cite{treumann} Th. 1.2. For conical stratifications this follows from 
  \cite{HA} Th. A.9.3 which gives an 
   $\oo$-categorical upgrade of $\Exit(X,\mcS)$.  
   
\qed


Suppose now that $X$ is a complex manifold and $\mcS$ is a complex analytic Whitney
stratification of $X$. 
By $\PS(X,\mcS, \A)\subset D(X, \mcS, \A)$ we denote the subcategory of {\em perverse sheaves}
(with respect to the middle perversity). Recall \cite{BBD}\cite{kashiwara-schapira} that $\F\in \P(X,\mcS, \A)$
iff two conditions are satisfied:
\begin{enumerate}
	\item[($P^+$)] For every $S \in \mcS$, we have $\ul H^n(i_S^*\F) = 0$ for $n > -\dim_\CC(S)$, 
	\item[($P^-$)] For every $S \in \mcS$, we have $\ul H^n(i_S^!\F) = 0$ for $n < -\dim_\CC (S))$. 
\end{enumerate}
It is well known \cite{BBD} that the category $\PS(X,\mcS;\A)$  is 
the heart of a $t$-structure, and so is an
abelian category.


\paragraph {The case of stratified surfaces.}
 We specialize to the case of $\dim_\CC(X)=1$,  so  $X$ is a Riemann surface, possibly noncompact and
with nonempty boundary. We fix a finite subset $N \subset X$ of interior points which we refer
to as {\em special points} and denote the corresponding stratification $X = N \cup (X \setminus N)$
by $\mcS = \mcS_N$.   This gives a topological stratification  and we adopt the following definition. 

\begin{defi} 
 By a {\em stratified surface}  we mean  a pair $(X,N)$ consisting of :
 \begin{itemize}
	 \item[(1)] A topological manifold $X$ of real dimension
$2$,  possibly noncompact and  with boundary. 

\item[(2)]  A finite subset $N \subset X$  of interior points which we refer
to as {\em special points}.
 \end{itemize}
 
   We denote by $j: X\setminus N\to X$ and $i: N\to X$ the embeddings of the strata. 
   \end{defi}

 
Let us fix a Grothendieck abelian category $\A$. We denote by $D(X,N;\A)\subset D(X,\A)$ the full
subcategory of complexes whose cohomology sheaves are constructible with respect to the
stratification $\mcS_N$, i.e., in our case, locally constant on $X\setminus N$. 

Further, the concept of a perverse sheaf  makes sense in this context, and is given explicitly as
follows. 
 
\begin{defi}\label{defi:perverse} Let $(X,N)$ be a stratified surface and $\A$ a Grothendieck
	abelian category. An object $\F$ of $D(X, N; \A)$ is called {\em perverse} if 
	\begin{enumerate}
		\item $j^*\F$ is isomorphic to $L[1]$ where $L$ is a local system on $X\setminus N$ with values in $\A$, 
		\item $H^n(i^*\F) = 0$ for $n > 0$,
		\item $H^n(i^!\F) = 0$ for $n < 0$.
	\end{enumerate}

\end{defi}

The category of perverse sheaves with respect to $N$ will be denoted $\PS(X,N;\A)$. As explained
above, it is an abelian category.

\subsection{Milnor disks, Milnor pairs and the purity property}

We
denote by $\DD \subset \CC$ the closed unit disk. 
Let $(X,N)$ be a surface $X$ with a set of special points $N \subset X$ as in \S \ref{par:sheaf-abel}. 
By a {\em closed disk} we mean a subspace  $A\subset X$ homeomorphic to $\DD$. 

\begin{defi}\label{def:miln-disk} 
A {\em Milnor disk} in $(X,N)$ is a pair $(A,A')$, where:
\begin{itemize}
\item[(1)] $A\subset X$ is a closed disk containing at most one special point.

\item[(2)] $A'\subset\del A\simeq S^1$ is a disjoint union of finitely many closed arcs, different from
$\emptyset$ and the whole $\del A$. 
\end{itemize}
\end{defi} 

See the left of Fig. \ref{fig:basic}. The concept of a Milnor disk can be compared with the following possibly more intuitive concept.

\begin{defi}\label{def:miln-pair}
A  {\em Milnor pair} for $(X,N)$ is a pair
 $(U,U')$,  $U' \subset U$, of  closed  subsets of $X$, such that 
	\begin{enumerate}
		\item $U$ is a closed disk containing at most one special point.
		
		\item $U'$ is a finite, nonempty, disjoint union of closed disks $\{U_i\}_{i \in I}$ such that $K=U \setminus U'$
			is contractible.
	\end{enumerate}
\end{defi}

 \begin{figure}[ht]
   \centering
    
	\def\svgwidth{5cm}
	\raisebox{-.5\height}{
\begingroup%
  \makeatletter%
  \providecommand\color[2][]{%
    \errmessage{(Inkscape) Color is used for the text in Inkscape, but the package 'color.sty' is not loaded}%
    \renewcommand\color[2][]{}%
  }%
  \providecommand\transparent[1]{%
    \errmessage{(Inkscape) Transparency is used (non-zero) for the text in Inkscape, but the package 'transparent.sty' is not loaded}%
    \renewcommand\transparent[1]{}%
  }%
  \providecommand\rotatebox[2]{#2}%
  \newcommand*\fsize{\dimexpr\f@size pt\relax}%
  \newcommand*\lineheight[1]{\fontsize{\fsize}{#1\fsize}\selectfont}%
  \ifx\svgwidth\undefined%
    \setlength{\unitlength}{153.60156937bp}%
    \ifx\svgscale\undefined%
      \relax%
    \else%
      \setlength{\unitlength}{\unitlength * \real{\svgscale}}%
    \fi%
  \else%
    \setlength{\unitlength}{\svgwidth}%
  \fi%
  \global\let\svgwidth\undefined%
  \global\let\svgscale\undefined%
  \makeatother%
  \begin{picture}(1,1.07368949)%
    \lineheight{1}%
    \setlength\tabcolsep{0pt}%
    \put(0,0){\includegraphics[width=\unitlength,page=1]{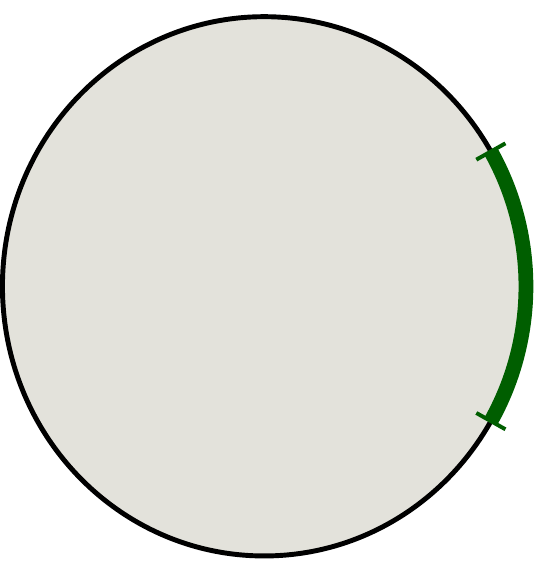}}%
    \put(0.28599036,0.88074992){\color[rgb]{0,0,0}\makebox(0,0)[lt]{\lineheight{1.25}\smash{\begin{tabular}[t]{l}$A'$\end{tabular}}}}%
    \put(0.8614589,0.52500576){\color[rgb]{0,0,0}\makebox(0,0)[lt]{\lineheight{1.25}\smash{\begin{tabular}[t]{l}$A'$\end{tabular}}}}%
    \put(0.26855196,0.16054241){\color[rgb]{0,0,0}\makebox(0,0)[lt]{\lineheight{1.25}\smash{\begin{tabular}[t]{l}$A'$\end{tabular}}}}%
    \put(0.83530108,0.98538048){\color[rgb]{0,0,0}\makebox(0,0)[lt]{\lineheight{1.25}\smash{\begin{tabular}[t]{l}$A$\end{tabular}}}}%
    \put(0.43596092,0.59650343){\color[rgb]{0,0,0}\makebox(0,0)[lt]{\lineheight{1.25}\smash{\begin{tabular}[t]{l}$x \in N$\end{tabular}}}}%
    \put(0,0){\includegraphics[width=\unitlength,page=2]{disk_1_3_labelled.pdf}}%
  \end{picture}%
\endgroup%
} \quad\quad\quad\quad
	\def\svgwidth{5cm}
	\raisebox{-.5\height}{
\begingroup%
  \makeatletter%
  \providecommand\color[2][]{%
    \errmessage{(Inkscape) Color is used for the text in Inkscape, but the package 'color.sty' is not loaded}%
    \renewcommand\color[2][]{}%
  }%
  \providecommand\transparent[1]{%
    \errmessage{(Inkscape) Transparency is used (non-zero) for the text in Inkscape, but the package 'transparent.sty' is not loaded}%
    \renewcommand\transparent[1]{}%
  }%
  \providecommand\rotatebox[2]{#2}%
  \newcommand*\fsize{\dimexpr\f@size pt\relax}%
  \newcommand*\lineheight[1]{\fontsize{\fsize}{#1\fsize}\selectfont}%
  \ifx\svgwidth\undefined%
    \setlength{\unitlength}{151.28287bp}%
    \ifx\svgscale\undefined%
      \relax%
    \else%
      \setlength{\unitlength}{\unitlength * \real{\svgscale}}%
    \fi%
  \else%
    \setlength{\unitlength}{\svgwidth}%
  \fi%
  \global\let\svgwidth\undefined%
  \global\let\svgscale\undefined%
  \makeatother%
  \begin{picture}(1,1.0315915)%
    \lineheight{1}%
    \setlength\tabcolsep{0pt}%
    \put(0,0){\includegraphics[width=\unitlength,page=1]{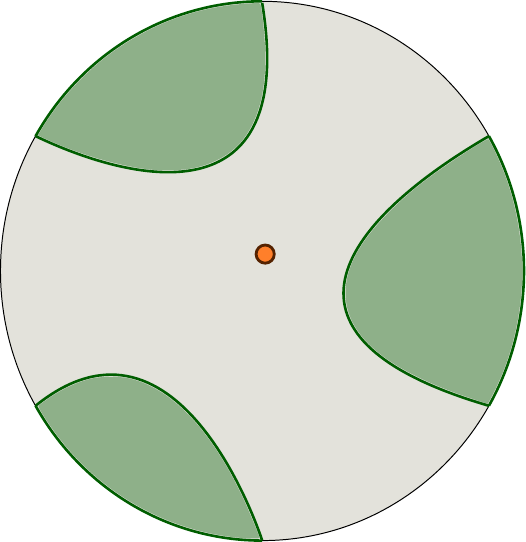}}%
    \put(0.29193808,0.83664277){\color[rgb]{0,0,0}\makebox(0,0)[lt]{\lineheight{1.25}\smash{\begin{tabular}[t]{l}$U'$\end{tabular}}}}%
    \put(0.8620623,0.49669304){\color[rgb]{0,0,0}\makebox(0,0)[lt]{\lineheight{1.25}\smash{\begin{tabular}[t]{l}$U'$\end{tabular}}}}%
    \put(0.24767409,0.19215468){\color[rgb]{0,0,0}\makebox(0,0)[lt]{\lineheight{1.25}\smash{\begin{tabular}[t]{l}$U'$\end{tabular}}}}%
    \put(0.86029175,0.95704161){\color[rgb]{0,0,0}\makebox(0,0)[lt]{\lineheight{1.25}\smash{\begin{tabular}[t]{l}$U$\end{tabular}}}}%
    \put(0.43004291,0.58522164){\color[rgb]{0,0,0}\makebox(0,0)[lt]{\lineheight{1.25}\smash{\begin{tabular}[t]{l}$x \in N$\end{tabular}}}}%
  \end{picture}%
\endgroup%
}
  
\caption{A Milnor disk $(A,A')$ and a Milnor pair $(U,U')$.}\label{fig:basic}
\end{figure}

\noindent Thus a Milnor disk can be seen as a Milnor pair $(U,U')$ with $U'$ being very thin, reducing to
a union of boundary arcs, see Fig. \ref {fig:basic}. Up to  homotopy equivalence, there is no
difference between the two concepts. 

\begin{ex}\label{ex:milnor}
Let $X$ be a Riemann surface (1-dimensional complex manifold), $z$ be a holomorphic coordinate near
an interior point $x\in X$ and $f$ be a holomorphic function defined near $x$ such that $f(x)=0$.
Then for sufficiently small $\eps > \delta>0$ the pair formed by 
\[
	 U=\{|z|\leq \eps\}, \quad U' = \{|z| \leq  \eps, \,\, \Re(f(z)) \leq \delta\}
\]
is a Milnor pair. This explains our terminology, motivated by the
concept of  Milnor fibers in singularity theory. Note that the cardinality $|\pi_0(U')|$ is equal
to $\ord_x(f)$, the order of vanishing of $f$ at $x$. 
\end{ex}

The role of Milnor disks for our purposes stems from the following:

\begin{prop}[(Purity property)]\label{prop:purity} Let $(X,N)$ be a stratified surface, let $\A$ be a Grothendieck
	abelian category, and let $\F$ be an object of the derived constructible category
	$D(X,N;\A)$. Then the following are equivalent:
	\begin{enumerate}
		\item[(i)] $\F$ is a perverse sheaf.
		\item[(ii)] For every Milnor disk $(A,A')$, 
		the relative hypercohomology $H^i(A,A';\F)$ vanishes for $i\neq 0$.
	\end{enumerate}
\end{prop}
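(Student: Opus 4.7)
The plan is to prove both implications by direct cohomological computation, using the long exact sequence of the pair $(A,A')$ together with the triangle $i_*i^!\F \to \F \to Rj_*j^*\F$ that separates the contribution of the special point from that of the open stratum. The computation splits according to whether $A$ meets $N$. In the open case ($A \cap N = \emptyset$), $\F|_A$ has locally constant cohomology on the simply connected disk $A$ and may be identified with a ``constant'' complex $C^\bullet \in D(\A)$; the long exact sequence of the pair then collapses via the diagonal $H^i(C^\bullet) \to H^i(C^\bullet)^k$ (where $k$ is the number of arcs of $A'$) to yield $H^i(A,A';\F) \simeq H^{i-1}(C^\bullet)^{k-1}$. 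In the special case ($x \in A \cap N$), a five-lemma argument applied to the ``link'' triangle $i_x^!\F \to \RG(A;\F) \to \RG(A\setminus\{x\}; L[1])$, comparing $A$ with a small concentric disk $B$, identifies $\RG(A;\F) = i_x^*\F = \F_x$ using only constructibility; meanwhile $\RG(A';\F) = \bigoplus_{i=1}^k V_i[1]$ where $V_i$ is the generic stalk of $\F[-1]$ on the $i$-th arc of $A'$.

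\textbf{Forward direction $(i)\Rightarrow(ii)$.} If $\F$ is perverse then condition (1) of Definition \ref{defi:perverse} gives $C^\bullet \simeq L[1]$, so the open-case formula yields concentration in degree $0$. In the special case, the stalk-costalk triangle $i_x^!\F \to \F_x \to \RG(S^1;L)[1]$ combined with $H^n(i_x^!\F)=0$ for $n<0$ and $H^n(\F_x)=0$ for $n>0$ forces $H^n(\F_x)=0$ for $n \notin \{-1,0\}$. The long exact sequence of $(A,A')$ then gives vanishing outside degree $0$, once one verifies that the restriction map $\RG(A;\F) \to \RG(A';\F)$ factors through $\RG(\partial A;\F) = \RG(S^1;L)[1]$ and hence, in cohomological degree $-1$, becomes the composite $H^{-1}(\F_x) \to V^T \hookrightarrow \bigoplus V_i$ whose kernel is $\ker(H^{-1}(\F_x)\to V^T) = H^{-1}(i_x^!\F) = 0$.

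\textbf{Reverse direction $(ii)\Rightarrow(i)$.} Each perverse condition is extracted from a particular family of Milnor disks. Applying the open-case formula with $k \geq 2$ and the purity hypothesis forces $H^j(j^*\F) = 0$ for $j \neq -1$; by constructibility this gives $j^*\F \simeq L[1]$, namely condition (1). Using (1) together with Milnor disks containing $x$, the long exact sequence collapses to $H^i(A,A';\F) \simeq H^i(\F_x)$ for $i \geq 1$ and $i \leq -2$, so purity forces $H^n(\F_x) = 0$ for $n \notin \{-1,0\}$, yielding condition (2). Finally the $i = -1$ piece of the long exact sequence identifies $H^{-1}(A,A';\F)$ with $\ker(H^{-1}(\F_x) \to V^T) = H^{-1}(i_x^!\F)$; purity makes this vanish, and combined with the already-established $H^n(\F_x) = H^n(i_x^!\F) = 0$ for $n \leq -2$ (via the stalk-costalk triangle in this range), we obtain $H^n(i_x^!\F) = 0$ for all $n < 0$, which is condition (3).

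The main obstacle is the careful tracking of the restriction map $\RG(A;\F) \to \RG(A';\F)$ in cohomological degree $-1$: one must verify its factorization through the monodromy invariants $V^T$ via the stalk-costalk triangle, so that the Milnor-disk computation exactly matches the perverse conditions at the special point. A secondary subtlety is that the computation takes place in the Grothendieck abelian category $\A$ rather than in vector spaces, but since only formal properties of the derived formalism (distinguished triangles, long exact sequences, the five-lemma) are used, this adds only notational overhead.
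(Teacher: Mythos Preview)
Your argument is essentially identical to the paper's: your ``stalk--costalk triangle'' $i_x^!\F \to \F_x \to \RG(S^1;L)[1]$ is precisely the middle row of the paper's $3\times 3$ diagram \eqref{eq:big-dia}, your long exact sequence of the pair is its middle column, and the crucial injectivity $H^{-1}(\F_x) \hookrightarrow V^T \hookrightarrow \bigoplus V_i$ is exactly the paper's argument that $c = b \circ a$ is injective on $H^{-1}$. One small omission: you tacitly assume the special point lies in the interior of $A$, whereas a Milnor disk allows $x \in \partial A$; the paper dispatches this by excision when $x \in A'$ and by a trivial variant when $x \in \partial A \setminus A'$, and your computations adapt without change.
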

 
We will refer to the condition (ii) as {\em purity}.

\begin{proof}[Proof of Proposition \ref{prop:purity}]
 (i) $\Rightarrow$ (ii): Assume that $\F$ is perverse. 
 
 Assume first that $A$ either contains no special point or contains exactly one
 special point $x$ in its interior. Note that the first possibility is really a particular
 case of the second, as we can always introduce a ``dummy'' special point,
 where a singularity is allowed but not present. So we assume that the second
 possibility holds. Denote by by $i_x: \{x\} \to X$ the inclusion of the
	point. Note that $R\Gamma_{\{x\}}(A,\F)\simeq i_x^!\F$ and so its
	cohomology, by Definition \ref{defi:perverse}(3), is concentrated in
	degrees $\geq 0$. Further, $R\Gamma(A,F)\simeq i_x^*\F$ and so its
	cohomology, by  Definition \ref{defi:perverse}(2), is concentrated in
	degrees $\leq 0$. Consider now the following diagram with rows and columns
	being exact triangles:

\be\label{eq:big-dia}
\xymatrix{
R\Gamma_{\{x\}}(A,A'; \F)\ar[d] \ar[r]& R\Gamma(A,A';\F)\ar[d] \ar[r]& R\Gamma(A\- \{x\}, A'\-\{x\}; \F)\ar[d]
\\
R\Gamma_{\{x\}}(A;\F)\ar[d] \ar[r]& R\Gamma(A,\F)\ar[d]^c \ar[r]^a &
R\Gamma(A\-\{x\};\F)\ar[d]^b
\\
R\Gamma_{\{x\}}(A',\F) \ar[r] & R\Gamma(A';\F) \ar[r]^d& R\Gamma(A'\-\{x\};\F). 
}
\ee	
Note that $\F|_{A\-\{x\}}\= L[1]$, a local system in degree $(-1)$ and $A\-\{x\}$
is homotopy equivalent to $S^1$. So $R\Gamma(A\-\{x\}; \F)$ has
cohomology only in degrees $\{-1,0\}$. The LES of cohomology of the middle
row of the diagram gives, using the information above, the following:
 \begin{equation}\label{eq:bound1}
 \begin{gathered}
		H^n(A, \F) = 0 \quad  \text{for $n \notin \{-1,0\}$},
		\\
		H^n_{\{x\}} (A, \F) = 0 \quad  \text{for $n \notin \{0,1\}$}. 
		\end{gathered}
	\end{equation}	
 Look now at the middle column of the diagram. Since $R\Gamma(A';\F)$ is
 concentrated in degree $(-1)$, in order to show that $R\Gamma(A,A';\F)$
 has cohomology only in degree $0$, it suffices to show that
 $c: H^{-1}(A;F)\to H^{-1}(A';\F)$ is injective. For this, it suffices to prove
 that the maps induced by $a$ and $b$ on $H^{-1}$ are injective.
 For $a$ it follows from the fact \eqref{eq:bound1} that $R\Gamma_{\{x\}}(A;\F)$
 has no cohomology in degree $(-1)$. For $b$, we use the identification
 $\F|_{A\-\{x\}}\= L[1]$ as above. Then the statement becomes that
 $H^0(A\-\{x\}; L)\to H^0(A';L)$ is injective which is clear. 
 
 Suppose now that the special point $x$ lies in $\del A$. If $x\in A'$, then
 by excisison we reduce to the case when $A\cap N=\emptyset$ treated above.
 So let $x\in\del A\- A'$. In this case the argument is similar to the above, as
 $A\-\{x\}$ is contractible, and so
 $\F|_{A\-\{x\}}=L[1]$ has cohomology only in degree $(-1)$. 
 
 \vskip .2cm

 	(ii) $\Rightarrow$ (i): Vice versa, suppose that $\F$ is an object of $D(X,N;\A)$  
	satisfying the purity condition.  Let $A \subset X$ be a closed disk not containing
	any special points. Let $A' \subset \del A$  be a  disjoint union of two closed arcs, 
	 so that
	$(A,A')$ is a Milnor disk. Since by our assumptions, $\F|_A$ has locally
	constant, hence constant cohomology, 
	it is straightforward to conclude that 
	\[
		\RG(A,\F) \simeq \RG(A,A';\F)[1].
	\]
	By purity,  this implies that $j^* \F[-1]\= L$ is quasi-isomorphic to a  single local system with values in 
	$\A$. This shows Condition (1) of Definition \ref{defi:perverse}. 
	
	Now let $A$ be an closed disk that contains exactly one special point
	$x$ in its interior. Let 
	$A'\subset \del A$ be the disjoint union of two arcs. 
	We consider again the diagram \eqref{eq:big-dia},
	arguing now  ``in the other direction''. 
	
	That is, look at the middle column.
	By purity, $R\Gamma(A,A';\F)$ has cohomology only in degree $0$.
	But since $j^* \F[-1]=L$ is a single local system in degree $0$, the complex
	$R\Gamma(A';F)$ has cohomology only in degree $(-1)$.
	Therefore $R\Gamma(A,F)\= i_x^*\F$ has cohomology only in degrees
	$\{-1,0\}$, thus establishing Condition (2) of Definition \ref{defi:perverse}.

	Next, look at the left column. Clearly, $R\Gamma_{\{x\}}(A'; \F)=0$, as
	 $x\notin A'$, and 
	so $i_x^! F\= R\Gamma_{\{x\}}(A, \F)$ is identified with
	$R\Gamma_{\{x\}}(A,A'; \F)$. Now, the latter can be analyzed via
	the top row of the diagram, which contains $R\Gamma(A,A';\F)$,
	with cohomology in degree $0$ and $R\Gamma(A\-\{x\}, A'\-\{x\};\F)$
	which, we claim,  has cohomology only in degree $0$.
	This follows from looking at the right column, where the statement
	reduces to the claim that $H^0(A\-\{x\};L) \to H^0(A'; L)$ is 
	injective. Therefore $i_x^! \F$ has cohomology only in degrees $\{0,1\}$, 
  thus establishing Condition (3) of Definition \ref{defi:perverse}. 
 \end{proof}

\begin{rem}\label{rem:Milnor-van}
	Assume that we are in the situation of Example \ref{ex:milnor}. Then 
	$R\Gamma(U,U';  \F)$ is identified with  $\Phi_f(\F)_x$, the stalk at $x$ of the complex of
	vanishing cycles for $\F$ with respect to $f$, see \cite{kashiwara-schapira}. It is well
	known ({\em loc. cit.}) that $\Phi_f(\F)$ is itself a perverse sheaf which, in our case,
	amounts to saying that $\Phi_f(\F)_x$ is quasi-isomorphic to a single vector space in degree
	$0$. This provides an alternative proof of purity for such Milnor pairs, at least in the
	classical case when $\A$ is the category of vector spaces over a field. 
\end{rem}
 
\section{The paracyclic category and constructible sheaves}
\label{sec:cyclicsurface}

In this section, we will introduce the paracyclic category $\Lambda(X,N)$ of a stratified surface and explain how
the formalism of Kan extensions, applied to a directed version of $\Lambda(X,N)$, can be used to
describe the Verdier duality of the derived constructible category. The ideas and constructions
introduced in this section serve as a preparation for the main part of this work, \S
\ref{sec:milnor}, where we will apply similar techniques to parametrize perverse sheaves in terms of
the subcategory $M(X,N) \subset \Lambda(X,N)$ of Milnor disks. 

\subsection{The standard paracyclic category and the Ran space of the circle}\label{par:cyc-paracyc} 

Recall that the standard {\em simplex category} $\Delta$ has, as objects,
the standard finite nonempty ordinals $[n] = \{0,1,\cdots, n\}$, $n \geq 0$,
with morphisms being monotone maps. The morphisms of $\Delta$ are
 generated by the coface and codegeneracy maps
\[
\begin{gathered}
\delta_i: [n-1] \lra [n], \,\,\, i=0, \cdots, n \quad \text{(omitting $i$)};
\\
 \sigma_j:  [n+1] \lra [n] \,\,\, j=0,\cdots, n  \quad \text{(repeating $j$)},
\end{gathered}
 \]
 subject to well known relations, see, e.g., \cite{connes}, Ch. III, App.A, Prop.2.
 We denote by $\Delta^\surj\subset\Delta$ the subcategory with the same objects
 and only surjective maps as morphisms. In other words, morphisms of $\Delta^\surj$
 are generated by the $\sigma_j$ only. As usual, we call a {\em simplicial object}  in a category
 $\A$ a contravariant functor $Z: \Delta\to\A$. Thus $Z$ consists of objects
 $Z_n = Z([n]) \in\A, n\geq 0$ and morphisms (face and degenaracy maps)
 \[
 \del_i: Z_n\lra Z_{n-1}, \,\, i=0,\cdots, n; \quad  s_j: Z_n\lra Z_{n+1},\,\, j=0,\cdots, n+1,
 \]
 satisfying the relations dual to those among the $\delta_i$ and $\sigma_j$. 
  We will also use the term 
 {\em half-simplicial object} for a contravariant functor $\Delta^\surj\to \A$. 
 Thus a half-simplicial object has only degeneracy maps but no face maps.

  \begin{defi}[(\cite{connes} Ch. III App. A, 
\cite{loday} Def. 6.1.1)]\label{def:paracyc}
 (a) The standard {\em paracyclic category} $\Lambda_\infty$ has the  objects $\cn$, $n\geq 0$
 which are in bijection with those of  $\Delta$. Its
 morphisms  are generated by those of $\Delta$ (i.e., the $\delta_i: \langle n-1\rangle  \to \cn$
 and $ \sigma_j:  \langle n+1 \rangle  \to \cn$ as above satisfying the same relations) 
 together with additional {\em automorphisms}
 $\tau_n:\cn\to\cn$ which are subject to the following  relations:
  \[
\begin{gathered}
\tau_n\delta_i = \delta_{i-1}\tau_{n-1} \text{ for } 1\leq i\leq n, \quad \tau_n\delta_0=\delta_n;
\\
\tau_n\sigma_i = \sigma_{i+1}\tau_{n+1} \text{ for } 
1\leq i\leq n, \quad \tau_n\sigma_0 = \sigma_n\tau_{n+1}^2;
\end{gathered}
\]

(b) The {\em cyclic category} $\Lambda$ is  obtained from $\Lambda_\infty$ by imposing the additional relations
$\tau_n^{n+1}=\Id$. 
 \end{defi}

\noindent The following proposition  is well known, see \cite{drinfeld}.  
   It  can be expressed by saying that $\Lambda_\infty$ is a {\em central extension of $\Lambda$
    by $\ZZ$}. 

  \begin{prop}\label{prop:lambda-infty-cex}
  (a) The automorphisms $\tau_n^{n+1}\in\Hom_{\Lambda_\infty}(\cn, \cn)$ form  a central system 
  (i.e., define a natural transformation from the identity functor to itself). 
  
  (b) Let 
  $p: \Lambda_\infty\to\Lambda$  be  natural functor (identical on objects, surjective on morphisms).  
   The fibers of each induced map
 \[
  \Hom_{\Lambda^\infty} (\cm, \cn) \lra \Hom_{\Lambda}(\cm,\cn)
  \]
  are principal homogeneous spaces with respect to the action of $\ZZ$ given by composition with 
  powers of $\tau_m^{m+1}$ or, what  by (a)  is the same, by composition with powers $\tau_n^{n+1}$. \qed
    \end{prop}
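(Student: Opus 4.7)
The strategy is to reduce both claims to generators-and-relations bookkeeping, using the presentation of $\Lambda_\infty$ via $\delta_i$, $\sigma_j$, $\tau_n$ for part (a), and supplementing this with a combinatorial model of morphisms to pin down the $\ZZ$-torsor structure in part (b).

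For (a), since $\Hom_{\Lambda_\infty}(\cm,\cn)$ is generated under composition by the coface, codegeneracy, and cyclic generators, it suffices to verify for each generator $f$ that $\tau_n^{n+1}\circ f = f\circ \tau_m^{m+1}$, where $m$ is the source and $n$ the target. Commutation with $f = \tau_n$ is automatic. For $f=\delta_i:\langle n-1\rangle\to\cn$ I would iterate the two relations $\tau_n\delta_i=\delta_{i-1}\tau_{n-1}$ ($1\leq i\leq n$) and $\tau_n\delta_0=\delta_n$: by induction on $k$ one gets $\tau_n^k\delta_i=\delta_{i-k}\tau_{n-1}^k$ while the lower index remains nonnegative, then a second induction picks up the ``wrap-around'' through $\delta_n$ and produces $\tau_n^{n+1}\delta_i=\delta_i\tau_{n-1}^n$ after exactly $n+1$ steps. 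The analogous iteration of the codegeneracy relations yields $\tau_n^{n+1}\sigma_j=\sigma_j\tau_{n+1}^{n+2}$, where the extra $\tau_{n+1}$ absorbed into the right-hand side is exactly the one contributed by the ``boundary'' relation $\tau_n\sigma_0=\sigma_n\tau_{n+1}^2$. Since every morphism is a composite of generators, these generator-level identities propagate to all of $\Lambda_\infty$.

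For (b), surjectivity of $p$ on $\Hom$-sets is immediate: $p$ is identity on generators, and any relation in $\Lambda$ lifts to the corresponding relation in $\Lambda_\infty$ (possibly modulo $\tau^{n+1}$). The nontrivial content is that distinct $\ZZ$-shifts of a lift are themselves distinct in $\Lambda_\infty$, and that no further collisions occur. For this, I would invoke the standard combinatorial model: a morphism $\cm\to\cn$ in $\Lambda_\infty$ is represented by a monotone map $\tilde f:\RR\to\RR$ with $\tilde f(\ZZ)\subset\ZZ$ and $\tilde f(x+(m+1))=\tilde f(x)+(n+1)$, modulo weak equivalence, while $\Lambda$ classifies such $\tilde f$ up to the additional equivalence $\tilde f\sim \tilde f + (n+1)$. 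The endomorphism $\tau_n^{n+1}$ is realized by the unit translation $x\mapsto x+(n+1)$ of the target, and by (a) precomposition with $\tau_m^{m+1}$ gives the same effect. Hence each fiber of $p$ is a principal homogeneous $\ZZ$-set under this translation action, as required.

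\textbf{Main obstacle.} The computations in (a) are mechanical but require careful attention to index ranges (especially the asymmetric behavior of the $i=0$ boundary relations), and they must be done in the correct order so that one never multiplies out of range. The substantive step is (b): justifying that the combinatorial/monotone-map model really presents $\Lambda_\infty$ — equivalently, producing a normal form for morphisms that cleanly separates the ``underlying cyclic data'' from the integral winding parameter — is the heart of the argument, and is where one genuinely needs Drinfeld's central-extension description rather than just manipulating the abstract presentation.
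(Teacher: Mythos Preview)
The paper does not actually prove this proposition: it is stated as ``well known'' with a reference to \cite{drinfeld} and closed with a \qed. So there is no proof in the paper against which to compare your argument line by line.

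That said, your proposal is correct and follows the standard route. For (a), verifying the centrality of $\tau_n^{n+1}$ on the generating morphisms by iterating the defining relations is exactly how this is usually done; your index bookkeeping for the $\delta_i$ is right (the wrap-around through $\tau_n\delta_0=\delta_n$ contributes no extra $\tau_{n-1}$, which is why the count comes out to $n$ on the right). For (b), invoking the model of $\Lambda_\infty$ via monotone periodic maps $\ZZ\to\ZZ$ (or $\RR\to\RR$) is precisely the content of the references the paper cites --- this is Elmendorf's description, used in Drinfeld's note --- and once one has that model, the $\ZZ$-torsor structure on fibers is transparent, since $\tau_n^{n+1}$ is the translation $x\mapsto x+(n+1)$. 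Your identification of the main obstacle is accurate: the substance lies in establishing that the monotone-map model really presents the category defined by generators and relations, and this is exactly what the cited literature supplies.
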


    We also denote $\Lambda_\oo^\surj\subset\Lambda_\oo$ the subcategory on the same objects
    with the morphisms generated by the $\sigma_j$ and $\tau_n$ only. 
    By a {\em paracyclic} {\em object} in a category $\Ac$
    we will mean a contravariant functor $Z: \Lambda_\oo\to \A$ As for simplicial objects, we write $Z_n$
    for the value of $Z$ on $\cn$ and $\del_i, s_j, t_n$ for the values
    on $\delta_i, \sigma_j, \tau_n$. 
    By a {\em half-paracyclic object} we will mean  a contravariant functor 
     $\Lambda_\oo^\surj\to\A$. 
    
        \begin{rem}\label{rem:lambda-self-dual}
    The categories $\Lambda$ and $\Lambda_\infty$ are self-dual, i.e., isomorphic to their opposite
    categories \cite {connes} \cite{elmendorf}. In fact, by introducing the additional co-degeneracies
    $\sigma_{n+1}= \tau_n \sigma_n\tau_{n+1}^{-1}: \langle n+1\rangle \to \cn$, one can
    write their presentations in a manifestly self-dual way, so that cofaces and co-degeneracies will be
    dual to each other. 
    \end{rem}
    
    \paragraph{A partial interpretation via the Ran space.} 
    We recall the topological version of the Ran space construction \cite{BD}. As
    pointed out in \cite{BD}, this version  goes back to
    Borsuk and Ulam \cite{borsuk-ulam}. 
    
    Let $M$ be a $C^\oo$-manifold. The {\em Ran space} of $M$ is the set $\Ran(M)$
    of all finite nonempty subsets $I\subset M$ equipped with a natural (Vietoris) topology.
    If  we choose  a metric on $M$ inducing the topology, then $\Ran(M)$ can be metrized using the corresponding Hausdorff distance.
The space $\Ran(M)$ has a filtration by closed subspaces $\Ran^{\leq d}(M) = \{I\subset M: \, |I|\leq d\}$, and the complement
\[
\Ran^{\leq d}(M) \-\Ran^{\leq d-1}(M) \= \Sym^d_\neq(M)
\]
is the configuration space of unordered $d$-tuples of distinct points in $M$. 
In this way each $\Ran^{\leq d}(M)$ becomes a Whitney stratified space, and
$\Ran(M)$ can be considered as a (infinite-dimensional)  space
with a conical stratification, see \S \ref{par:sheaf-abel}. 
 In particular, we can speak about the category of exit paths $\Exit(\Ran(M))$
and, for a Grothendieck abelian category $\A$, about  $\A$-valued constructible sheaves on $\Ran(M)$ (with respect to
 the stratification by the $\Sym^d_\neq(M)$). 
 
 \begin{rems}\label{rem:ran-bacteria}
 (a) An exit path in $\Ran(M)$ can be seen as a history of a colony of bacteria living in $M$
 which can move and multiply (by splitting) but not merge together, and cannot die, see
 Fig. \ref{fig:exit-ran}.

 (b) A constructible sheaf $\F$ on $\Ran(M)$ assigns to any finite nonempty $I\subset M$
 an object $\F_I\in\A$ (the stalk). When $I$ ``evolves'' into $J$ by moving and splitting,
 we have a morphism $\F_I\to\F_J$ (the generalization map). 
 \end{rems}
 
 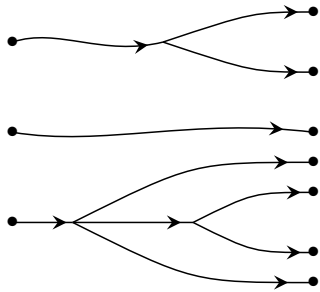
\begin{figure}[h]
 \centering
 \begin{tikzpicture}[scale=0.4]
 \def\drarr{\draw  [decoration={markings,mark=at position 0.9 with
{\arrow[scale=1.5,>=stealth]{>}}},postaction={decorate},
line width=.2mm]}
 
 \node at (0,5){\small$\bullet$}; 
\node at (0,8){\small$\bullet$}; 
\node at (0,11){\small$\bullet$}; 

 \node at (10,3){\small$\bullet$}; 
 \node at (10,4){\small$\bullet$}; 
 \node at (10,6){\small$\bullet$}; 
 \node at (10,7){\small$\bullet$}; 
 \node at (10,8){\small$\bullet$};  
 \node at (10,10){\small$\bullet$}; 
 \node at (10,12){\small$\bullet$};    
 
 \drarr (0,8) .. controls (3,7.5) and (6,8.5) ..  (10,8);  
 
 \drarr (0,5) -- (2,5);  
 \drarr (2,5) .. controls (6,7) .. (10,7); 
 \drarr (2,5) .. controls (6,3) .. (10,3); 
 \drarr (2,5) -- (6,5) ; 
 \drarr (6,5) .. controls (8,6) .. (10,6); 
  \drarr (6,5) .. controls (8,4) .. (10,4); 
  \drarr (0,11) .. controls (1.5, 11.5) and (3, 10.5) .. (5,11);  
\drarr (5,11) .. controls (8,12) .. (10,12); 
\drarr (5,11) .. controls (8,10) .. (10,10);

 \end{tikzpicture}
 \caption{An exit path in $\Ran(M)$.}\label{fig:exit-ran}
 \end{figure}

 Let us focus, in particular, on  the Ran spaces of the real line $\RR$ and the circle $S^1$.

\begin{ex}
 It goes back to Bott \cite{bott} that $\Ran^{\leq 3}(S^1)$ is homeomorphic to the $3$-sphere $S^3$. 
 Further, inside this sphere $\Ran^{\leq 1}(S^1)=S^1$ is embedded as a trefoil knot, and 
 $\Ran^{\leq 2}(S^1)$ is a Moebius band  bounding this knot. See 
 \cite{mostovoy} for a beautiful treatment using elliptic functions. 
The topology and homotopy type of
  $\Ran^{\leq d}(S^1)$ for higher $d$ was studied in \cite{tuffley-s1}. 
 \end{ex}

 The following result was proven in \cite{cepek:thesis}:
 
 \begin{prop}\label{prop:lambda-exit}
 (a) The category $\Exit(\Ran(\RR))$ is equivalent to $(\Delta^\surj)^\op$. In particular,  $\A$-valued
 constructible sheaves on $\Ran(\RR)$  can be identified with half-simplicial objects in $\A$.
 
 (b) The category  $\Exit(\Ran(S^1))$ is equivalent to $(\Lambda_\oo^\surj)^\op$. In particular,  $\A$-valued
 constructible sheaves on $\Ran(S^1)$  can be identified with half-paracyclic objects in $\A$.
 \end{prop}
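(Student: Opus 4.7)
The plan is to identify the objects, morphisms and compositions of $\Exit(\Ran(M))$ directly with $(\Delta^\surj)^\op$ in the case $M = \RR$ and with $(\Lambda_\oo^\surj)^\op$ in the case $M = S^1$. The statements about constructible sheaves then follow immediately by applying Proposition \ref{prop:exit} to the conical stratification of $\Ran(M)$ recalled in the preceding paragraph.

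For part (a), the first observation is that each stratum $\Sym^{n+1}_{\neq}(\RR)$ is contractible: writing a finite subset of $\RR$ with its induced linear order produces a homeomorphism with the open simplex $\{x_0 < x_1 < \cdots < x_n\} \subset \RR^{n+1}$. This contractibility implies that any two points within a single stratum are connected by a unique morphism in $\Exit$. Next, an exit path from $I_0 = \{x_0 < \cdots < x_m\}$ to $I_1 = \{y_0 < \cdots < y_n\}$ (with $m \leq n$, since cardinality is weakly increasing along exit paths) assigns to each child $y_j$ its parent in $I_0$, producing a map $[n] \to [m]$ which is monotone (order is preserved) and surjective (no parent can die, since points can only split). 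An isotopy argument inside $\Ran(\RR)$ shows that any two exit paths with the same parent assignment are equivalent, and concatenation corresponds to composition in $\Delta^\surj$ in the opposite direction. This yields $\Exit(\Ran(\RR)) \simeq (\Delta^\surj)^\op$.

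For part (b) the geometry is analogous but the strata are no longer contractible. Each $\Sym^{n+1}_{\neq}(S^1)$ is homotopy equivalent to $S^1$, as one sees by deforming any configuration toward the regular $(n+1)$-gon while tracking its rotational phase, so that $\pi_1(\Sym^{n+1}_{\neq}(S^1)) \cong \ZZ$ is generated by the minimal rotation by $2\pi/(n+1)$ under which the regular $(n+1)$-gon returns to itself as an unordered set. This loop provides the automorphism of $\langle n \rangle$ in $\Exit$ that I would identify with $\tau_n$; its $(n+1)$-fold iterate is the full rotation by $2\pi$, matching the central element $\tau_n^{n+1}$ from Proposition \ref{prop:lambda-infty-cex}. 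An exit path between distinct strata is then described by a \emph{cyclically} monotone surjection (the parent assignment) decorated with a winding integer recording the total rotation of the configuration along the path; by Proposition \ref{prop:lambda-infty-cex}, the set of all such pairs is precisely $\Hom_{\Lambda_\oo^\surj}(\langle n \rangle, \langle m \rangle)$, and composition matches on both sides.

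The main technical obstacle I foresee is the verification that two exit paths carrying the same combinatorial data (monotone surjection for $\RR$; cyclic monotone surjection plus winding integer for $S^1$) are homotopic \emph{through exit paths}. This amounts to showing contractibility of the relevant spaces of exit paths in $\Ran(M)$, and requires the stratification to be well-behaved near the lower-dimensional, more degenerate strata --- precisely the role played by the conicality hypothesis that allows Proposition \ref{prop:exit} to be invoked. Once this is in hand, both parts of the Proposition follow by transporting the established equivalences of exit-path categories through the identification $\Sh(\Ran(M), \A) \simeq \Fun(\Exit(\Ran(M)), \A)$.
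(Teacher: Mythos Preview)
Your proposal is correct and follows essentially the same strategy as the paper: extract from an exit path the ``ancestry'' surjection $J \to I$, observe that for $M=\RR$ it is monotone, and for $M=S^1$ enrich it with the extra rotational datum. The only difference worth noting is in part~(b): where you describe the extra datum as a winding integer and appeal to the central-extension description of $\Lambda_\oo$ (Proposition~\ref{prop:lambda-infty-cex}), the paper instead invokes Connes' geometric model directly, recording the data of an exit path as an isotopy class of pairs $(f,s)$ with $f:(S^1,J)\to(S^1,I)$ a monotone degree~$1$ map and $s$ a homotopy from $f$ to the identity. The two packagings are equivalent, and your version has the advantage of making the role of $\tau_n$ and the central element $\tau_n^{n+1}$ explicit; the paper's has the advantage of matching the literature definition of $\Lambda_\oo$ on the nose, so that no further translation is needed.
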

 
 \noindent{\sl Proof:} (a) An exit path $\gamma$ in any $\Ran(M)$ going from
 $I$ to $J$ gives, for any $x\in I$, a tree of descendents of $x$ which terminates in
 a subset of $J$. This gives a surjection $a_\gamma: J\to I$ (the ``ancestry map''). 
  Isotopic exit paths lead to the same surjection. 
 If $M=\RR$, 
 then the order of $\RR$ makes both 
 $I$ and $J$ into nonempty finite ordinals and the surjection $a_\gamma$ is monotone.

 (b) Recall  from \cite{connes} Ch. III, Appendix A  the geometric definition
 of the cyclic  
 category $\Lambda$. For this we identify $\cn$ with the set of $(n+1)$st roots of $1$
 in the standard circle $S^1$. Then $\Hom_\Lambda(\cm, \cn)$
 is the set of connected components of the space  of degree $1$ monotone maps
 $f: (S^1,\cm)\to (S^1, \cn)$. Each such connected component has the homotopy type of
 $S^1$, and $\Hom_{\Lambda_\oo}(\cm, \cn)$ is obtained by passing to the universal
 coverings of these components. That is, $\Hom_{\Lambda_\oo}(\cm, \cn)$ is the
 set of isotopy classes of data $(f,s)$ consisting of  $f$ 
 as above together with a homotopy $s$ between  $f$ and  the identity 
 (as maps $S^1\to S^1$). Note now that 
 for $M=S^1$, an exit path $\gamma$ as in (a) gives not only a surjection $a_\gamma$
 but a well defined isotopy class of pairs $(f,s)$, where  $f:(S^1,J)\to (S^1, I)$ 
 is a monotone degree $1$ map and $s$ is
  homotopy of $f$ to the identity.\qed
  
 \begin{rem}\label{rem:full-lambda}
One would like  to extend the approach with the Ran spaces so as to realize the
  full categories $\Delta, \Lambda_\oo$ or functors out of them
   in terms of some categories of exit paths or constructible sheaves. 
   For this, in the language of Remark \ref{rem:ran-bacteria}(a), we would need
    to modify the concept
  of an exit path as a history of a colony of bacteria so as to allow the bacteria to die,
  see Fig. \ref{fig:exit-ran-death}. Then for such a ``history with deaths'' evolving from
  $I$ to $J$ we will
  still have the ancestry map $J\to I$ but it need not be surjective, as some lines may
  die out.

 \begin{figure}[h]
 \centering
 \begin{tikzpicture}[scale=0.4]
 \def\drarr{\draw  [decoration={markings,mark=at position 0.9 with
{\arrow[scale=1.5,>=stealth]{>}}},postaction={decorate},
line width=.2mm]}
 
 \node at (0,5){\small$\bullet$}; 
\node at (0,8){\small$\bullet$}; 
\node at (0,11){\small$\bullet$}; 

 \node at (10,3){\small$\bullet$}; 
 \node at (10,4){\small$\bullet$}; 
 \node at (10,6){\small$\bullet$}; 
 
 \node at (10,12){\small$\bullet$};    
 
 \drarr (0,8) .. controls (3,7.5) and (6,8.5) ..  (8,8);  
 \node at (8,8){\large$\dagger$}; 
 \node at (7,10) {\large$\dagger$}; 
  \node at (7,7) {\large$\dagger$}; 
 
 \drarr (0,5) -- (2,5);  
 \drarr (2,5) .. controls (5,7) .. (7,7); 
 \drarr (2,5) .. controls (6,3) .. (10,3); 
 \drarr (2,5) -- (6,5) ; 
 \drarr (6,5) .. controls (8,6) .. (10,6); 
  \drarr (6,5) .. controls (8,4) .. (10,4); 
  \drarr (0,11) .. controls (1.5, 11.5) and (3, 10.5) .. (5,11);  
\drarr (5,11) .. controls (8,12) .. (10,12); 
\drarr (5,11)  .. controls (6, 10) ..  (7,10);

 \end{tikzpicture}
 \caption{An exit path in $\Ran(M)$ with deaths.}\label{fig:exit-ran-death}
 \end{figure}
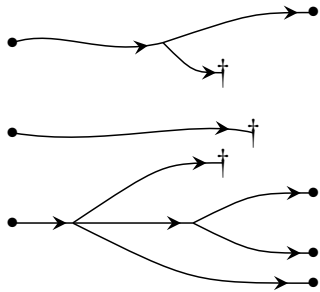
 
 To account for 
 such ``exit paths with deaths'', one needs  to consider   constructible sheaves $\F$ on
  $\Ran(M)$ equipped with
 an additional {\em monotone structure} which is a system of 
   maps $\F_J\to \F_I$ given for any
nested pair $I\subset J\subset S^1$ of nonempty finite sets and transitive in nested
triples.  

We do not pursue this approach further but note that our
 point of view based on Milnor disks $(A,A')$ has $A'$, a finite union of intervals
in the circle $\del A\= S^1$, playing the role of a finite subset $I\in\Ran(\del A)$. 

\end{rem}
  
A systematic approach to the matter discussed in Remark \ref{rem:full-lambda} via ``unital'' Ran
spaces was developed in \cite{cepek:thesis, cepek:ran}. The author recovers the paracyclic category
and Joyal's categories $\Theta_n$ as unital exit path categories associated to the Ran spaces of
$S^1$ and $\RR^n$, respectively.

\subsection{The paracyclic category of a stratified surface}\label{subsec:para-strat}

Let $(X,N)$ be a stratified surface as defined above. Throughout this text, we will assume that, if
$X \cong S^2$, then $|N| \ge 2$. In this section, we introduce the paracyclic category
$\Lambda(X,N)$ of $(X,N)$ which can be seen as a certain amalgamation of the copies of $\Lambda^\oo$
associated with the circles of directions at all the points $x\in X$

\paragraph{Pant cobordisms and the paracyclic category.} 

We  will  use the notation $I=[0,1]$ for the closed  unit  interval 
and, as before, $\DD$ for the closed unit disk.

\begin{defi}\label{def:paradisk} 
By a {\em para-disk} in $(X,N)$ we mean a pair 
$(A,A')$ where $ A \subset X$ is a closed disk such that $|A \cap N| \le 1$ 
 and $A' \subset
\partial A \cong S^1$ is a compact $1$-dimensional submanifold, i.e. one of the following:
\begin{enumerate}[label=(\roman*)]
	\item[(i)] the empty set,
	\item[(ii)] a finite nonempty union of closed intervals,
	\item[(iii)]  the full boundary circle, 
\end{enumerate}
\end{defi}

Thus a Milnor disk is a particular case of a para-disk corresponding to the
possibility (ii) of Definition \ref{def:paradisk}. 
In the other two cases,   a para-disk $(A,A')$ will be called:

\begin{enumerate}[label=(\alph*)]
	\item a  {\em standard disk}, if $A' = \emptyset$,
	 \item a  {\em bounded disk}, if $A' = \partial A$.
\end{enumerate}

We now define morphisms between para-disks. Intuitively, such a morphism should be
a certain isotopy class of paths
  $(A_t, A'_t)_{t\in I}$ in the space of para-disks. We want such paths to satisfy the following 
dynamical requirements as $t$ increases from $0$ to $1$: 

\begin{itemize}
\item[(PD1)]  The
 components $A'_t$ can merge together and can appear {\em ex nihilo} (growing out of single 
 points)
but cannot split. 

\item[(PD2)]  A special point $x\in N$ can enter the interior of
 $A_t$ (i.e., $A_t$ can ``run it over'') only
through the complement $A_t\- A'_t$ and exit $A_t$ only through $A'_t$. 
 \end{itemize}
 
 \noindent To implement this formally, we represent  paths in the space of para-disks via 
 maps $I\times \DD\to X$. We start with formalizing the merging behavior of the components
 $A_t$ as in (PD1). 
 
 \begin{defi}\label{def:pant-cobordism}
 \begin{itemize}
 \item[(1)] Let $P\subset I\times S^1$ be a subset. For any $t\in I$ we denote
 by  $P_t = P \cap (\{t\} \times S^1)$ the slice of $P$ over $t$. We can view $P_t$
 as a subset in $S^1$. 
 
 \item[(2)] By a {\em pant cobordism} we will mean a  closed $2$-dimensional (topological) submanifold $P \subset I \times S^1$ with
		boundary such that:
		
		\begin{itemize}
		\item[(2a)] The slices $P_0, P_1\subset S^1$
		 are compact $1$-dimensional submanifolds
		with boundary, as in Definition \ref{def:paradisk}. 
		
		\item[(2b)] The inclusion $P_1 \subset P$ is a homotopy equivalence.
		
		\end{itemize} 
 \end{itemize}
  \end{defi}

  \begin{figure}[h]
  \centering

	\includegraphics[scale=.7]{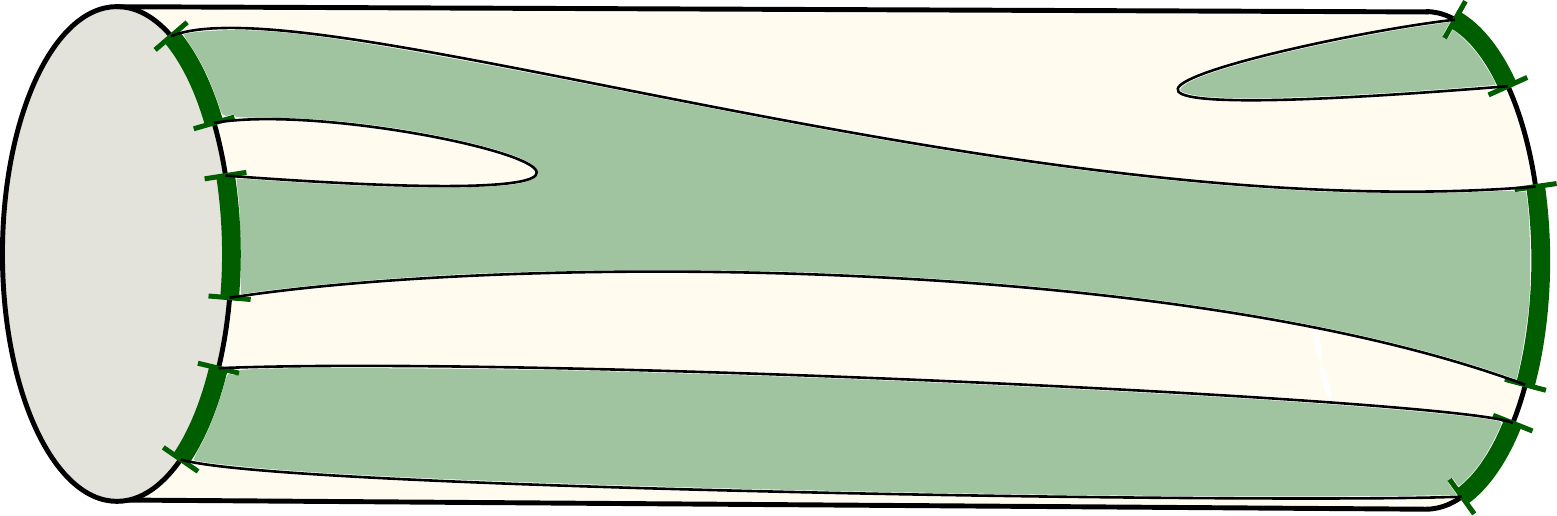}
  \caption{A pant cobordism.}\label{fig:pant-cob}
  \end{figure}
  
  An example of a pant cobordism is depicted in Fig. \ref{fig:pant-cob}. 
  
  \begin{rems}
 (a)  Strictly speaking, a pant cobordism $P$ is a manifold with corners, not just boundary, the corners being the boundary points of $P_0$ and $P_1$, as one can see 
  in Fig. \ref{fig:pant-cob}. Since we consider $P$ as a
 topological manifold, we ignore this subtlety. 
 
 (b) Intuitively, the slices $P_t\subset S^1$ correspond to the $1$-dimensional submanifolds
 $A'_t\subset A_t$ in the picture with  paths in the space of para-disks. Of course, 
 for some values of $t$ such slices may not be of the form allowed in
 Definition  \ref{def:paradisk}, in particular, they may have, as components,
  single points (which can then  disappear or grow to become intervals) Nevertheless,  the condition (2b) of Definition \ref{def:pant-cobordism} corresponds to the
 requirement (PD1) on  the paths.  In this way a pant cobordism can 
 (after time reversal $t\mapsto 1-t$) be seen as a thickened version
 of an ``exit path with deaths" from Remark \ref{rem:full-lambda}.

  \end{rems}

\begin{defi}\label{defi:undirected}
The {\em  paracyclic category $\Lambda(X,N)$ of $(X,N)$} is the category with 
objects being  para-disks $(A,A')$ for $(X,N)$. A morphism 
		\[
			f: (A_0,A'_0) \lra (A_1, A'_1)
		\]
in $\Lambda(X,N)$ consists of 
\begin{itemize}
	\item a  pant cobordism $P\subset I\times S^1$.
	
	\item a continuous map $H: I \times \DD \to X$, which we also  consider as a family of maps 
	 $H_t: \DD \to X$, $t\in I$,  such that
		\begin{enumerate}[label=(\arabic*)]
		
		\item\label{condition:isotopy} $H$ is an isotopy, i.e., each $H_t$ is an embedding, 
		
			\item\label{condition:para} for $i \in \{0,1\}$, the embedding $H_i$ induces homeomorphisms $\DD
				\cong A_i$ and $P_i \cong A'_i$,
			\item for every $t \in I$, we have $|H_t(\DD) \cap N| \le 1$,
			
				\item \label{condition:ent-ex} for every $t_0 \in I$ and $x \in H_{t_0}(P_{t_0}) \cap N$, there
		exists $\varepsilon >0$ such, for every $t_0 \le t \le t + \varepsilon$, $x \notin
		H_{t}(\DD \setminus P_{t_0})$,

		\end{enumerate}
	\item two such data $(H,P)$, $(H',P')$ define the same morphism if there exists a homeomorphism $\varphi: I
		\times \DD \to I \times \DD$ such that $\varphi|P$ induces a homeomorphism with
		$P'$, together with a homotopy $\alpha: I^2 \times \DD \to X$ with $\alpha_0 = H$
		and $\alpha_1 = H'$ such that, for every $s \in I$,
		$\alpha_s$ satisfies the above conditions.
\end{itemize}
\end{defi}

We denote by $S(X,N) \subset \Lambda(X,N)$ the full subcategory of standard disks, by
$B(X,N) \subset \Lambda(X,N)$ the full subcategory of bounded disks, and by $M(X,N)
\subset \Lambda(X,N)$ the full subcategory of Milnor disks. 
We refer to $M(X,N)$ as the {\em Milnor category} of $(X,N)$. 

\begin{rems}

 (a)  Given a  morphism $f$  with a representative $(P,H)$, we have, for any $t\in I$,  a closed
 disk $A_t=H_t(\DD)\subset X$ and a closed subset $A'_t = H_t(P_t) \subset\del A_t$.  The pair
 $(A_t, A'_t)$ depends only on $f$. For generic values of $t$, the slice $P_t$ belongs to one of the
 three types described in Definition \ref{def:paradisk} and so  $(A_t, A'_t)$ is a para-disk by the
 condition  \ref{condition:para} The condition \ref{condition:ent-ex}  corresponds to the intuitive
 requirement (PD2) on paths in the space of para-disks while (PD1) corresponds, as mentioned above,
 to the condition (2b) of Definition \ref{def:pant-cobordism} of a pant cobordism. 

 (b)  Our assumption that if $X \cong S^2$, then $|N| \ge 2$ implies that the mapping spaces which
 appear implicitly in our definition of $\Lambda(X,N)$ have contractible components, so that it is
 justified to consider it as an ordinary category (rather than an $\oo$-category). 
\end{rems}

\begin{exa}
	The category $M(\CC,\emptyset)$ of Milnor disks in $(\CC,0)$  is equivalent to the
	paracyclic category $\Lambda_{\infty}$.  This is shown similarly to the proof of Proposition
	\ref{prop:lambda-exit}. Further, the category $\Lambda(\CC,\emptyset)$ is equivalent to the
	category obtained from $\Lambda_{\infty}$ by adjoining an initial and a final objects which
	correspond to the objects
			\[
				(A, \emptyset) = \disk{0}{0} \quad \text{and} \quad 
			 (A, \partial A) = \disk{0}{s},
			\]
	respectively. 
\end{exa}

\paragraph{The Milnor category and perverse sheaves.} 

The role of the category $M(X,N)$ for our purposes is explained by the following.

\begin{prop}\label{prop:M-perv}
Let $\F\in\PS(X,N;\A)$ be a perverse sheaf on $(X,N)$ with values in a Grothendieck
abelian category $\A$. Then the correspondence $(A,A') \mapsto H^0(A,A'; \F)$
extends to a functor $h_\F: M(X,N)^\op\to \A$. 
\end{prop}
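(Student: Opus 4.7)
The plan is to construct $h_\F$ using the span of relative hypercohomology complexes announced in \eqref{eq:correspondence}, and then to descend from the derived category to $\A$ via the purity property of perverse sheaves (Proposition \ref{prop:purity}). On objects, set $h_\F(A,A') := H^0(A,A';\F)$; purity ensures that $\RG(A,A';\F)$ is concentrated in degree $0$, so $h_\F(A,A') \simeq \RG(A,A';\F)$ in $D(\A)$. For a morphism $f: (A_0,A'_0) \to (A_1, A'_1)$ in $M(X,N)$ with representative $(H,P)$, consider the diagram
\[
\RG(A_0, A'_0; \F) \xleftarrow{r_0} \RG(I \times \DD, P; H^*\F) \xrightarrow{r_1} \RG(A_1, A'_1; \F)
\]
in $D(\A)$, where $r_0$ and $r_1$ are restriction to $\{0\} \times \DD$ and $\{1\} \times \DD$ respectively. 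Assuming $r_1$ is a quasi-isomorphism (the key technical point, discussed below), define
\[
h_\F(f) := H^0(r_0) \circ H^0(r_1)^{-1} : h_\F(A_1, A'_1) \lra h_\F(A_0, A'_0),
\]
which is a morphism in $\A$ in the direction required by a functor $M(X,N)^{\op} \to \A$.

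The essential step is to show that $r_1$ is a quasi-isomorphism. This should follow by combining two geometric inputs with standard homotopy invariance of relative hypercohomology with constructible coefficients. First, the pant cobordism condition that $P_1 \hookrightarrow P$ is a homotopy equivalence, together with the obvious retraction of $I \times \DD$ onto $\{1\} \times \DD$, provides a homotopy equivalence of pairs $(\{1\} \times \DD, P_1) \hookrightarrow (I \times \DD, P)$. Second, the entrance/exit condition \ref{condition:ent-ex} of Definition \ref{defi:undirected} forces the trajectories $H^{-1}(N) \subset I \times \DD$ of special points to enter the cylinder through $(I \times S^1) \setminus P$ and exit through $P$, so that the preceding deformation retraction can be carried out compatibly with the stratification of $I \times \DD$ pulled back from $(X,N)$ via $H$. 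Invariance of $\RG(-,-; H^*\F)$ under such stratified deformations then yields the desired quasi-isomorphism.

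Well-definedness of $h_\F(f)$ under the equivalence relation of Definition \ref{defi:undirected} is automatic: a homotopy between two representatives $(H,P)$ and $(H',P')$ produces a homotopy between the corresponding zigzags, inducing the same map on $H^0$. For functoriality, a representative of a composite $g \circ f$ is obtained by concatenating the underlying isotopies and gluing the pant cobordisms along $\{1\} \times S^1$; a Mayer--Vietoris argument splits the resulting span into the two halves and produces the required equality $h_\F(g \circ f) = h_\F(f) \circ h_\F(g)$. The main obstacle throughout is the stratified deformation-retraction argument justifying the quasi-isomorphism of $r_1$; as noted in the introduction, the rigorous elaboration of this is carried out systematically via the $\oo$-categorical Kan extension formalism in Section \ref{sec:milnor}, where this proposition is effectively recovered as a byproduct of a cleanly formulated descent statement for constructible sheaves.
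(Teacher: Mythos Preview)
Your proposal is correct and follows essentially the same route as the paper: form the span of restriction maps, prove that the restriction $r_1$ to $\{1\}\times\DD$ is a quasi-isomorphism using the pant cobordism condition together with the entry--exit condition, and then set $h_\F(f)=H^0(r_0)\circ H^0(r_1)^{-1}$. The only substantive difference is that where you invoke an abstract ``stratified deformation retraction'', the paper makes this step concrete by splitting $\wt N:=H^{-1}(N)$ into the components $\wt N^+$ terminating on $P$ and $\wt N^-$ terminating on $\{1\}\times\DD$, then thickening $P$ to $P^+:=P\cup\wt N^+$ and $\{1\}\times\DD$ to $\DD^-:=(\{1\}\times\DD)\cup\wt N^-$; the inclusions $P_1\hookrightarrow P^+$ and $\{1\}\times\DD\hookrightarrow\DD^-$ are then ordinary homotopy equivalences away from the singular locus of $H^*\F$, which is exactly what is needed to factor $r_1$ into two visible quasi-isomorphisms.
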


\noindent{\sl Proof:} Let $f: (A_0, A'_0)\to (A_1, A'_1)$ be a morphism between two
Milnor disks represented by a pair $(P,H)$ as in Definition \ref{defi:undirected}. 
Let $\wt N= H^{-1}(N)\subset I\times \DD$. Because of condition \ref{condition:isotopy}
of that definition, $\wt N$
is a $1$-dimensional  topological submanifold with
boundary, i.e., a disjoint union of closed curvilinear intervals
  in the cylinder $I\times D$, each of them projecting to $I$ in an injective way.
  We orient these curves following the increase of $t\in I$.

   Let $\wt N^+\subset \wt N$
  be the union of components that terminate (in the sense of the above orientation) on $P$.
  Let 
  $\wt N^-\subset \wt N$ be the union of components that terminate on $\{1\}\times \DD$.
  Thus $\wt N^+ \cup \wt N^- =\wt N$ and $\wt N^+\cap\wt N^-$ is the union of
  components that terminate on  the slice $P_1$.
  
    Further, let  $\wt \F= H^{*}(\F)$. It is a complex of sheaves on $I\times D$
  constructible with respect to the stratification given by $\wt N$. By Proposition 
\ref{prop:purity},
\[
H^0(A_i, A'_i;\F) \=\, R\Gamma(\{i\}\times \DD, P_i; \wt \F), \quad i\in \{0,1\}\subset I.
\]
Consider the diagram of restrictions
\[
R\Gamma(\{1\}\times \DD, P_1; \wt \F)\buildrel \rho_1\over\lla R\Gamma(I\times \DD, P; \wt \F) 
\buildrel \rho_0\over\lra R\Gamma(\{0\}\times \DD, P_0; \wt \F). 
\]
We claim that $\rho_1$ is a quasi-isomorphism (and therefore, by purity, 
 it reduces to an isomorphism of objects of $\A$). 
Indeed, denote
 \[
 P^+ = P\cup \wt N^+\subset I\times \DD, \,\, \DD^- = \{1\}\times \DD \cup \wt N^- \quad \subset
 \quad  I\times\DD.
 \]
  Because of the condition (2b)
  of Definition \ref{def:pant-cobordism} and the entry-exit condition
 \ref{condition:ent-ex}  of Definition \ref{defi:undirected}, 
  the inclusion of the slice $P_1\subset P^+$
  is a homotopy equivalence, and the inclusion $\{1\}\times\DD\hra \DD^-$ is a homotopy equivalence
  as well. This means that each of the two restriction morphisms
  \[
  R\Gamma(I\times\DD,P; \F) \lra R\Gamma(I\times\DD,P'; \F) \lra R\Gamma(\{1\}\times\DD,P_1; \F) 
  \]
  whose composition is $\rho_1$, is a quasi-isomorphism. 
  
  We now define  the value of the functor $h_\F$ on $f$, i.e., the
   morphism $h_\F(f): H^0(A_1, A'_1;\F)\to H^0(A_0,A'_0; \F)$ to be given by 
   $\rho_2\rho_1^{-1}$.
  The necessary verifications are left to the reader. \qed
  
  \begin{rem}
  In a similar way, utilizing the $\infty$-category of spans, one can show that the association 
  $(A,A')\mapsto R\Gamma(A,A'; \F)$ extends to an $\oo$-functor from $\Lambda(X,N)$ to $\D_\oo(\A)$, the
  $\oo$-categorical enhancement of the derived category of $\A$, see \S \ref{par:dercat}. 
  
  \end{rem}

\begin{exa}\label{exa:paraexit} 
The categories $S(X,N)$ of standard disks and $B(X,N)$ of bounded
disks are equivalent to $\Entr(X,N)$ and $\Exit(X,N)$, the
categories of entrance and exit paths of the stratified space $(X,N)$
respectively. The first equivalence  has the form
\[
		\Entr(X,N) \to S(X,N),\; x \mapsto (A_x,\emptyset)
	\]
	where $A_x \subset X$ is a disk containing $x$ such that, $A_x \cap N =
	\emptyset$ if $x \notin N$.  The second equivalence is defined
	in the dual way. 	

\end{exa}

 \paragraph{The paracyclic duality.} 

Next, we describe an identification of $\Lambda(X,N)$ with its opposite category
$\Lambda(X,N)^\op$  which
will play an important role in interpreting the Verdier duality for perverse sheaves. 
We start with the following remarks. For a closed subset $Z$ of a topological space $Y$
we denote by $\mathring{Z}$ the interior of $Z$. The next two propositions are
then clear. 

\begin{prop}
(a) For a para-disk $(A,A')\subset X$ the pair $(A,A')^*:= (A, \del A\-( \mathring{A'}))$ 
is again a para-disk.

(b) 
Let $\sigma: I\times S^1\times I\times S^1$ be the involution $(t,\theta) \mapsto (1-t, \theta)$.
For a pant cobordism $P\subset I\times S^1$  the subset $P^* = \sigma (I\times S^1) \-\mathring{ P}$
is again a pant cobordism. \qed
\end{prop}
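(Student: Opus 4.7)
Both assertions follow by direct verification, so I outline the plan for each.

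For (a), I would proceed case by case on the three forms permitted for $A'$ in Definition \ref{def:paradisk}. If $A' = \emptyset$, then $\mathring{A'} = \emptyset$ and $(A,A')^* = (A,\partial A)$ is a bounded disk. Dually, if $A' = \partial A$, the interior (taken inside the circle $\partial A$) is $\mathring{A'} = \partial A$, so $(A,A')^* = (A,\emptyset)$ is a standard disk. In the remaining case, where $A'$ is a finite nonempty disjoint union of closed arcs distinct from $\partial A$, the set $\partial A \setminus \mathring{A'}$ is the disjoint union of the closures of the complementary open arcs, which is again a finite nonempty disjoint union of closed arcs distinct from $\partial A$. In every case $A$ itself is unchanged, so the condition $|A \cap N| \le 1$ is preserved.

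For (b), I would set $Q := (I \times S^1) \setminus \mathring{P}$, so that $P^* = \sigma(Q)$. The slices satisfy $(P^*)_t = S^1 \setminus \mathring{P_{1-t}}$, and by the dichotomy in part (a) both $(P^*)_0$ and $(P^*)_1$ fall into one of the three allowed forms, verifying condition (2a). Moreover, $P^*$ is a compact $2$-dimensional topological submanifold with boundary of $I \times S^1$, since the closure of the complement of a codimension-$0$ submanifold of a surface is again such, and $\sigma$ is a self-homeomorphism of $I \times S^1$.

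The main step is condition (2b), which via $\sigma$ amounts to showing that $Q_0 \hookrightarrow Q$ is a homotopy equivalence. My plan is a time-reversal duality argument on the height function $t \colon I \times S^1 \to I$: the hypothesis that $P_1 \hookrightarrow P$ is a homotopy equivalence implies that $P$ deformation retracts onto $P_1$ by downward gradient flow along a small Morse perturbation of $t|_P$, which forbids any critical point of ``local minimum type'' not already lying on $P_0$. Complementation inside the surface $I \times S^1$ exchanges the two index-$1$ behaviors of $t$ relative to the submanifolds $P$ and $Q$, so the analogous Morse function on $Q$ has no ``local maximum'' critical points off $Q_0$, and hence $Q$ deformation retracts onto $Q_0$ by upward flow, yielding the desired homotopy equivalence. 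The main obstacle will be making this ``complement-and-time-reversal'' duality precise; I would handle it by induction on the number of connected components of the internal boundary $\partial P \setminus (\{0,1\} \times S^1)$, reducing to a finite list of elementary local models (a single birth, a single merge, or a trivial product piece) where the retraction can be written down explicitly.
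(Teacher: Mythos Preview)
The paper records this proposition with a bare \qed and gives no argument, so there is nothing to compare your proof against; you are simply supplying the omitted verification.  Your treatment of (a) is correct and complete.

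For (b), your plan is sound and reaches the right conclusion, but the Morse bookkeeping is stated with the orientations reversed.  A deformation retraction of $P$ onto $P_1$ pushes points \emph{toward} $t=1$, so it is the \emph{upward} flow; what it forbids is internal local \emph{maxima} (deaths of $P_t$-components) and upward splits, not local minima off $P_0$.  Once this is fixed, the duality you want is exactly the one you describe: the allowed elementary moves for $P$ as $t$ increases are births and merges, which under complementation become splits and deaths for $Q$ as $t$ increases, i.e.\ merges and births for $Q$ as $t$ \emph{decreases}.  That is precisely the condition ensuring $Q_0 \hookrightarrow Q$ is a homotopy equivalence.  Your induction on elementary pieces then goes through.

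A slightly quicker route, avoiding Morse theory altogether, is to observe that the hypothesis $P_1 \hookrightarrow P$ a homotopy equivalence forces (since $P$ is planar) each component of $P$ to be a disk meeting $\{1\}\times S^1$ in a single arc, except for the degenerate case $P = [a,1]\times S^1$.  In either case one can write down the complement $Q$ explicitly and see by inspection that it deformation retracts onto $Q_0$.  This is presumably why the authors felt a \qed sufficed.
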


\begin{prop}\label{prop:Lambda'}
 Let $i: \Lambda(X,N)' \subset \Lambda(X,N)$ denote the full subcategory
	consisting of those Milnor disks $(A,A')$ such $\partial A \cap N = \emptyset$. Then the
	inclusion $i$ is an equivalence of categories. \qed
\end{prop}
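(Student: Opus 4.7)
Since $i$ is the inclusion of a full subcategory, it is automatically fully faithful, so the entire content of the statement is essential surjectivity: every para-disk $(A,A')$ in $\Lambda(X,N)$ must be isomorphic to some para-disk $(B,B')$ with $\partial B \cap N = \emptyset$. The plan is to exhibit such an isomorphism by a localized perturbation of $\partial A$ near each point of $\partial A \cap N$.

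The set $\partial A \cap N$ is finite (since $N$ is). For each $x \in \partial A \cap N$, pick a small disk neighborhood $U_x \subset X$ of $x$ disjoint from all other special points and from $U_y$ for $y \neq x$ in $\partial A \cap N$. Inside $U_x$, perform a compactly supported isotopy of $X$ that pushes $\partial A$ off of $x$, with direction dictated by the location of $x$:
\begin{itemize}
\item[(i)] if $x \in A'$, push the boundary inward so that $x$ ends up outside the new disk; then $x$ exits through $A'$, which is permitted by condition \ref{condition:ent-ex};
\item[(ii)] if $x \in \partial A \setminus A'$, push the boundary outward so that $x$ ends up in the interior; then $x$ enters through $\partial A \setminus A'$, again permitted.
\end{itemize}
Combining these local isotopies (with disjoint supports) produces an ambient isotopy $H: I \times \DD \to X$ carrying $(A,A')$ to a para-disk $(B,B')$ with $\partial B \cap N = \emptyset$. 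Pairing $H$ with the trivial pant cobordism $P = I \times P_0$ (which tautologically satisfies condition (2b) of Definition \ref{def:pant-cobordism}) defines a morphism $f: (A,A') \to (B,B')$ in $\Lambda(X,N)$.

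Reversing the isotopy (via $t \mapsto 1-t$) yields a morphism $g: (B,B') \to (A,A')$; the relevant entry-exit behaviour is symmetric under time reversal precisely because the perturbations in cases (i) and (ii) interchange under reversal. The concatenation $g \circ f$ (resp.\ $f \circ g$) is then represented by an isotopy that is homotopic rel endpoints to the constant one inside the contractible neighborhoods $U_x$, paired with a trivial pant cobordism, so they define identity morphisms in $\Lambda(X,N)$.

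The only point requiring any care is the verification that the direction of the perturbation in (i) and (ii) is forced by the entry-exit condition \ref{condition:ent-ex}, and hence that the two cases do not conflict when several points of $\partial A \cap N$ are present simultaneously; this is resolved automatically by the disjointness of the supports $U_x$. All other verifications are routine applications of standard isotopy extension.
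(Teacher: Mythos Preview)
Your argument is correct and supplies exactly the details the paper leaves out: in the paper the proposition is simply asserted to be ``clear'' and closed with a \qed, with no proof given. Your perturbation argument is the natural one and is implicitly what the authors have in mind.

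Two small remarks. First, since a para-disk satisfies $|A \cap N| \le 1$ by definition, the set $\partial A \cap N$ contains at most one point, so your treatment of ``several points with disjoint supports'' is more general than needed (but harmless). Second, you should also note the boundary case $x \in \partial A'$ (an endpoint of an arc of $A'$): since $A'$ is closed, such an $x$ lies in $A'$ and not in $\partial A \setminus A'$, so case (i) applies and the analysis goes through unchanged.
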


\begin{prop}\label{rem:selfdual} 
 We have  a  perfect duality (which we call the {\em paracyclic duality})
 \[
		\xi: \Lambda(X,N) \overset{\simeq}{\lra}  \Lambda(X,N)^\op
	\]
defined on objects by the association $(A,A')\mapsto (A,A')^*$. \qed
\end{prop}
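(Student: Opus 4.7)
The plan is to construct $\xi$ explicitly and verify in turn that it is well-defined, functorial, and an equivalence, leveraging the two preceding propositions that do most of the object-level work. On objects, $\xi$ is given by $(A,A') \mapsto (A,A')^{*} = (A, \overline{\partial A \setminus \mathring{A'}})$, which is a para-disk by part (a) of the immediately preceding proposition. On morphisms, given a representative $(H,P)$ of $f: (A_0,A'_0) \to (A_1,A'_1)$, I would set $\xi(f)$ to be the arrow $(A_1,A'_1)^{*} \to (A_0,A'_0)^{*}$ in $\Lambda(X,N)$ represented by $(H \circ \sigma, P^{*})$, where $\sigma: I \times \DD \to I \times \DD$ is the time reversal $(t,z) \mapsto (1-t,z)$ and $P^{*} = \sigma(I \times S^1) \setminus \mathring P$, which is a pant cobordism by part (b) of the same proposition.

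The next step is to verify that $(H \circ \sigma, P^{*})$ satisfies all four conditions of Definition \ref{defi:undirected}. Condition \ref{condition:isotopy} is immediate since pre-composition with a homeomorphism preserves the property of being an isotopy. Condition \ref{condition:para} follows from the slice identities $P^{*}_0 = \overline{S^1 \setminus P_1}$ and $P^{*}_1 = \overline{S^1 \setminus P_0}$, which match the required identifications $(A_1, A'^{*}_1)$ and $(A_0, A'^{*}_0)$. The constraint $|H_t(\DD) \cap N| \le 1$ is insensitive to time reversal. The main obstacle is condition \ref{condition:ent-ex}, the entry-exit condition: after time reversal, the original statement (a special point lying on $A'$ at time $t_0$ cannot remain in the disk while leaving a neighborhood of $P_{t_0}$ just after $t_0$) must be read as the symmetric \emph{entering} statement for the dual data (a special point lying on the new $A'^{*}$ at time $1-t_0$ cannot have just arrived from inside the disk away from a neighborhood of $\overline{S^1 \setminus P_{t_0}}$). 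These two formulations are equivalent because the trajectories $H^{-1}(N) \subset I \times \DD$ are properly embedded $1$-submanifolds transverse to the slices, so the behavior ``just after $t_0$'' in one direction of time is exactly the behavior ``just before $1-t_0$'' in the other; the intuitive requirement (PD2) is manifestly symmetric under interchange of $A'$ and its complement combined with reversal of time.

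Well-definedness on equivalence classes is then routine: a witnessing pair $(\varphi, \alpha)$ identifying two representatives $(H,P) \sim (H',P')$ transports under conjugation by $\sigma$ (applied to the $I$ factor) to a witness for $(H \circ \sigma, P^{*}) \sim (H' \circ \sigma, P'^{*})$. Functoriality is the observation that for a composable pair, the concatenation of pant cobordisms along a common slice goes, under complementation and time reversal, to the concatenation in the opposite order, yielding $\xi(g \circ f) = \xi(f) \circ \xi(g)$ in $\Lambda(X,N)^{\op}$, and $\xi$ evidently preserves identities.

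Finally, to see that $\xi$ is an equivalence I would exhibit a canonical natural isomorphism $\xi \circ \xi \simeq \id$. On objects, $A'^{**}$ reduces to $A'$ by elementary manipulation of closures and interiors of compact $1$-submanifolds of $S^1$ (the operation $A' \mapsto \overline{\partial A \setminus \mathring{A'}}$ is an involution on the set of such submanifolds). The same manipulation applied slice-wise gives $P^{**} = P$ inside $I \times S^1$, and $\sigma^2 = \id$, so the morphism-level map satisfies $\xi^2 = \id$. This produces a perfect duality $\xi: \Lambda(X,N) \to \Lambda(X,N)^{\op}$.
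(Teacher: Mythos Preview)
Your overall structure matches the paper's: define $\xi$ on objects via $(A,A')\mapsto(A,A')^{*}$, on morphisms via $(H,P)\mapsto(H\circ\sigma,P^{*})$, and check $\xi^{2}=\id$. The gap is in your verification of condition~\ref{condition:ent-ex} for the dual data. You assert that the original condition and its time-reversed, complemented counterpart are equivalent because the trajectories $H^{-1}(N)$ are ``properly embedded $1$-submanifolds transverse to the slices,'' and because the intuitive requirement (PD2) is manifestly symmetric. But the formal condition~\ref{condition:ent-ex} in Definition~\ref{defi:undirected} only encodes the \emph{exit} half of (PD2): it constrains what happens when $x\in H_{t_0}(P_{t_0})$ and says nothing directly when $x$ lies on the complementary arcs. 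Under time-reversal and complementation, condition~\ref{condition:ent-ex} for $(H,P)$ transforms into an \emph{entry} statement (about $x$ on $\overline{S^1\setminus P_{t_0}}$ at earlier times), which is not the same statement as condition~\ref{condition:ent-ex} for $(H\circ\sigma,P^{*})$. The two are not equivalent for an arbitrary representative; for instance, a trajectory that lingers on $\partial P$ can satisfy one and not the other. Your transversality hypothesis is precisely what needs to be arranged, not assumed.

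The paper closes this gap in two moves you omit. First, it invokes Proposition~\ref{prop:Lambda'} to replace $\Lambda(X,N)$ by the equivalent full subcategory $\Lambda(X,N)'$ of disks with $\partial A\cap N=\emptyset$, eliminating boundary pathologies at $t=0,1$. Second, and crucially, it notes that within the equivalence class of a morphism one may choose a representative $(H,P)$ for which special points enter through $I\times S^{1}\setminus P$ and exit through $\mathring{P}$, i.e.\ the trajectories avoid $\partial P$ altogether. With such a normalized representative both condition~\ref{condition:ent-ex} and its dual are visibly satisfied, and your remaining checks (well-definedness, functoriality, $\xi^{2}=\id$) go through. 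So your argument becomes correct once you insert this normalization step; without it, the verification of condition~\ref{condition:ent-ex} does not stand.
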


\noindent{\sl Proof:} Using Proposition \ref{prop:Lambda'}, it suffices to define a duality on
the equivalent subcategories
		$\xi': \Lambda(X,N)' \overset{\simeq}{\lra}  \Lambda(X,N)'^{\op}$, which is given on objects
		by the desired formula $(A,A')\mapsto (A,A')^*$.

	To do this, suppose we have  a morphism $f$ represented by $(H,P)$, note that we may assume, replacing $(H,P)$ by
	an equivalent representative if needed, that special points enter
	in $I \times S^1 \setminus P$ and exit in $\mathring{P}$. Then we define $\xi(f)$ to be
	represented by $(H(1-t,-), P^*)$. It is straightforward to
	verify that this association yields a well-defined functor squaring to the identity,
	i.e., giving a perfect duality.  \qed

 \vskip .2cm

	Note, that the paracyclic duality $\xi$ interchanges the subcategories $S(X,N)$ and
	$B(X,N)$, identifying them as opposite to one another, and  restricts to a self-duality of
	$M(X,N)$.

\subsection{The directed paracyclic category and its localization} 

Let $(X,N)$ be as before. In this section we  exhibit  $\Lambda(X,N)$  as a localization of another
category $\Lambdad(X,N)$  which we call the directed paracyclic category. This latter category turns
out to be more suitable for the use of Kan extensions. 

\begin{defi}\label{defi:directed}
We define the {\em directed paracyclic category} $\Lambdad(X,N)$ exactly as in Definition
\ref{defi:directed} but replacing condition \ref{condition:ent-ex} by the following:
\begin{enumerate}

\item[(Ent)] \label{condition:entrance} For every $x \in N$, we have
				\begin{enumerate}[label=(Ent\arabic*)]
					\item if $x \in A_{t_0} =  H_{t_0}(\DD)$ for $t_0 \in I$ then, for all $t \ge
						t_0$, we have $x \in A_t$,
					\item if $x \in A'_{t_0}= H_{t_0}(P_{t_0})$ for $t_0 \in I$ then, for all $t \ge
						t_0$, we have $x \in A'_t$.
				\end{enumerate}
	
\end{enumerate}

A morphism $f: (A,A') \to (B,B')$ in $\Lambdad(X,N)$ is called a {\em weak equivalence} if either
\begin{enumerate}[label=(\roman*)]
	\item $f$ is an isomorphism, or
	\item\label{item-weak} $f$ can be represented by a pair $(P,H)$ such that $H_0^{-1}(A')
		\subset P$ is a homotopy equivalence and $H^{-1}(N) \subset P$.
\end{enumerate}
We denote $W \subset \Mor(\Lambdad(X,N))$ the set of weak equivalences. 
\end{defi}

\begin{rems}\label{rem:Ent1-2}
(a) 
The condition (Ent) is a $2$-step version of the entrance path condition: if a special point $x$ enters $A_{t_0}$,
then it stays in all the $A_t$ for all $t\geq t_0$, and similarly for $A'_{t_0}$. 

(b) The condition \ref{item-weak} in the definition of a weak equivalence means that a special point $x$
is allowed to enter $A'_{t_0}\subset A_{t_0}$ from the outside of $A_{t_0}$ and stay there for all $t\geq t_0$. 
\end{rems}

We also denote by $\Sd(X,N), \Bd(X,N), \Md(X,N) \subset \Lambdad(X,N)$ the full subcategories of standard disks, bounded disks, and   Milnor disks respectively.

\begin{prop}\label{prop:localization} The natural morphism
	\[
		\pi: \Lambdad(X,N) \lra \Lambda(X,N) 
	\]
	exhibits $\Lambda(X,N)$ as a localization of $\Lambdad(X,N)$ along $W$.
	\end{prop}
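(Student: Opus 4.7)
My plan is to verify the universal property of the localization in two steps: first, that $\pi$ sends $W$ to isomorphisms, and second, that any functor $F \colon \Lambdad(X,N) \to \C$ with $F(W) \subset \on{iso}(\C)$ admits a unique extension $\wt F \colon \Lambda(X,N) \to \C$ satisfying $\wt F \circ \pi = F$.

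For the first step, given a weak equivalence $w \colon (A,A') \to (B,B')$ represented by $(H,P)$ with $H^{-1}(N) \subset P$, the time-reversed data $(H', P')$ defined by $H'_t := H_{1-t}$ and $P' := \sigma(P)$ will give a morphism in $\Lambda(X,N)$ from $(B,B')$ to $(A,A')$. The entrance-exit condition \ref{condition:ent-ex} will be vacuous because every trajectory of a special point lies on $P \subset I \times S^1$, hence on the boundary, so no special point ever enters the interior of the disk. A standard isotopy-contraction argument (concatenate the isotopy with its reverse and contract to the constant isotopy via the pant-cobordism structure of $P$) then shows that the compositions with $w$ in either order are the identity in $\Lambda(X,N)$.

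For the second step, the key geometric construction is to express every morphism in $\Lambda(X,N)$ as a zigzag in $\Lambdad(X,N)$ with backwards arrows in $W$. Given $f \colon (A_0, A_0') \to (A_1, A_1')$ represented by $(H,P)$, I plan to pass, if necessary, to an equivalent representative in which the exit events of special points occur transversely at finitely many times $0 < t_1 < \cdots < t_k < 1$, and to subdivide $[0,1]$ into alternating intervals:
\begin{itemize}
    \item on the intervals $[t_i + \varepsilon, t_{i+1} - \varepsilon]$ the restricted $(H,P)$ satisfies (Ent) and defines a morphism $g_i$ in $\Lambdad(X,N)$;
    \item on the intervals $[t_i - \varepsilon, t_i + \varepsilon]$ around an exit event, the time-reversed restricted data satisfies $H^{-1}(N) \subset P$ (the special point stays on $A'$ throughout a small enough exit window) and defines a weak equivalence $w_i \in W$.
\end{itemize}
This exhibits $f$ as the composition $\pi(g_k) \circ \pi(w_k)^{-1} \circ \cdots \circ \pi(w_1)^{-1} \circ \pi(g_0)$ in $\Lambda(X,N)$, and I would set $\wt F(f)$ to be the corresponding composition in $\C$ using $F(w_i)^{-1}$ for the weak-equivalence pieces.

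The main obstacle will be verifying well-definedness of $\wt F$. Independence of the choice of subdivision follows easily, as refining a $\Lambdad$-piece by inserting further cuts corresponds to a composition of $\Lambdad$-morphisms. Independence of the representative $(H,P)$ is more delicate: the homotopies $\alpha$ between representatives from Definition \ref{defi:undirected} need not preserve any given subdivision, so one must argue by generic transversality that $\alpha$ can be perturbed so the induced partition varies piecewise continuously in the homotopy parameter, with the codimension-one transitions corresponding either to equivalences within $\Lambdad(X,N)$ or to replacements of a weak equivalence by an equivalent one (which $F$ identifies). Once this is in place, functoriality of $\wt F$ and its uniqueness relative to $F$ follow directly from the zigzag representation.
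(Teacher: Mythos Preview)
Your approach is essentially the paper's: show $\pi$ inverts $W$, factor an arbitrary morphism in $\Lambda(X,N)$ as a zigzag by subdividing at exit events, then check well-definedness. Two points deserve correction or sharpening.

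\textbf{The subdivision window is wrong.} You take the ``weak-equivalence piece'' to be $[t_i-\varepsilon,\,t_i+\varepsilon]$ and assert that on this interval $H^{-1}(N)\subset P$. That is false in general: just \emph{before} the exit time $t_i$ the special point sits in the interior of $A_t$, so its trajectory lies in $I\times\mathring{\DD}$, not in $P\subset I\times S^1$. The paper instead cuts at $[t_i,\,t_i']$ with $t_i'>t_i$ close: at $t_i$ the point is on $A'_{t_i}$, and for $t\in(t_i,t_i']$ it is outside the disk, so the time-reversed restriction genuinely has $H^{-1}(N)\subset P$ and is a weak equivalence. On the complementary intervals the point is either outside or in the interior heading toward $A'$, and one checks (Ent1), (Ent2) hold there. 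This is a minor fix, but as written your window does not produce an element of $W$.

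\textbf{Well-definedness is where the real work is.} You correctly flag this as the main obstacle but leave it at ``generic transversality'' and ``codimension-one transitions''. The paper makes this precise: after putting the homotopy $\alpha$ in generic position, any two representatives differ by a sequence of two explicit local moves, (M$'$1) deforming $H$ so that a special point locally avoids $A'$ rather than bouncing off it, and (M$'$2) deforming $P$ so that two arcs merge at an exit moment. These are then matched, respectively, with the Gabriel--Zisman move (M1) (a factorization $w_i g=f_i$) and with a hammock diagram whose equivalence follows from the elementary moves (M1), (M2). Your sketch would benefit from naming at least these two local bifurcations and saying which calculus-of-fractions identity absorbs each one; without that, ``the induced partition varies piecewise continuously'' does not by itself yield equality in $\Lambdad(X,N)[W^{-1}]$.
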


Here by ``localization" we mean  $\Lambdad(X,N)[W^{-1}]$, the Gabriel-Zisman localization  in the
sense of ordinary categories \cite{gabriel-zisman}.  In fact, one can prove  stronger statements,
identifying $\Lambda(X,N)$ with the $\oo$-categorical localization or with the Dwyer-Kan simplicial
localization \cite{dwyerkan} of $\Lambdad(X,N)$ with respect to  $W$. This can be done by adapting
our proof below by  using a hammock-type model for the Dwyer-Kan  localization. We will not need
this generalization for our purposes except for a very particular case in  Lemma \ref{lem:bloc}
below, which is easily proved directly.

	\begin{proof}
	 Recall that in $\Lambda(X,N)$ a special point $x$ is allowed to exit $A_{t_0}$
	through $A'_{t_0}$. This process is inverse to entering $A_{t_0}$
	through $A'_{t_0}$  from the outside which is, according to Remark \ref{rem:Ent1-2}(b),
	a general form of a weak equivalence (apart from an isomorphism). 
	Indeed, the composite process (entering $A_{t_0}$ through $A'_{t_0}$ from the outside and
	then bouncing back to the original position) is connected to the identity 
	by a homotopy $\alpha$ as in Definition \ref{defi:undirected}.

	Therefore
	  the functor $\pi$ inverts weak equivalences and we obtain an induced functor 
	  $\overline{\pi}:
	\Lambdad(X,N)[W^{-1}] \to \Lambda(X,N)$. 
	We claim that $\ol\pi$ is an equivalence. 
	To this end, we  study a typical Hom-set
	\be\label{eq:Hom-lambdad}
	\Hom_{\Lambdad(X,N)[W^{-1}]}(A,A'), (C,C')). 
	\ee
  By definition, cf.  \cite{gabriel-zisman} \S I.1,  an  element 
  of this set is an equivalence class of zig-zags  
   \begin{equation}\label{eq:rep}
	(A, A')=	  (A_1, A'_1) \buildrel {w_1}\over\leftarrow  (B_1, B'_1)\buildrel {f_1}\over\to  (A_2, A'_2) 
		 \buildrel {w_2}\over\leftarrow  \cdots \buildrel {f_{n-1}} \over\to 
		 (A_n, A'_n) = (C,C')
		 \end{equation}
	of arbitrary length, with $w_i\in W$. The equivalence relation on the set of such zig-zags
	is generated by two elementary moves:
	\begin{itemize}
	\item[(M1)] For any factorization
	\[
	\xymatrix{
	(A_i, A'_i) & \ar[l]_{w_i} (B_i, B'_i) \ar[r]^{\hskip -0.5cm f_{i+1}}& (A_{i+1}, A'_{i+1})
	\\
	(B_{i-1}, B'_{i-1})\ar[u]^{f_i} \ar[ur]_{g}&&
	}
	\]
	we can replace the fragment $\buildrel f_i\over\to\buildrel w_i\over\leftarrow \buildrel f_{i+1}\over\to$
	with $\buildrel f_{i+1}g\over\to$. 
	
	\item[(M2)] For any factorization
	\[
	\xymatrix{
	(B_{i-1}, B'_{i-1}) \ar[r]^{\hskip 0.5cm f_{i-1}}& (A_i, A'_i)\ar[dr]_{h}
	 & \ar[l]_{w_i} (B_i, B'_i) 
	\ar[d]^{f_i} 
	\\
	&& (A_{i+1}, A'_{i_1})
	}
	\]
	we can replace the fragment $\buildrel f_{i-1}\over\to\buildrel w_i\over\leftarrow \buildrel f_i\over\to$
	with $\buildrel hf_{i-1}\over\to$. 
	\end{itemize}
	These two moves imply the {\em hammock move}, which 
	is at the basis of
	  Dwyer-Kan localization theory  \cite{dwyerkan} (except that we don't assume that the
	  vertical morphisms are weak equvialences):
	\begin{itemize}
	\item[(H)] Any two zig-zags  connected by a {\em hammock}, 
	 i.e., by a commutative diagram 	\[
	\xymatrix@C=1em@R=1em{
	& (B_1, B_1') \ar[dl]_{w_1} \ar[dd]
	 \ar[r]^{f_1}& (A_2, A'_2) \ar[dd] & \ar[l]_{w_2} (B_2, B'_2)\ar[dd]
	\ar[r]^{\hskip 0.5cm f_2}& \cdots & \ar[l]_{\hskip -0.8cm w_{n-1}}
	(B_{n-1}, B'_{n-1}) \ar[dr]^{f_n} \ar[dd] &
	\\
	(A,A') &&&&&&(C,C')
	\\
	& (\wt B_1, \wt B_1') \ar[ul]^{\wt w_1}
	 \ar[r]^{\wt f_1}& (\wt A_2,\wt A'_2)& \ar[l]_{\wt w_2} (\wt B_2, \wt B'_2)
	\ar[r]^{\hskip  0.5cm \wt f_2}& \cdots & \ar[l]_{\hskip -0.8cm \wt w_{n-1}}
	(\wt B_{n-1}, \wt B'_{n-1}) \ar[ur]_{\wt f_n}&
	}
	\]
   	are equivalent.
	\end{itemize}

	We now compare this with $\Hom_{\Lambda(X,N)}((A,A'), (C,C'))$. 
	An element  $f$ of this latter set is an equivalence class of pairs $(P,H)$
	as in Definition \ref{defi:undirected}. As usual, we write $A_t= H_t(\DD)$,
	$A'_t=H_t(P_t)$. Without loss of generality, we can assume that:
	
	\begin{itemize} 
	\item $P$ is smooth as a manifold with corners, i.e.,  
	 the part of $\del P$ lying over the open interval $(0,1)\subset I$
	is smooth.
	
	\item
	 The projection of this part of $\del P$ to $(0,1)$ is a Morse function. 
	This implies that for all but finitely many   values of $t$
	(which we call {\em critical values})
	the slice $P_t$ has one of the three forms listed in Definition \ref{def:paradisk}
	and therefore $(A_t, A'_t)$ is a para-disk. 
	
	\item The moments  $t_1 <\cdots < t_n$, $t_i\in I$,  of exit of special points
	 $x\in N$ out of
	$A_t$ (happening through $A'_t$) are non-critical.   
	\end{itemize}
	
	Let $t'_i >t_i$, $i=1,\cdots, n$,  be sufficently close.
	 As explained in the beginning of the proof,
	the restriction of $(P,H)$ to the preimage of each interval $[t_i, t'_i]$
	can be seen as an inverse of a weak equivalence in $\Lambdad(X,N)$. 
	while the restriction to each interval in the complement of the union of
	the $[t_i, t'_i]$, is a morphism in $\Lambdad(X,N)$. Therefore we
	can associate to $(H,P)$ a zig-zag  \eqref {eq:rep}. 
	
	We claim that different choices of $(H,P)$ representing the same morphism
	$f$, give rise to equivalent zig-zags. Any two such different choices are, by 
	Definition \ref{defi:undirected}, related by a reparemetrization
	$\varphi: I\times \DD \to I \times \DD$ and a homotopy $\alpha: I^2 \times \DD \to X$. By choosing $\alpha$ generic enough, we see that   any two choices are
	connected by a sequence of the following moves and their inverses:
	\begin{enumerate} 
		\item[(M$'$1)]  replacing a representative $(P,H)$ with a  representative
		 $(P, \wt H)$ which, locally around $t \in I$,   avoids the special point contained in $A_t'$: 
		 \begin{equation}\label{eq:m1}
			 \begin{tikzcd}
	\def\svgwidth{5cm}
	\raisebox{-.5\height}{
\begingroup%
  \makeatletter%
  \providecommand\color[2][]{%
    \errmessage{(Inkscape) Color is used for the text in Inkscape, but the package 'color.sty' is not loaded}%
    \renewcommand\color[2][]{}%
  }%
  \providecommand\transparent[1]{%
    \errmessage{(Inkscape) Transparency is used (non-zero) for the text in Inkscape, but the package 'transparent.sty' is not loaded}%
    \renewcommand\transparent[1]{}%
  }%
  \providecommand\rotatebox[2]{#2}%
  \newcommand*\fsize{\dimexpr\f@size pt\relax}%
  \newcommand*\lineheight[1]{\fontsize{\fsize}{#1\fsize}\selectfont}%
  \ifx\svgwidth\undefined%
    \setlength{\unitlength}{279.41500824bp}%
    \ifx\svgscale\undefined%
      \relax%
    \else%
      \setlength{\unitlength}{\unitlength * \real{\svgscale}}%
    \fi%
  \else%
    \setlength{\unitlength}{\svgwidth}%
  \fi%
  \global\let\svgwidth\undefined%
  \global\let\svgscale\undefined%
  \makeatother%
  \begin{picture}(1,0.70409073)%
    \lineheight{1}%
    \setlength\tabcolsep{0pt}%
    \put(0,0){\includegraphics[width=\unitlength,page=1]{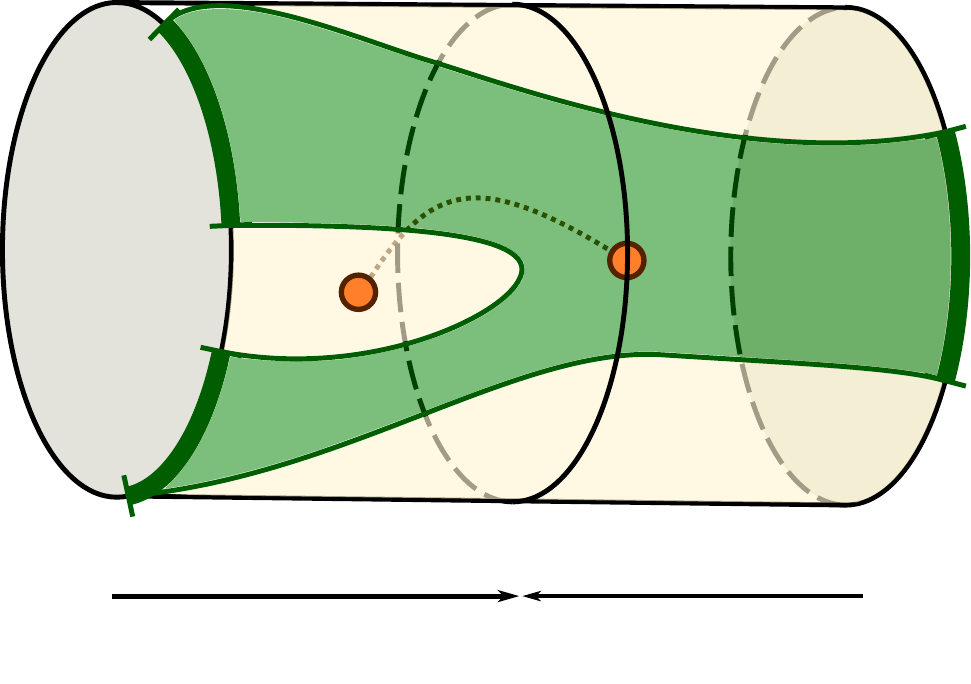}}%
    \put(0.30484604,0.02532146){\color[rgb]{0,0,0}\makebox(0,0)[lt]{\lineheight{1.25}\smash{\begin{tabular}[t]{l}$f_i$\end{tabular}}}}%
    \put(0.68063105,0.02723869){\color[rgb]{0,0,0}\makebox(0,0)[lt]{\lineheight{1.25}\smash{\begin{tabular}[t]{l}$w_i$\end{tabular}}}}%
  \end{picture}%
\endgroup%
} & \leadsto & 
	\def\svgwidth{5cm}
	\raisebox{-.5\height}{
\begingroup%
  \makeatletter%
  \providecommand\color[2][]{%
    \errmessage{(Inkscape) Color is used for the text in Inkscape, but the package 'color.sty' is not loaded}%
    \renewcommand\color[2][]{}%
  }%
  \providecommand\transparent[1]{%
    \errmessage{(Inkscape) Transparency is used (non-zero) for the text in Inkscape, but the package 'transparent.sty' is not loaded}%
    \renewcommand\transparent[1]{}%
  }%
  \providecommand\rotatebox[2]{#2}%
  \newcommand*\fsize{\dimexpr\f@size pt\relax}%
  \newcommand*\lineheight[1]{\fontsize{\fsize}{#1\fsize}\selectfont}%
  \ifx\svgwidth\undefined%
    \setlength{\unitlength}{279.41500824bp}%
    \ifx\svgscale\undefined%
      \relax%
    \else%
      \setlength{\unitlength}{\unitlength * \real{\svgscale}}%
    \fi%
  \else%
    \setlength{\unitlength}{\svgwidth}%
  \fi%
  \global\let\svgwidth\undefined%
  \global\let\svgscale\undefined%
  \makeatother%
  \begin{picture}(1,0.69502866)%
    \lineheight{1}%
    \setlength\tabcolsep{0pt}%
    \put(0,0){\includegraphics[width=\unitlength,page=1]{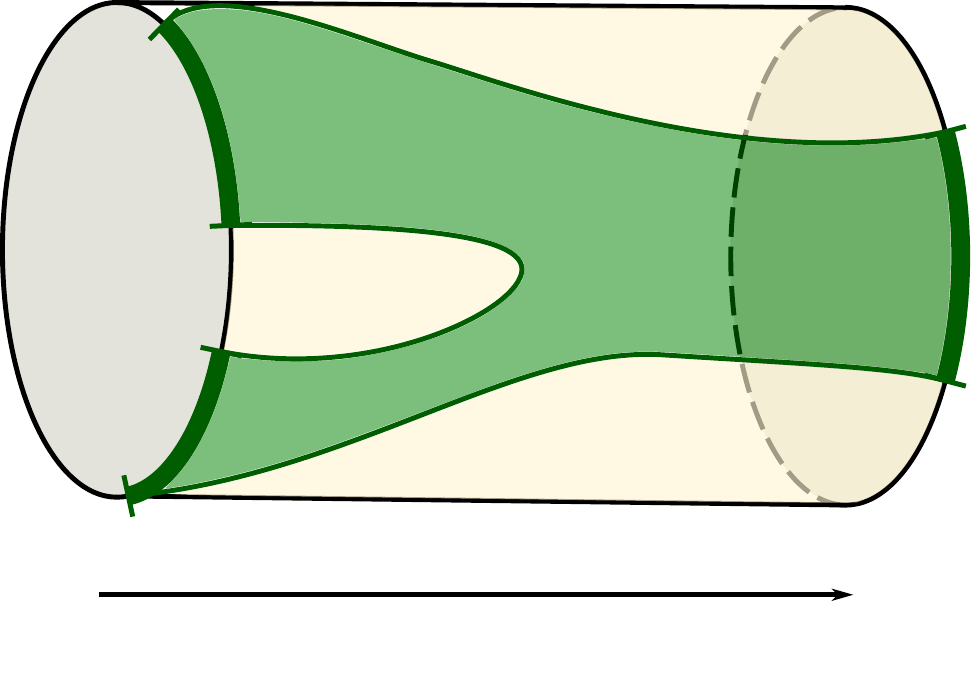}}%
    \put(0.47992317,0.02233321){\color[rgb]{0,0,0}\makebox(0,0)[lt]{\lineheight{1.25}\smash{\begin{tabular}[t]{l}$g$\end{tabular}}}}%
  \end{picture}%
\endgroup%
}
			\end{tikzcd}
		\end{equation}

%
%
%
%
%
%
%
%
 Denote by $\buildrel w_i\over\lla$ the slice of $(P,H)$ 
from the moment of exit of $x$ until shortly afterwards and by $\buildrel f_i\over\lra$ the slice from shortly before exit to the moment of exit,
see \eqref{eq:m1}. 
We see that we have three morphisms $g, w_i, f_i$  $\Lambdad(X,N)$
  and a factorization $w_i g=f_i$ in $\Lambdad(X,N)$ represented by an appropriate
  homotopy $\alpha$. Therefore the move (M$'$1) yields two zig-zags connected by the move (M1).

		\item[(M$'$2)] replacing a representative $(P,H)$ with a representative
		 $(\wt P, H)$
			where $\wt P$ is obtained by deforming $P$ in a suitable way
			locally around one of the  exit moments  $t_i$,  so that two intervals in $A'_{t_i}$
			are replaced by one: 
		 \begin{equation}\label{eq:m2}
			 \begin{tikzcd}
	\def\svgwidth{5cm}
	\raisebox{-.52\height}{
\begingroup%
  \makeatletter%
  \providecommand\color[2][]{%
    \errmessage{(Inkscape) Color is used for the text in Inkscape, but the package 'color.sty' is not loaded}%
    \renewcommand\color[2][]{}%
  }%
  \providecommand\transparent[1]{%
    \errmessage{(Inkscape) Transparency is used (non-zero) for the text in Inkscape, but the package 'transparent.sty' is not loaded}%
    \renewcommand\transparent[1]{}%
  }%
  \providecommand\rotatebox[2]{#2}%
  \newcommand*\fsize{\dimexpr\f@size pt\relax}%
  \newcommand*\lineheight[1]{\fontsize{\fsize}{#1\fsize}\selectfont}%
  \ifx\svgwidth\undefined%
    \setlength{\unitlength}{282.61110179bp}%
    \ifx\svgscale\undefined%
      \relax%
    \else%
      \setlength{\unitlength}{\unitlength * \real{\svgscale}}%
    \fi%
  \else%
    \setlength{\unitlength}{\svgwidth}%
  \fi%
  \global\let\svgwidth\undefined%
  \global\let\svgscale\undefined%
  \makeatother%
  \begin{picture}(1,0.68964043)%
    \lineheight{1}%
    \setlength\tabcolsep{0pt}%
    \put(0,0){\includegraphics[width=\unitlength,page=1]{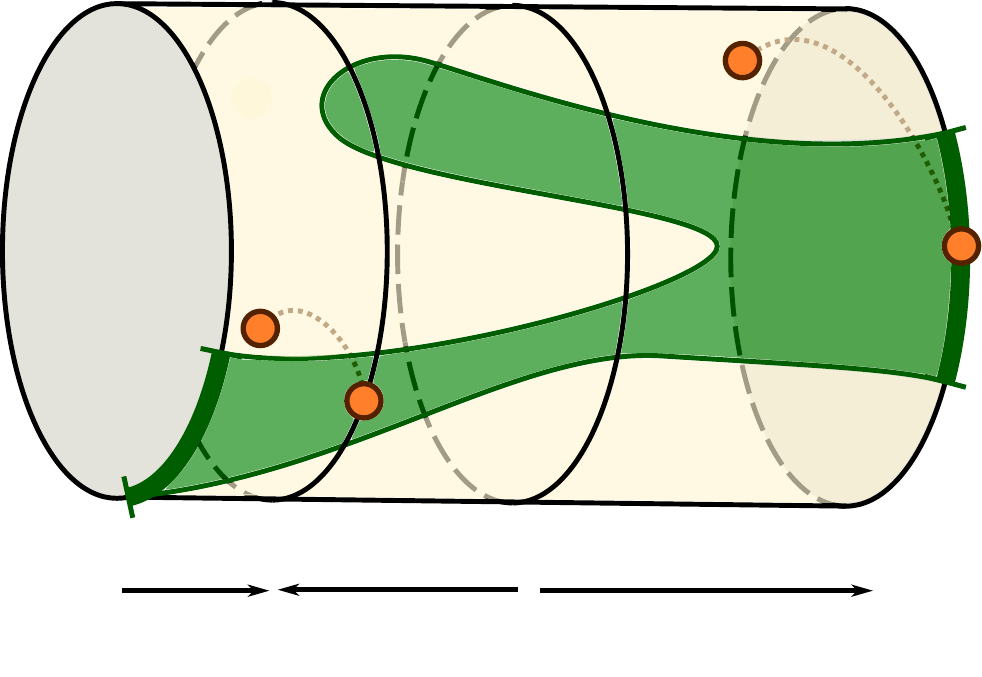}}%
    \put(0.18018443,0.0052336){\color[rgb]{0,0,0}\makebox(0,0)[lt]{\lineheight{1.25}\smash{\begin{tabular}[t]{l}$f_i$\end{tabular}}}}%
    \put(0.69294108,0.00333801){\color[rgb]{0,0,0}\makebox(0,0)[lt]{\lineheight{1.25}\smash{\begin{tabular}[t]{l}$f_{i+1}$\end{tabular}}}}%
    \put(0.39912485,0.00618143){\color[rgb]{0,0,0}\makebox(0,0)[lt]{\lineheight{1.25}\smash{\begin{tabular}[t]{l}$w_i$\end{tabular}}}}%
  \end{picture}%
\endgroup%
} & \leadsto & 
	\def\svgwidth{5cm}
	\raisebox{-.5\height}{
\begingroup%
  \makeatletter%
  \providecommand\color[2][]{%
    \errmessage{(Inkscape) Color is used for the text in Inkscape, but the package 'color.sty' is not loaded}%
    \renewcommand\color[2][]{}%
  }%
  \providecommand\transparent[1]{%
    \errmessage{(Inkscape) Transparency is used (non-zero) for the text in Inkscape, but the package 'transparent.sty' is not loaded}%
    \renewcommand\transparent[1]{}%
  }%
  \providecommand\rotatebox[2]{#2}%
  \newcommand*\fsize{\dimexpr\f@size pt\relax}%
  \newcommand*\lineheight[1]{\fontsize{\fsize}{#1\fsize}\selectfont}%
  \ifx\svgwidth\undefined%
    \setlength{\unitlength}{282.61110179bp}%
    \ifx\svgscale\undefined%
      \relax%
    \else%
      \setlength{\unitlength}{\unitlength * \real{\svgscale}}%
    \fi%
  \else%
    \setlength{\unitlength}{\svgwidth}%
  \fi%
  \global\let\svgwidth\undefined%
  \global\let\svgscale\undefined%
  \makeatother%
  \begin{picture}(1,0.69493538)%
    \lineheight{1}%
    \setlength\tabcolsep{0pt}%
    \put(0,0){\includegraphics[width=\unitlength,page=1]{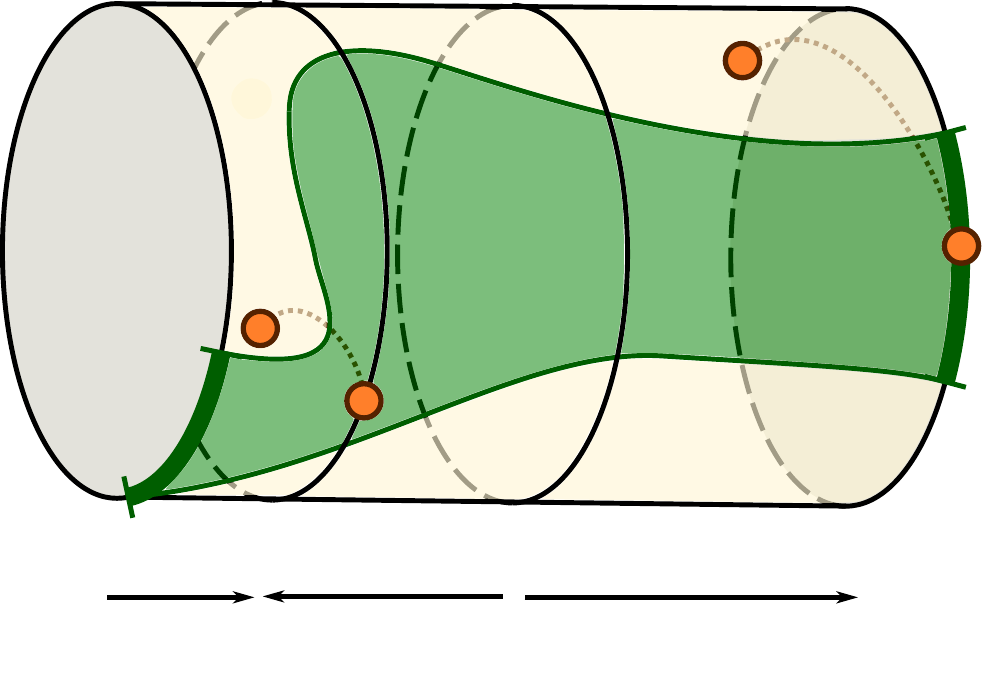}}%
    \put(0.38480428,0.00942904){\color[rgb]{0,0,0}\makebox(0,0)[lt]{\lineheight{1.25}\smash{\begin{tabular}[t]{l}$\tilde{w}_i$\end{tabular}}}}%
    \put(0.14615067,0.00333801){\color[rgb]{0,0,0}\makebox(0,0)[lt]{\lineheight{1.25}\smash{\begin{tabular}[t]{l}$\tilde{f_i}$\end{tabular}}}}%
    \put(0.67901311,0.00389327){\color[rgb]{0,0,0}\makebox(0,0)[lt]{\lineheight{1.25}\smash{\begin{tabular}[t]{l}$\tilde{f}_{i+1}$\end{tabular}}}}%
  \end{picture}%
\endgroup%
}
			\end{tikzcd}
		\end{equation}
	\end{enumerate}
	Making four slices of each the two cobordisms as in \eqref{eq:m2},
	we get two zig-zags connected by a hammock: 
		\[
	\xymatrix@R=1em{
	& (A_i, A'_i) \ar[dd] & \ar[l]_{w_i} (B_i, B'_i)\ar[dd] \ar[dr]^{f_{i+1}} & 
	\\
	(B_{i-1}, B'_{i-1}) \ar[ur]^{f_i} \ar[dr]_{\wt f_i} 
	&&& (A_{i+1}, A'_{i+1})
	\\
	& (\wt A_i, \wt A'_i) & \ar[l]^{\wt w_i} (\wt B_i, \wt B'_i)\ar[ur]_{\wt f_{i+1}}
	 & 
	}
	\]
	so they are equivalent by the hammock move. Therefore the entire zigzags corresponding to
	$(P,H)$ and $(\wt P, H)$ are equivalent as well. 
	
	In this way we define a functor $\Lambdad(X,N)[W^{-1}]\to \Lambda(X,N)$
	which is easily seen to be quasi-inverse to $\ol\pi$. 
		
\end{proof}

\begin{cor}\label{cor:milnorlocalization} The functor $\pi$ from Proposition \ref{prop:localization}
	induces an equivalence $\Sd (X,N) \simeq S(X,N)$ and localizations $\Md(X,N) \to M(X,N)$, 
	$\Bd(X,N) \to B(X,N)$. 
\end{cor}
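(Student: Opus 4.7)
\emph{Proof proposal.} The plan is to deduce all three claims from the argument of Proposition \ref{prop:localization} by verifying that the zig-zag construction used there restricts to the relevant full subcategories of $\Lambdad(X,N)$.

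For the first claim, I will observe that a morphism $(P,H)\colon (A,\emptyset)\to(B,\emptyset)$ in $\Lambda(X,N)$ has $P_1 = H_1^{-1}(\emptyset) = \emptyset$, so the homotopy equivalence condition $P_1 \subset P$ forces $P = \emptyset$. Condition \ref{condition:ent-ex} of Definition \ref{defi:undirected} and condition (Ent) of Definition \ref{defi:directed} are then both vacuous, so the Hom-sets of $\Lambdad(X,N)$ and $\Lambda(X,N)$ between standard disks agree, and any weak equivalence in $\Sd(X,N)$ is an isomorphism (as $H^{-1}(N) \subset P = \emptyset$ forces the isotopy to avoid $N$ entirely, so it is reversible). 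Since the objects coincide, this gives the desired equivalence $\Sd(X,N) \simeq S(X,N)$.

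For the Milnor claim, the plan is to rerun the proof of Proposition \ref{prop:localization} inside $\Md(X,N)$ and to check that the resulting zig-zag lies in this subcategory. The key geometric observation is that for a generic representative $(P,H)$ of a morphism $f \in M(X,N)$, the slice $P_t \subset S^1$ is, away from finitely many critical values of $t$, a nonempty and non-full disjoint union of closed arcs. Indeed, the homotopy equivalence $P_1 \subset P$ forces every connected component of $P$ to be contractible and to meet $\{1\}\times S^1$ in a single arc; the components of $P$ therefore appear, persist, and merge as $t$ grows, but never split or die, so $P_t \ne \emptyset$ (a component touching the nonempty $P_0$ must survive), while $P_t = S^1$ would embed an essential circle into a contractible component of $P$, a contradiction. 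Hence $(A_t,A'_t)$ is a Milnor disk for generic $t$, so the chopping at exit times $t_1<\dots<t_n$ of the proof of Proposition \ref{prop:localization} produces a zig-zag in $\Md(X,N)$ whose backward arrows are weak equivalences of $\Md(X,N)$, and the moves (M$'$1), (M$'$2) and their hammock consequence are realized by local modifications that again leave the slices Milnor. This gives the localization $\Md(X,N)[W_M^{-1}]\simeq M(X,N)$.

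The bounded case is analogous. For a morphism $(P,H)\colon(A,\partial A)\to(B,\partial B)$ in $B(X,N)$, one has $P_0 = P_1 = S^1$, and the homotopy equivalence $P_1\subset P$ combined with the fact that $P$ is a codimension-zero $2$-submanifold of $I \times S^1$ containing $\{0,1\}\times S^1$ forces $P = I\times S^1$: any nontrivial complement would either disconnect $P_1$ from $P_0$ or contribute extra loops to $H_1(P)$, contradicting $P \simeq S^1$. Consequently $P_t = S^1$ for every $t$, all intermediate slices are bounded disks, and the zig-zag produced by the proof of Proposition \ref{prop:localization} lies in $\Bd(X,N)$. The main technical point that I still need to verify carefully is that the generic choices of $(P,H)$ and the deformations implementing the moves (M$'$1), (M$'$2) can be made so as to preserve the Milnor (respectively bounded) condition on every intermediate slice; since these moves are supported in a small time-neighborhood of a single exit event and modify $P$ only near an arc of $A'_{t_i}$, the persistence-and-merging analysis above extends locally and supplies the required verification, completing the proof.
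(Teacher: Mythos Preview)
Your approach—restricting the zig-zag argument of Proposition~\ref{prop:localization} to the relevant full subcategories—is exactly what the paper intends (the corollary is stated without proof). Your treatments of the Milnor and bounded cases are correct and more detailed than the paper's: the verification that generic slices $P_t$ stay in the Milnor range via the homotopy equivalence $P_1\hookrightarrow P$, and the argument forcing $P=I\times S^1$ in the bounded case, are both sound.

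There is, however, an error in your standard-disk argument. You assert that condition (Ent) of Definition~\ref{defi:directed} is vacuous when $P=\emptyset$, but this is false: condition (Ent1) says that if $x\in A_{t_0}$ then $x\in A_t$ for all $t\ge t_0$, and this does not involve $P$ at all. So the Hom-sets of $\Sd(X,N)$ and $S(X,N)$ do not coincide for the reason you give. The correct observation is that for standard disks the intended requirement (PD2)—which condition~\ref{condition:ent-ex} is meant to formalize—\emph{also} forbids exit of special points (since $A'=\emptyset$ leaves no locus through which to exit), so the two conditions impose the \emph{same} constraint rather than no constraint. With this correction your conclusion that $\pi$ is bijective on Hom-sets between standard disks stands, and your remark that every weak equivalence in $\Sd(X,N)$ is already an isomorphism (since $H^{-1}(N)\subset\emptyset$ forces the isotopy to avoid $N$) then confirms that no nontrivial localization occurs there.
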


\subsection{Constructible sheaves with values in $\oo$-categories}
\label{subsec:constructible}

Let $(X,N)$ be a stratified surface, let $\Op(X)$ denote the poset of open subsets of $X$, and let
$\D$ be an $\infty$-category.
The following  is an $\oo$-categorical analog of the discussion for abelian categories in
\S \ref{par:sheaf-abel}. 

\begin{lem}\label{lem:descent} Given a functor $\F: \N(\Op(X))^{\op} \to \D$, an open subset $U
	\subset X$, and an open cover $\U = \{U_i\}_{i \in I}$ of $U$, the following conditions are
	equivalent:
	\begin{enumerate}[label=(\roman *)]
		\item Denote by $\Op(X)/\U$ the poset of open subsets $V \subset X$ such that $V
			\subset U_i$ for some $i \in I$. Then the canonical map
			\[
				\F(U) \lra \lim \F|(\Op(X)/\U)^{\op}
			\]
			is an equivalence in $\D$.
		\item Denote by $\P(I)$ the poset of nonempty finite subsets of $I$ and consider the
			inclusion $\P(I) \subset \Op(X)^{\op}, J \mapsto \cap_{j \in J} U_j$. Then
			the canonical map
			\[
				\F(U) \lra \lim \F|\P(I)
			\]
			is an equivalence in $\D$.
	\end{enumerate}
\end{lem}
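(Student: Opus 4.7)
\emph{Plan.} Both conditions compare $\F(U)$ to a limit, so the natural strategy is to identify the two indexing diagrams via a cofinality equivalence. Define the inclusion of posets $\iota\colon \P(I) \to (\Op(X)/\U)^{\op}$ by $J \mapsto \bigcap_{j \in J} U_j$; this is well-defined because each such intersection lies in $\Op(X)/\U$, and as $J$ grows the intersection shrinks, producing the required opposite-variance. Then $\F|\P(I)$ is literally $\F|(\Op(X)/\U)^{\op} \circ \iota$, and the canonical maps out of $\F(U)$ fit into a commutative triangle with the restriction map $\lim \F|(\Op(X)/\U)^{\op} \to \lim \F|\P(I)$ induced by $\iota$. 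It therefore suffices to show that this restriction is an equivalence in $\D$, that is, that $\iota$ is initial in the sense of \cite{HA}~\S 4.1.

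By the standard criterion, initialness reduces to showing that for every $V \in \Op(X)/\U$, the fiber product
\[
  \P(I) \times_{(\Op(X)/\U)^{\op}} ((\Op(X)/\U)^{\op})_{/V}
\]
has weakly contractible nerve. Unwinding this pullback of posets, it is the poset $\P(I_V)$ of nonempty finite subsets of $I_V := \{i \in I : V \subset U_i\}$, since $J \in \P(I)$ maps over $V$ precisely when $V \subset \bigcap_{j \in J} U_j$, that is, when $J \subset I_V$. By assumption $V \in \Op(X)/\U$, so $I_V \neq \emptyset$; and $\P(I_V)$ is directed under union of finite subsets. Any nonempty directed poset has weakly contractible nerve, so initialness follows.

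Combining the resulting equivalence $\lim \F|(\Op(X)/\U)^{\op} \simeq \lim \F|\P(I)$ with the commutative triangle above, conditions (i) and (ii) are simultaneously true or false. I expect the main source of friction to be purely notational: the functor $\iota$ lands in an opposite category, so the initialness criterion must be invoked on the correct side (via the dual cofinality criterion applied to $\iota^{\op}$), and one must verify that the two cones emanating from $\F(U)$ are compatible through the restriction. Beyond this bookkeeping, the entire argument rests on the elementary fact that a nonempty filtered poset has contractible nerve.
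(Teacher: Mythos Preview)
Your proposal is correct and is exactly the paper's approach: the paper's one-line proof simply asserts that $\P(I)^{\op} \subset \Op(X)/\U$ is $\infty$-cofinal, and your argument (phrased dually as initiality of $\iota\colon \P(I)\to (\Op(X)/\U)^{\op}$) supplies precisely the verification of that claim via contractibility of the filtered posets $\P(I_V)$. The only cosmetic discrepancy is that the paper states cofinality of $\iota^{\op}$ rather than initiality of $\iota$, which you already anticipated.
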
 
\begin{proof} The inclusion $\P(I)^{\op} \subset \Op(X)/\U$ is $\infty$-cofinal.
\end{proof}

A {\em $\D$-valued sheaf} on $X$ is a functor 
\[
	\F: \N(\Op(X))^{\op} \to \D
\]
such that, for every open $U \subset X$ and every open cover $\U$ of $U$, the equivalent conditions
of Lemma \ref{lem:descent} hold. We denote by 
\[
	\Sh(X;\D) \subset \Fun(\N(\Op(X))^{\op},\D)
\]
the full subcategory spanned by the $\D$-valued sheaves on $X$.

Let  $(\Disko(X,N, \leq)$ be the poset of standard pairs $(U,\emptyset)$ ordered by inclusion. 
We will consider it as a category. A morphism in $(\Disko(X,N, \leq)$ (i.e., an inclusion
$U_1\subset U_2$ of standard disks) will be called a {\em weak equivalence}, if
$|N\cap U_1|=|N\cap U_2|$. We denote by $W$ the set of weak equivalences. 
The map
\[
	i: \Disko(X,N) \subset \Op(X),\; (U,\emptyset) \mapsto U
\]
identifies  $\Disko(X,N)$  with a full subposet of $\Op(X)$.  A
sheaf $\F$ in $\Sh(X;\D)$ is called {\em constructible} if its restriction $\F|\Disko(X,N)^{\op}$
maps weak equivalences to equivalences in $\D$. We denote the full subcategory of $\Sh(X;\D)$
spanned by the constructible sheaves by $\Sh(X,N ;\D)$. 

\begin{rem}\label{rem:derived} Let $\A$ be an abelian category with enough injectives and let
	$\D = \Dp(\A)$ denote the corresponding (left-bounded) derived $\infty$-category as defined in
	\cite[1.3.2.8]{HA}. We equip the stable $\infty$-category $\Sh(X;\D)$ with the $t$-structure
	$(\Sh(X;\D_{\ge 0}), \Sh(X;\D_{\le 0}))$ where the $t$-structure on $\D$ is the one from
	\cite[1.3.2.19]{HA}. The heart of this $t$-structure is equivalent to $\Sh(X,\A)$. 
	Then, using the recognition principle for derived $\infty$-categories (\cite[1.3.3.7]{HA}),
	we obtain an equivalence of $\infty$-categories
	\[
		\Dp(\Sh(X;\A)) \overset{\simeq}{\lra} \Sh(X;\Dp(\A)).
	\]
	In particular, the $\infty$-category $\Sh(X;\D(\A))$ is really an enhancement of the
	ordinary derived category of complexes of $\A$-valued sheaves. Further, this equivalence
	identifies our constructible category $\Sh(X,N; \Dp(\A))$ with the more traditional
	derived constructible category, defined as the full subcategory of $\Dp(\Sh(X;\A))$ spanned by
	objects with constructible cohomology sheaves. 
\end{rem}

We denote by $\Disko(X,N)[W^{-1}]_\oo$ the $\infty$-categorical localization of $\Disko(X,N)$ along the
weak equivalences $W$. In particular, we may identify 
\[
	\Fun(\Disko(X,N)[W^{-1}]_\oo^{\op}, \D) \,  \subset \,  \Fun(\Disko(X,N)^{\op}, \D)
\]
with the full subcategory spanned by those functors that map weak
equivalences in $\Disko(X,N)$ to equivalences in $\D$. 

\begin{prop} \label{prop:constructible} The functor
	\[
		i^*: \Sh(X,N;\D) \lra \Fun(\Disko(X,N)[W^{-1}]_\oo^{\op}, \D)
	\]
	is an equivalence of $\infty$-categories. 
\end{prop}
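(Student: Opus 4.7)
The plan is to exhibit a quasi-inverse to $i^*$ via right Kan extension along the inclusion $i\colon \Disko(X,N) \hookrightarrow \N(\Op(X))$, and then to check that both composites are equivalent to the identity. Concretely, let
\[
	(i_* G)(U) \;=\; \lim_{V \in \Disko(X,N),\; V \subset U} G(V)
\]
denote the right Kan extension, which is right adjoint to $i^*$ at the level of all presheaves. The task is to show that $i^*$ and $i_*$ restrict to mutually inverse equivalences between $\Sh(X,N;\D)$ and $\Fun(\Disko(X,N)[W^{-1}]_\oo^{\op},\D)$.

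\textbf{Unit.} For any presheaf $G$ and any $U \in \Disko(X,N)$, the object $U$ is terminal in the inclusion poset $\{V \in \Disko(X,N) : V \subset U\}$, hence initial in its opposite, so the limit defining $(i_* G)(U)$ collapses tautologically to $G(U)$ and the unit $G \to i^* i_* G$ is an equivalence. This immediately entails that $i^* i_* G \simeq G$ inverts weak equivalences whenever $G$ does, so $i_* G$ is constructible as soon as it is a sheaf.

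\textbf{Counit.} For any sheaf $\F$ and any open $U \subset X$, one must show $\F(U) \simeq \lim_{V \in \Disko(X,N),\; V \subset U} \F(V)$. Choose an open cover $\U$ of $U$ by open disks each containing at most one special point, apply Lemma \ref{lem:descent}(i) to express $\F(U)$ as a limit over $\Op(X)/\U$, and then reduce this to the limit over $\Disko(X,N)/U$ by an $\oo$-cofinality argument resting on the fact that any open subset of a disk can itself be covered by open disks, and that finite intersections of open disks can also be covered by open disks.

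\textbf{Main obstacle.} The step I expect to be hardest is verifying that, for $G$ inverting weak equivalences, the presheaf $i_* G$ is in fact a sheaf. Using Lemma \ref{lem:descent}(i) and the Fubini theorem for $\oo$-limits, the sheaf condition for an open cover $\U$ of $U$ reduces to an $\oo$-cofinality statement about the projection $(V,V') \mapsto V$ from $\{(V,V') : V \in \Disko(X,N),\; V \subset V' \in \Op(X)/\U\}^{\op}$ to $\{V \in \Disko(X,N) : V \subset U\}^{\op}$. This is where the weak-equivalence hypothesis on $G$ enters essentially: a given disk $V \subset U$ need not be contained in any single $U_\alpha$, but one can always pass to a weakly equivalent subdisk $V_0 \subset V$ with $V_0 \cap N = V \cap N$ and $V_0 \subset U_\alpha$ for some $\alpha$; replacing $V$ by $V_0$ in the limit is valid because $G$ inverts weak equivalences, and this replacement is precisely what supplies the missing cofinality.
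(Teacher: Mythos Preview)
Your overall strategy matches the paper's: both set up the adjunction $(i^*, i_*)$ and reduce to showing that the right Kan extension of a weak-equivalence-inverting $G$ is a sheaf. Your treatment of the unit and of the counit on sheaves is fine; the paper's argument for the counit is in fact shorter, since $\Disko(X,N)/U$ is itself an open cover of $U$, so the sheaf condition gives the limit formula directly without any auxiliary cover or cofinality step.

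There is, however, a genuine gap in your ``main obstacle'' paragraph. The projection $(V,V') \mapsto V$ from your pair poset to $\{V \in \Disko(X,N): V \subset U\}^{\op}$ is \emph{not} $\oo$-initial: if $D \subset U$ is a disk not contained in any single member of the cover $\U$, then the relevant slice over $D$ is empty. So the desired identification of limits cannot be obtained as a cofinality statement about this projection. Your remark that one can shrink $D$ to a weakly equivalent $D_0$ lying in some $U_\alpha$ is the right intuition, but it does not repair the cofinality claim---cofinality is a property of the indexing functor, independent of $G$, and no use of ``$G$ inverts weak equivalences'' can make a non-cofinal functor cofinal.

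What the paper does instead is to work pointwise: for each disk $D \in \Disko(X,N)/U$ one must show $G(D) \simeq \lim G|(i/D)^{\op}$, where $i/D$ is the poset of disks $V \subset D$ subordinate to $\U$. The paper introduces an auxiliary category $\L$ with the same objects as $i/D$ but with morphisms given by suitable isotopy classes of embeddings, and shows (via the covering Lemma~\ref{lem:vankampen}) that the natural functor $i/D \to \L$ is simultaneously $\oo$-cofinal and an $\oo$-localization at the weak equivalences. Since $G$ inverts weak equivalences, $G|(i/D)$ descends to $\L$, and the point is that $\L^{\op}$ has an \emph{initial} object: a small disk $D' \subset D$ containing the special point of $D$ (if any). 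Thus the limit collapses to $G(D') \simeq G(D)$. This localization-then-terminal-object mechanism is the content you are missing; your sketch gestures at it but does not supply it.
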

\begin{proof}
	Let $\F: \Op(X)^{\op} \to \D$ be a presheaf on $X$ such that $\F|\Disko(X,N)^{\op}$ sends weak
	equivalences to equivalences in $\D$. We claim that the following conditions
	are equivalent:
	\begin{enumerate}
		\item $\F$ is a sheaf.
		\item $\F$ is a right Kan extension of $\F|\Disko(X,N)^{\op}$. 
	\end{enumerate}
	The claim immediately implies the statement of the Proposition. The reason why this
	statement is not completely formal is that in condition (2), we do not assume that the
	restriction of $\F$ to $\Disko(X,N)^{\op}$ satisfies a descent condition. We rather need
	to convince ourselves that this is automatic due to the assumption that $\F$ is
	constructible. 

	(1) $\Rightarrow$ (2): Suppose that $\F$ is a sheaf. We need to show that, for every open $U
	\subset X$, $\F(U)$ is the limit of the diagram $\F|(\Disko(X,N)/U)^{\op}$. We interpret the
	set $\U = \Disko(X,N)/U$ as an open cover of $U$ so that this statement follows immediately
	from the hypothesis that $\F$ is a sheaf. 

	(2) $\Rightarrow$ (1): Suppose that $\F$ is a right Kan extension of $\F|\Disko(X,N)^{\op}$.
	Let $U \subset X$ be an open subset, and let $\U \subset \Op(X)$ be an open cover of $U$. Let
	$\Op(X)/\U$  (resp. $\Disko(X,N)/\U$) denote the subposet of $\U$ consisting of those opens
	$V$ (resp. $V \in \Disko(X,N)$) such that $V \subset U_i$ for some $U_i \in \U$. We need
	to show that the map
	\[
		\F(U) \to \lim \F|(\Op(X)/\U)^{\op}
	\]
	is an equivalence.
	Since $\F|(\Op(X)/\U)^{\op}$ is a right Kan extension of $\F|(\Disko(X,N)/\U)^{\op}$, it
	suffices to show that the composite 
	\[
		\F(U) \to \lim \F|(\Op(X)/\U)^{\op} \to \lim \F|(\Disko(X,N)/\U)^{\op}
	\]
	is an equivalence. Via the pointwise formula for $\F(U)$, we deduce that it suffices to show
	that $\F|(\Disko(X,N)/U)^{\op}$ is a right Kan extension along $i^{\op}$ where
	\[
		i: \Disko(X,N)/\U \subset \Disko(X,N)/U. 
	\]
	To this end, let $D \in \Disko(X,N)$ with $D \subset U$. We need to show that $\F(D)$ is a
	limit of $\F|(i/D)^{\op}$. Denote $\E = i/D$ and introduce the category $\L$ with
	\begin{itemize}
		\item the set of objects of $\L$ is the set of objects of $\E$,
		\item a morphism between objects $V$ and $V'$ of $\L$ is a homotopy class of paths
			$\gamma$ in $\Emb(V,D)$ such that $\gamma(0)$ is the embedding $V \subset
			D$, $\gamma(1)$ is a homeomorphism $V \cong V'$, and, if $\gamma(t)(V)$
			contains the special point for some $t$, then $\gamma(t')(V)$ contains the
			special point for all $t' \ge t$.
	\end{itemize}
	Denote by $\pi: \E \to \L$ the natural functor. We will show that $\pi$ is an
	$\infty$-cofinal localization at the set of weak equivalences in $\E$.\\
	
	{\bf Step 1.} $\pi$ is $\infty$-cofinal. To show this claim, we need to show that, for every $V \in \L$, the
	category $V/\pi$ is weakly contractible. To this end, we consider the space $E = P'\Emb(V,D)$ of paths
	$\gamma$ in $\Emb(V,D)$, that satisfy: if $\alpha(t)(V)$ contains the special point, then
	$\alpha(t')(V)$ contains the special point for all $t' \ge t$. We then deduce that $V/\pi$
	is weakly contractible, by applying Lemma \ref{lem:vankampen} to the functor
	\[
		V/\pi \lra \U(E), (V \overset{[\gamma]}{\to} V') \mapsto U([\gamma]),
	\]
	where $U([\gamma])$ is the open subset of $E$ consisting of paths that
	end in an embedding $V \hra V'$ and whose associated homotopy class, obtained by composing
	with any path of embeddings from $V \hra V'$ to $V \cong V'$, agrees with $\gamma$.\\

	{\bf Step 2.} For $V \in \L$, denote by $j: (V/\pi)^{\cong} \subset V/\pi$ the inclusion of
	the full subcategory spanned by the isomorpisms in $\L$. By a similar argument as in Step 1,
	using Lemma \ref{lem:vankampen}, it follows that $j$ is $\infty$-coinitial. It is then
	that, for every $\infty$-category $\D$ with limits, the unit $\id \to \pi_*\pi^*$
	is an equivalence, and the counit $\pi^*\pi_* \to \id$ is an equivalence on those functors
	$\E \to \D$ that map weak equivalences to equivalences.  This implies that $\pi^*$ is fully
	faithful with essential image consisting precisely of these latter functors $\E \to \D$.\\

	Now, equipped with this statement, we show that $\F(D)$ is a limit of $\F|(i/D)^{\op}$.
	Namely, by assumption, $\F|(i/D)^{\op}$ maps weak equivalences to equivalences so that it
	is equivalent to $(\pi^{\op})^*\G$ for some functor $\G: \L^{\op} \to \D$. Since $\pi$ is
	$\infty$-cofinal, we may compute the limit of $\F|(i/D)^{\op}$ as the limit of $\G$. But
	now the category $\L^{\op}$ has an initial object given by a disk $D' \subset D$ so 
	that, if $D$ contains a special point, then $D'$ also contains the special point. In any
	case, we have that $D' \subset D$ is a weak equivalence. Therefore, we obtain the desired
	equivalence $\F(D) \simeq \lim \G \simeq \F|(i/D)^{\op}$.
\end{proof}

In our treatment of Milnor sheaves it will be important to have a good control on the boundary of
disks which is why we now switch from open disks to closed disks. Let $\Diskoc(X,N)$ denote the
poset of all open and closed disks in $X$ containing at most one special point. We denote by
$\Disko(X,N) \subset \Diskoc(X,N)$ and $\Diskc(X,N) \subset \Diskoc(X,N)$ the subsets of open and
closed disks, respectively. The poset $\Diskoc(X,N)$ comes equipped with a set of weak equivalences
$W$ given by those inclusions of disks that preserve the number of special points. 

\begin{prop}
	Let $(X,N)$ be a stratified surface and let $\D$ be an $\infty$-category. There are
	equivalences of $\oo$-categories
	\[
		 \Fun(\Diskc(X,N)[W^{-1}]_\oo, \D) \longleftarrow  \Fun(\Diskoc(X,N)[W^{-1}]_\oo, \D)
			 \longrightarrow \Fun(\Disko(X,N)[W^{-1}]_\oo, \D).
		\]
\end{prop}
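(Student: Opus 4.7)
The strategy is to deduce both equivalences from the single underlying statement that the fully faithful inclusions
\[
\iota_o: \Disko(X,N) \hookrightarrow \Diskoc(X,N) \quad \text{and} \quad \iota_c: \Diskc(X,N) \hookrightarrow \Diskoc(X,N)
\]
become equivalences of $\oo$-categories after localizing at $W$. Once this is in hand, applying $\Fun(-,\D)$ and invoking the universal property of $\oo$-localization (which identifies $\Fun(\mathcal{C}[W^{-1}]_\oo,\D)$ with the full subcategory of $\Fun(\mathcal{C},\D)$ spanned by functors inverting $W$) produces the stated equivalences of functor categories.

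To establish the localization equivalences, I would apply a Quillen-Theorem-A-type criterion for relative categories: a fully faithful inclusion of relative categories $(\mathcal{A},W_\mathcal{A}) \hookrightarrow (\mathcal{B},W_\mathcal{B})$ respecting weak equivalences induces an equivalence on $\oo$-localizations provided that, for every $b \in \mathcal{B}$, the category of pairs $(a,w)$ with $a \in \mathcal{A}$ and $w: b \to \iota(a)$ (or dually $w:\iota(a) \to b$) a weak equivalence has weakly contractible nerve. The criterion will be applied with $\mathcal{B} = \Diskoc(X,N)$ and $\mathcal{A}$ taken to be either $\Disko(X,N)$ or $\Diskc(X,N)$.

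The geometric verification of weak contractibility is the heart of the argument. Fix $D \in \Diskoc(X,N)$ meeting $k \in \{0,1\}$ special points. For $\iota_o$, the slice category in question is cofinal in the poset of open disk neighborhoods $U \supset D$ with $|U \cap N| = k$. This poset is cofiltered under reverse inclusion: given any two such $U_1, U_2$, their intersection is open, contains $D$, and meets $N$ exactly in $D \cap N$, so any sufficiently small open disk $U_3 \subset U_1 \cap U_2$ containing $D$ (which exists since $N$ is discrete in $X \setminus \partial X$ and $X$ is a topological surface) provides a common refinement. A cofiltered category has weakly contractible nerve, completing the check. The argument for $\iota_c$ is symmetric, using small closed-disk neighborhoods $A \subset D$ (respectively $A \supset D$) with matching special-point count.

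The main obstacle is the geometric case in which special points lie on the topological boundary $\partial D$ of a closed disk $D$: then no open subdisk of $D$ is weakly equivalent to $D$, so one is forced to work with open enlargements of $D$ rather than shrinkings. This subtlety is precisely what dictates the direction of the slice chosen above. The dual situation for $\iota_c$, when $\partial D \cap N \neq \emptyset$ for an open disk $D$, is handled analogously by perturbing the boundary of a small closed-disk neighborhood away from $N$ using local Euclidean charts, which is possible thanks to the finiteness of $N$.
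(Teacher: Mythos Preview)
Your approach is correct and rests on the same geometric input as the paper's proof---the filteredness of the relevant disk-inclusion posets---but the formal packaging differs. The paper argues directly at the level of functor categories: it shows that a functor $\F:\Diskoc(X,N)\to\D$ inverts $W$ if and only if it is the left Kan extension of its restriction to $\Diskc(X,N)$ (and dually a right Kan extension from $\Disko(X,N)$), by observing that the slice $i/U$ of closed disks inside an open disk $U$ has a filtered cofinal subcategory on which any such $\F$ is constant up to equivalence; Lemma~\ref{lem:conlim} and Proposition~\ref{prop:kanres} then finish the argument. You instead show that $\iota_c,\iota_o$ become equivalences after $\oo$-localization and apply $\Fun(-,\D)$. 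This is more conceptual (it is a statement about the indexing categories, independent of $\D$), but your ``Quillen Theorem~A for relative categories'' criterion is not quite standard as stated: weak contractibility of the weak-equivalence slice $\{w:b\to\iota(a)\}$ alone does not obviously suffice---one typically also needs this slice to be $\oo$-coinitial in the full slice $b/\iota$, so that limits of $W$-inverting functors may be computed over it. In your setting the cofilteredness you establish delivers both conditions at once (the same shrinking argument shows that for any fixed $U_0\in b/\iota_o$ the poset of weak equivalences $b\to U\subset U_0$ is again cofiltered), and once this is noted your argument and the paper's Kan-extension computation become two sides of the same coin. Your final paragraph's worry about $\partial D\cap N\neq\emptyset$ in the $\iota_c$ case is unnecessary: for open $D$ any special point of $D$ lies in its interior, and the poset of closed subdisks containing it is filtered without perturbation.
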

\begin{proof}
	We claim that the subcategory 
	\[
		\Fun(\Diskoc(X,N)[W^{-1}]_\oo, \D) \, \subset \, \Fun(\Diskoc(X,N), \D)
	\] 
	can be identified with the subcategory of left Kan extensions along $i: \Diskc(X,N) \subset
	\Diskoc(X,N)$ and the subcategory of right Kan extensions along $j: \Disko(X,N) \subset
	\Diskoc(X,N)$. To verify the first claim, suppose that $\F:\Diskoc(X,N) \to \D$ is functor
	sending weak equivalences in $\Diskc(X,N)$ to equivalences in $\D$. The pointwise Kan
	extension formula at $U \in \Disko(X,N)$ exhibits $\F(U)$ as the colimit over $i/U$. If $U$
	contains a special point, then we may replace the category $i/U$ by the cofinal subcategory
	$(i/U)'$ consisting of those closed disks that contain the special point (otherwise, we set
	$(i/U)' = i/U$). The category $(i/U)'$ is filtered and hence contractible and the diagram
	$\F|(i/U)'$ consists of equivalences. Hence, by Lemma \ref{lem:conlim}, $\F$ is a left Kan
	extension of $\F|\Disko(X,N)$ if and only if, for every $A \in (i/U)'$, the map $F(A) \to
	\F(U)$ is an equivalence. It is now an immediate consequence of the two-out-of-three property
	of equivalences that $\F$ is a left Kan extension of $\F|\Disko(X,N)$ if and only if $\F$ sends
	all weak equivalences to equivalences in $\D$. The second claim regarding right Kan
	extensions along $j$ follows from an essentially identical argument. 
\end{proof}

Finally, we would like to provide an explicit description of the localization 
$\Diskc(X,N)[W^{-1}]_\oo$ which
will provide the starting point for our discussion of Milnor disks.  

\begin{prop} The functor 
	\begin{equation}\label{eq:disklocalization}
			\pi: \Diskc(X,N) \lra S(X,N),\; A \mapsto (A,\emptyset)
	\end{equation}
	exhibits the ordinary category $S(X,N)$ as  an $\infty$-categorical localization along the weak equivalences of
	$\Diskc(X,N)$, i.e., identifies it with  $\Diskc(X,N)[W^{-1}]_\oo$ as an $\oo$-category.
	 In particular, for every $\infty$-category $\D$, the functor
	\[
		\pi^*: \Fun(S(X,N), \D) \lra \Fun(\Diskc(X,N), \D)
	\]
	is fully faithful with essential image consisting of those functors that send weak
	equivalences in $\Diskc(X,N)$ to equivalences in $\D$.
\end{prop}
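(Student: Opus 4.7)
The plan is to mirror the strategy used in the proof of Proposition \ref{prop:constructible}, adapted to our current setting. We verify that $\pi$ satisfies the recognition criterion for an $\oo$-cofinal localization: for every $V = (B,\emptyset) \in S(X,N)$,
\begin{enumerate}
\item[(i)] the slice category $V/\pi$ is weakly contractible, and
\item[(ii)] the inclusion $j: (V/\pi)^{\cong} \subset V/\pi$ of the full subcategory spanned by the isomorphisms is $\oo$-coinitial.
\end{enumerate}
As in Prop.~\ref{prop:constructible}, together these conditions imply that the unit $\id \to \pi_*\pi^*$ is an equivalence, while the counit $\pi^*\pi_* \to \id$ is an equivalence on functors sending weak equivalences to equivalences. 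This immediately gives full faithfulness of $\pi^*$ with the correct essential image.

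First, I would check the tautological fact that $\pi$ inverts weak equivalences: an inclusion $A \subset A'$ in $\Diskc(X,N)$ preserving the number of special points can be realized by a contracting isotopy in which no point of $N$ enters or leaves the disk, hence defines an isomorphism in $S(X,N) \simeq \Entr(X,N)$ (Example~\ref{exa:paraexit}). This guarantees that $\pi$ factors through the localization, so the question is whether the induced functor is an equivalence of $\oo$-categories. For condition~(i), the objects of $V/\pi$ are pairs $(A,[\gamma])$ consisting of a closed disk $A \in \Diskc(X,N)$ together with a morphism $[\gamma]: (A,\emptyset) \to (B,\emptyset)$ in $S(X,N)$, i.e.\ an isotopy class of isotopies in which special points are only allowed to enter. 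I would apply Lemma~\ref{lem:vankampen} to the functor
\[
 V/\pi \lra \U(E),\qquad (A,[\gamma]) \longmapsto U([\gamma]),
\]
where $E$ is the space of paths $\gamma$ in $\Emb(\DD,B)$ that are entrance-monotone with respect to $N$ (i.e.\ such that if a special point lies in $\gamma(t)(\DD)$, then it lies in $\gamma(t')(\DD)$ for all $t' \geq t$), and $U([\gamma])$ is the open subset of paths representing the given homotopy class. Weak contractibility of each $V/\pi$ then follows exactly as in Step~1 of Prop.~\ref{prop:constructible}.

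For condition~(ii), a second application of Lemma~\ref{lem:vankampen} gives $\oo$-coinitiality of $j$, essentially by refining the path space $E$ to its open subspace $E^{\cong}$ consisting of those entrance-monotone paths that furthermore neither create nor destroy special points (so that the resulting morphism is an isomorphism in $\Entr(X,N)$). Once (i) and (ii) are in hand, the standard $\oo$-categorical formalism gives the conclusion; the ``in particular'' clause about the essential image of $\pi^*$ follows at once. The main technical step is the verification of the hypotheses of Lemma~\ref{lem:vankampen} in each case; this amounts to the same careful bookkeeping of entrance-type isotopies that was carried out in Steps~1 and~2 of the proof of Prop.~\ref{prop:constructible}, with the simplification that there is no ambient open cover $\U$ and no need to restrict to sub-disks of a fixed open set, so the spaces of paths are now genuine contractible spaces of embeddings into~$B$.
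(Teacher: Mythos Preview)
Your strategy coincides with the paper's: verify the $\oo$-localization criterion (the paper invokes Proposition~\ref{prop:oo-localiz-dual}) by showing the relevant slice categories are contractible, using Lemma~\ref{lem:vankampen} applied to a suitable path space. There is, however, a substantive error in your choice of path space. You take $E$ to consist of paths in $\Emb(\DD,B)$, but the disks $A$ appearing in the slice are arbitrary closed disks in $X$, not sub-disks of $B$; morphisms in $S(X,N)$ are isotopy classes of isotopies through all of $X$, not within $B$. Your closing remark that the paths are ``embeddings into $B$'' is a carryover from the setup of Proposition~\ref{prop:constructible}, where all disks under consideration genuinely lie inside a fixed ambient disk $D$; that feature is absent here. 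The paper instead takes $E$ to be the space of paths of \emph{points} in $\mathring{X}\setminus N$ (resp.\ in $\mathring{X}\setminus(N\setminus\{x\})$ when the target disk contains the special point $x$) ending in the interior of the target disk; contractibility of $E$ is then immediate, and the open sets needed for Lemma~\ref{lem:vankampen} are easy to describe.

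Two smaller points. First, there is a notational slip: you write $V/\pi$ but then describe objects carrying a morphism $(A,\emptyset)\to(B,\emptyset)=V$, which is the overcategory $\pi/V$; the paper indeed works with overcategories, and correspondingly the condition on $j$ is $\oo$-\emph{cofinality}, not coinitiality. Second, the paper treats the two cases (target disk with or without a special point) separately: when there is no special point one has $(\pi/V)^{\simeq}=\pi/V$ on the nose, so only contractibility needs checking; when there is a special point $x$, one first shows cofinality of $j$ via a path space of paths ending at $x$, and then contractibility of $(\pi/V)^{\simeq}$ by a further application of Lemma~\ref{lem:vankampen}. Your sketch elides this distinction.
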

\begin{proof}
	Let $(A,\emptyset) \in S(X,N)$. Suppose first that $A$ does not contain a
	special point. Then we have 
	\[
		(\pi/(A,\emptyset))^{\simeq} = \pi/(A,\emptyset).
	\]
	Further, we claim that $\pi/(A,\emptyset)$ is contractible. To this
	end, consider the topological space $P$ of continuous paths $[0,1] \to \mathring{X}
	\setminus N$ ending in $\mathring{A}$. To an object $(B, \alpha: (B,\emptyset) \to
	(A,\emptyset))$, we associate the open subset of $P$ consisting of those paths that
	start in $\mathring{B}$ and lie in the same homotopy class as the class of paths 
	that arises from the isotopy comprising $\alpha$. This association defines a functor
	\[
		\pi/(A,\emptyset) \lra \U(P)
	\]
	which satisfies the hypothesis of Lemma \ref{lem:vankampen} thus proving the contractibility
	of $\pi/(A,\emptyset)$. 

	Now suppose that $A$ does contain a special point $x \in N$. Then we first claim that the
	inclusion 
	\[
		j: (\pi/(A,\emptyset))^{\simeq} \subset \pi/(A,\emptyset)
	\]
	is cofinal. To this end, we need to show that, given an object $b = (B, \alpha: (B,\emptyset) \to
	(A,\emptyset))$ of $\pi/(A,\emptyset)$, the category $b/j$ is contractible. We
	consider the space $Q$ of paths in $\mathring{X} \setminus{(N \setminus {x})}$ starting in
	$\mathring{B}$ and ending in $x$. To an object $b' = (B', \alpha: (B',\emptyset) \to
	(A,\emptyset))$ of $b/j$, we associate the open subset of $Q$ consisting of paths that
	lie in $\mathring{B'}$. An application of Lemma \ref{lem:vankampen} proves the claim.
	Finally, an argument similar to the above shows that $(\pi/(A,\emptyset))^{\simeq}$ is
	contractible so that the result follows from Proposition \ref{prop:oo-localiz-dual}. 
\end{proof}

As a consequence of the results of this section, we thus obtain the following:

\begin{cor}\label{cor:constructible} Let $(X,N)$ be a stratified surface and let $\D$ be an
	$\infty$-category. Then there is an equivalence 
	\[
		\Sh(X,N;\D) \simeq \Fun(S(X,N)^{\op}, \D)
	\]
	of $\infty$-categories.
\end{cor}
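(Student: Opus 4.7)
The plan is to simply chain together the three preceding propositions of this subsection, since each one identifies a piece of the tower of localized disk categories with a more tractable presentation. First, I would invoke Proposition \ref{prop:constructible} to obtain an equivalence
\[
	\Sh(X,N;\D) \; \simeq \; \Fun(\Disko(X,N)[W^{-1}]_\oo^{\op}, \D).
\]
Next, the proposition comparing $\Diskc$, $\Diskoc$, and $\Disko$ (via left and right Kan extension along the inclusions $\Diskc \subset \Diskoc \supset \Disko$) gives a zig-zag of equivalences
\[
	\Fun(\Disko(X,N)[W^{-1}]_\oo, \D) \; \overset{\simeq}{\lla} \; \Fun(\Diskoc(X,N)[W^{-1}]_\oo, \D) \; \overset{\simeq}{\lra} \; \Fun(\Diskc(X,N)[W^{-1}]_\oo, \D),
\]
which passes to opposite categories by duality. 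Finally, the last proposition of this subsection identifies $\Diskc(X,N)[W^{-1}]_\oo$ with the ordinary category $S(X,N)$ of standard disks via the functor $A \mapsto (A,\emptyset)$, and hence
\[
	\Fun(\Diskc(X,N)[W^{-1}]_\oo^{\op}, \D) \; \simeq \; \Fun(S(X,N)^{\op}, \D).
\]
Composing the three equivalences yields the desired identification $\Sh(X,N;\D) \simeq \Fun(S(X,N)^{\op}, \D)$.

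The only point that requires care is checking that the equivalences are compatible when they are strung together, i.e., that the composite is induced in an essentially unique way by restriction along the inclusion $S(X,N) \hookrightarrow \Op(X)$ (via $(A,\emptyset) \mapsto \mathring{A}$, say). This is immediate once one observes that each equivalence in the chain is induced by restriction or Kan extension along a fully faithful inclusion of posets of disks, all of which sit inside $\Op(X)$ in a compatible way. No genuinely new ingredient is needed beyond the three cited propositions, so there is no real obstacle; the statement is a formal corollary.
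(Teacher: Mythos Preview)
Your proposal is correct and matches exactly what the paper intends: the corollary is stated immediately after the three propositions you cite, with the paper simply saying ``As a consequence of the results of this section, we thus obtain the following,'' so chaining them together is precisely the intended argument. Your additional remark about compatibility of the restrictions is a reasonable sanity check but not strictly needed for a formal corollary.
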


\begin{rem} In view of Example \ref{exa:paraexit}, Corollary \ref{cor:constructible} recovers the presentation of the
	constructible derived category in terms of the exit path category. Nevertheless, the description
	in terms of $S(X,N)$ will be more convenient in what follows. 
\end{rem}

\subsection{Verdier duality}
\label{sec:verdier}

In this section we assume that $\D$ is a stable   $\oo$-category.

Recall that in Corollary \ref{cor:constructible}, we have identified the  $\infty$-category
of  constructible sheaves on $(X,N)$ with values in any $\infty$-category
 $\D$ with the $\infty$-category
$\Fun(S(X,N)^{\op}, \D)$ of presheaves on the category $S(X,N) \subset \Lambdad(X,N)$. In this
section, under the assumption that $\D$ is stable, we
illustrate the use of Kan extensions among subcategories of $\Lambdad(X,N)$ to provide a
proof of Verdier duality. This treatment can be regarded as an introduction to the techniques to be
used in \S \ref{sec:milnor} to establish our description of $\Sh(X,N;\D)$ as Milnor sheaves.

Along with $S=\Sd=\Sd(X,N)$ and $\Bd=\Bd(X,N)$ defined earlier,
 we  consider the following full subcategories of $\Lambdad(X,N)$ consisting, respectively, of the following disks:
\begin{itemize}
	 \item $\Dd$: disks $(A,A')$ of the form 
			\[
				\disk{0}{1}, \disk{1b}{1} \subset X
			\]
		where $A' \subset \partial A$ is a single closed interval and $A \cap N \subset A'$, 
	\item $\Vd$: disks $(A,A')$ of the form
			\[
				\disk{1}{1} \subset X
			\]
		where $A' \subset \partial A$ is a single closed
		interval and $(A \setminus A') \cap N$ is a singleton.
\end{itemize}
The category $S$ is equivalent to the entrance path category of $(X,N)$
so that, for any stable $\infty$-category $\D$, the $\infty$-category $\Fun(S^{\op},\D)$ can be
identified with the category of constructible sheaves on $(X,N)$ valued in $\D$ (Corollary
\ref{cor:constructible}). We set $\Qd := S \cup \Bd \cup \Dd \cup \Vd$.  

\begin{thm}\label{thm:verdierkan}
	Let $\D$ be a stable $\infty$-category. And let
	\[
		\F: \Qd \lra \D
	\]
	be a functor. Then the following are equivalent:
	\begin{enumerate}[label=(\roman*)]
		\item \label{kan:1} $\F$ satisfies the following conditions:
			\begin{enumerate}[label=(\arabic*)]
				\item $\F|S \cup \Dd$ is a right Kan extension of $\F|S$, and
				\item $\F$ is a left Kan extension of $\F|S \cup \Dd$.
			\end{enumerate}
		\item \label{kan:2} 
			\begin{enumerate}[label=(\arabic*)]
				\item $\F|\Bd$ maps weak equivalences in $\Bd$ to equivalences in $\D$,
				\item $\F|\Bd \cup \Dd$ is a left Kan extension of $\F|\Bd$, and
				\item $\F$ is a right Kan extension of $\F|\Bd \cup \Dd$.
			\end{enumerate}
	\end{enumerate}
\end{thm}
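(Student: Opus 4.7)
The plan is to verify the equivalence of conditions \ref{kan:1} and \ref{kan:2} by a case-by-case analysis of the pointwise Kan extension formulas at each object of $\Qd$, exploiting the stability of $\D$ to convert limits into colimits via fiber/cofiber sequences. Geometrically, the four subcategories $S, \Dd, \Vd, \Bd$ of $\Qd$ form a \emph{Verdier ladder} inside $\Lambdad(X,N)$: from a standard disk one adds a boundary arc to obtain an object of $\Dd$, then an interior special point to obtain an object of $\Vd$, and finally completes the boundary to obtain an object of $\Bd$. Condition \ref{kan:1} builds $\F$ up this ladder starting from entrance-type data on $S$; condition \ref{kan:2} builds it down the ladder starting from exit-type data on $\Bd$, with (ii)(1) enforcing the exit condition by demanding that weak equivalences be inverted. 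The equivalence is the combinatorial shadow of Verdier duality.

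The first step is to analyse the Kan extension formulas at objects of $\Dd$. For $(A,A') \in \Dd$, I would use cofinality arguments of the type appearing in the proof of Proposition \ref{prop:constructible} (together with Lemma \ref{lem:vankampen} for path-space contractibility) to reduce the relevant slice categories to a simple finite diagram of two standard disks: a small disk $(B, \emptyset)$ contained in $A \setminus A'$ and the disk $(A, \emptyset)$ itself, together with the inclusion $B \subset A$. This identifies the Kan extension value in (i)(1) with a fiber of a map between values of $\F$ on these two standard disks, and dually the Kan extension value in (ii)(2) with the analogous cofiber on the bounded versions. Stability of $\D$ then produces a canonical bicartesian square relating the two computations, forcing agreement at $(A,A')$.

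The second step is to carry out the analogous analysis at objects of $\Vd$ and $\Bd$. For $(A,A') \in \Vd$, the slice categories controlling (i)(2) and (ii)(3) both decompose, up to cofinality, into two pieces: one indexed by small disks contained in $A \setminus A'$ that carry the interior special point, and one indexed by $\Dd$-type disks situated near the boundary arc $A'$. Assembling these pieces yields in each case the same bicartesian square built from four corner values on the standard and bounded versions of $(A, \emptyset)$ and $(A \setminus \{x\}, \emptyset)$, where $x$ is the interior special point. For $(A, \partial A) \in \Bd$, the colimit formula from (i)(2) depends only on the homotopy type of the slice and hence automatically inverts weak equivalences in $\Bd$, yielding (ii)(1); the converse direction is dual.

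The main obstacle is the cofinality reduction of the slice categories at objects of $\Vd$ to the single bicartesian square above, because near the interior special point the Milnor-style geometry mixes entrance-type and exit-type behaviour and the slices are not obviously of small combinatorial shape. Once the correct cofinal subcategories are identified --- paralleling the path-space techniques used to prove Proposition \ref{prop:constructible} --- the rest of the proof is a routine consequence of the compatibility of fiber and cofiber sequences in the stable $\oo$-category $\D$.
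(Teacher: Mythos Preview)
Your analysis of the Kan extension conditions at objects of $\Dd$ is incorrect, and this error propagates through the rest of the sketch. For $(A,A') \in \Dd$, the undercategory $(A,A')/S$ is \emph{empty}: there is no morphism in $\Lambdad(X,N)$ from a disk with one boundary arc to a standard disk with empty boundary, because the pant cobordism condition (Definition~\ref{def:pant-cobordism}(2b)) forces $P$ to be homotopy equivalent to its terminal slice $P_1 = \emptyset$, which is incompatible with a nonempty initial slice $P_0$. Hence (i)(1) says precisely that $\F|\Dd \simeq 0$. Dually, the overcategory $\Bd/(A,A')$ is also empty (a pant cobordism from a full circle to a single arc would have to be contractible yet contain the homotopically nontrivial loop $\{0\}\times S^1$ in $I \times S^1$), so (ii)(2) likewise says $\F|\Dd \simeq 0$. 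There is no fiber or cofiber to compute here; the role of $\Dd$ is to insert zero objects, not to record a comparison map between two standard disks.

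With this corrected, the substance of the proof lies in comparing (i)(2) and (ii)(3). The paper identifies, for each object of $\Bd$, $\Vd$, and $S$, an explicit $\oo$-cofinal (resp.\ $\oo$-coinitial) finite cubical subcategory of the relevant slice; the vanishing $\F|\Dd \simeq 0$ then reduces each Kan extension condition to a concrete biCartesian cube or square. For the cubes governing the values on $\Bd$ (from $S\cup\Dd$) and on $S$ (from $\Bd\cup\Dd$), one literally obtains the \emph{same} cube read once as a colimit and once as a limit, so stability handles those cases directly. The nontrivial point --- which your sketch does not isolate --- is at $(A,A')\in\Vd$: the square produced by (i)(2) (a cofiber built from standard disks, cf.\ \eqref{eq:cube2}) is \emph{not} the same diagram as the square produced by (ii)(3) (a fiber built from bounded disks, cf.\ \eqref{eq:dualcube2}). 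Their equivalence is proved by a pasting argument with three stacked cubes and the two-out-of-three property for biCartesian cubes, and it is exactly here that condition (ii)(1) on weak equivalences in $\Bd$ enters. Your claim that both sides reduce to ``the same bicartesian square'' at $\Vd$ is therefore too optimistic; showing that these two distinct squares are simultaneously biCartesian is the heart of the argument.
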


Before we provide a proof of the Theorem, we explain its implications. 
The following lemma generalizes one of the statements of Corollary
\ref{cor:milnorlocalization}. 

\begin{lem}\label{lem:bloc} Let $B \subset \Lambda(X,N)$ denote the full subcategory spanned by the
	bounded disks. Then the restriction 
	$\Bd \to  B$
	of the canonical functor $\Lambdad(X,N) \to \Lambda(X,N)$ exhibits $B$ as an
	$\infty$-categorical localization of $\Bd$ along the weak equivalences. 
\end{lem}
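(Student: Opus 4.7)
The plan is to leverage the geometric simplicity of bounded disks to show that the induced functor $\Bd[W^{-1}]_\oo \to B$ is an equivalence. Since Corollary \ref{cor:milnorlocalization} already identifies $B$ with the ordinary Gabriel--Zisman localization $\Bd[W^{-1}]$, it suffices to check that no higher homotopy appears in passing to $\Bd[W^{-1}]_\oo$; equivalently, that the canonical functor is fully faithful, essential surjectivity being immediate.

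The key simplification in the bounded case is that for a bounded disk $(A,\partial A)$, the condition $A' = \partial A$ forces $P_0 = P_1 = S^1$ in any pant cobordism $P$ appearing in a morphism out of or into $(A,\partial A)$; together with Definition \ref{def:pant-cobordism}(2b), this forces $P \simeq S^1$. Hence the cobordism data carries no essential homotopy content: a morphism between bounded disks is represented essentially by an isotopy of embedded disks, constrained only by the behavior of special points. In $\Bd$ the entrance condition (Ent) forbids special points from exiting $A_t$ once they have entered; in $B$ the condition \ref{condition:ent-ex} instead forbids re-entering the interior of $A_t$ after touching $\partial A_t$. Weak equivalences in $\Bd$ between bounded disks correspond geometrically to ``releasing the exit prohibition'' for special points.

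First I would adapt the zig-zag analysis from the proof of Proposition \ref{prop:localization} to this restricted setting: every morphism in $B$ between bounded disks is represented by an isotopy which, after cutting at the exit moments of special points, decomposes as an alternating chain of morphisms in $\Bd$ and inverses of weak equivalences in $\Bd$. The elementary moves (M$'$1) and (M$'$2) and the hammock move (H) from that proof carry over verbatim, yielding an equality of mapping sets $\Hom_{\Bd[W^{-1}]}((A,\partial A),(C,\partial C)) = \Hom_B((A,\partial A),(C,\partial C))$.

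Second I would promote this to an $\oo$-categorical statement by observing that the spaces of isotopies and homotopies of isotopies parametrizing Definition \ref{defi:undirected} have contractible connected components for fixed endpoint data; this is a standard contractibility property for spaces of embeddings of a disk into a surface with prescribed boundary behavior. This contractibility ensures that the Dwyer--Kan hammock simplicial sets between two bounded disks in $\Bd$ are homotopy-discrete, so the $\oo$-categorical localization mapping spaces coincide with the $1$-categorical ones, and the lemma follows. The main obstacle is to make this second step fully rigorous: one must show that any simplex of the Dwyer--Kan mapping space can be canonically filled to the next dimension, which reduces to a relative contractibility statement for spaces of isotopies with prescribed behavior on the trajectories $H^{-1}(N)$. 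This is geometrically intuitive but requires some care to formulate uniformly across all dimensions.
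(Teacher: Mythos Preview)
Your approach is more laborious than necessary and leaves a genuine gap in the second step. The paper dispatches the lemma in one line by invoking Proposition~\ref{prop:oo-localiz-dual}, a general criterion for a functor $\pi:\C\to\Di$ of ordinary categories to exhibit $\Di$ as the $\oo$-categorical localization of $\C$ along $W$. That criterion asks only that (1) $W$ be exactly the $\pi$-preimage of the isomorphisms, (2) each $(\pi/d)^{\iso}$ be contractible, and (3) each inclusion $j^d:(\pi/d)^{\iso}\hookrightarrow \pi/d$ be $\oo$-cofinal. For bounded disks these conditions are easy to verify directly from the geometry: the over- and iso-overcategories are governed by spaces of isotopies of embedded closed disks, which have the requisite contractibility properties. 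No hammock analysis is needed.

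By contrast, your two-step plan first reproves the $1$-categorical localization (which is already Corollary~\ref{cor:milnorlocalization}) and then attempts to rule out higher homotopy in the Dwyer--Kan mapping spaces by an ad hoc contractibility argument for spaces of isotopies with constraints on $H^{-1}(N)$. You yourself flag that this step ``requires some care to formulate uniformly across all dimensions,'' and indeed you do not carry it out: checking that every simplex of the hammock space can be filled is precisely the hard part, and nothing in your sketch does it. The general machinery of Proposition~\ref{prop:oo-localiz-dual} (proved via Kan extensions and pointwise formulas) packages exactly this kind of argument once and for all, so that the specific case of bounded disks reduces to checking finite, geometrically transparent conditions on slice categories rather than an infinite hierarchy of filling problems.
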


\begin{proof}
	This follows immediately from Proposition \ref{prop:oo-localiz-dual}.
	\end{proof}

\begin{cor} There is a canonical equivalence of stable $\infty$-categories
	\[
		\delta: \Fun(S^{\op},\D) \simeq \Fun(S, \D)
	\]
	identifying constructible sheaves and constructible cosheaves valued in $\D$. 
\end{cor}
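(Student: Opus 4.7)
The plan is to read Theorem \ref{thm:verdierkan} as exhibiting a single intermediate $\oo$-category $\E \subset \Fun(\Qd, \D)$, namely the full subcategory of functors satisfying the equivalent conditions (i) and (ii), from which one can restrict either to $S$ or to $\Bd$, and then to convert the latter into $\Fun(S^{op}, \D)$ via the paracyclic duality. Condition (i) presents $\E$ as the essential image of the two-step Kan extension procedure from $\Fun(S, \D)$ (right Kan extension along $S \subset S \cup \Dd$, followed by left Kan extension along $S \cup \Dd \subset \Qd$), so that restriction yields an equivalence
$$ r_S \colon \E \overset{\simeq}{\lra} \Fun(S, \D). $$
Condition (ii) presents $\E$ symmetrically as the essential image of the Kan extension procedure starting from those functors $\Bd \to \D$ that invert weak equivalences; combined with Lemma \ref{lem:bloc}, which identifies the latter subcategory with $\Fun(B, \D)$, this gives a parallel equivalence
$$ r_B \colon \E \overset{\simeq}{\lra} \Fun(B, \D). $$
The composite $r_B \circ r_S^{-1}$ already furnishes an equivalence $\Fun(S, \D) \simeq \Fun(B, \D)$.

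To finish, I invoke the paracyclic duality $\xi$ of Proposition \ref{rem:selfdual}, which, as noted in the remark following that statement, restricts to an identification of the standard and bounded disk categories as mutually opposite, i.e. $S \simeq B^{op}$. Precomposition with $\xi$ then converts $\Fun(B, \D)$ into $\Fun(S^{op}, \D)$, and the latter is identified with $\Sh(X,N; \D)$ by Corollary \ref{cor:constructible}. Dually, $\Fun(S, \D)$ parametrises constructible $\D$-valued cosheaves on $(X,N)$. Stringing the pieces together delivers the composite
$$ \delta \colon \Fun(S^{op}, \D) \simeq \Fun(B, \D) \simeq \E \simeq \Fun(S, \D), $$
which is the asserted Verdier duality equivalence.

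The conceptual difficulty is entirely packaged inside Theorem \ref{thm:verdierkan}: once the two parallel Kan-extension presentations of $\E$, starting from $S$ and from $\Bd$, are in hand, the remainder is a formal assembly of three equivalences. The only remaining point is the naturality of the composite $\delta$ in $\D$ and its canonicity, which reduces to tracking the Kan-extension constructions and unwinding the explicit description of $\xi$ (swap boundary intervals with their complements and reverse the direction of the isotopy) so as to confirm that the identification $B \simeq S^{op}$ is compatible with the restriction maps $r_S$ and $r_B$.
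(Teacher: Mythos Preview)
Your proposal is correct and follows essentially the same approach as the paper: both arguments pass through the intermediate category $\E \subset \Fun(\Qd,\D)$ of functors satisfying the equivalent conditions of Theorem \ref{thm:verdierkan}, use Proposition \ref{prop:kanres} (implicitly in your case) to obtain restriction equivalences to $\Fun(S,\D)$ and to $\Fun(B,\D)$ via Lemma \ref{lem:bloc}, and then apply the paracyclic duality $\xi$ to identify $B \simeq S^{\op}$.
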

\begin{proof}
 Let  $\Fun(\Qd,\D)',  \Fun(\Qd,\D)''\subset \Fun(\Qd,\D)$ be the full
($\oo$-)subcategories consisting of functors
	satisfying the conditions \ref{kan:1} and  \ref{kan:2}  of Theorem \ref{thm:verdierkan}, respectively. 
By Proposition
	\ref{prop:kanres} the restriction functor
	\[
		p: \Fun(\Qd,\D)' \lra \Fun(S,\D),
	\]
	 is an equivalence.
	For the same reason, the restriction functor
	\[
		q: \Fun(\Qd,\D)'' \lra \Fun(\Bd[W^{-1}]_\oo,\D),
	\]
	  is an equivalence. 
	Since \ref{kan:1} and \ref{kan:2} are equivalent, and $\Bd[W^{-1}]_\oo \simeq B$ by Lemma
	\ref{lem:bloc}, we obtain an equivalence 
	\[
		\Fun(S,\D) \simeq \Fun(B,\D)
	\]
	by composing an inverse of $p$ with $q$. The equivalence $B \simeq S^{\op}$ induced by the
	duality $\xi$ then yields the desired result.
\end{proof}

\begin{proof}[Proof of Theorem \ref{thm:verdierkan}]

	Let $\F: \Qd \lra \D$ be a functor.

	We will provide concrete interpretations of the Kan extension conditions in
	\ref{kan:1} and \ref{kan:2} so as the claimed equivalence will become an apparent
	consequence of the stability of the $\infty$-category $\D$. 

	We begin with \ref{kan:1}: For every $(A,A') \in \Dd$, the category
	$(A,A')/S$ is empty so that $\F|S \cup \Dd$ is a right Kan extension of $\F|S$ if and
	only if, for every $(A,A') \in \Dd$, we have $\F(A,A') \simeq 0$.

	Suppose now that $\F$ is a left Kan extension of $\F|S \cup \Dd$. 
	We first determine the value of $\F$ at 
	\[
		(A,A')\, =\,  \disk{0}{s}\, \in\, \Bd
	\]
	as determined by the pointwise formula \eqref{eq:kan-pointwise}. 
	The overcategory $S \cup \Dd/(A,A')$ admits an $\infty$-cofinal subcategory depicted by 
	\begin{equation}\label{eq:cofinalsub1}
			\begin{tikzcd}
				&&  \rotdisk{0}{1}{90} & & \\
				& \rotdisk{0}{1}{180}\ar{ur}\ar{dl} & \ar{l} \disk{0}{0} \ar{r}\ar{d}\ar{u}\ar{dll}\ar{drr} &
				\rotdisk{0}{1}{0}\ar{ul}\ar{dr} & & \subset  \disk{0}{s}\\
				\rotdisk{0}{1}{225} & &  \rotdisk{0}{1}{-90} \ar{rr} \ar{ll}& &
				\rotdisk{0}{1}{-45} 
			\end{tikzcd}
	\end{equation}
	where the morphisms around the boundary of the $2$-simplex are given by rotation by the
	smallest possible angle so that a full turn is obtained by traversing the boundary once.
	Thus, the value of $\F$ at $(A,A')$ is determined by the colimit cone (i.e. biCartesian
	cube)
	\begin{equation}\label{eq:cube1}
			\begin{tikzcd}[column sep=.2cm,row sep=.2cm]
				\F(\disk{0}{0}) \ar{rr} \ar{dd}\ar{dr} & & \F(\rotdisk{0}{1}{0})
				\ar{dd} \ar{dr}\\
				& \F(\rotdisk{0}{1}{180}) \ar{dd}\ar{rr} & & \F(\rotdisk{0}{1}{90})\ar{dd} \\
				\F(\rotdisk{0}{1}{-90}) \ar{rr}\ar{dr} & &
				\F(\rotdisk{0}{1}{-45})\ar{dr}\\ 
				& \F(\rotdisk{0}{1}{225}) \ar{rr} & & \F(\disk{0}{s})
			\end{tikzcd}
	\end{equation}
	In particular, since $\F|\Dd \simeq 0$, this biCartesian cube induces an equivalence
	\[
		\F(\disk{0}{s}) \simeq \F(\disk{0}{0})[2].
	\]
	Similarly, for a disk of the form
	\[
		(A,A') = \disk{1b}{s} \in \Bd
	\]
	the overcategory $S \cup \Dd/(A,A')$ admits an $\infty$-cofinal subcategory depicted by 
	\begin{equation}\label{eq:cofinalsub2}
			\begin{tikzcd}
				&&  \rotdisk{1b}{1}{90} & & \\
				& \rotdisk{0}{1}{180}\ar{ur}\ar{dl} & \ar{l} \disk{0}{0} \ar{r}\ar{d}\ar{u}\ar{dll}\ar{drr} &
				\rotdisk{0}{1}{0}\ar{ul}\ar{dr} & & \subset  \disk{1b}{s}\\
				\rotdisk{0}{1}{225} & &  \rotdisk{0}{1}{-90} \ar{rr} \ar{ll}& &
				\rotdisk{0}{1}{-45} 
			\end{tikzcd}
	\end{equation}
	again exhibiting an equivalence $\F(\disk{1b}{s}) \simeq \F(\disk{0}{0})[2]$. More
	precisely, we observe that any weak equivalence
	\[
		\disk{0}{s} \to \disk{1b}{s} 
	\]
	in $\Bd$ induces an equivalence $\F(\disk{0}{s}) \to \F(\disk{1b}{s})$ in $\D$, since the
	induced map relating the above $\infty$-cofinal subcategories \eqref{eq:cofinalsub2} and
	\eqref{eq:cofinalsub1} becomes a pointwise equivalence upon applying $\F$. 

	We next describe the value
	of $\F$ at 
	\[
		(A,A') = \disk{1}{1} \in \Vd. 
	\]
	To this end, we argue that the overcategory $(S \cup
	\Dd)/(A,A')$ contains an $\infty$-cofinal subcategory of the form
	\[
			\begin{tikzcd}
				\dot\;\disk{0}{0} \ar{r}\ar{d}\ar[dashed]{drrr} & \dot\;\disk{0}{1}\ar[dashed]{drr}\\
				\disk{1}{0} \ar[dashed]{rrr} &&& \disk{1}{1}.
			\end{tikzcd}
	\]
	In particular, the value of $\F$ at $(A,A')$ is determined by the colimit cone (i.e. biCartesian
	square)
	\begin{equation}\label{eq:cube2}
			\begin{tikzcd}
				\F(\dot\;\disk{0}{0}) \ar{r} \ar{d} & \F(\dot\;\disk{0}{1}) \ar{d}\\
				\F(\disk{1}{0}) \ar{r} & \F(\disk{1}{1}). 
			\end{tikzcd}
	\end{equation}
	Since the top--right object is a zero object, this diagram exhibits $\F(A,A')$ as a
	cofiber (cone) of the morphism
	\[
			\begin{tikzcd}
				\F(\dot\;\disk{0}{0}) \ar{r} & \F(\disk{1}{0}).
			\end{tikzcd}
	\]

	Finally, it remains to characterize the value of $\F$ at 
	\[
		(A,A') = \disk{1}{s} \in \Bd.
	\]
	Similarly, as in Step 1, the overcategory $S \cup \Dd \cup \Vd/(A,A')$ admits an
	$\infty$-cofinal subcategory depicted by 
	\[
			\begin{tikzcd}
				&&  \rotdisk{1}{1}{90} & & \\
				& \rotdisk{1}{1}{180}\ar{ur}\ar{dl} & \ar{l} \disk{1}{0} \ar{r}\ar{d}\ar{u}\ar{dll}\ar{drr} &
				\rotdisk{1}{1}{1}\ar{ul}\ar{dr} & & \subset  \disk{1}{s}\\
				\rotdisk{1}{1}{225} & &  \rotdisk{1}{1}{-90} \ar{rr} \ar{ll}& &
				\rotdisk{1}{1}{-45} 
			\end{tikzcd}
	\]
	where the morphisms around the boundary of the $2$-simplex are given by rotation by the
	smallest possible angle. Thus, the value of $\F$ at $(A,A')$ is determined by the colimit cone (i.e. biCartesian
	cube)
	\begin{equation}\label{eq:cube3}
			\begin{tikzcd}[column sep=.2cm,row sep=.2cm]
				\F(\disk{1}{0}) \ar{rr} \ar{dd}\ar{dr} & & \F(\rotdisk{1}{1}{0})
				\ar{dd} \ar{dr}\\
				& \F(\rotdisk{1}{1}{180}) \ar{dd}\ar{rr} & & \F(\rotdisk{1}{1}{90})\ar{dd} \\
				\F(\rotdisk{1}{1}{-90}) \ar{rr}\ar{dr} & &
				\F(\rotdisk{1}{1}{-45})\ar{dr}\\ 
				& \F(\rotdisk{1}{1}{225}) \ar{rr} & & \F(\disk{1}{s})
			\end{tikzcd}
	\end{equation}
	In conclusion, we may characterize the functors $\F: \Qd \lra \D$ satisfying the Kan extension
	conditions of \ref{kan:1} as those functors for which $\F|\Dd \simeq 0$ and further the square 
	\eqref{eq:cube2} as well as the cubes \eqref{eq:cube1} and  \eqref{eq:cube3} are
	biCartesian. 

	We now discuss the Kan extension conditions of \ref{kan:2}. A similar argumentation as the
	one for \ref{kan:1} show that a functor $\F$ satisfies the conditions of \ref{kan:2}(2) and \ref{kan:2}(3) if and
	only if 
	\begin{enumerate}
		\item $\F|\Dd \simeq 0$, 
		\item the cubes \eqref{eq:cube1} and \eqref{eq:cube3} are limit
			cones, and hence biCartesian, 
		\item the square
			\begin{equation}\label{eq:dualcube2}
			\begin{tikzcd}
				\F(\rotdisk{1}{1}{180}) \ar{r} \ar{d} &
				\F(\rotdisk{1b}{1}{180}) \ar{d}\\
				\F(\disk{1}{s}) \ar{r} & \F(\rotdisk{1b}{s}{90})),
			\end{tikzcd}
			\end{equation}
			is biCartesian.
	\end{enumerate}
	Thus, to finish the proof, we have to argue why, assuming further \ref{kan:2}(1), 
	\eqref{eq:cube2} being biCartesian is equivalent to \eqref{eq:dualcube2} being biCartesian
	(in the presence of the remaining conditions). To show this, consider the commutative diagram
	in $\Qd$ depicted by
	\begin{equation}\label{eq:cubestack}
			\begin{tikzcd}[column sep=.2cm,row sep=.2cm]
				\disk{0}{0} \ar{rr} \ar{dd}\ar{dr} & & \rotdisk{0}{1}{0}
				\ar{dd} \ar{dr}\\
				& \rotdisk{0}{1}{180} \ar{dd}\ar{rr} & & \rotdisk{0}{1}{90}\ar{dd} \\
				\rotdisk{1}{0}{0} \ar{rr}\ar{dr}\ar{dd} & &
				\rotdisk{1}{1}{0}\ar{dr}\ar{dd}\\ 
				& \rotdisk{1}{1}{180} \ar{dd}\ar{rr} & & \rotdisk{1}{1}{90}\ar{dd} \\
				\rotdisk{1}{1}{-90} \ar{rr}\ar{dr}\ar{dd} & &
				\rotdisk{1}{1}{-45}\ar{dr}\ar{dd}\\ 
				& \rotdisk{1}{1}{225} \ar{rr}\ar{dd} & & \disk{1}{s} \ar{dd}\\
				\rotdisk{1b}{1}{-90} \ar{rr}\ar{dr} & &
				\rotdisk{1b}{1}{-45}\ar{dr}\\ 
				& \rotdisk{1b}{1}{225} \ar{rr} & & \disk{1b}{s}.
			\end{tikzcd}
	\end{equation}
	consisting of three stacked cubes, and further, the diagram in $\D$ obtained by applying
	$\F$. The middle cube is identical to \eqref{eq:cube3} which is biCartesian. Furthermore,
	the cube given by the composite of the three cubes coincides may be decomposed as
	\begin{equation}
			\begin{tikzcd}[column sep=.2cm,row sep=.2cm]
				\disk{0}{0} \ar{rr} \ar{dd}\ar{dr} & & \rotdisk{0}{1}{0}
				\ar{dd} \ar{dr}\\
				& \rotdisk{0}{1}{180} \ar{dd}\ar{rr} & & \rotdisk{0}{1}{90}\ar{dd} \\
				\rotdisk{0}{1}{-90} \ar{rr}\ar{dr}\ar{dd} & &
				\rotdisk{0}{1}{-45}\ar{dr}\ar{dd}\\ 
				& \rotdisk{0}{1}{225} \ar{rr}\ar{dd} & &
				\disk{0}{s}\ar{dd}\\
				\rotdisk{1b}{1}{-90} \ar{rr}\ar{dr} & &
				\rotdisk{1b}{1}{-45}\ar{dr}\\ 
				& \rotdisk{1b}{1}{225} \ar{rr} & & \disk{1b}{s}.
			\end{tikzcd}
	\end{equation}
	where the top cube coincides with \eqref{eq:cube1} and the bottom cube is biCartesian
	since all vertical maps are weak equivalences which, 
	by \ref{kan:2}(1), are mapped to equivalences by $\F$. Thus the composite cube is
	biCartesian as well. 
	The front face of the top cube in \eqref{eq:cubestack} is biCartesian, since it contains two parallel arrows 
	that are equivalences. Proposition \ref{prop:cuberecursive} implies that the top cube is
	biCartesian if and only if its back face, which coincides with \eqref{eq:cube2}, is
	biCartesian. By the same argument, the bottom cube will be biCartesian if and only if its
	front face, which coincides with \eqref{eq:dualcube2}, is biCartesian. As a consequence, the
	two-out-of-three property for the pasting of
	biCartesian cubes (Proposition \ref{prop:2out3}) implies that \eqref{eq:cube2} is
	biCartesian if and only if \eqref{eq:dualcube2} is biCartesian, concluding our argument.
\end{proof}

\section{Milnor sheaves}
\label{sec:milnor}

By Corollary \ref{cor:constructible}, the $\infty$-category of constructible sheaves $\Sh(X,N;\D)$ with values in a stable
$\infty$-category $\D$ may be parametrized in terms of standard disks: there is an equivalence
\begin{equation}\label{eq:constructible}
		\Sh(X,N;\D) \simeq \Fun(S(X,N)^{\op}, \D).
\end{equation}
If $\D$ is the derived category of an abelian category $\A$, then this equivalence restricts to an equivalence 
\[
	\Sh(X,N;\A) \simeq \Fun(S(X,N)^{\op}, \A).
\]
In other words, the equivalence \eqref{eq:constructible} is compatible with the standard
$t$-structure on $\Sh(X,N;\D)$. In this section, we provide yet another parametrization of
$\Sh(X,N;\D)$, in terms of Milnor disks, which is in the same sense compatible with the perverse
$t$-structure. In particular, it provides an intrinsically abelian description of the category
of perverse sheaves. 

\subsection{Constructible sheaves as Milnor sheaves}

Let $(X,N)$ be a stratified surface and let $\Lambdad(X,N)$ denote its directed paracyclic category.
A {\em collared cut} of an object $(A,A') \in \Lambdad(X,N)$ consists of
\begin{itemize}
	\item a {\em cut} $\alpha$, by which we mean an embedding $\alpha: I \to A$ with
		$\alpha^{-1}(\partial A) = \{0,1\}$. We denote the two connected components of the
		complement if $\alpha(I)$ in $A$ by $U_1$ and $U_2$. 
	\item a {\em collar} for $\alpha$, by which we mean a continuous map $G: [-1,1] \times I \to
		A$ such that
		\begin{itemize}
			\item $G(0,t) = \alpha(t)$, 
			\item for every $s \in [-1,1]$, the map $G(s,-)$ is a cut, 
			\item $G([-1,1] \times I) \cap \partial A' = \emptyset$,
			\item $G(\{-1,1\} \times I) \cap N = \emptyset$,
		\end{itemize}
		We denote $C = G([-1,1] \times I)$ and $A_1 = U_1 \cup C$ and $A_2 = U_2 \cup C$.
\end{itemize}
Associated to a collared cut, there is a commutative square
\begin{equation}\label{eq:cutsquare}
		\begin{tikzcd}
			(C,C \cap A') \ar{d} \ar{r} &  (A_1, A_1 \cap A') \ar{d}\\
			(A_2, A_2 \cap A') \ar{r} &  (A, A')
		\end{tikzcd}
\end{equation}
in $\Lambdad(X,N)$.

\begin{defi} Let $\D$ be a pointed $\infty$-category. A functor $\F: \Md(X,N) \to \D$ is called
	a {\em Milnor cosheaf} if
	\begin{enumerate}
		\item $\F$ maps weak equivalences in $\Md(X,N)$ to equivalences in $\D$,
		\item $\F$ maps objects of the form
			\[
				\disk{1b}{1},\disk{0}{1} \in \Md(X,N) 
			\]
			to a zero object,
		\item for every object $(A,A')$ of $\Md(X,N)$ and for every collared cut of
			$(A,A')$, such that the associated diagram \eqref{eq:cutsquare} takes
			values in $\Md(X,N)$, $\F$ maps \eqref{eq:cutsquare} to a coCartesian square
			in $\D$.
	\end{enumerate}
	Dually, $\F: \Md(X,N)^{\op} \to \D$ is called a {\em Milnor sheaf} if $\F^{\op}$ is a
	Milnor cosheaf. 
	We denote by $\Fun^{\sharp}(\Md(X,N),\D)$ the $\infty$-category of Milnor cosheaves and by
	$\Fun^{\sharp}(\Md(X,N)^{\op},\D)$ the $\infty$-category of Milnor sheaves defined as full
	subcategories of the respective functor categories. 
\end{defi}

\begin{defi} Let $(A,\emptyset)$ be an object of $S(X,N)$. We denote by $\Lambda_A$ the subcategory
	of $\Md(X,N)$ with objects $(A,A')$ and morphisms, represented by an isotopy $H: I \times
	\DD \to X$ such that, for every $t \in I$, $H_t(\DD) = A$. We further denote by
	$\Lambda_A^+ = \Lambda_A \cup (A,\emptyset)$ obtained by adjoining the initial object
	$(A,\emptyset)$. 
\end{defi}

\begin{rem}\label{rem:paracyclic} For every $(A, \emptyset)$, the category $\Lambda_{A}$
	is equivalent to the paracyclic category $\Lambda_{\infty}$.
\end{rem}

We say that a functor $\F: \Md(X,N) \lra \D$ is {\em locally Segal} if, for every $(A,\emptyset)
\in S(X,N)$, with $\partial A \cap N = \emptyset$, the object $\F|\Lambda_{A}$ is a Segal object,
i.e., the restriction along an embedding $\Delta \to \Lambda_{A}$ is Segal.

\begin{prop}\label{prop:milnminimal} 
	Let $\D$ be a stable $\infty$-category and let $\F: \Md(X,N) \to \D$ be a functor. Then $\F$
	is a Milnor cosheaf if and only if the following hold:
	\begin{enumerate}
		\item\label{miln:0} $\F$ maps weak equivalences to equivalences in $\D$.
		\item\label{miln:1} $\F$ is locally Segal.
		\item\label{miln:2} For every $x \in N$, $\F$ maps any square of the form 
			\[
			\begin{tikzcd}
				\disk{0}{1} \ar{r}\ar{d} & \disk{1}{1}\ar{d}\\
				\disk{0}{2} \ar{r} & \disk{1}{2}
			\end{tikzcd}
			\]
			to a coCartesian square in $\D$.
	\end{enumerate}
\end{prop}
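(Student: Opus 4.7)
The plan is to prove both implications of the stated equivalence. The forward direction is largely geometric: conditions (2) and (3) are recognized as specific instances of the collared cut condition (c) in the Milnor cosheaf definition. The backward direction is the substantive content: it requires deriving the zero condition (b) from (1), (2), (3), and then reducing an arbitrary collared cut square to a pasting of elementary squares that are governed by (2) and (3).

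For the forward direction, condition (1) coincides with the weak equivalence axiom in the Milnor cosheaf definition. For condition (3), I would exhibit the stated square as the cut square \eqref{eq:cutsquare} associated to a collared cut of $\disk{1}{2}$ whose separating arc passes transversally through one boundary interval while leaving the interior special point strictly on one side; the coCartesian conclusion then follows from (c). For condition (2), fix $(A,\emptyset) \in S(X,N)$ with $\partial A \cap N = \emptyset$, and for $(A,A'_n) \in \Lambda_A$ with $n+1$ boundary intervals choose $n$ disjoint collared cuts, each passing transversally through one interval of $A'_n$. Iterating the resulting coCartesian squares from (c), and using that each collar piece $(C, C\cap A'_n)$ is a one-interval Milnor disk sent to zero by (b), produces the Segal identification of $\F|\Lambda_A$.

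For the backward direction, one must first establish that $\F$ vanishes on one-interval Milnor disks. The vanishing of $\F(\disk{0}{1})$ is extracted from the local Segal condition on $\F|\Lambda_A$ combined with the paracyclic structure of $\Lambda_A \cong \Lambda_\infty$: the rotational symmetries together with the iterated Segal pushout decomposition constrain $\F(\disk{0}{1})$ to be an initial, hence (by stability) zero object of $\D$. The vanishing $\F(\disk{1b}{1}) \simeq 0$ then follows from (1) applied to the weak equivalence $\disk{0}{1} \to \disk{1b}{1}$ realized by an isotopy that brings the special point in through the boundary interval. The general collared cut condition (c) is then obtained by decomposing an arbitrary cut square into a pasting of two types of elementary squares: Segal-type cuts entirely away from any special point (covered by (2)), and special-point cuts that isolate a single special point (covered by (3), possibly after first preparing with Segal cuts). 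The coCartesian conclusion follows from iterated application of the pasting lemma for biCartesian squares in the stable $\infty$-category $\D$, together with the zero-object collapse on one-interval collars coming from (b).

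The main obstacle lies in the backward direction. The derivation of (b) from (1) and (2) is delicate because the Segal condition on a (co)simplicial object does not by itself constrain its $0$-th level; the argument depends on a subtle interplay between the Segal decompositions of $\F(A, A'_n)$ for $n \geq 2$ and the paracyclic automorphisms of $\Lambda_A$. The reduction of an arbitrary collared cut to a pasting of elementary squares is a combinatorial task that requires care to ensure that every intermediate piece is a genuine Milnor disk, i.e., contains at most one special point and has at least one boundary interval, so that $\F$ is actually defined on all objects appearing in the pasting, and that the arrangement of pushouts respects the vanishing on one-interval disks forced by (b).
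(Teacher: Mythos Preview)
Your forward direction and the overall strategy for the collared-cut condition are in the spirit of the paper's proof. The paper, however, organizes the backward cut argument more crisply: for a Milnor disk with no special point, a cut meeting exactly one boundary interval is precisely a Segal square \eqref{eq:segal2}, while a cut meeting two intervals is reduced to the one-interval case by sliding an endpoint out of its interval (the ``$1$-Segal $\Rightarrow$ $2$-Segal'' trick); for a disk containing a special point, the same analysis handles cuts through the special point, and a cut avoiding the special point is handled by induction on the number of boundary intervals, using condition~(3) together with one further cut. Your ``pasting of elementary squares'' is the same idea, but you should articulate this case structure rather than leave it as a combinatorial task.

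There is, however, a genuine gap in your derivation of the zero condition. Your claim that $\F(\disk{0}{1}) \simeq 0$ can be extracted from the local Segal condition plus the paracyclic symmetry does not hold: the Segal conditions on a cosimplicial object place no constraint whatsoever on $X_0$, and the paracyclic automorphisms $t_n$ add nothing here. Concretely, take any nonzero object $A$ of $\D$ and let $\F$ be the constant functor with value $A$. Then $\F$ sends weak equivalences to identities, so (1) holds; its restriction to each $\Lambda_A$ is the constant paracyclic object, whose simplicial restriction is trivially Segal (all Segal squares are the identity square), so (2) holds; and the square in (3) is the constant square, hence coCartesian, so (3) holds. Yet $\F(\disk{0}{1}) = A \not\simeq 0$. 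Thus the zero condition is \emph{not} a consequence of (1)--(3), and your proposed mechanism cannot work. You should note that the paper's own proof simply does not address this point: it verifies only the cut condition from (2) and (3), never returning to the zero condition. In the paper's sole application (the proof of Theorem~\ref{thm:main}) the vanishing $\F|\Dd(X,N)\simeq 0$ is already part of the hypothesis, so nothing downstream is affected; but as literally stated the proposition requires an additional hypothesis, and your attempt to supply the missing argument does not succeed.
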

\begin{proof}
	Condition \ref{miln:0} appears directly in the definition of a Milnor cosheaf.

	Every Milnor cosheaf satisfies the conditions \ref{miln:1} and \ref{miln:2} since, using
	the version \eqref{eq:segal2} of the Segal conditions, the respective coCartesian squares
	all arise from collared cuts. 

	Suppose now that $\F$ satisfies \ref{miln:1} and \ref{miln:2}. Let $(A,\emptyset) \in
	\Md(X,N)$ with $A \cap N = \emptyset$. Then the local Segal conditions
	imply that \eqref{eq:cutsquare} is coCartesian for every cut which only intersects one of
	the boundary intervals. If the cut $\alpha$ intersects two boundary intervals, then it is
	straightforward to deduce that \eqref{eq:cutsquare} is coCartesian by considering cuts
	$\alpha_1$ of $(A_1, A_1 \cap A')$ and $\alpha_2$ of 
	$(A_2,A_2 \cap A')$ which are obtained by sliding the endpoint of
	$\alpha$ out of the boundary interval towards the two possible directions (In the language
	of \cite{HSS}, this amounts to the statement that every $1$-Segal object is $2$-Segal).

	By the exact same argumentation, we deduce that \eqref{eq:cutsquare} is coCartesian for
	$(A,\emptyset) \in \Md(X,N)$ with $A \cap N = \{x\}$ as long as the cut
	$\alpha$ runs through the special point $x$. It remains to verify the coCartesianess of
	\eqref{eq:cutsquare} for a cut $\alpha$ which does {\em not} run through $x$. But this case
	can be reduced to \eqref{miln:2} by induction on the number of boundary intervals: The
	induction step is obtained by introducing one additional cut which runs either through the
	special point $x$, or lies completely in the component of $A \setminus \alpha(I)$ which
	does not contain $x$.
\end{proof}

We denote by 
\[
	\Mdp(X,N) \subset \Lambdad(X,N)
\]
the full subcategory spanned by the standard and Milnor disks and by
\[
	\Lambdad(X,N)_{\le n} \subset \Mdp(X,N)
\]
the full subcategory consisting of objects $(A,A')$ such that $A'$ has at most $n$ connected
components.

Further, we denote by 
\[
	\Dd(X,N) \subset \Lambdad(X,N)
\]
the full subcategory of objects $(A,A') \in \Lambdad(X,N)$ of the form
\[
	\disk{0}{1},\disk{1b}{1} \in \Lambdad(X,N).
\]

\begin{thm}\label{thm:main} Let $\F: \Mdp(X,N) \lra \D$ be a functor.
	Then the following are equivalent:
	\begin{enumerate}
		\item\label{main:1} $\F|\Dd(X,N) \simeq 0$ and $\F$ is a left Kan extension of
			$\F|S(X,N) \cup \Dd(X,N)$. 
		\item\label{main:2} $\F|\Md(X,N)$ is a Milnor cosheaf and $\F$ is a right Kan extension of 
			$\F|\Md(X,N)$. 
	\end{enumerate}
\end{thm}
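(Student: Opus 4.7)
This theorem is the Milnor-disk analogue of Theorem \ref{thm:verdierkan}, relating a ``minimal'' left Kan extension description (from $S \cup \Dd$, with vanishing on $\Dd$) to a ``maximal'' right Kan extension description (from all of $\Md$, with a Milnor cosheaf condition in place). I would follow the exact strategy used in the proof of Theorem \ref{thm:verdierkan}: unpack both Kan extension conditions into a common list of biCartesian diagrams and vanishings on $\Mdp(X,N)$, and then observe that the two lists coincide. The key technical inputs are the pointwise Kan extension formula together with Proposition \ref{prop:kanres} (mediating Kan extensions across chains of subcategory inclusions), and the concrete characterization of Milnor cosheaves provided by Proposition \ref{prop:milnminimal}.

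\textbf{Unpacking (1).} Assuming $\F|\Dd \simeq 0$ and that $\F$ is left Kan extended from $S \cup \Dd$, I would, for each Milnor disk $(A,A')$, identify an $\infty$-cofinal subcategory of $(S \cup \Dd)/(A,A')$ built from the central standard disk $(A,\emptyset)$ together with a rotating family of one-interval $\Dd$-type disks around $\partial A$, one for each connected component of $A'$. Mirroring the cube calculations used in the proof of Theorem \ref{thm:verdierkan} and exploiting the stability of $\D$ together with $\F|\Dd \simeq 0$, these cofinal diagrams express $\F(A,A')$ as an iterated pushout of copies of $\F(A,\emptyset)$. These pushouts are precisely the local Segal relations (for $A$ without a special point) and the special-point square (for $A$ containing a special point) appearing in Proposition \ref{prop:milnminimal}, so $\F|\Md$ is a Milnor cosheaf. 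Weak equivalences go to equivalences by the same formal pattern as in the proof of Theorem \ref{thm:verdierkan}.

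\textbf{Verifying the right Kan extension condition.} To finish $(1) \Rightarrow (2)$, I need to check that $\F$ on $\Mdp$ is the right Kan extension of $\F|\Md$. By Proposition \ref{prop:kanres} this reduces to verifying that for each $(A,\emptyset) \in S$, the limit of $\F$ over the undercategory $(A,\emptyset)/\Md$ recovers $\F(A,\emptyset)$. Using Remark \ref{rem:paracyclic}, the relevant $\infty$-cofinal subcategory is a copy of $\Lambda_A^+ \setminus \{(A,\emptyset)\} \simeq \Lambda_\infty$ (with a minor variation when $A$ contains a special point). The Segal structure on $\F|\Lambda_A$ together with the vanishing $\F(\disk{0}{1}) \simeq 0$ (respectively $\F(\disk{1b}{1}) \simeq 0$) forces this limit to collapse onto $\F(A,\emptyset)$ via stability of $\D$.

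\textbf{The converse $(2) \Rightarrow (1)$.} Suppose $\F|\Md$ is a Milnor cosheaf and $\F$ is right Kan extended from $\Md$. The essential step is establishing $\F|\Dd \simeq 0$. Combining the Segal structure of $\F|\Lambda_A$ with the limit formula for the right Kan extension at $(A,\emptyset)$ yields a canonical cofiber sequence relating $\F(A,\emptyset)$, $\F(\disk{0}{1})$, and $\F(\disk{0}{0})$; the requirement that this limit agree with $\F(A,\emptyset)$ through the totalization of the paracyclic object $\F|\Lambda_A$ is consistent only when $\F(\disk{0}{1}) \simeq 0$, and symmetrically for $\F(\disk{1b}{1})$ using the special-point square of Proposition \ref{prop:milnminimal}(3). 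Once $\F|\Dd \simeq 0$ is in hand, the left Kan extension property along $S \cup \Dd \subset \Mdp$ follows by reversing the cofinal analysis from the unpacking of (1), again invoking stability of $\D$.

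\textbf{Main obstacle.} The principal difficulty lies in the converse direction: extracting the vanishing $\F|\Dd \simeq 0$ from the conjunction of the Milnor cosheaf condition and the right Kan extension formula requires carefully navigating the augmented paracyclic undercategories and exploiting stability of $\D$ to pin down a particular degree shift rather than a mere suspension. The forward direction, while computationally intricate due to the $n$-interval case having to be handled inductively, is a systematic application of cofinal subcategory analysis of the type already executed for Theorem \ref{thm:verdierkan}.
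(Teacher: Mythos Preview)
Your overall strategy for $(1)\Rightarrow(2)$ is essentially the paper's: identify $\infty$-cofinal subcategories of the overcategories $(S\cup\Dd)/(A,A')$, read off the Segal conditions and the special-point square of Proposition~\ref{prop:milnminimal}, then verify the right Kan extension by showing $(A,\emptyset)/\Lambda_A \subset (A,\emptyset)/\Md$ is $\infty$-coinitial and invoking Proposition~\ref{prop:localpara}. (You do not mention Proposition~\ref{prop:localpara}, but it is precisely what makes the limit over $\Lambda_A$ collapse back to $\F(A,\emptyset)$; your phrase ``forces this limit to collapse onto $\F(A,\emptyset)$ via stability of $\D$'' is where it enters.)

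There is, however, a genuine misstep in your converse direction. You identify ``extracting the vanishing $\F|\Dd \simeq 0$'' as the principal difficulty, and propose to derive it from an interplay between the Segal structure and the right Kan extension formula. But this vanishing is \emph{free}: it is condition~(2) in the definition of a Milnor cosheaf, so it is part of the hypothesis in~\ref{main:2}. There is nothing to extract.

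The real content of $(2)\Rightarrow(1)$ is showing that $\F$ is a left Kan extension of $\F|S\cup\Dd$. The paper handles this in one line by observing that the cofinal analysis carried out in the forward direction has already characterized the pointwise left Kan extension condition at each $(A,A')\in\Md$ as a finite list of biCartesian diagrams (the Segal squares and the special-point square), and then invoking the \emph{converse} implication $(2)\Rightarrow(1)$ of Proposition~\ref{prop:localpara} (and its underlying Proposition~\ref{prop:combcech}) to deduce these from the Segal condition together with the right Kan extension hypothesis. Your phrase ``reversing the cofinal analysis'' is the right instinct, but the mechanism that makes the reversal work is exactly Proposition~\ref{prop:localpara}, which you do not invoke.
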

\begin{proof}
	Suppose that $\F$ is a left Kan extension of $\F|S(X,N) \cup \Dd(X,N)$. To show that $\F|\Md(X,N)$ is
	a Milnor cosheaf, we verify conditions \ref{miln:1} and \ref{miln:2} of Proposition \ref{prop:milnminimal}.
	Let $(A,A') \in \Md(X,N)$ with $A \cap N$ empty. Then the inclusion
	\[
		\Lambdap_{A, \le 1}/(A,A') \subset (S(X,N) \cup \Dd(X,N))/(A,A')
	\]
	is an equivalence of categories and hence $\infty$-cofinal. In particular, by Proposition
	\ref{prop:localpara}, $\F|\Lambda_{A}$ is a Segal object. 

	Now let 
	\[
		(A,A') = \disk{1}{1} \in \Md(X,N)
	\]
	such that $A \cap N = \{x\} \subset A \setminus A'$ is a singleton.  
	Then the inclusion 
	\[
		(S(A,\{x\}) \cup \Dd(A,\{x\})/(A,A') \subset (S(X,N) \cup \Dd(X,N))/(A,A'),
	\]
	where the first undercategory is taken in $\Md(U,\{x\})$, is an equivalence, in
	particular $\infty$-cofinal. Now the category $(S(A,\{x\}) \cup D(A,\{x\})/(A,\varphi)$ is equivalent
	to the category depicted by
	\[
			\begin{tikzcd}
				\arrow[loop left]{l}{\ZZ}\dot\;\disk{0}{0} \ar{r}{\ZZ} \ar{d} & \dot\;\disk{0}{1} \ar[dashed]{d}\\
				\disk{1}{0} \ar[dashed]{r} & \disk{1}{1}. 
			\end{tikzcd}
	\]
	where the automorphisms $\ZZ$ correspond to the disk moving around the special point $x$, so
	that it is equivalent to the category depicted by
	\[
			\begin{tikzcd}
				\arrow[loop left]{l}{\ZZ}\bullet \ar{r}{\ZZ} \ar{d} & \bullet \\
				\bullet.
			\end{tikzcd}
	\]
	This latter category contains the $\infty$-cofinal subcategory
	\[
			\begin{tikzcd}
				\bullet \ar{r} \ar{d} & \bullet \\
				\bullet,
			\end{tikzcd}
	\]
	so that the pointwise left Kan extension condition for $\F(A,A')$ is thus equivalent to the
	square
	\begin{equation}\label{eq:pushout1}
			\begin{tikzcd}
				\F(\dot\;\disk{0}{0}) \ar{r} \ar{d} & \F(\dot\;\disk{0}{1}) \ar{d}\\
				\F(\disk{1}{0}) \ar{r} & \F(\disk{1}{1}). 
			\end{tikzcd}
	\end{equation}
	being coCartesian. 
	For a more general $(A,A') \in M(X,N)$ with $A \cap N = \{x\} \subset A \setminus A'$, by a similar
	argument, the category $(S(X,N) \cup D(X,N))/(A,A')$ contains 
	a cofinal subcategory $C$ of the form
	\begin{equation}\label{eq:cofinal1}
			\begin{tikzcd}
				& & \dot\;\disk{0}{0} \ar{dll} \ar{dl} \ar{dr} & &\\ 
				\disk{1}{0}\ar[dashed,swap]{drr}{f} & \dot\;\disk{0}{1}\ar[dashed]{dr}{f_1} & \dots &
				\dot\;\disk{0}{1}\ar[dashed]{dl}{f_n} \\
				&& (A,A') &&
			\end{tikzcd}
	\end{equation}
	where 
	\[
		f: \disk{1}{0} \to (A,A')
	\]
	is given by the constant isotopy and 
	the morphisms 
	\[
		f_i: \dot\;\disk{0}{1} \to (A,A')
	\]
	enter the special point and map the unique interval to the $i$th interval of $A$ (with
	respect to some chosen order). We have a functor from the category
	\[
			\begin{tikzcd}
				&  & 0 \ar{dll}\ar{dl} \ar{dr} &\\ 
				1 & 2 & \dots & n 
			\end{tikzcd}
	\]
	to $\on{Cat}/C$ by associating to $0$ the subcategory
	\[
		\begin{tikzcd}
			\dot\;\disk{0}{0} \ar{r} & \disk{1}{0} \ar[dashed]{r}{f} & (A,A') 
	 	\end{tikzcd}
	\]
	of $C$ and to $i > 0$ the subcategory 
	\[
			\begin{tikzcd}
				\dot\;\disk{0}{0} \ar{r}\ar{d} & \disk{0}{1}\ar[dashed]{d}{f_i}\\
				\disk{1}{0}\ar[dashed]{r}{f} & (A,A').
			\end{tikzcd}
	\]
	An application of \cite[4.2.3.10]{HTT}, using that \eqref{eq:pushout1} is a pushout, implies
	that the pointwise left Kan condition for $(A,A')$ is equivalent to the diagram 
	\[
			\begin{tikzcd}
				&  \F(\disk{1}{0}) \ar{dl}\ar{dr} & &\\ 
				\F(\disk{1}{1})\ar[swap]{dr}{g_1} & \dots &
				\F(\disk{1}{1})\ar{dl}{g_n} \\
				& \F(A,A') &&
			\end{tikzcd}
	\]
	being a colimit cone. Here the maps $g_i: \disk{1}{1} \to (A,A')$ are morphisms in
	$\Lambda_{A}$ which move the interval into the various intervals comprising $A'$. In
	particular, this implies that the diagram $\F|\Lambda_{A}^+$ is a left Kan extension of
	its restriction to $\F|\Lambda_{A, \le 1}^+$ so that $\F|\Lambda_{A}$ satisfies
	the Segal conditions by Proposition \ref{prop:localpara}. We have thus shown that $\F$ is locally Segal. 

	A similar argument shows that the value of $\F$ at a disk $(A,A')$ with
	$A' \cap N = \{x\}$ is determined by the colimit cone
	\begin{equation}
			\begin{tikzcd}
				& & \F(\dot\;\disk{0}{0}) \ar{dll} \ar{dl} \ar{dr} & &\\ 
				\F(\disk{1b}{1}) \ar[dashed,swap]{drr}{f} & \F(\dot\;\disk{0}{1})\ar[dashed]{dr}{f_1} & \dots &
				\F(\dot\;\disk{0}{1}) \ar[dashed]{dl}{f_n} \\
				&& \F(A,A') &&
			\end{tikzcd}
	\end{equation}
	Further, the Segal conditions for $\F$ at a disk $(A_0,A_0')$, obtained by moving $(A,A')$
	away from the special point $x$ so that $A_0 \cap N = \emptyset$, imply that the diagram  
	\begin{equation}
			\begin{tikzcd}
				& & \F(\dot\;\disk{0}{0}) \ar{dll} \ar{dl} \ar{dr} & &\\ 
				\F(\dot\;\disk{0}{1}) \ar[dashed,swap]{drr}{f} & \F(\dot\;\disk{0}{1})\ar[dashed]{dr}{f_1} & \dots &
				\F(\dot\;\disk{0}{1}) \ar[dashed]{dl}{f_n} \\
				&& \F(A_0,A'_0) &&
			\end{tikzcd}
	\end{equation}
	is a colimit cone. Since the map 
	\[
		\F(\dot\;\disk{0}{1}) \lra \F(\disk{1b}{1})
	\]
	is, as a map between zero objects, an equivalence, we deduce from the induced map on colimit
	cones that the map $\F(A_0,A_0') \to \F(A,A')$ is an equivalence as well. In particular,
	$\F$ maps weak equivalences to equivalences in $\D$. 

	Condition \ref{miln:2} follows by applying \cite[4.2.3.10]{HTT} to \eqref{eq:cofinal1}
	for $n=2$ with respect to the functor from the category 
	\[
			\begin{tikzcd}
				0 \ar{d}\ar{r} & 2\\ 
				1  
			\end{tikzcd}
	\]
	into $\on{Cat}/C$ which associates to $0$ the subcategory
	\[
		\begin{tikzcd}
			\dot\;\disk{0}{0} \ar{r} & \disk{0}{1} \ar[dashed]{r}{f_1} & (A,A') 
	 	\end{tikzcd}
	\]
	to $1$ the subcategory
	\[
			\begin{tikzcd}
				\dot\;\disk{0}{0} \ar{r}\ar{d} & \disk{1}{0}\ar[dashed]{d}{f}\\
				\disk{0}{1}\ar[dashed]{r}{f_1} & (A,A')
			\end{tikzcd}
	\]
	and to $2$ the subcategory
	\[
			\begin{tikzcd}
				\dot\;\disk{0}{0} \ar{r}\ar{d} & \disk{0}{1}\ar[dashed]{d}{f_2}\\
				\disk{0}{1}\ar[dashed]{r}{f_1} & (A,A').
			\end{tikzcd}
	\]

	The above statements impy that $\F|\Md(X,N)$ is a Milnor sheaf. It remains to show that $\F$ is a right Kan
	extension of $\F|\Md(X,N)$. To this end, let 
	\[
		(A,\emptyset) = \disk{1}{0} \in S(X,N) 
	\]
	with $A \cap N = \{x\}$ a singleton. Then it is easily seen that the inclusion
	\[
		(A,\emptyset)/\Lambda_{A} \subset (A,\emptyset)/\Md(X,N),
	\]
	where the left-hand overcategory is taken in the category $\Lambdap_{A}$, is
	$\infty$-coinitial. Thus, by Proposition \ref{prop:localpara} below, the value of $\F$ at
	$(A,\emptyset)$ is given by right Kan extension of $\F|\Md(X,N)$. 

	Finally, consider 
	\[
		(A,\emptyset) = \disk{0}{0} \in S(X,N) 
	\]
	with $A \cap N$ empty. Again, we consider the inclusion
	\[
		j: (A,\emptyset)/\Lambda_{A} \subset (A,\emptyset)/\Md(X,N).
	\]
	We claim that $j$ is $\infty$-coinitial. To this
	end, we have to verify, for every $f: (A,\emptyset) \to (A_1,A_1') \in
	(A,\emptyset)/M(X,N)$, that $j/f$ is contractible. 
	This statement is clear if $A_1 \cap N = \emptyset$. Suppose now that $A_1 \cap N = \{x\}
	\subset A_1 \setminus A'_1$. In this case, we proceed by exhibiting a contractible
	$\infty$-cofinal subcategory of $j/f$: Fix an object $a_0$
	of $j/f$ whose underlying disk $(B,B')$ has $|\pi_0(B')| = |\pi_0(A'_1)|+1$ boundary
	components and such that the map $(B, B') \to (A_1,A'_1)$ includes $|\pi_0(A'_1)|-1$ intervals
	of $B'$ into respective intervals of $A'_1$ and includes the two intervals adjacent to the entry
	location of $x$ into the remaining interval of $A'_1$. There are objects $\{a_i | i \in \ZZ\}$
	of $j/f$ which differ from $a_0$ in that the entry point of $x$ lies $i$ segments in $S^1
	\setminus B'$ away from the entry point of $x$ for $a_0$. For $i \in \ZZ$, we denote by
	$a_i^+$ and $a_i^-$ the two objects of $j/f$ obtained by omitting one of the intervals of
	$B'$ adjacent to the entry point of $x$. The full subcategory of $j/f$ spanned by these
	objects has the form:
	\[
		\begin{tikzcd}
			\dots& a_{-1} & \ar{l}	a_{-1}^+ = a_0^- \ar{r} & a_0 & a_0^+ =a_1^- \ar{l}
			\ar{r} & a_1 & \dots .
		\end{tikzcd}
	\]
	It is now straighforward to verify that this subcategory is cofinal in $j/f$ and, since it
	is further contractible, the claim follows. Finally, the contractibility of $j/f$ in the
	remaining case where $x \in A_1' \cap N$ is immediate.

	Therefore, by Proposition \ref{prop:localpara}, the value of $\F$ at $(A,\emptyset)$ is
	also given by right Kan extension of $\F|\Md(X,N)$ so that, in conclusion, $\F$ is a right Kan
	extension of $\F|\Md(X,N)$. 

	The converse implication $\ref{main:2} \Rightarrow \ref{main:1}$ is a consequence of the above
	argumentation and the converse implication $(2) \Rightarrow (1)$ of Proposition
	\ref{prop:localpara}.  
\end{proof}

\begin{rem}\label{rem:cycliccosheaves}
	In the context of Theorem \ref{thm:main}, let
	\[
		\Fun^{\sharp}(\Mdp(X,N),\D) \subset \Fun(\Mdp(X,N),\D)
	\]
	denote the full subcategory consisting of those functors that satisfy the equivalent conditions
	\ref{main:1} and \ref{main:2}. By arguments analogous to the ones in the proof of Proposition
	\ref{prop:milnminimal}, it can be shown that the objects of $\Fun^{\sharp}(\Mdp(X,N),\D)$ are
	precisely the {\em cyclic cosheaves}, namely functors $\F: \Mdp(X,N) \to \D$ such that
	\begin{enumerate}
		\item $\F$ maps objects of the form
			\[
				\disk{1b}{1},\disk{0}{1} \in \Mdp(X,N) 
			\]
			to zero objects in $\D$,
		\item for every object $(A,A')$ and for every collared cut of $(A,A')$,
			$\F$ maps the associated square \eqref{eq:cutsquare} to a coCartesian square
			in $\D$.
	\end{enumerate}
\end{rem}

\begin{cor}\label{cor:milnorparam} Let $(X,N)$ be a stratified surface and $\D$ a stable $\infty$-category. Then there are
	equivalences of stable $\infty$-categories
	\[
		\begin{tikzcd}
			&\ar[swap]{dl}{\rho_1} \Fun^{\sharp}(\Mdp(X,N), \D)
			\ar{dr}{\rho_2} & \\
			\Fun^{\sharp}(\Md(X,N), \D) & & \Fun(S(X,N), \D)
		\end{tikzcd}
	\]
	given by restriction along $\Md(X,N) \subset \Mdp(X,N)$ and $S(X,N) \subset
	\Mdp(X,N)$, respectively. In particular, via the equivalence $\Sh(X,N;\D) \simeq
	\Fun(S(X,N)^{\op},\D)$ from Corollary \ref{cor:constructible}, there is a canonical equivalence
	\[
		\Sh(X,N;\D) \simeq \Fun^{\sharp}(\Md(X,N)^{\op}, \D). 
	\]
\end{cor}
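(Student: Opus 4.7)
The plan is to deduce this Corollary directly from Theorem \ref{thm:main} by viewing $\Fun^{\sharp}(\Mdp(X,N),\D)$ (as defined in Remark \ref{rem:cycliccosheaves}) as the common ``middle term'' and realizing the two restriction functors $\rho_1,\rho_2$ as the equivalences implicit in conditions \ref{main:1} and \ref{main:2} of that theorem.

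More precisely, the first step is to treat $\rho_2$. By definition, a functor $\F:\Mdp(X,N)\to \D$ lies in $\Fun^{\sharp}(\Mdp(X,N),\D)$ iff it satisfies the Kan extension condition \ref{main:1}, i.e., $\F|\Dd(X,N)\simeq 0$ and $\F$ is a left Kan extension of $\F|S(X,N)\cup\Dd(X,N)$. The standard Kan extension formalism (Proposition \ref{prop:kanres}) identifies the full subcategory of $\Fun(S(X,N)\cup\Dd(X,N),\D)$ spanned by functors which vanish on $\Dd(X,N)$ with $\Fun(S(X,N),\D)$ (by extending by zero, which is simultaneously a left and right Kan extension because the inclusion $\Dd(X,N) \hookrightarrow S(X,N)\cup\Dd(X,N)$ has empty comma-categories into $S(X,N)$), and identifies the full subcategory of $\Fun(\Mdp(X,N),\D)$ spanned by left Kan extensions of such functors with $\Fun(S(X,N)\cup\Dd(X,N),\D)_{\Dd\simeq 0}$. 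Composing these identifications gives the equivalence $\rho_2$.

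The second step is $\rho_1$. Theorem \ref{thm:main} asserts that condition \ref{main:1} is equivalent to condition \ref{main:2}, namely that $\F|\Md(X,N)$ is a Milnor cosheaf and $\F$ is a right Kan extension of $\F|\Md(X,N)$. Hence $\Fun^{\sharp}(\Mdp(X,N),\D)$ equals the full subcategory of $\Fun(\Mdp(X,N),\D)$ spanned by functors which are right Kan extensions of Milnor cosheaves on $\Md(X,N)$. Applying Proposition \ref{prop:kanres} once more to the inclusion $\Md(X,N)\subset\Mdp(X,N)$, this subcategory is equivalent, via restriction, to $\Fun^{\sharp}(\Md(X,N),\D)$, which is the definition of $\rho_1$.

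The third step combines the two: inverting $\rho_2$ and composing with $\rho_1$ produces a canonical equivalence $\Fun(S(X,N),\D)\simeq \Fun^{\sharp}(\Md(X,N),\D)$. Passing to opposite $\infty$-categories (both constructions are self-dual in $\D$) and invoking the equivalence $\Sh(X,N;\D)\simeq \Fun(S(X,N)^{\op},\D)$ of Corollary \ref{cor:constructible} yields the final equivalence $\Sh(X,N;\D)\simeq \Fun^{\sharp}(\Md(X,N)^{\op},\D)$. No step should present real difficulty here, since all the substantive work has already been done inside Theorem \ref{thm:main}; the only point requiring slight care is verifying that extension by zero along $\Dd(X,N)\hookrightarrow S(X,N)\cup\Dd(X,N)$ really is a Kan extension, which follows because objects of $\Dd(X,N)$ admit no morphisms to or from any standard disk that is not isomorphic in $\Lambdad(X,N)$ to a disk of the same special-point type, so the relevant comma-categories are empty and the pointwise formula trivially yields the zero object in the stable $\infty$-category $\D$.
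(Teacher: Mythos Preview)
Your approach is essentially the same as the paper's: both arguments deduce the corollary from Theorem \ref{thm:main} via two applications of Proposition \ref{prop:kanres} (which is \cite[4.3.2.15]{HTT}), one for each restriction functor $\rho_1,\rho_2$.

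There is one small but genuine slip in your justification for $\rho_2$. You assert that objects of $\Dd(X,N)$ admit no morphisms \emph{to or from} standard disks, so that extension by zero along $S(X,N)\hookrightarrow S(X,N)\cup\Dd(X,N)$ is simultaneously a left and right Kan extension. Only half of this is true: there are no morphisms from $\Dd(X,N)$ to $S(X,N)$ (a pant cobordism $P$ with $P_1=\emptyset$ must itself be empty since $P_1\subset P$ is a homotopy equivalence, forcing $P_0=\emptyset$), but there \emph{are} morphisms from $S(X,N)$ to $\Dd(X,N)$ (take the constant isotopy and let $P$ grow from $\emptyset$ to a single interval). Hence for $d\in\Dd(X,N)$ the undercategory $d/S$ is empty but the overcategory $S/d$ is not, so extension by zero is only a \emph{right} Kan extension along $S\hookrightarrow S\cup\Dd$, not a left one. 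This is exactly what the paper uses: it characterises the subcategory $\{\F\mid \F|\Dd\simeq 0\}\subset\Fun(S\cup\Dd,\D)$ as the right Kan extensions from $S$, and then applies Proposition \ref{prop:kanres} once for that step and once more for the left Kan extension from $S\cup\Dd$ to $\Mdp$. With this correction your argument goes through unchanged.
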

\begin{proof}
	We apply Theorem 4.3.2.15 of \cite{HTT}.
	The fact that $\rho_1$ is an equivalence is then an immediate consequence of the equivalence
	Theorem \ref{thm:main}. The functor $\rho_2$ is an equivalence by Theorem \ref{thm:main}
	combined with the observation that a functor $S(X,N) \cup \Dd(X,N) \to \D$ is a right Kan
	extension of its retriction to $S(X,N)$ if and only if $\F|\Dd(X,N) \simeq 0$, i.e., two
	successive applications of loc. cit.
\end{proof}

\begin{thm}\label{thm:milnorsheaves}
	Let $\D = \D(\A)$ be the derived $\infty$-category of an abelian category $\A$. 
	Then the equivalence
	\[
		\Sh(X,N;\D) \simeq \Fun^{\sharp}(\Md(X,N)^{\op}, \D). 
	\]
	from Corollary \ref{cor:milnorparam} restricts to an equivalence
	\[
		\PS(X,N;\A) \simeq \Fun^{\sharp}(\Md(X,N)^{\op}, \A)
	\]
	identifying perverse sheaves on $(X,N)$ with Milnor sheaves valued in $\A$. 
\end{thm}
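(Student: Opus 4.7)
The strategy is to combine the equivalence from Corollary \ref{cor:milnorparam} with the purity characterization of perverse sheaves in Proposition \ref{prop:purity}. Both sides of the claimed equivalence sit as full subcategories inside the two sides of Corollary \ref{cor:milnorparam} (with $\D = \D(\A)$), so it suffices to verify that the equivalence matches these subcategories.

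The first step is to identify explicitly, for a constructible sheaf $\F \in \Sh(X,N;\D)$, the associated Milnor sheaf $\F^{\sharp} \in \Fun^{\sharp}(\Md(X,N)^{\op}, \D)$. Starting from the presentation $\Sh(X,N;\D) \simeq \Fun(S(X,N)^{\op}, \D)$ of Corollary \ref{cor:constructible}, the value of $\F^{\sharp}$ at a standard disk $(A,\emptyset)$ is $\RG(A;\F)$, and the value at a disk in $\Dd(X,N)$ is zero. Dualising Theorem \ref{thm:main} to the presheaf setting, $\F^{\sharp}$ on a Milnor disk $(A,A')$ is computed by the right Kan extension (i.e. by a limit diagram), which, using the pointwise limit cones appearing in the proof of Theorem \ref{thm:main} (e.g.\ the square \eqref{eq:pushout1} and its generalisations), unwinds to give $\F^{\sharp}(A,A') \simeq \RG(A,A';\F)$ for every Milnor disk $(A,A')$. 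This identification is the key technical step; it is the sheafy counterpart of Proposition \ref{prop:M-perv} and the remark following it.

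Granting this identification, the forward direction is immediate. If $\F \in \PS(X,N;\A)$, Proposition \ref{prop:purity} gives $H^i(A,A';\F) = 0$ for all $i \neq 0$ and every Milnor disk, so $\F^{\sharp}(A,A') \simeq H^0(A,A';\F)$ lies in the heart $\A \subset \D$. Hence $\F^{\sharp}$ factors through $\Fun^{\sharp}(\Md(X,N)^{\op}, \A)$, the latter being defined as the full subcategory of $\Fun^{\sharp}(\Md(X,N)^{\op}, \D)$ of functors landing in $\A$.

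Conversely, suppose $\G \in \Fun^{\sharp}(\Md(X,N)^{\op}, \A)$. Composing with the inclusion $\A \hookrightarrow \D$ produces an object of $\Fun^{\sharp}(\Md(X,N)^{\op}, \D)$, and Corollary \ref{cor:milnorparam} yields a constructible sheaf $\F \in \Sh(X,N;\D)$ with $\F^{\sharp} \simeq \G$. Using Remark \ref{rem:derived} to identify $\Sh(X,N;\D(\A))$ with the derived constructible category, we may regard $\F$ as an object of $D(X,N;\A)$. By Step 1, $\RG(A,A';\F) \simeq \G(A,A') \in \A$ is concentrated in degree zero for every Milnor disk, so the purity criterion (Proposition \ref{prop:purity}(ii)) applies and forces $\F \in \PS(X,N;\A)$. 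The main obstacle is Step 1, the explicit computation of $\F^{\sharp}(A,A')$ as relative hypercohomology; once this is in hand, both implications reduce to a direct application of purity.
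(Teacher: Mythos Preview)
Your approach matches the paper's: identify the value of the associated Milnor sheaf on $(A,A')$ with $\RG(A,A';\F)$, then apply the purity criterion (Proposition~\ref{prop:purity}) in both directions. However, there is one point you sidestep that the paper addresses explicitly.

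You \emph{define} $\Fun^{\sharp}(\Md(X,N)^{\op}, \A)$ as the full subcategory of $\Fun^{\sharp}(\Md(X,N)^{\op}, \D)$ spanned by functors landing in $\A$. But that is not the definition in force: the Milnor sheaf conditions are stated intrinsically for any pointed target category, so an $\A$-valued Milnor sheaf is a functor $\Md(X,N)^{\op}\to\A$ whose cut squares are Cartesian \emph{in $\A$}. With the intrinsic definition, your sentence ``composing with the inclusion $\A \hookrightarrow \D$ produces an object of $\Fun^{\sharp}(\Md(X,N)^{\op}, \D)$'' is not automatic, because the heart inclusion $\A\hookrightarrow\D(\A)$ does not preserve pullbacks in general: a square of objects of $\A$ is Cartesian in $\D(\A)$ iff it is Cartesian in $\A$ \emph{and} the map $X\oplus Y\to Z$ is an epimorphism. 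The same issue arises in the forward direction when you claim $\F^{\sharp}$ ``factors through'' the $\A$-valued Milnor sheaves. The paper closes this gap by observing that the horizontal morphisms in the cut squares \eqref{eq:cutsquare} admit sections (coming from geometric retractions of the smaller disk onto the collar), which forces the relevant map to $Z$ to be split surjective and hence makes the two notions of Cartesian square coincide. You should include this argument; otherwise the equivalence you prove is with the wrong (extrinsically defined) category, and the point of the theorem---an intrinsic abelian description---is lost.
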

\begin{proof}
	Under the equivalence 
	\[
		\rho: \Sh(X,N;\D) \simeq \Fun^{\sharp}(\Md(X,N)^{\op}, \D),
	\]
	the value of a Milnor sheaf $\rho(\F)$ on a Milnor disk $(A,A') \in \Md(X,N)$ is
	equivalent to the value of the corresponding constructible sheaf $\F$ on a Milnor pair $(U,U')$ where
	$U$ is a sufficiently small open disk containing the closed disk $A$ and $U'$ is
	a union of open disks where each disk contains one of the intervals comprising $A'$. 
	Thus, by Proposition \ref{prop:purity}, a constructible sheaf $\F \in \Sh(X,N;\D)$ is
	perverse if and only if $\rho(\F)$ takes values in $\A$. Further, since all horizontal
	morphisms that arise in the Milnor sheaf conditions admit sections, they are Cartesian in
	$\A$ if and only if they are Cartesian in $\D(\A)$. This proves the claim. 
\end{proof}

\begin{cor}\label{cor:milnorsheaves}
	Let $\A$ be an abelian category. Then we have an natural equivalence
	\[
		\Fun^{\sharp}(\Md(X,N)^{\op}, \A) \simeq \Fun^{\sharp}(M(X,N)^{\op}, \A)
	\]
	where $M(X,N) \subset \Lambda(X,N)$ is the full subcategory of the (undirected) paracyclic
	category of $(X,N)$ spanned by the Milnor disks.
\end{cor}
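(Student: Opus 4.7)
The plan is to factor the equivalence through the $1$-categorical localization. By Corollary \ref{cor:milnorlocalization}, the canonical functor $\pi: \Md(X,N) \to M(X,N)$ exhibits $M(X,N)$ as the Gabriel--Zisman $1$-categorical localization $\Md(X,N)[W^{-1}]$ at the class of weak equivalences. Since $\A$ is an ordinary $1$-category, the universal property of this localization produces a fully faithful restriction functor
\[
    \pi^*: \Fun(M(X,N)^{\op}, \A) \hookrightarrow \Fun(\Md(X,N)^{\op}, \A)
\]
whose essential image consists precisely of those presheaves that send every weak equivalence in $W$ to an isomorphism in $\A$.

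Next, I would observe that condition (1) of the definition of a Milnor sheaf already requires this $W$-inversion, so every Milnor sheaf $F \in \Fun^\sharp(\Md(X,N)^{\op},\A)$ is canonically of the form $\pi^* \bar F$ for a unique presheaf $\bar F: M(X,N)^{\op} \to \A$. It then remains to show that the remaining two axioms---vanishing on the thin Milnor disks $\disk{1b}{1}$ and $\disk{0}{1}$, and the Cartesian square condition on the collared cut squares \eqref{eq:cutsquare}---translate faithfully between $\Md(X,N)$ and $M(X,N)$ along $\pi^*$. For this I would use that $\pi$ is the identity on objects and that the geometric datum of a collared cut determines a commutative square equally well in $\Md(X,N)$ and in $M(X,N)$, with the former mapping to the latter under $\pi$. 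Since $\pi^*$ is fully faithful, $\bar F$ satisfies either axiom on $M(X,N)^{\op}$ if and only if $\pi^* \bar F$ satisfies it on $\Md(X,N)^{\op}$, and the two Milnor sheaf subcategories correspond under $\pi^*$.

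The main subtlety to verify is the collared cut condition: one must check that every collared cut square in $M(X,N)$ lifts, up to the relations imposed by $W$, to a collared cut square in $\Md(X,N)$, and conversely that the axiom in $\Md(X,N)$ imposes no constraints beyond those encoded in $M(X,N)$. This reduces to the fact that the four arrows in \eqref{eq:cutsquare} can be represented by directed isotopies: generically the cut avoids the special point of $A$, and in the exceptional case where the cut passes through or near the special point one can shrink the collar and choose the isotopies so that the point enters the appropriate disks monotonically, in conformity with the directedness condition (Ent). Combined with the fully faithfulness of $\pi^*$ and the commentary above, this identifies $\pi^*$ with an equivalence between $\Fun^\sharp(M(X,N)^{\op},\A)$ and $\Fun^\sharp(\Md(X,N)^{\op},\A)$, yielding the corollary.
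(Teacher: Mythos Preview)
Your proposal is correct and follows the same approach as the paper: both arguments invoke Corollary~\ref{cor:milnorlocalization} to identify $M(X,N)$ with the localization of $\Md(X,N)$ at $W$, and then observe that since $\A$ is an ordinary category the Milnor sheaf conditions transfer along $\pi^*$. The paper's proof is a one-liner citing that localization result, whereas you have spelled out why the vanishing and collared-cut axioms pass through $\pi^*$; your final paragraph on lifting cut squares is more caution than is strictly needed, since a collared cut is a purely geometric datum that produces the square \eqref{eq:cutsquare} equally in $\Lambdad(X,N)$ and in $\Lambda(X,N)$, with $\pi$ carrying one to the other tautologically.
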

\begin{proof} 
	This follows from the observation  that $\Md(X,N) \to M(X,N)$ is a localization along the
	weak equivalences (Corollary \ref{cor:milnorlocalization}).
\end{proof}

\subsection{Verdier duality for perverse sheaves}

\begin{prop} Let $\A$ be an abelian category. Then the self-duality
	\[
		\xi: \Lambda(X,N) \lra \Lambda(X,N)^{\op}
	\]
	induces an equivalence 
	\[
		\xi^*: \Fun^{\sharp}(M(X,N)^{\op},\A) \overset{\simeq}{\lra}
		\Fun^{\sharp}(M(X,N),\A)
	\]
	between Milnor sheaves and cosheaves. 
\end{prop}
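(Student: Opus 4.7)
The plan is to proceed in two main steps. First, I will verify that the paracyclic self-duality $\xi$ restricts to a self-duality of the full subcategory $M(X,N) \subset \Lambda(X,N)$. This amounts to checking that the assignment $(A, A') \mapsto (A, \overline{\partial A \setminus A'})$ preserves the defining property of being a Milnor disk: namely, that the closure of the complement of a finite nonempty disjoint union of proper closed arcs in $\partial A$ is again such a union. Given this, precomposition with $\xi$ yields an equivalence of functor categories
\[
\xi^*: \Fun(M(X,N)^{\op}, \A) \overset{\simeq}{\lra} \Fun(M(X,N), \A)
\]
with quasi-inverse given by the analogous precomposition in the other direction.

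The remaining task is to show that $\xi^*$ restricts to an equivalence between the full subcategories of Milnor sheaves and Milnor cosheaves. I plan to check each of the defining conditions separately. For the vanishing condition, I will verify that $\xi$ sends distinguished Milnor disks of type $\disk{0}{1}$ to disks of the same type (since the complement of a single interval in $S^1$ is again a single interval), and similarly for $\disk{1b}{1}$, using that any variants arising from the placement of boundary special points become canonically equivalent in the undirected category $M(X,N)$. For the descent condition, I will argue that a collared cut of $(A, A')$ with cut $\alpha$ and collar $G$ automatically gives rise to a collared cut of $\xi(A,A')$ via the same data $(\alpha, G)$: the requirement $G([-1,1] \times I) \cap \partial A' = \emptyset$ is symmetric in $A'$ and its complement in $\partial A$, since these two arc systems share the same boundary. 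Applying $\xi$ term-by-term to the cutting square \eqref{eq:cutsquare} then reverses all arrows and should yield, up to canonical identification in $M(X,N)$, the cutting square for $\xi(A, A')$, thereby translating the Cartesian condition for $F$ into the coCartesian condition for $\xi^* F$.

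The main obstacle I anticipate is the identification of the Milnor disks appearing in the $\xi$-image of a cutting square with those appearing in the cutting square of $\xi(A, A')$. For instance, the term $\xi(C, C \cap A')$ differs from the collar term $(C, C \cap \overline{\partial A \setminus A'})$ that naturally appears in the cutting square of $\xi(A, A')$ by whether the two collar-cut arcs in the interior of $A$ are included in the boundary decoration. These two Milnor disks should be canonically equivalent in $M(X,N)$ because the collar condition guarantees that these cut arcs contain no special points, and the analogous discrepancies arise for $\xi(A_1, A_1 \cap A')$ and $\xi(A_2, A_2 \cap A')$. I will need to produce these equivalences as natural morphisms in $M(X,N)$ compatible with the ambient squares, so as to reduce the problem cleanly to the already-established equivalence $\xi^*$ of functor categories; this bookkeeping, rather than any conceptual difficulty, is the real work of the proof.
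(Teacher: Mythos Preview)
Your plan has a genuine gap at the descent step. The duality $\xi$ does \emph{not} carry a cutting square for $(A,A')$ to a cutting square for $\xi(A,A')$. To see this concretely, take a cut $\alpha$ whose endpoints both lie in $A'$, so that the collar term $(C,C\cap A')$ is a Milnor disk with two boundary arcs (one near each endpoint). The same cut, regarded as a cut of $\xi(A,A') = (A,\overline{\partial A\setminus A'})$, produces a collar term $(C, C\cap \overline{\partial A\setminus A'})$ with \emph{empty} boundary decoration, since the endpoints of $\alpha$ are not in $\overline{\partial A\setminus A'}$ and the collar misses $\partial A'$; so this is not even a Milnor disk. The discrepancy you identified between $\xi(C,C\cap A')$ and $(C,C\cap\overline{\partial A\setminus A'})$ is therefore not just bookkeeping: the two candidate squares have different numbers of intervals on each term, and one of them may fail to lie in $M(X,N)$ altogether.

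What is really happening is visible at the level of the local paracyclic categories $\Lambda_A \simeq \Lambda_\infty$. The self-duality of $\Lambda_\infty$ interchanges cofaces $\delta_i$ and codegeneracies $\sigma_j$ (Remark~\ref{rem:lambda-self-dual}). The cutting squares encode the Segal conditions, which are formulated in terms of face maps; under $\xi$ they become squares built from degeneracy maps --- not cutting squares. The paper's (one-line) proof is exactly this observation: the Milnor (co)sheaf conditions, rephrased via Proposition~\ref{prop:milnminimal} as local Segal conditions, are swapped by $\xi$ with the dual conditions in terms of degeneracies, and the two are equivalent by the retraction argument used in the proof of Lemma~\ref{lem:deloop} (this is what the reference to Proposition~\ref{prop:combcech} is pointing at). In an abelian category the relevant squares have one pair of parallel arrows admitting sections, which is what makes the Cartesian $\Leftrightarrow$ coCartesian passage work without stability. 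Your proof needs this ingredient --- that Segal conditions are equivalent to their degeneracy-dual --- and once you have it, the direct geometric matching of cutting squares becomes unnecessary.
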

\begin{proof}
	The Milnor sheaf conditions (in terms of face maps) get swapped with the dual conditions (in
	terms of degeneracy maps), cf. the proof of Proposition \ref{prop:combcech}.
\end{proof}

\begin{rem} Suppose $\A$ is an abelian category with exact duality $\delta$. Then the
	resulting anti-equivalence $\delta \circ \xi^*$ of $\Fun^{\sharp}(M(X,N)^{\op},\A)$
	can be identified with the Verdier self-duality of $\PS(X,N;\A)$. Note that, even more
	classically, we may understand the perfect pairing between $\RG(A,A';\F)$ and $\RG(A, \partial
	A \setminus \mathring A';\F^{\vee})$ as an elementary instance of Lefschetz duality for manifolds
	with boundary.  
\end{rem}

\subsection{Paracyclic Segal objects}
\label{sec:combinatorial}

Let $\D$ be an $\infty$-category with finite colimits. A cosimplicial object $X:
\Delta \to \D$ is called a {\em Segal object}, if it satisfies the {\em Segal conditions}: for
every $n \ge 1$, the map
\begin{equation}\label{eq:segal1}
		X_1 \amalg_{X_0} \dots \amalg_{X_0} X_1 \lra X_n 
\end{equation}
induced by the inclusions $[1] \cong \{i,i+1\} \subset [n]$ is an equivalence. Equivalently, $X$ is
a Segal object if, for every $1 \le m < n$, the square
\begin{equation}\label{eq:segal2}
		\begin{tikzcd}
			X_0 \ar{r}\ar{d} & X_m \ar{d}\\
			X_{n-m} \ar{r} & X_n
		\end{tikzcd}
\end{equation}
induced by the diagram
\[
	\begin{tikzcd}
		\{m\} \ar{r} \ar{d}& \{0,1,\dots,m\} \ar{d} \\
	\{m,m+1,\dots,n\}\ar{r} & \{0,1,\dots,n\} 
	\end{tikzcd}
\]
is a pushout square in $\D$.

\begin{prop}\label{prop:localpara}
	Let $\D$ be a stable $\infty$-category, and let $\Lambdap$ be the augmented paracyclic category
	obtained from $\Lambda$ by adjoining an initial object $\emptyset$. Let $\J \subset
	\Lambdap$ denote the full subcategory spanned by $\emptyset$ and $\langle 0 \rangle$. Then
	for a functor
	\[
		\F: \Lambdap \lra \D,
	\]
	the following conditions are equivalent:
	\begin{enumerate}
		\item $\F$ is a left Kan extension of $\F|\J$.
		\item $\F|\Delta$ satisfies the Segal conditions and $\F$ is a right Kan
			extension of $\F|\Lambda$.
	\end{enumerate}
\end{prop}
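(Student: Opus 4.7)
The plan is to prove the equivalence by a pointwise analysis of the two Kan extension conditions, using the combinatorics of the paracyclic category together with the stability of $\D$ to mediate between the colimit presentation in (1) and the limit presentation in (2).

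The central combinatorial step is to analyze the overcategory $\J/\langle n \rangle$. Its objects are $\emptyset$, with its unique arrow to $\langle n \rangle$, together with pairs $(\langle 0 \rangle, f)$ for $f \in \Hom_{\Lambda_\infty}(\langle 0 \rangle, \langle n \rangle)$. By Proposition \ref{prop:lambda-infty-cex}, this Hom-set is a $\ZZ$-torsor over the $(n+1)$-element set $\Hom_{\Lambda}(\langle 0 \rangle, \langle n \rangle) = \{\delta_0, \dots, \delta_n\}$. A morphism between $(\langle 0 \rangle, f)$ and $(\langle 0 \rangle, f')$ in $\J/\langle n \rangle$ is a power $\tau_0^k$, and exists uniquely if and only if $f$ and $f'$ lie above the same coface. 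Thus each $\ZZ$-fiber is equivalent to a point, and $\J/\langle n \rangle$ admits an $\infty$-cofinal subcategory equivalent to the fan $\emptyset \to \{v_0, v_1, \dots, v_n\}$ with $n+1$ terminal objects.

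For $(1) \Rightarrow (2)$, the pointwise left Kan extension formula applied to this cofinal fan computes $\F(\langle n \rangle)$ as the iterated pushout of $n+1$ copies of $\F(\langle 0 \rangle)$ amalgamated over $\F(\emptyset)$. Specializing to $n = 1$ gives $\F(\langle 1 \rangle) \simeq \F(\langle 0 \rangle) \amalg_{\F(\emptyset)} \F(\langle 0 \rangle)$; reassociating then identifies $\F(\langle n \rangle)$ with $n$ copies of $\F(\langle 1 \rangle)$ glued along $\F(\langle 0 \rangle)$, which is precisely the Segal condition \eqref{eq:segal1}. The right Kan extension condition at $\emptyset$ is then derived by invoking stability of $\D$: the Segal pushout squares are biCartesian, so their dual pullback presentations exhibit $\F(\emptyset)$ as the limit of an appropriate cofinal subdiagram of $\Lambda_\infty$, and the contractibility of the $\ZZ$-fibers (as above, now applied dually) extends this to the limit over all of $\Lambda_\infty$. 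The converse $(2) \Rightarrow (1)$ runs the same argument backwards: starting from the Segal decomposition together with the limit formula at $\emptyset$, the fan colimit formula for $\F(\langle n \rangle)$ is recovered by unfolding pushouts, which is the pointwise left Kan extension condition from $\J$.

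The main obstacle will be the rigorous justification of the bridge between the colimit presentation of each $\F(\langle n \rangle)$ and the limit description of $\F(\emptyset)$. The paracyclic category has infinitely many morphisms, so the limit diagram is a priori infinite; reducing it to manageable form combines the contractibility of the $\ZZ$-fibers with the self-duality of biCartesian squares in a stable $\oo$-category. Without the stability hypothesis this equivalence breaks down, which is precisely why the proposition is stated only for stable targets.
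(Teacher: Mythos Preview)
Your analysis of the overcategory $\J/\langle n\rangle$ is correct and matches the paper's approach: the paper records this as the observation that the inclusion $(\Delta^+)_{\le 1}/[n] \subset (\Lambdap)_{\le 1}/\langle n\rangle$ is an equivalence, which is precisely your contraction of the $\ZZ$-fibers to obtain the fan. From this the Segal conditions follow just as you say.

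The gap is in the treatment of the right Kan extension condition. You correctly observe that the $n=1$ fan gives a biCartesian square exhibiting $\F(\emptyset)$ as the pullback $\F(\langle 0\rangle)\times_{\F(\langle 1\rangle)}\F(\langle 0\rangle)$. But the condition in (2) asks that $\F(\emptyset)$ be the limit of $\F$ over all of $\Lambda_\infty$, and you do not explain how to pass from that single pullback to this infinite limit. The phrase ``contractibility of the $\ZZ$-fibers, applied dually'' does not do this job: that argument was about the slice $\J/\langle n\rangle$, which is the shape relevant for the \emph{left} Kan extension, not for coinitiality inside $\Lambda_\infty$ itself. Likewise, the Segal squares \eqref{eq:segal2} do not involve $\F(\emptyset)$ at all, so they cannot by themselves present it as a limit.

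The paper fills this gap with two separate ingredients. First, it shows that the inclusion $\Delta \subset \Lambda_\infty$ is $\infty$-coinitial, by identifying $\Delta/\langle n\rangle$ with the category of simplices of a simplicial set whose realization is $|\Delta^n|\times\RR$; this reduces $\lim_{\Lambda_\infty}\F$ to $\lim_\Delta\F$. Second, it proves (Lemma~\ref{lem:deloop}) that a cosimplicial Segal object is automatically a right Kan extension from $\Delta_{\le 1}$, which collapses $\lim_\Delta\F$ to exactly the pullback square you have. Both steps are then packaged into Proposition~\ref{prop:combcech}, the purely simplicial version of the statement, to which the paracyclic case reduces. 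Your proposal would become a complete proof once these two reductions are supplied; without them, the bridge you describe in your final paragraph remains an assertion.
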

\begin{proof}
	We make two observations:
	\begin{itemize}
		\item The inclusion
			\[
				\Delta \subset \Lambda
			\]
			is coinitial: the category $\Delta/\cn$ is the category of simplices of the
			simplicial object $\Hom_{\Lambda}(-,\cn)|\Delta^{\op}$ whose geometric
			realization is homeomorphic to $|\Delta^n| \times \RR$. 
		\item The inclusion
			\[
				(\Delta^+)_{\le 1}/[n] \subset (\Lambdap)_{\le 1}/\cn
			\]
			is an equivalence and hence cofinal.
	\end{itemize}
	Therefore, we have reduced the proof of Proposition \ref{prop:localpara} to the statement of
	Proposition \ref{prop:combcech} below. 
\end{proof}

\begin{prop}\label{prop:combcech}
	Let $\D$ be a stable $\infty$-category, and let $\Delta^+$ be the augmented simplex category
	obtained from $\Delta$ by adjoining an initial object $\emptyset$. Let $\J \subset
	\Delta^+$ denote the full subcategory spanned by $\emptyset$ and $[0]$. Let 
	\[
		X: \Delta^+ \lra \D
	\]
	be an augmented cosimplicial object in $\D$. Then the following conditions are equivalent:
	\begin{enumerate}
		\item $X$ is a left Kan extension of $X|\J$.
		\item $X|\Delta$ satisfies the Segal conditions and $X$ is a right Kan
			extension of $X|\Delta$.
	\end{enumerate}
\end{prop}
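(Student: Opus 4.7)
The strategy is to expand both conditions using the pointwise formulas for Kan extensions and then to exploit the stability of $\D$, which makes pushout and pullback squares interchangeable. First, I would unwind the relevant comma categories: the comma category $\J/[n]$ has one initial object $(\emptyset \to [n])$ together with $n+1$ objects $([0] \xrightarrow{\delta_i} [n])$ and a unique morphism from the initial object to each, so the pointwise formula identifies condition (1) with the requirement that, for every $n \ge 0$,
\[
X_n \,\simeq\, X_0 \sqcup_{X_\emptyset} X_0 \sqcup_{X_\emptyset} \cdots \sqcup_{X_\emptyset} X_0
\]
with $n+1$ copies of $X_0$; in particular $X_1 \simeq X_0 \sqcup_{X_\emptyset} X_0$. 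For the right Kan extension part of (2), the only new value of the extension is at $\emptyset$, the comma category $\emptyset/\Delta$ is tautologically $\Delta$, and the condition becomes $X_\emptyset \simeq \lim_{\Delta} X|\Delta$.

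For the direction $(1) \Rightarrow (2)$, the Segal conditions follow by substituting $X_1 \simeq X_0 \sqcup_{X_\emptyset} X_0$ into the $(n+1)$-fold iterated pushout and reassociating, yielding $X_n \simeq X_1 \sqcup_{X_0} \cdots \sqcup_{X_0} X_1$ with $n$ copies of $X_1$. For the right Kan extension at $\emptyset$, stability converts the pushout square displaying $X_1 \simeq X_0 \sqcup_{X_\emptyset} X_0$ into a pullback square, giving $X_\emptyset \simeq X_0 \times_{X_1} X_0 = \lim X|\Delta_{\le 1}$, and it then remains to extend this identification to $X_\emptyset \simeq \lim_{\Delta} X|\Delta$. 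The converse $(2) \Rightarrow (1)$ is the easier direction: the hypothesis $X_\emptyset \simeq \lim_{\Delta} X|\Delta$ restricts to the pullback $X_\emptyset \simeq X_0 \times_{X_1} X_0$, which stability converts into the pushout $X_1 \simeq X_0 \sqcup_{X_\emptyset} X_0$; substituting into the Segal formula $X_n \simeq X_1 \sqcup_{X_0} \cdots \sqcup_{X_0} X_1$ produces precisely the iterated pushout of $X_0$'s over $X_\emptyset$ demanded by the pointwise formula of (1).

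The main obstacle is the reduction $\lim_{\Delta} X|\Delta \simeq \lim_{\Delta_{\le 1}} X|\Delta$ for Segal cosimplicial objects in the stable setting. Concretely, one needs that in the tower $\{\lim_{\Delta_{\le n}} X\}_n$ every transition map for $n \ge 2$ is an equivalence; the Segal conditions express each $X_n$ ($n \ge 2$) as an iterated colimit of $X_0$ and $X_1$, so the matching map $X_n \to M_n X$ is an equivalence, and in the stable setting this makes the additional constraints imposed at stage $n$ vacuous. A cleaner alternative I would pursue is to factor $\J \subset \Delta^+$ through the intermediate full subcategory $\Delta^+_{\le 1}$ and verify separately that $X$ is a left Kan extension from $\J$ iff $X|\Delta^+_{\le 1}$ satisfies the pushout condition on $X_1$ and $X$ is a left Kan extension from $\Delta^+_{\le 1}$: the second translates exactly into the Segal conditions, while the first becomes, via stability, the first-level pullback that together with the Segal condition gives the full right Kan extension statement.
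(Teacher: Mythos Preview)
Your proposal is correct and follows essentially the same approach as the paper: unwind the pointwise Kan extension formulas, use stability to flip the single pushout/pullback square at level $1$, and reduce the right Kan extension condition over $\Delta$ to the one over $\Delta_{\le 1}$. The paper isolates the ``main obstacle'' you identify as a separate lemma (a Segal cosimplicial object in a stable $\infty$-category is a right Kan extension of its restriction to $\Delta_{\le 1}$) and proves it by an inductive $3\times 3$ diagram argument using the two-out-of-three property for biCartesian squares, which is a cleaner implementation than the matching-object sketch you outline.
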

\begin{proof}
	(1) $\Rightarrow$ (2): Suppose that $X$ is a left Kan extension of $X|\J$. The pointwise formula for Kan
	extensions implies that, for every $n \ge 1$, $X_n$ is a colimit of the restriction of $X$ to
	$\J/[n]$. We define a functor $f$ from the poset 
	\[
		\I = \{0,1\} \leftarrow \{1\} \rightarrow \{1,2\} \leftarrow \{2\} \rightarrow \dots
		\leftarrow \{n-1\}
		\rightarrow \{n-1,n\}
	\]
	to $(\Set_{\Delta})_{/\N(\J/[n])}$ sending a set $I$ to the nerve of the subposet of $\J/[n]$
	consisting of those maps with image contained in $I$. By \cite[4.2.3.10]{HTT}, we may
	compute the colimit of $X|(\J/[n])$ as the colimit of the diagram
	\[
		\I \to \D, I \mapsto \colim \F|f(I)
	\]
	yielding the $n$th Segal condition. 

	To show that $X$ is a right Kan extension of $X|\Delta$ first note that, since
	$X|\Delta$ is Segal, by Lemma \ref{lem:deloop} below, it is a right Kan extension of
	$X|(\Delta_{\le 1})$. Therefore, it suffices to show that $X$ is a right Kan extension
	of $X|(\Delta_{\le 1})$. By the pointwise criterion, this is equivalent to the
	statement that $X$ maps the diagram
	\[
		\begin{tikzcd}
			\emptyset \ar{r}\ar{d} & \{0\} \ar{d}\\
			\{1\} \ar{r} & \{0,1\}
		\end{tikzcd}
	\]
	in $\Delta^+$ to a pullback square in $\D$. But, since $X$ is a left Kan extension of
	$\J$, it maps the square to a pushout square in $\D$, so that the statement follows
	since $\D$ is stable. 

	(2) $\Rightarrow$ (1): Suppose that $X|\Delta$ satisfies the Segal conditions. Then,
	by the above arguments, $X$ is left Kan extension of $X|\J$ if and only if it maps
	the square 
	\[
		\begin{tikzcd}
			\emptyset \ar{r}\ar{d} & \{0\} \ar{d}\\
			\{1\} \ar{r} & \{0,1\}
		\end{tikzcd}
	\]
	to a pushout square. But, by the last part of the argument of (1) $\Rightarrow$ (2), this is
	equivalent to $X$ being a right Kan extension of $X|\Delta$, concluding the argument.
\end{proof}

\begin{lem}\label{lem:deloop} Let $\D$ be a stable $\infty$-category, and let $Y: \Delta \to \D$ be a
	cosimplical object in $\D$. Let $\Delta_{\le 1} \subset \Delta$ denote the full
	subcategory spanned by the objects $[0]$ and $[1]$. Then $Y$ is a Segal object if and only
	if $Y$ is a right Kan extension of its restriction $Y|(\Delta_{\le 1})$. 
\end{lem}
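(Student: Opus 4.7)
The plan is to use the pointwise criterion for right Kan extensions together with the stability of $\D$. By the criterion, $Y$ is a right Kan extension of $Y|\Delta_{\le 1}$ if and only if, for every $[n] \in \Delta$, the canonical map
\[
Y_n \lra \lim\nolimits_{([n] / \Delta_{\le 1})} Y|\Delta_{\le 1} \circ \pi
\]
is an equivalence, where the limit ranges over the comma $\infty$-category of pairs $([k], [n] \to [k])$ with $[k] \in \{[0],[1]\}$ and $\pi$ is the evident projection. The condition is automatic for $n \le 1$, so the content is at $n \ge 2$.

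The first step is to simplify this limit via an $\infty$-coinitial subcategory. The $n$ non-constant maps $\chi_i : [n] \to [1]$ ($1 \le i \le n$) realize the $n$ cuts between consecutive vertices of $[n]$, and each factors through the unique map $p : [n] \to [0]$ via the codegeneracy $s^0: [1]\to[0]$. The remaining objects (the two constant maps $[n] \to [1]$ together with their non-identity endomorphisms $c_0 = d^1 s^0$ and $c_1 = d^0 s^0$) split off via the identities $s^0 d^0 = s^0 d^1 = \id$ and so are absorbed. This reduces the limit to the iterated pullback $Y_1 \times_{Y_0} \cdots \times_{Y_0} Y_1$ of $n$ copies of $Y_1$ glued along $s^0$.

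The second step is to identify this iterated pullback with the Segal iterated pushout $Y_1 \amalg_{Y_0} \cdots \amalg_{Y_0} Y_1$ using stability. The basic square
\[
\begin{tikzcd}
Y_0 \ar{r}{d^0} \ar[swap]{d}{d^1} & Y_1 \ar{d}\\
Y_1 \ar{r} & Y_2
\end{tikzcd}
\]
is biCartesian in a stable $\infty$-category (simultaneously pushout and pullback), and the splitting relations $s^0 d^i = \id$ ensure that the pushout formulation along $d^0, d^1$ matches the pullback formulation along $s^0, s^0$. Iterating this recognition by induction on $n$, decomposing both the Segal colimit and the Kan extension limit along $Y_n \simeq Y_1 \amalg_{Y_0} Y_{n-1}$, identifies the two expressions, yielding the equivalence. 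The main obstacle is the cofinality step: justifying that the degenerate part of $([n]/\Delta_{\le 1})$ is absorbed requires a Quillen Theorem A-style argument showing that the inclusion of the cut-subcategory is $\infty$-coinitial. I would handle this by exhibiting, for each object in the comma category, a contractible back-category provided by the splittings $s^0 d^i = \id$, compatible with the non-identity endomorphisms $c_0, c_1$.
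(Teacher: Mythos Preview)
Your Step 1 is correct and in fact more carefully argued than the paper's own treatment: the paper simply asserts that the pointwise right Kan extension condition at $[n]$ reduces to the pullback square with legs $s^0, s^1, \ldots$, whereas you give the coinitiality argument. The subcategory $\I \subset [n]/\Delta_{\le 1}$ spanned by $p$ and the $\chi_i$ really is $\infty$-coinitial (each $\I/j$ has a terminal object), so the limit is the wide pullback of $n$ copies of $Y_1$ over $Y_0$ along $\sigma = Y(s^0)$, with comparison map from $Y_n$ having components $Y(\chi_i) = Y(\sigma^{i-1})$.

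Step 2, however, has a genuine gap. You claim that ``the splitting relations $s^0 d^i = \id$ ensure that the pushout formulation along $d^0, d^1$ matches the pullback formulation along $s^0, s^0$'', but this is an assertion, not an argument. The Segal square being biCartesian says $Y_n$ is a pushout $P = Y_1 \amalg_{Y_0} \cdots \amalg_{Y_0} Y_1$; the Kan condition says $Y_n$ is a pullback $Q = Y_1 \times_{Y_0} \cdots \times_{Y_0} Y_1$. Even though $P$ and $Q$ are abstractly equivalent (both $\simeq Y_0 \oplus K^{\oplus n}$ with $K = \fib(\sigma)$), you need to show that the \emph{specific} maps $f: P \to Y_n$ (Segal) and $g: Y_n \to Q$ (Kan) are such that one is an equivalence iff the other is --- for instance, by showing the composite $gf$ is always an equivalence. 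Your displayed square involving $Y_2$ is precisely the Segal square, whose biCartesianness is what is in question; saying it ``is biCartesian'' begs the question.

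The paper handles this via a $3 \times 3$ grid in $\Delta$: the Segal square sits in the top-left corner and the Kan square in the bottom-right, with the $2 \times 1$ and $1 \times 2$ rectangles automatically biCartesian because the row and column composites are identities (the simplicial retraction identities). The two-out-of-three property for biCartesian squares in a stable $\infty$-category then gives the equivalence directly. This is exactly the missing ingredient in your Step 2, and it is not bypassed by the splitting observation alone.
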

\begin{proof}
	Suppose $Y$ satisfies the Segal conditions. We need to verify that, for every $n \ge 2$,
	$Y_n$ is a limit of $Y|([n]/\Delta_{\le 1})$. We prove the statement by induction on
	$n$ starting with $n=2$. Consider the commutative diagram in $\Delta$ depicted by
	\[
		\begin{tikzcd}
			\{1\} \ar{r}\ar{d} & \{0,1\}\ar{d} \ar{r} & \{1\} \ar{d}\\
			\{1,2\} \ar{r}\ar{d} & \{0,1,2\}\ar{d} \ar{r} & \{1,2\} \ar{d}\\
			\{1\} \ar{r} & \{0,1\} \ar{r} & \{1\}.
		\end{tikzcd}
	\]
	Since all horizontal and vertical composites yields the identity on the respective object,
	$Y$ maps all $2x1$ and $1x2$ rectangles to biCartesian squares in $\D$. The Segal condition
	for $n=2$ is equivalent to $Y$ mapping the top left square to a pushout, and hence
	biCartesian, square. The pointwise condition on $Y$ being a right Kan extension is equivalent
	to $Y$ mapping the bottom right square to a pullback, hence biCartesian, square. But, by the
	two-out-of-three property for biCartesian squares (\cite[4.4.2.1]{HTT}), the top left
	square is biCartesian if and only if the bottom right square is biCartesian. Therefore, for
	$n=2$, the Segal condition is equivalent to the corresponding pointwise Kan extension
	criterion for $Y_2$.

	Assume that the $n$th Segal condition is equivalent to the pointwise Kan extension formula
	for $Y_n$. Consider the diagram 
	\[
		\begin{tikzcd}
			\{n\} \ar{r}\ar{d} & \{0,1,\dots,n\}\ar{d} \ar{r} & \{n\} \ar{d}\\
			\{n,n+1\} \ar{r}\ar{d} & \{0,1,\dots,n+1\}\ar{d} \ar{r} & \{n,n+1\} \ar{d}\\
			\{n\} \ar{r} & \{0,1,\dots,n\} \ar{r} & \{n\}
		\end{tikzcd}
	\]
	in $\Delta$. A similar argument to the case $n=2$ implies the equivalence of the $(n+1)$st
	Segal condition and the pointwise Kan extension formula for $Y_{n+1}$, concluding the argument.
\end{proof}

\section{Perverse sheaves on $(\CC, \{0\})$}\label{sec:per-D-paracyc}

In this chapter we consider the classical cae when $X = \CC$ is the complex plane and $N=\{0\}$. The
corresponding category of perverse sheaves is well known but our approach provides a new point of
view on it which will be crucial in the further work on categorical generalization to perverse
schobers. In what follows we compare the two approaches and diskuss the concepts they lead to. 

\subsection{The classical $(\Phi, \Psi)$-description} 

Let $\A$ be a Grothendieck abelian category. The following result goes back to the
early days of the theory of perverse sheaves \cite{beil-gluing, GGM}. It was
originally formulated for perverse sheaves of vector spaces but the proof
given in \cite{GGM} generalizes easily to the $\A$-valued case. 

\begin{prop}\label{prop:phi-psi}
	The category $\PS(\CC,\{0\};\A)$ is equivalent to the category of data $(\Phi, \Psi, a,b)$ where
	$\Phi$ and $\Psi$ are objects of $\A$ and
	\begin{equation}\label{eq:data}
		 \begin{tikzcd}
			 \Phi \ar[bend left=20]{r}{a} & \Psi \ar[bend left=20]{l}{b}
		 \end{tikzcd}
	 \end{equation}
	 are morphisms such that the {\em monodromy transformations}
	 \begin{equation}\label{eq:conditions}
		 T_\Psi: = \Id_\Psi - ab \quad \text{and} \quad T_\Phi: \Id_\Phi-ba \quad 
	 \end{equation}
	 are isomorphisms. In fact, $T_\Psi$ being an isomorphism is equivalent to $T_\Phi$ being an
	 isomorphism. 
	 \qed
\end{prop}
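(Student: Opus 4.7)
The plan is to deduce this from the Milnor sheaf framework of this paper, by unravelling the local paracyclic Segal datum governing perverse sheaves on $(\CC,\{0\})$ into the classical $(\Phi,\Psi,a,b)$ form. First, I would apply Theorem \ref{thm:milnorsheaves} together with Corollary \ref{cor:milnorsheaves} to identify $\PS(\CC,\{0\};\A)$ with $\Fun^\sharp(M(\CC,\{0\})^{\op},\A)$. In this local situation, a Milnor sheaf is determined by its restriction to those Milnor disks that contain $0$: the sheaf conditions under collared cuts, combined with local constancy on $\CC\-\{0\}$, propagate the values to all other disks. The full subcategory of disks through $0$ is equivalent to the paracyclic category $\Lambda_\oo$, so the data reduces to a paracyclic object $Y:\Lambda_\oo^{\op}\to\A$ whose restriction to $\Delta^{\op}$ is Segal in the abelian sense --- this is the content of Corollary \ref{cor:local}.

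Second, I would unpack such a paracyclic Segal object. The Segal condition forces $Y_n\simeq Y_1\times_{Y_0}\cdots\times_{Y_0}Y_1$, and the paracyclic relations of Definition \ref{def:paracyc} then determine the higher automorphisms $t_n$ from $t_1$. The essential data thus consists of the pair $Y_0,Y_1$, the face maps $d_0,d_1:Y_1\to Y_0$, a common section $s_0:Y_0\to Y_1$, and the automorphisms $t_0,t_1$. Setting $\Phi:=Y_0\simeq\F(\disk{1}{1})$ and $\Psi:=Y_1\simeq\F(\disk{0}{2})$, the maps $a=\on{var}$ and $b=\on{can}$ arise as the values of $\F$ on the morphisms \eqref{eq:a} and \eqref{eq:b}, following the dictionary of \S \ref{subsec:milntoclass}.

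Third, I would identify the monodromy. The central system $\tau_n^{n+1}$ of Proposition \ref{prop:lambda-infty-cex}(a) yields monodromy automorphisms $T_\Phi:=t_0$ on $\Phi$ and $T_\Psi:=t_1^2$ on $\Psi$. Re-expressing these in terms of the face and degeneracy data using the paracyclic identities produces precisely
\[
T_\Psi=\id_\Psi-ab,\qquad T_\Phi=\id_\Phi-ba,
\]
as a special case of the duplicial Dold-Kan correspondence (see \S \ref{subsection:para-DK}). Invertibility of $T_\Phi$ and $T_\Psi$ is then automatic, since $t_0,t_1$ are automorphisms of objects of $\A$; conversely, starting from data $(\Phi,\Psi,a,b)$ with $\id-ab$ invertible, the reverse of this dictionary assembles a paracyclic Segal object and hence a Milnor sheaf. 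The equivalence ``$T_\Psi$ invertible $\Leftrightarrow T_\Phi$ invertible'' is the standard algebraic identity: if $u:=(\id-ab)^{-1}$ exists, then $\id+bua$ is a two-sided inverse of $\id-ba$.

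The main delicate step I anticipate is the third paragraph: the paracyclic identities between $\tau_n$ and the simplicial operators $\delta_i,\sigma_j$ must be carefully tracked through the Segal identifications in order to recover the precise form $\id-ab$ of the monodromy, the minus sign arising from the cancellation built into the paracyclic structure and reflecting the classical fact that $T_\Psi$ differs from $\on{var}\circ\on{can}$ by the identity rather than equalling it.
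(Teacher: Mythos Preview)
The paper does not actually prove this proposition: it is stated with a \qed and attributed to the classical references \cite{beil-gluing, GGM}, noting only that the proof there generalizes to $\A$-valued sheaves. What the paper \emph{does} do, in \S\ref{subsec:milntoclass}--\S\ref{subsection:para-DK}, is exactly the alternative derivation you are sketching: pass through Corollary~\ref{cor:local} to paracyclic Segal objects, and then invoke the duplicial Dold--Kan correspondence (Propositions~\ref{prop:perv-para}, \ref{prop:para-PS-A}, Theorem~\ref{thm:dwyer-kan-duplex}) to match these with the $(\Phi,\Psi,a,b)$ data. So your overall route coincides with the paper's own internal re-derivation.

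There is, however, a genuine slip in your unpacking. The paracyclic object $Y$ of Corollary~\ref{cor:local} lives on disks \emph{containing} $0$, so $Y_1 = \F(\disk{1}{2})$, not $\F(\disk{0}{2})$. As worked out in \S\ref{subsec:milntoclass} (and in the explicit formula \eqref{eq:N_n}), one has $Y_1 \cong \Phi \oplus \Psi$, not $\Psi$. The nearby-cycles object $\Psi$ is recovered either as the normalized piece $\Ker(\partial_1: Y_1 \to Y_0)$ via Dold--Kan, or as $\F(\disk{0}{2})$ for a disk \emph{avoiding} $0$ --- but the latter lies outside $\Lambda_\DD$. Correspondingly, the morphisms \eqref{eq:a} and \eqref{eq:b} you invoke for $a$ and $b$ involve $\disk{0}{2}$ and are not morphisms in $\Lambda_\DD$, so they cannot be read off directly from $Y$ as you suggest. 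In the paper's treatment, $b$ is the differential $d = \partial_0|_{\Ker\partial_1}$ and $a$ is extracted from the extra degeneracy encoded in $t_1$ via the formula \eqref{t_n-explicit}; the identities $T_\Psi = \Id - ab$, $T_\Phi = \Id - ba$ then come out of the explicit computation there, not from an abstract appeal to the paracyclic relations. Your third paragraph would need to be rewritten along these lines to actually go through.
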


For a given perverse sheaf $F \in \PS(\CC,\{0\};\A)$ the corresponding objects $\Phi=\Phi(F)$ and
$\Psi=\Psi(F)$ are called the objects of {\em vanishing} and {\em nearby cycles} of $F$.  We will
now describe the relationship between the classification data in Proposition \ref{prop:phi-psi} and
our description of perverse sheaves as Milnor sheaves from Corollary \ref{cor:milnorsheaves}. 

\subsection{From a Milnor sheaf to vanishing and nearby cycles}
\label{subsec:milntoclass}

Let $\F: M(\CC,\{0\})^{\op} \to \A$ be a Milnor sheaf. We will explain how to most directly extract from $\F$ the
classification data \eqref{eq:data} and verify conditions \eqref{eq:conditions}.
First, we define 
\[
	\Psi = \F(A,A') \quad \text{where} \quad (A,A') = \disk{0}{2}
\]
is any disk that does not contain the origin $0$. Further, we set 
\[
	\Phi = \F(B,B') \quad \text{where} \quad (B,B') = \disk{1}{1} 
\]
is any disk containing $0$ in its interior. The descent conditions force rotation of $(A,A')$ by $\pi$
to be multiplication by $-1$: in the local model explained in \S \ref{sec:combinatorial}, this
automorphism corresponds to the paracyclic shift on the \v{C}ech nerve of $0 \to \Psi[1]$. The
monodromy transformation $T_{\Psi}$ is obtained by moving $(A,A')$  as a rigid body
(parallel to itself) in a circle around the origin $0 \in
\CC$. The monodromy $T_{\Phi}$ is induced by rotating $(B,B')$ by an angle of $2\pi$ around the center of the disk $B$. The map
\[
	a: \Phi \lra \Psi
\]
is obtained from the morphism in $M(\CC,\{0\})$ that is represented by a bordism of the form 
\begin{equation}\label{eq:borda}
		\begin{tikzcd}[row sep={1em,between origins}]
			& \includegraphics[width=3cm]{bordism_a.pdf} & \\
			\bdisk{0}{2} \ar{rr} & & \bdisk{1}{1}
		\end{tikzcd}
\end{equation}
while the morphism $b$ corresponds to the dual of \eqref{eq:borda}: 
\begin{equation}\label{eq:bordb}
		\begin{tikzcd}[row sep={1em,between origins}]
			& \includegraphics[width=3cm]{bordism_b.pdf} & \\
			\bdiskdual{0}{2} & &\ar{ll}  \bdiskdual{1}{1}.
		\end{tikzcd}
\end{equation}
To obtain the relations \eqref{eq:conditions}, we investigate the descent condition for
\[
	\F(C,C') \quad \text{where} \quad (C,C') = 
	\raisebox{-.4\height}{\includegraphics[scale=.3]{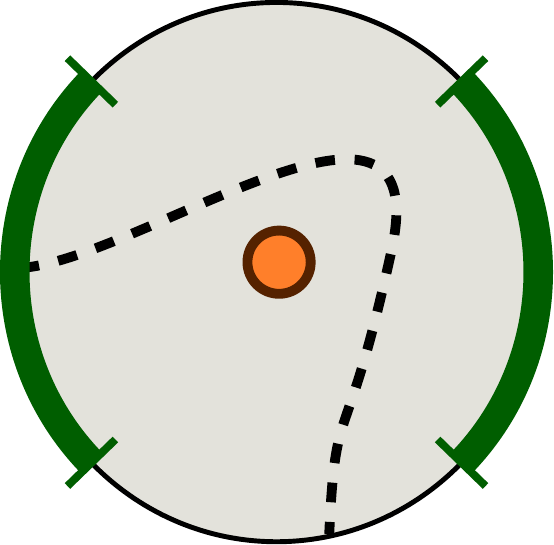}}
\]
Namely, introducing the depicted cut, the corresponding descent condition \eqref{eq:cutsquare}
provides a direct sum decomposition $\F(C,C') \cong \Phi \oplus \Psi$. We then directly observe
that, with respect to that decomposition, the transformation induced on $\F(C,C')$ by rotating
$(C,C')$ around its center by $2 \pi$,  is given by the matrix 
\begin{equation}
	Q = \left( \begin{array}{rr} T_{\Phi} & 0 \\ 0 & T_{\Psi} \end{array} \right).
\end{equation}
On the other hand, this transformation comes equipped with a square root, induced by rotating
$(C,C')$ around its center by $\pi$. A somewhat more careful analysis shows that, in terms of the above direct sum
decomposition, this transformation can be described by the matrix
\begin{equation}
	P = \left( \begin{array}{rr} -\id & b \\ -a & \id \end{array} \right).
\end{equation}
Now the relation $P^2 = Q$ implies the desired relations \eqref{eq:conditions}. Note that, in order
to extract the above data, various choices have to be made -- the advantage of the description of
$\F$ lies in the intrinsic nature of the parametrizing category $M(\CC,\{0\})$ of Milnor disks.

\subsection{The equivalence of classical and Milnor sheaf descriptions} 

In this section, we elaborate on the discussion in \S \ref{subsec:milntoclass} to provide a direct
argument for why these descriptions are equivalent. This can, of course, also be indirectly deduced by
combining our Corollary \ref{cor:milnorsheaves} and \cite{GGM}, but it is nevertheless interesting
to provide an explicit dictionary. 

\paragraph{The Milnor sheaf description.} 

\begin{prop}\label{prop:PSD-Milnor}
        Let $\A$ be an abelian category. Let $\DD \subset \CC$ be the 
	unit disk. Then the restriction along $\Lambda_{\DD} \subset M^+(\CC,\{0\})$ induces a
	fully faithful functor
	\[
		\Fun^{\sharp}(M^+(\CC,\{0\})^{\op},\A) \overset{\simeq}{\lra}
		\Fun(\Lambda_{\DD}^{\op}, \A)
	\]
	with essential image given by those paracyclic objects whose underlying simplicial
	object satisfies the Segal conditions. 
\end{prop}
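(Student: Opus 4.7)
The plan is to establish both claims by reducing to the general Kan-extension machinery of Theorem~\ref{thm:main} combined with the purity property of Proposition~\ref{prop:purity}, using the explicit geometry of $(\CC,\{0\})$. I would factor the restriction functor as
\[
    \Fun^{\sharp}(M^+(\CC,\{0\})^{\op},\A) \xrightarrow{\ r_1\ } \Cc \xrightarrow{\ r_2\ } \Fun(\Lambda_{\DD}^{\op},\A),
\]
where $\Lambda_{\DD}^+$ denotes $\Lambda_{\DD}$ together with the standard disk $(\DD,\emptyset)$ as initial object, and $\Cc \subset \Fun((\Lambda_{\DD}^+)^{\op},\A)$ denotes the full subcategory spanned by functors satisfying the Milnor descent conditions on $\DD$. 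The proposition then follows from showing that each factor is fully faithful with essential image as claimed.

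For $r_1$, the key geometric observation is that every object of $M^+(\CC,\{0\})$ admits a canonical ``reduction'' to $\Lambda_{\DD}^+$: any Milnor disk $(A,A')$ whose underlying disk contains the origin is weakly equivalent to some $(\DD,A'')\in \Lambda_{\DD}$ via an ambient isotopy fixing the origin, while any $(A,A')$ disjoint from the origin embeds as a subdisk of a larger disk $\DD'$ containing the origin, and a collared cut of $\DD'$ separating $A$ from the origin presents $\F(A,A')$ as a vertex of a Milnor-descent square whose remaining vertices lie in $\Lambda_{\DD'}\simeq \Lambda_{\DD}$. Combined with the pointwise Kan-extension analysis from the proof of Theorem~\ref{thm:main}, this shows that any Milnor sheaf on $M^+(\CC,\{0\})$ is a right Kan extension of its restriction to $\Lambda_{\DD}^+$, so that $r_1$ is fully faithful with essential image $\Cc$.

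For $r_2$, the relevant statement is essentially Proposition~\ref{prop:localpara} transferred from the stable $\infty$-categorical setting to the abelian category $\A$. Using Theorem~\ref{thm:milnorsheaves} together with the purity property, one sees that the relevant Kan extensions land in $\A\subset \D(\A)$ precisely because the Segal hypothesis on the underlying simplicial object forces the corresponding biCartesian squares in $\D(\A)$ to be concentrated in degree zero. Consequently $r_2$ is fully faithful with essential image the subcategory of paracyclic objects whose restriction to $\Delta^{\op}$ satisfies the Segal conditions.

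The hardest step will be the explicit verification that the Kan extension produced in the argument for $r_1$ satisfies the Milnor descent conditions on disks $(A,A')$ disjoint from the origin. Geometrically, this amounts to expressing arbitrary collared cuts of arbitrary Milnor disks in terms of the canonical cuts of $\DD$, while tracking the paracyclic rotations $\tau_n$ that arise when the cut angularly permutes the entry/exit locations of the origin into $\DD$. This combinatorial bookkeeping is already visible in the analysis of the identity $P^2=Q$ from~\S\ref{subsec:milntoclass}, and it will constitute the main technical content of the argument.
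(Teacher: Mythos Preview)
Your factorization has two genuine gaps. First, the category $\Cc$ is not well-defined as stated: none of the Milnor-sheaf conditions (vanishing on $\disk{0}{1}$ and $\disk{1b}{1}$, collared-cut squares) are visible inside $\Lambda_{\DD}^+$, because every auxiliary disk appearing in those conditions lies outside $\Lambda_{\DD}^+$. Second, and more seriously, the assertion that every Milnor sheaf is the \emph{right Kan extension} of its restriction to $\Lambda_{\DD}^+$ does not follow from your collared-cut argument. That argument shows that $\F(A,A')$ for $(A,A')$ disjoint from $0$ sits in a biCartesian square whose other vertices lie in $\Lambda_{\DD}$; this is a \emph{relation} determining $\F(A,A')$, not a pointwise limit over the slice $\Lambda_{\DD}^+/(A,A')$. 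Indeed, if one passes to the directed category $\Mdp$ (as one must, to invoke Theorem~\ref{thm:main}), then for $(A,A')$ with $0\notin A$ the slice $\Lambda_{\DD}^+/(A,A')$ is empty, since special points cannot exit in $\Mdp$; the right Kan extension would then force $\F(A,A')=0$, contradicting $\F(\disk{0}{2})\cong\Psi$.

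The paper's proof circumvents this by first passing to $\Mdp(\CC,\{0\})$ via Corollary~\ref{cor:milnorlocalization}, and then using not a single Kan extension but a four-step filtration
\[
\Md_0\subset\Md_1\subset\Md_2\subset\Md_3\subset\Mdp(\CC,\{0\}),
\]
where $\Md_0$ consists of disks with $0\in A\setminus A'$ (so $\Md_0$ plays the role of your $\Lambda_{\DD}^+$), $\Md_1$ adds disks with $0$ on a boundary interval, $\Md_2$ adds disks with $0\in A'$, and $\Md_3$ adds $\disk{0}{1}$. The extensions alternate right, left, left, right. The alternation is not cosmetic: disks away from the origin can only be reached from $\Md_0$ by first producing the zero objects (via right extension to $\Md_1$, left to $\Md_3$) and then right-extending; a single direction cannot do this. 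Your instinct that Proposition~\ref{prop:localpara} governs the final passage from $\Lambda_{\DD}^+$ to $\Lambda_{\DD}$ is correct, but the climb from $\Lambda_{\DD}^+$ up to the full Milnor category requires this staged mechanism.
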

\begin{proof} 
	For notational convenience, we replace $\A$ by $\A^{\op}$ and prove the cosheaf version of
	the statement. 
	By Corollary \ref{cor:milnorlocalization}, the category $M^+$ may be described as the
	localization of its directed variant $\Md^+$. In the statement of the proposition, we may
	therefore replace the category $\Fun^{\sharp}(M^+(\CC,\{0\}),\A)$ by the equivalent
	category $\Fun^{\sharp}(\Md^+(\CC,\{0\}),\A)$ where here, the superscript $\sharp$ also
	contains the requirement that weak equivalences be sent to isomorphisms in $\A$.  
	We now focus on the following collections of objects of $\Md^+$ (and the subcategories they
	span):
	\begin{itemize}
		\item $\Md_0$: all objects $(A,A')$ where $0 \in A \setminus A'$,
		\item $\Md_1$: $\Md_0$ together with all objects of the form
			\[
				(A,A') = \disk{1b}{1}
			\]
		\item $\Md_2$: $\Md_1$ together with all objects $(A,A')$ such that $0 \in A'$, 
		\item $\Md_3$: $\Md_2$ together with all objects of the form
			\[
				(A,A') = \disk{0}{1}
			\]
	\end{itemize}
	The fact that the restriction functor of the proposition is an equivalence now follows from
	the statement that the functors $\F \in \Fun^{\sharp}(\Md^+(\CC,\{0\}),\A)$ can be characterized
	by the following conditions:
	\begin{enumerate}
		\item The paracyclic object $\F|\Md_0 \simeq \Lambda_{\DD}$ satisfies the Segal conditions.
		\item $\F$ is obtained from its restriction to $\Md_0$ via a sequence of left (resp.
			right) Kan extensions as
			indicated in 
			\[
				\begin{tikzcd}
					\Md_0 \ar{r}{\text{right}} & \Md_1 \ar{r}{\text{left}} & \Md_2
					\ar{r}{\text{left}} & \Md_3 \ar{r}{\text{right}} & \Md^+(\CC,\{0\})
				\end{tikzcd}
			\]
			The details are left to the reader.
	\end{enumerate}
\end{proof}

\begin{cor}
	\label{cor:local}
		The category of Milnor sheaves on $(\CC,\{0\})$ with values in $\A$, and therefore the category of
		perverse sheaves on $(\CC,\{0\})$, is equivalent to the category $\A_{\Lambda_{\infty}}^{\on{Seg}}$
		of paracyclic objects in $\A$ whose underlying simplicial object satisfies the Segal
		conditions. 
\end{cor}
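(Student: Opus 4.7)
The plan is to chain together the previously established equivalences; essentially all the real work has been done in Theorem \ref{thm:milnorsheaves}, Theorem \ref{thm:main} (via Corollary \ref{cor:milnorparam}), and Proposition \ref{prop:PSD-Milnor}, and the task is just to arrange them in the correct order for the special case $(X,N) = (\CC, \{0\})$.

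First I would invoke Theorem \ref{thm:milnorsheaves} together with Corollary \ref{cor:milnorsheaves} to identify the abelian category of perverse sheaves $\PS(\CC,\{0\};\A)$ with $\Fun^{\sharp}(M(\CC,\{0\})^{\op}, \A)$, the category of $\A$-valued Milnor sheaves on the undirected Milnor category. Next, via Corollary \ref{cor:milnorparam}, applied in its directed form and then transported through the localization $\Md \to M$ supplied by Corollary \ref{cor:milnorlocalization}, restriction along the inclusion $M(\CC,\{0\}) \subset M^+(\CC,\{0\})$ yields an equivalence
\[
\Fun^{\sharp}(M^+(\CC,\{0\})^{\op}, \A) \;\simeq\; \Fun^{\sharp}(M(\CC,\{0\})^{\op}, \A),
\]
so Milnor sheaves extend canonically to the enlarged category that also contains the standard disks.

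Then I would apply Proposition \ref{prop:PSD-Milnor}: restriction along the inclusion $\Lambda_{\DD} \subset M^+(\CC,\{0\})$ fully faithfully embeds $\Fun^{\sharp}(M^+(\CC,\{0\})^{\op}, \A)$ into $\Fun(\Lambda_{\DD}^{\op}, \A)$, with essential image precisely those paracyclic objects whose underlying simplicial object is Segal. Finally, Remark \ref{rem:paracyclic} identifies $\Lambda_{\DD} \simeq \Lambda_{\infty}$, so this image is exactly $\A_{\Lambda_{\infty}}^{\on{Seg}}$. Composing the three equivalences yields the claim.

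Since each ingredient is already in place, there is no substantial obstacle; the only point requiring care is to verify that the subcategory $\Lambda_{\DD}$ one picks inside $M^+(\CC,\{0\})$ in Proposition \ref{prop:PSD-Milnor} is the one associated (in the sense of the paragraph preceding Remark \ref{rem:paracyclic}) to a disk $\DD$ centered at $0$, so that its objects genuinely surround the unique special point. With that choice, the identification $\Lambda_{\DD} \simeq \Lambda_{\infty}$ from Remark \ref{rem:paracyclic} is unambiguous, and the Segal condition on the underlying simplicial object transports directly across to the Segal condition on the simplicial object underlying a paracyclic object in $\A_{\Lambda_{\infty}}^{\on{Seg}}$.
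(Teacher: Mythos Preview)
Your proposal is correct and matches the paper's approach: the corollary is stated without proof precisely because it is meant to be read as the composite of Proposition \ref{prop:PSD-Milnor}, Remark \ref{rem:paracyclic}, and the identification of perverse sheaves with Milnor sheaves from Theorem \ref{thm:milnorsheaves}/Corollary \ref{cor:milnorsheaves}. One small caveat: your step bridging $M$ to $M^+$ cites Corollary \ref{cor:milnorparam}, which is stated for stable $\infty$-categories $\D$, not abelian $\A$; the abelian version of that passage is instead established inside the proof of Proposition \ref{prop:PSD-Milnor} itself (the Kan-extension chain $\Md_0 \to \Md_1 \to \cdots \to \Md^+$), so you could simply drop that intermediate citation and let Proposition \ref{prop:PSD-Milnor} do the work directly.
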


In what follows, we provide the relation to the more traditional classification of Proposition
\ref{prop:phi-psi} by means of a paracyclic nerve construction which can also be regarded as a
special instance of a duplicial variant of the Dold--Kan correspondence established in
\cite{DK}. 

\paragraph{Paracyclic structures on the nerve of a Picard groupoid.} 
To compare Propositions \ref{prop:phi-psi} and \ref{prop:PSD-Milnor}
in a direct way, we assume for simplicity that $\A=\Ab$
is the category of abelian groups. It is classical that a simplicial set 
is Segal if and only
if it is isomorphic to the nerve of a small category. The categories relevant for us are
are  Picard groupoids of a particular type. 

We recall (cf. \cite{deligne-SGA}) that a  {\em Picard groupoid} is a symmetric monoidal
category $(\P, \otimes, \one)$ in which each object is invertible with respect to $\otimes$
and each morphism is invertible with respect to the composition.

\begin{ex}\label{ex:2-term-picard}
Let $E^\bullet$ be a 2-term complex of abelian groups situated in degrees $[-1,0]$.
It will be suggestive for us to write $E^\bullet$ as $\{\Psi\buildrel b\over\to \Phi\}$
with $\Phi$ in degree $0$ and $\Psi$ in degree $(-1)$. To such a datum one
associates a Picard groupoid $[E^\bullet] = [\Psi\buildrel b\over\to \Phi]$ with: 
    \[
    \begin{gathered}
    \Ob\,  [\Psi\buildrel b\over\to \Phi] \,\,=\,\,\Phi; 
    \\
    \Hom(\phi', \phi) \,\,=\,\,\bigl\{ \psi\in\Psi\, \bigl| \, b(\psi) = \phi-\phi' \bigr\}. 
    \end{gathered}
    \]
 Composition of morphisms is given by addition of the $\psi$.   
 The tensor product of objects is given by addition of the $\phi$. 
 We note that the set of all morphisms in $[\Psi\buildrel b\over\to \Phi]$ 
 (i.e., the disjoint union of all  the $ \Hom(\phi, \phi') $) can be described as
 \[
 \Mor \,  [\Psi\buildrel b\over\to \Phi] \,\,=\,\, \Psi \oplus \Phi, 
 \]
 with the source and target maps $s,t: \Mor\to\Ob$ given by
 \begin{equation}
	 \label{[b]-st}
	 s(\psi,\phi)= \phi - b(\psi) , \quad t(\psi,\phi) = \phi. 
 \end{equation}
See \cite{deligne-SGA} for more details.
\end{ex}

The nerve $N [\Psi\buildrel b\over\to \Phi]$ is a simplicial abelian group
with $n$-simplices 
\begin{equation}
	\label{eq:N_n}
	N_n   [\Psi\buildrel b\over\to \Phi] \,\,=\,\,  \Psi^{\oplus n}\oplus \Phi. 
\end{equation}
Passing from a 2-term complex $\{  \Psi\buildrel b\over\to \Phi\}$ to the simplicial object
$N [\Psi\buildrel b\over\to \Phi]$
is a particular case of the Dold-Kan correspondence  between non-positively graded cochain complexes
of abelian groups and simplicial abelian groups, see \S \ref{subsection:para-DK} below. 

 \begin{prop}\label{prop:perv-para}
Let $b: \Psi \to \Phi$ be a  morphism of abelian groups. Then the following are in bijection:
\begin{itemize}
\item[(i)] Morphisms $a: \Phi\to \Psi$ such that the data
 $(\Phi, \Psi, a,b)$ satisfy the conditions of Proposition \ref{prop:phi-psi}, i.e., define a perverse sheaf $F\in\PS(D,0;\Ab)$.
 
  \item[(ii)] Extensions of the structure of a simplicial abelian on
  $ N [\Psi\buildrel b\over\to \Phi]$ to 
 that of a paracyclic abelian group,
 i.e.,  systems of  automorphisms $t_n \in\Aut\bigl(N_n  [\Psi\buildrel b\over\to \Phi]\bigr)$ 
 (actions of  the $\tau_n\in \Aut_{\Lambda_\oo}\cn $) satisfying the
 relations dual to those imposed in Definition \ref{def:paracyc}(a). 
\end{itemize} 
Under this bijection,   the automorphism $t_n^{n+1}$  corresponds, via the identification \eqref{eq:N_n},  to
the  direct sum $T_\Psi^{\oplus n}\oplus T_\Phi$ of the monodromies. 
\end{prop}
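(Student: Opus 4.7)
The strategy is to pin down, using the defining relations of $\Lambda_\oo$, the general form of a paracyclic extension $(t_n)_{n \ge 0}$ of the simplicial abelian group $N[\Psi\xrightarrow{b}\Phi]$ in explicit coordinates; observe that such extensions are parametrized by a single linear map $a\colon\Phi\to\Psi$; and check that the paracyclic operator is an automorphism precisely when the monodromy conditions \eqref{eq:conditions} hold.

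I fix coordinates on $N_n = \Psi^{\oplus n} \oplus \Phi$ by writing an $n$-simplex as $(\psi_1,\ldots,\psi_n,\phi_0)$, corresponding to the chain $\phi_0 \xrightarrow{\psi_1} \phi_1 \xrightarrow{\psi_2} \cdots \xrightarrow{\psi_n} \phi_n$ with $\phi_i = \phi_0 + b(\psi_1+\cdots+\psi_i)$. In these coordinates the face and degeneracy maps are explicit: $\partial_0$ returns $(\psi_2,\ldots,\psi_n,\phi_1)$, $\partial_n$ returns $(\psi_1,\ldots,\psi_{n-1},\phi_0)$, inner $\partial_i$ replaces $(\psi_i,\psi_{i+1})$ by $\psi_i+\psi_{i+1}$, and $s_j$ inserts a zero $\psi$-entry. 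A paracyclic extension amounts to a family of homomorphisms $t_n\colon N_n\to N_n$ satisfying the relations dual to Definition \ref{def:paracyc}(a).

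The key step is deriving the explicit formula for $t_n$. The relation $\partial_0 t_n = \partial_n$ forces the last $n-1$ output $\psi$-coordinates to be $(\psi_1,\ldots,\psi_{n-1})$, leaving a single free entry $\psi'_1$; writing $\psi'_1 = a(\phi_0) + \sum_i c_i\psi_i$ with $a\colon\Phi\to\Psi$ and $c_i\colon\Psi\to\Psi$, the relations $\partial_i t_n = t_{n-1}\partial_{i-1}$ for $1\le i \le n$ reduce by coefficient matching to $c_1 = c_2 = \cdots = c_n = ab - \id$. After rewriting, this gives
\[
t_n(\psi_1,\ldots,\psi_n,\phi_0) \,=\, \bigl(\,a(\phi_n) - (\psi_1+\cdots+\psi_n),\ \psi_1,\ldots,\psi_{n-1},\ T_\Phi(\phi_n)\,\bigr),
\]
where $T_\Phi := \id - ba$, parametrized by the arbitrary homomorphism $a$. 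The remaining degeneracy relations $s_i t_n = t_{n+1}s_{i+1}$ and $s_0 t_n = t_{n+1}^2 s_n$ (the latter carrying the characteristic squared paracyclic shift) follow by direct substitution. Conversely, given $(t_n)$, the map $a$ is recovered as the first $\Psi$-component of $t_1$ restricted to $\Phi \subset N_1$, establishing the bijection between (i) and (ii) at the level of homomorphisms.

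For the final claim, I verify $t_n^{n+1} = T_\Psi^{\oplus n} \oplus T_\Phi$. The case $n=0$ is immediate from $t_0 = T_\Phi$, and for $n=1$ the direct computation
\[
t_1(\psi_1,\phi_0) = (a\phi_0 - T_\Psi\psi_1,\ T_\Phi\phi_0 + bT_\Psi\psi_1), \qquad t_1^2(\psi_1,\phi_0) = (T_\Psi\psi_1,\ T_\Phi\phi_0)
\]
uses the identities $aT_\Phi = T_\Psi a$ and $T_\Phi b = bT_\Psi$. The general case is an induction on $n$: each $\psi_i$ shifts by one position per application of $t_n$, returning to its original position after $n+1$ iterations, and the contributions from the ``new'' entries $a(\phi_\bullet) - \sum_j \psi_j$ telescope via the same identities into precisely $T_\Psi\psi_i$, while the $\phi$-component collects a single factor of $T_\Phi$. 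In particular $t_n$ is an automorphism iff $T_\Phi$ (equivalently $T_\Psi$, by the last sentence of Proposition \ref{prop:phi-psi}) is one, matching \eqref{eq:conditions}. The main obstacle is the bookkeeping of how the ``new'' first entry propagates through successive applications of $t_n$; the cleanest route is to observe that under a single $t_n$ the last $\phi$-coordinate transforms as $\phi_n \mapsto T_\Phi\phi_n$, which renders the telescope transparent.
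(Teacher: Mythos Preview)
Your proof is correct and follows essentially the same approach as the paper: both arguments use the relation $\partial_0 t_n = \partial_n$ to constrain the general form of $t_n$ up to undetermined coefficients, then use the remaining face relations $\partial_i t_n = t_{n-1}\partial_{i-1}$ to pin those coefficients down, and finally verify the identity $t_n^{n+1} = T_\Psi^{\oplus n}\oplus T_\Phi$. The only cosmetic difference is your choice of the source vertex $\phi_0$ rather than the target $\phi = \phi_n$ as the $\Phi$-coordinate, which is why your coefficients come out as $c_i = ab - \id$ while the paper's come out as $-\id$; the two formulas agree after the substitution $a(\phi_0) + (ab-\id)\sum\psi_i = a(\phi_n) - \sum\psi_i$.
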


\noindent{\sl Proof:} Explicitly, the convention \eqref{[b]-st} on labelling the source and target of
a morphism implies that the 
simplicial face  and degeneracy operators  on
 $N_n  [\Psi\buildrel b\over\to \Phi]$ are given by
\[
 \del_i:  \Psi^{\oplus n}\oplus \Phi \to  \Psi^{\oplus(n-1)} \oplus \Phi, \quad 
(\psi_1,\cdots, \psi_n;\phi ) \mapsto
\begin{cases}
( \psi_2,\cdots, \psi_n; \phi), &  i=0, 
\\
( \psi_1, \cdots, \psi_i+\psi_{i+1}, \cdots\psi_n;\phi), & 1\leq i <  n;
\\
(\psi_1, \cdots, \psi_{n-1}; \phi-b(\psi_n)), & i=n; 
 \end{cases}   
 \]
 \[
s_j: \Psi^{\oplus n} \oplus \Phi \to \Psi^{\oplus (n+1)}\oplus \Phi, \quad 
(\psi_1,\cdots, \psi_n; \phi) \mapsto
 (\psi_1, \cdots, \psi_{j-1},0, \psi_{j+1}, \cdots \psi_n; \phi),  \,\, j=0,\cdots, n.
 \]
Now, let $a: \Phi\to\Psi$ be as in (a).  For each $n\geq 0$,  define an endomorphism 
$t_n$ of $\Psi^{\oplus n}\oplus\Phi$ by
\be\label{t_n-explicit}
t_n(\psi_1,\cdots, \psi_n, \phi) \,=\, \bigl(-\psi_1-\cdots - \psi_n + a(\phi), \psi_2, \cdots, 
\psi_{n-1};  \phi-b(\psi_n)\bigr). 
\ee
We then check directly that the relations dual to those of Definition \ref{def:paracyc}(a)
are satisfied. We also check that $t_n^{n+1} = T_\Psi^{\oplus n}\oplus T_\Phi$
which implies that $t_n$ is invertible. 


Conversely, suppose we have automorphisms $t_n$ as in (b).  The relation $\del_0t_n=\del_n$
implies that $t_n$ has the form
\[
t_n(\psi_n, \cdots, \psi_n; \phi) \,\,=\,\,\biggl( -\sum_{i=1}^n x^{(n)}_i(\psi_i) + a_n(\phi) ; \, \psi_2, \cdots, \psi_{n-1}, \phi-b(\psi_n)\biggr)
\]
for some linear maps $x^{(n)}_i: \Psi\to\Psi$ and $a_n: \Phi\to\Psi$. We denote $a_1=a$ and will prove that 
\be\label{eq:x-i-n}
x^{(n)}_i=\Id, \quad a_n=a, \quad  \forall \, n, \,\, i=1,\cdots, n,
\ee
 i.e., that all the $t_n$ are given by the formula \eqref{t_n-explicit}. This will imply the invertibilty of
$T_\Psi=\Id-ab$ and $T_\Phi=\Id-ba$ by identifying $t_n^{n+1}$ as above. 

The  equalities \eqref{eq:x-i-n} are proved recursively, using the relations of $\Lambda^\infty$. To start,  the relation $\del_1t_2=t_1\del_0$ implies that
\[
\del_1t_2(\psi_1, \psi_2; \phi) \,=\, (-x^{(2)}_1\psi_1 - x^{(2)}_2\psi_2 + a_2\phi + \psi_1; \, \phi-b\psi_2)
\]
 is equal to
 \[
 t_1\del_0(\psi_1, \psi_2; \phi) \,=\, (-x^{(1)}_1\psi_2+a\phi;\,  \phi-b\psi_2),
 \]
which entails
\[
x^{(2)}_2 = x^{(1)}_1, \,\,\, x^{(2)}_1=\Id. 
\]
The relation $\del_2t_2=t_1\del_1$ then implies that 
\[
\del_1 t_2(\psi_1, \psi_2;\psi) \,=\, (-x^{(2)}_1\psi_1-x^{(2)}_2\psi_2 + a_2\phi; \, \phi-b\psi_2-b\psi_1)
\]
  is equal to
  \[
  t_1 \del_1(\psi_1, \psi_2;\psi) \,=\, (-x^{(1)}_1\psi_1 -x^{(1)}_1 \psi_2 + a(\phi);\,  \phi-b\psi_2-b\psi_1),
  \]
  which entails
  \[
  x^{(2)}_1 = x^{(2)}_2 = x^{(1)}_1, \,\,\, a_2=a.
  \]
  Since we already know that $x^{(2)}_1=\Id$, we see that $x^{(2)}_2=x^{(1)}_1=\Id$. Continuing like this, we prove 
  \eqref{eq:x-i-n}. 
   \qed
   
   \begin{rem}  
   One can consider paracyclic structures on the nerves of more general Picard groupoids, not necessarily those corresponding 
   to 2-term complexes. It would be interesting to
   understand the relation of such structures to perverse sheaf-like objects.
   We recall \cite{johnson-osorno}  that Picard groupoids correspond to spectra (stable homotopy types in the sense of
   homotopy topology) which have only two nontrivial homotopy groups in
   adjacent degrees, say only $\pi_0$ and $\pi_1$ or only $\pi_1$ and $\pi_2$.
   
   More generally, unstable homotopy types with only $\pi_1$ and $\pi_2$ nontrivial,
   are described by {\em crossed modules}, see, e.g.,  \cite{noohi}, which are 2-term complexes of
   possibly nonabelian groups
   \[
   G^\bullet \,=\,\bigl\{ G^{-1} \buildrel\del\over\lra G^0\bigr\}
   \]
 with a compatible action of $G^0$ on $G^{-1}$.  A crossed module $G^\bullet$
 gives rise to a non-abelian Picard groupoid (also known as a {\em $2$-group})
 $[G^\bullet]$, defined similarly to Example \ref{ex:2-term-picard}. One can
 ask about the meaning of paracyclic structures on the nerve of $[G^\bullet]$
 and the possibility of defining perverse sheaves of nonabelian groups
 in one complex dimension. 
   
   \end{rem}

\subsection{Relation to the duplicial Dold-Kan correspondence}
   \label{subsection:para-DK}
   
   \paragraph{The classical Dold-Kan.} 
   Let $\A$ be an abelian category and $\Co^{\leq 0}(\A)$ be the (abelian) category of cochain complexes over $\A$ situated
   in degrees $\leq 0$. As usual, by $\A_\Delta$ we denote the category of simplicial objects of $\A$. 
   The {\em Dold-Kan correspondence}, see, e.g., \cite{goerss-jardine},  is the pair of mutually quasi-inverse (in particular, adjoint) equivalences
   of categories
   \[
   \CDK: \A_\Delta \buildrel \sim\over \longleftrightarrow \C^{\leq 0}(\A): \NDK, 
   \]
  defined as follows. The functor $\CDK$, called the {\em normalized chain complex functor}, takes $A_\bullet\in\A_\Delta$
  to the complex $\CDK(A_\bullet)$ with
  \[
  \CDK^{-n}(A_\bullet) \,=\,\bigcap_{i=1}^{n}\,  \Ker \{\del_i: A_n\lra A_{n-1}\}, \quad n\geq 0,
  \] 
 with the differential given by the remaining face map $\del_0$.   
 
 The functor $\NDK$ called the {\em Dold-Kan nerve}, takes a complex $(E^\bullet, d_E)\in \Co^{\leq 0}(\A)$ into the simplicial
 object $\NDK(E^\bullet)$ with
 \[
 \NDK(E^\bullet)_n \,=\, Z^0(\Delta^n, E^\bullet), 
 \]
the object of degree $0$ simplicial (hyper)cocycles on $\Delta^n$ with values in $E^\bullet$. That is, denoting $\Delta^n_m$
the set of $m$-simplices of $\Delta^n$, 
\[
 Z^0(\Delta^n, E^\bullet) \,\subset \, \prod_{m\geq 0} (E^{-m})^{\Delta^n_m}
\]  
 is given by the following ``end'' condition: the action of the morphism induced by each $d_E: E^{-m} \to E^{-m+1}$ is equal to the
 action of the morphism induced by $\sum (-1)^i\del_i: \Delta^n_{m+1} \to\Delta^n_m$.   
 
 \begin{exas}
 (a) Let $\A=\Ab$. An element of $ Z^0(\Delta^n, E^\bullet)$ is in this case a rule $\gamma$ associating:
 \begin{enumerate}
 \item[(0)] To each vertex $e_i$, $0\leq i\leq n$,  of $\Delta^n$,  an element $\gamma_i\in E^0$.
 
 \item[(1)]  To each edge (possibly degenerate) $e_{ij}$, $0\leq i\leq j\leq n$,  of $\Delta^n$, an element $\gamma_{ij}\in E^{-1}$ so that
 $d_E(\gamma_{ij}) = e_j-e_i$.
 
 \item[(2)] To each $2$-face  (possibly degenerate)   $e_{ijk}$, $0\leq i\leq j \leq k\leq n$,  of $\Delta^n$, an element $\gamma_{ijk}\in E^{-2}$
 so that $d_E(\gamma_{ijk}) = \gamma_{jk} - \gamma_{ik} + \gamma_{ij}$. 
 
 \item[($\cdots$)] And so on. 
 \end{enumerate}
 
 (b) In particular, if $E^\bullet = \{E^{-1}\to E^0\}$ is a 2-term complex of abelian groups, then $\NDK(E^\bullet) = \N[E^\bullet]$
 is the usual nerve of the Picard groupoid $[E^\bullet]$. 
 \end{exas}
 
 Proposition \ref{prop:perv-para} extends verbatim to the following.
 
 \begin{prop}\label{prop:para-PS-A}
 Let $\A$ be any abelian category and $b: \Psi \to\Phi$ be a morphism in $\A$. Then the following are in bijection:
 
\begin{itemize}
 \item[(i)]  
 Morphisms $a: \Phi\to \Psi$ such that the data
 $(\Phi, \Psi, a,b)$ satisfy the conditions of Proposition \ref{prop:phi-psi}, i.e., define a perverse sheaf $F\in\PS(D,0;\A)$.
 
  \item[(ii)] Extensions of the simplicial object structure on $\NDK\{\Psi\buildrel b\over\to\Phi\}$ to a structure of a paracyclic object.
  \qed
 \end{itemize}

 \end{prop}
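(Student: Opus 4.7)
The plan is to extend the proof of Proposition \ref{prop:perv-para} verbatim from $\A = \Ab$ to an arbitrary abelian category $\A$, relying on the observation that every construction in that proof is manifestly functorial and consists of canonical morphisms between finite direct sums. Specifically, $N_n[\Psi \buildrel b\over\to \Phi] = \Psi^{\oplus n} \oplus \Phi$ is intrinsically defined in $\A$, and the face and degeneracy operators admit descriptions purely in terms of projections, inclusions, the codiagonal $\Psi^{\oplus 2}\to\Psi$, sign changes, and the morphism $b$ --- for instance $\del_i$ with $1\le i<n$ applies the codiagonal on the $(i,i+1)$-block, while $\del_n$ combines $b$ with the subtraction $\Psi\oplus\Phi\to\Phi$.

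For the implication (i) $\Rightarrow$ (ii), I would define $t_n$ by the same formula \eqref{t_n-explicit} as in the abelian-group case, now interpreted as a morphism of finite direct sums assembled from $a$, $b$, projections, inclusions, codiagonals and sign changes. Each paracyclic relation becomes an equality between compositions of such canonical morphisms in $\A$, which can be verified either by direct matrix calculation or, most economically, via the Yoneda embedding $\A\hookrightarrow \Fun(\A^{\op},\Ab)$, reducing every identity to the abelian-group computation already carried out in the proof of Proposition \ref{prop:perv-para}. The same formal manipulation yields $t_n^{n+1} = T_\Psi^{\oplus n} \oplus T_\Phi$, so that invertibility of $t_n$ follows from the hypothesis on $T_\Psi$ and $T_\Phi$.

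For the converse (ii) $\Rightarrow$ (i), I would begin with the identity $\del_0 t_n = \del_n$, which forces $t_n$ to have a block form involving a priori unknown morphisms $x^{(n)}_i: \Psi\to\Psi$ and $a_n: \Phi\to\Psi$ in its top row, exactly as in the proof of Proposition \ref{prop:perv-para}. Setting $a := a_1$, the recursive argument using the remaining relations $\del_i t_n = t_{n-1}\del_{i-1}$ (and their companions) forces $x^{(n)}_i = \id$ and $a_n = a$ for all $n,i$; these are morphism equalities in $\A$ that again transfer from $\Ab$ by the same Yoneda reduction. Invertibility of $t_n^{n+1} = T_\Psi^{\oplus n}\oplus T_\Phi$ then yields invertibility of $T_\Psi$ and $T_\Phi$, producing the required data of condition (i) and completing the bijection, whose naturality in $b$ is clear from the formulas.

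The main --- and in fact only --- obstacle is bookkeeping: one must verify that the original element-level argument nowhere exploits features specific to $\Ab$ (such as a forgetful functor to sets or the existence of individual elements), but uses only the additive structure of $\A$ together with the morphisms $a$, $b$, and sign changes. Once this is observed, the transfer is automatic and the proposition extends without additional input.
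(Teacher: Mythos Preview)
Your proposal is correct and matches the paper's approach exactly: the paper itself offers no separate argument for this proposition, stating only that Proposition~\ref{prop:perv-para} ``extends verbatim'' and marking it \qed. Your elaboration that every formula in that proof is an identity of morphisms between finite direct sums (hence meaningful in any additive category, and verifiable via Yoneda) is precisely the content behind the paper's one-line justification.
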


 \paragraph{The duplicial Dwyer-Kan correspodence.}
 Let $\A$ be an abelian category and $E^\bullet\in \Co^{\leq 0}(\A)$. Proposition \ref{prop:para-PS-A} leads to the
 following question: what is the meaning of a paracyclic structure on $\NDK(E^\bullet)$ extending the given
 simplicial structure? An answerto that can be given by the results of Dwyer-Kan \cite{DK} which we recall.  
 
 
 We define the {\em duplex category}  $\Xi$ to have objects $\cn$, $n \in \NN:= \ZZ_{\geq 0} $. A
morphism from $\cm$ to $\cn$ consists of a weakly monotone map $f: \NN \to \NN$ satisfying
te following periodicity condition: for all
$i \in \NN$,  we have $f(i + m +1) = f(i) + n + 1$. The simplex category $\Delta$ is naturally a subcategory
of $\Xi$ obtained by restricting to those morphisms between $\cm$ and $\cn$ that map the interval
$[0,m]$ to $[0,n]$. A {\em duplicial object} in a category $\C$ is a functor $\Xi^{\op} \to \C$. 

 
 We recall \cite {elmendorf} that the paracyclic category $\Lambda_\oo$ can be defined in a very similar
 way, except we consider weakly mononote maps $f: \ZZ\to\ZZ$ (instead of $\NN\to\NN$) satisfying the
 same periodicity condition. In particular, the shift map
  \[
	\tau_n: \cn \lra \cn,\quad  i \mapsto i+1 
\] 
 is invertible as en element of $\Hom_{\Lambda_\oo}(\cn, \cn)$ (with $i$ running in $\ZZ$) but is not
 invertible as an element of   $\Hom_{\Xi}(\cn, \cn)$ (with $i$ running in $\NN$). In fact, comparing 
 \cite{DK} and \cite{elmendorf}, leads to the following.
 
 \begin{prop}
 $\Lambda_\oo\simeq \Xi[\tau_n^{-1}|\, n\geq 0] $ is identified with the localization of $\Xi$ with respect to 
  the morphisms $\tau_n$, $n\geq 0$.\qed
 \end{prop}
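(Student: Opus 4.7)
The plan is to establish directly that the canonical functor $p\colon \Xi \to \Lambda_\oo$, which is the identity on objects and which extends a periodic weakly monotone map $f\colon \NN \to \NN$ to the unique periodic weakly monotone map $\ZZ \to \ZZ$ agreeing with it, exhibits $\Lambda_\oo$ as the localization of $\Xi$ at the family $\{\tau_n\}_{n \geq 0}$. The first step is to observe that $p(\tau_n)$ is already invertible in $\Lambda_\oo$: its inverse is the shift $i \mapsto i-1$, which is well-defined on $\ZZ$ but, crucially, not on $\NN$. This shows that $p$ inverts the $\tau_n$ and hence factors through $\Xi[\tau_n^{-1} \mid n \geq 0]$.

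The second step is a normal form theorem: every morphism $g \in \Hom_{\Lambda_\oo}(\cm, \cn)$ can be written as $p(\tau_n)^{-k} \circ p(h)$ for some $h \in \Hom_\Xi(\cm, \cn)$ and some $k \geq 0$. Given a periodic weakly monotone $g\colon \ZZ \to \ZZ$, I choose $k \geq \max(0, -g(0))$ and set $h(i) := g(i) + k$; weak monotonicity together with the periodicity relation forces $h(\NN) \subset \NN$, so that $h \in \Xi$. Any two such presentations $(k,h)$ and $(k',h')$ differ by a power of $\tau_n$, yielding an explicit identification $\Hom_{\Lambda_\oo}(\cm,\cn) \cong \colim_k \Hom_\Xi(\cm,\cn)$ along postcomposition with $\tau_n$.

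The technical heart of the argument is a left Ore-type relation in $\Xi$: for $h \in \Hom_\Xi(\cn, \langle p\rangle)$ and any integer $k \geq 0$, there exist $\ell \geq 0$ and $h' \in \Hom_\Xi(\cn, \langle p\rangle)$ with $h' \circ \tau_n^k = \tau_p^{\ell} \circ h$ in $\Xi$; explicitly, one takes $h'(i) = h(i-k) + \ell$ and chooses $\ell$ large enough that $h'$ lands in $\NN$. Given any functor $F\colon \Xi \to \mathcal{C}$ sending each $\tau_n$ to an isomorphism, this Ore relation is exactly what is needed to show that the formula $\tilde F(\tau_n^{-k} \circ h) := F(\tau_n)^{-k} \circ F(h)$ well-defines a functor $\tilde F\colon \Lambda_\oo \to \mathcal{C}$: independence of representative follows by increasing $k$ by one (which replaces $h$ by $\tau_n \circ h$), and functoriality $\tilde F(g_2\circ g_1) = \tilde F(g_2) \circ \tilde F(g_1)$ reduces to moving an interior $\tau_n^{-k_1}$ past an $h_2$ via the Ore relation. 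Uniqueness of $\tilde F$ is automatic, since it is forced both on $\Xi$ and on the $\tau_n^{-1}$.

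I expect the main obstacle to be the clean bookkeeping for the functoriality of $\tilde F$ in the last step: commuting the inner shift past $h_2$ requires verifying both an identity inside $\Xi$ (which is elementary) and its consequence under $F$ (which is pure diagram chasing). An alternative organization that avoids explicit Ore bookkeeping would be to present $\Lambda_\oo$ as a Gabriel--Zisman calculus of fractions on $\Xi$; however, the hands-on version above is already close to explicit and highlights the role of the periodicity condition as the only place where the difference between $\NN$ and $\ZZ$ matters.
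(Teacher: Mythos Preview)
Your argument is correct. The paper does not actually give a proof of this proposition: it carries a \qed already in the statement, and the surrounding text only says that the result follows by ``comparing \cite{DK} and \cite{elmendorf}''. So there is nothing to match against beyond the implicit assertion that the two concrete models (periodic monotone self-maps of $\NN$ versus of $\ZZ$) differ exactly by the invertibility of the shifts.

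Your route makes that assertion explicit: the functor $p$ is the unique periodic extension from $\NN$ to $\ZZ$, it is faithful and bijective on objects, and your normal form $g = p(\tau_n)^{-k}\,p(h)$ identifies $\Hom_{\Lambda_\oo}(\cm,\cn)$ with the colimit of $\Hom_\Xi(\cm,\cn)$ along postcomposition by $\tau_n$. The Ore relation $h'\,\tau_n^{k} = \tau_p^{\ell}\,h$ (with $h'(j) = h(j-k) + \ell$ on the periodic extension, $\ell$ large enough) is exactly what is needed to commute an inner $p(\tau_n)^{-k_1}$ past $p(h_2)$ and verify functoriality of $\tilde F$; the computation goes through as you sketch. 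One small point worth stating explicitly for well-definedness: $p$ is injective on morphisms, so two normal forms $(k,h)$ and $(k',h')$ of the same $g$ satisfy $\tau_n^{\,k'-k} h = h'$ already in $\Xi$, whence $F(\tau_n)^{-k}F(h) = F(\tau_n)^{-k'}F(h')$. With that in place, your hands-on argument is a complete proof of the ordinary-categorical localization statement and supplies content the paper leaves to the references.
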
 
 
 In fact, the powers $\tau_n^{n+1}$ forming a central system (a natural transformation from 
 $\Id_\Xi$ to itself),
it is easy to see that the $\oo$-categorical localization $ \Xi[\tau_n^{-1}|\, n\geq 0]_\oo$ is also identified with $\Lambda_\oo$. 
In particular, $\Xi$, like $\Lambda_\oo$, is generated by the coface and codegeneracy morphisms
\[
\begin{gathered}
\delta^{(n)}_i: \langle n-1\rangle \to \cn, \quad (n\geq 1, \,\, 0\leq i\leq n),
\\
\sigma^{(n)}_i: \cn \to \langle n-1\rangle, \quad (n\geq 1, \,\, 0\leq i\leq n),
\end{gathered}
\]
satisfying the same quadratic relations as in $\Lambda_\oo$. The morphisms
\[
\delta^{(n)}_i, \,\, 0\leq i\leq n, \quad \sigma^{(n)}_i,\,\, 0\leq i\leq n-1, 
\]
generate the simplex category $\Delta\subset\Xi$. The shift map is expressed as $\tau_n=\delta^{(n-1}_0 \sigma^{(n)}_{n-1}$. 
Accordingly, a duplicial object $Y_\bullet$ in a category $\C$
 can be identified with a sequence of objects $X_0, X_1, \dots$
equipped with face and degeneracy maps
\[
\begin{gathered} 
	\del_i: X_n \lra X_{n-1}\quad \text{($n \ge 1$, $0 \le i \le n$)}, 
 \\
	s_i: X_{n-1} \lra X_{n}\quad \text{($n \ge 1$, $0 \le i \le n$)}
	\end{gathered}
\]
subject to   relations dual to those among the $\delta^{(n)}_i, \sigma^{(n)}_i$. The action of $\tau_n$ is then
$t_n = s_{n+1}\del_0: Y_n\to Y_n$. A paracyclic object is a duplicial object such that all the $t_n$ are isomorphisms.


Following Dwyer-Kan, we  call a {\em connective ducomplex} in $\A$ a diagram
\[
\xymatrix{
\cdots \ar@/^1ex/[r]^{\delta}  & \ar@/^1ex/[l]^{d} B^{-2}  \ar@/^1ex/[r]^{\delta}  & \ar@/^1ex/[l]^{d} B^{-1}
 \ar@/^1ex/[r]^{\delta}  & \ar@/^1ex/[l]^{d} B^0
 }
\]
satisfying $d^2=0$, $\delta^2=0$ and no further relations. We denote $\DC^{\leq 0}(\A)$ the category
of connective ducoplexes in $\A$.

\begin{thm}[(Dwyer-Kan)]\label{thm:dwyer-kan-duplex}
(a) 	There is an equivalence of categories
		\[
		\A_{\Xi} \overset{\simeq}{\lra} \DC^{\leq 0}(\A) 
		\]
	given by associating to a duplicial abelian group $A_{\bullet}$ the ducomplex $B^{\bullet}$
	with
	\begin{align*}
		B^{-n}  & = \bigcap_{ i=1}^n \,  \Ker\{\del_i: A_n\to A_{n-1}\},  \quad n\geq 0,
		\\ 
		d & =  \del_0: B^{-n} \to B^{-n+1}, \\
		\delta & = \sum_{i=0}^{n} (-1)^i s_i : B^{-n+1} \to B^{-n}.
	\end{align*}
	
	
	(b) Under this equivalence, paracyclic objects correspond to ducomplexes satisfying:
	\[
	\Id_{B^{-n}} + (-1)^n (d\delta-\delta d): B^{-n} \lra B^{-n} \quad \text {is invertible for any } n\geq 0. 
	\]
\end{thm}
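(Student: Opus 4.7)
The plan is to refine the classical Dold--Kan correspondence by decomposing the duplicial structure into its underlying simplicial part plus a collection of ``extra'' top-degeneracies. The category $\Xi$ is generated by $\Delta$ together with the additional codegeneracies $\sigma^{(n)}_n:\langle n\rangle\to\langle n-1\rangle$, so a duplicial object $A_\bullet$ amounts to a simplicial object in $\A$ together with extra maps $s_n:A_{n-1}\to A_n$ subject to the remaining duplicial relations. Applying the classical Dold--Kan correspondence to the underlying simplicial object produces $(B^{-n},d)$ with $B^{-n}=\bigcap_{i\geq 1}\Ker(\partial_i)$ and $d=\partial_0$, and the task then reduces to encoding the data of the additional $s_n$'s as a single map $\delta$ on the normalized complex, together with a single relation.

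For part (a), define $\delta:B^{-n+1}\to B^{-n}$ as the restriction of the alternating sum $\widetilde{\delta}=\sum_{i=0}^{n}(-1)^i s_i$. Using the duplicial identities $\partial_j s_i=s_{i-1}\partial_j$ for $j<i$, $\partial_j s_i=\id$ for $j\in\{i,i+1\}$, and $\partial_j s_i=s_i\partial_{j-1}$ for $j>i+1$, for each fixed $j\geq 1$ the only nonzero contributions to $\partial_j\widetilde{\delta}(x)$ (with $x\in B^{-n+1}$) come from $i\in\{j-1,j\}$ and cancel in pairs because of the alternating signs; hence $\widetilde{\delta}$ restricts to a well-defined $\delta$. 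The identity $\delta^2=0$ follows by the analogous double-sum cancellation based on $s_is_j=s_{j+1}s_i$ for $i\leq j$. This defines a functor $F:\A_{\Xi}\to \DC^{\leq 0}(\A)$. For its inverse, one follows the classical pattern by setting $A_n=\bigoplus_{\sigma}B^{-k}$ with $\sigma$ ranging over surjective $\Xi$-morphisms $\langle n\rangle\twoheadrightarrow\langle k\rangle$, defining the duplicial operators via epi--mono factorization in $\Xi$, and verifying mutual inverseness on generators. The only ingredient beyond classical Dold--Kan is that the extra surjection $\sigma^{(n)}_n$ contributes additional summands to the decomposition of $A_n$, which are precisely accounted for by the $\delta$-structure.

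For part (b), the powers $\tau_n^{n+1}$ form a central system not only in $\Lambda_\oo$ (Proposition \ref{prop:lambda-infty-cex}) but also in $\Xi$, since $\tau_n^{n+1}$ corresponds to the period shift $i\mapsto i+n+1$ and a direct check shows it commutes with every morphism in $\Xi$. Consequently, $\tau_n^{n+1}$ commutes with every face and degeneracy map and hence preserves both $B^{-n}\subset A_n$ and the Dold--Kan degenerate part $D_n$. The main work, and the main obstacle, is the explicit combinatorial identification of the restriction $\tau_n^{n+1}|_{B^{-n}}$: one expands the $(n+1)$-fold composite as an alternating sum of compositions of $s_i$'s and $\partial_j$'s and restricts to $B^{-n}$ (where $\partial_i=0$ for $i\geq 1$), whereupon substantial cancellation telescopes the expression down to $\Id_{B^{-n}}+(-1)^n(d\delta-\delta d)$. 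This telescoping requires careful bookkeeping and is where the specific formula emerges. To conclude, since $\tau_n$ is invertible if and only if $\tau_n^{n+1}$ is invertible (again by centrality), and since the action of $\tau_n^{n+1}$ on $D_n$ is determined inductively by the actions of $\tau_k^{k+1}$ for $k<n$ via naturality with the degeneracies, an induction on $n$ reduces the paracyclic invertibility conditions for all $\tau_n$ to the invertibility of $\Id+(-1)^n(d\delta-\delta d)$ on every $B^{-n}$, as claimed.
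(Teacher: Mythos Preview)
The paper does not prove this theorem at all: its proof consists entirely of citations to Theorem~3.5 and Proposition~6.5 of \cite{DK}. Your proposal instead sketches a direct argument, which is a genuinely different contribution and, as far as it goes, is on the right track. Your treatment of part~(a) is sound: the observation that $\Xi$ is generated over $\Delta$ by the extra top codegeneracies, the verification that $\widetilde\delta$ preserves the normalized subobjects $B^{-n}$, and the construction of the inverse via epi--mono factorization in $\Xi$ (which $\Xi$ indeed possesses, by the same argument as for $\Delta$ using the description via periodic monotone maps $\NN\to\NN$) all work as you indicate.

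For part~(b), however, you have deferred the entire content. The identification of $t_n^{n+1}|_{B^{-n}}$ with $\Id+(-1)^n(d\delta-\delta d)$ \emph{is} Proposition~6.5 of \cite{DK}, and your phrase ``substantial cancellation telescopes the expression down'' is a placeholder for that computation rather than an argument. If you intend a self-contained proof, this step must be carried out explicitly (e.g.\ by an induction on $n$ using $t_n = s_n\partial_0$ and the duplicial identities). One minor point: your appeal to centrality for ``$t_n$ invertible iff $t_n^{n+1}$ invertible'' is unnecessary; in an abelian category any endomorphism with an invertible power is itself invertible, being both monic and epic. Your inductive reduction from $A_n$ to $B^{-n}$ via the degenerate summand $D_n$ is correct and is the standard way to conclude.
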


\begin{proof} Part (a) is Theorem 3.5 of \cite{DK}. Part (b) follows from the interpretation of $t_n^{n+1}$
	in terms of ducomplexes given in Proposition 6.5 of \cite{DK}.\end{proof}

The equivalence between the descriptions of the category of perverse sheaves on $(\CC,\{0\})$ from 
Proposition \ref{prop:PSD-Milnor} and Proposition \ref{prop:phi-psi}, respectively, is then a
consequence of restricting the equivalence from Theorem \ref{thm:dwyer-kan-duplex} to paracyclic Segal objects. 

\appendix

\section{$\oo$-categorical preliminaries}\label{subsec:inf-cat-prep}
 
\subsection{Generalities on $\oo$-categories} 

In the rest of the paper we will use freely the language of $\oo$-categories \cite{HTT}. The
following is intended to fix the terminology and notation and to recall the main tools that will be
used. 
 
We denote by $\sSet$ the category of simplicial sets. For a simplicial set $S=(S_n)_{n\geq 0}$ we
denote
\[
	\partial_i: S_n\to S_{n-1}, \,\,\, 0\leq i\leq n
\]
the simplicial face maps. By $\Delta^n\in\sSet$ we denote the standard $n$-simplex. 

Following \cite{HTT} we will use the term {\em $\infty$-category} for a weak Kan complex. Thus an
$\infty$-category $\C$ is a simplicial set $(\C_n)_{n\geq 0}$ satisfying the lifting condition for
intermediate horns $\Lambda^n_i \subset \Delta^n$, $0<i<n$. 
Any ordinary category can be considered as an $\infty$-category by passing to the nerve. 

Each $\infty$-category $\C$ contains the maximal Kan subcomplex $\C^{\on{Kan}}\subset\C$, which is
can be interpreted as ``the $\infty$-groupoid of equivalences in $\C$".

We follow the usual notation and terminology: $0$-simplices of $\C$ are called {\em objects}, and we
denote $\Ob(\C)=\C_0$, while $1$-simplices are called {\em morphisms} and we denote $\Mor(\C)=\C_1$.
For any two $x,y\in\Ob(\C)$ we denote $\Hom_\C(x,y)\subset\Mor(\C)$ the set of 1-simplices $f$ such
that $\partial_1(f)=x$ and $\partial_0(f)=y$. 

To any $\infty$-category $\C$ one associates its {\em homotopy category} $\Ho(\C)$ which is an
ordinary category with the set of objects $\Ob(\C)$ and $\Hom_{\Ho(\C)}(x,y)$ defined as the
quotient of $\Hom_\C(x,y)$ by the homotopy relation: $f\sim g$ if there is $\sigma\in \C_2$ with
$\partial_1(\sigma)=f$ and $\partial_2(\sigma)=g$. An {\em equivalence} in $\C$ is a morphism which
becomes an isomorphism in $\Ho(\C)$.

For an $\infty$-category $\C$ and any $x,y\in\Ob(\C)$,  the set $\Hom_\C(x,y)$ can be upgraded to a
simplicial set $\Map_\C(x,y)$  (the {\em mapping space}), in such a way as to make out of $\C$ a
category enriched in simplicial sets. See \cite{HTT} \S 1.2.2 for details.  
  
This leads to another point of view on $\infty$-categories: as categories enriched in topological
spaces (or simplicial sets). Several $\infty$-categorical concepts can be formulated in this
language.  
For example, an {\em initial object} of an $\infty$-category $\C$ is an object $0$ such that, for
each $x\in\Ob(\C)$ the space $\Map_\C(0,x)$ is contractible. 
 
\begin{ex}[(Kan simplicial sets as an $\oo$-category)]  Any Kan simplicial set is an $\oo$-category. 
 The $\oo$-category $\Sp$ of {\em spaces} is defined as the simplicial nerve of the category
 of Kan simplicial sets \cite[\S 1.2.16]{HTT}. 
\end{ex}

\subsection{Dg-categories} 
We denote by $\Ab$ the category of abelian groups and by
 $C(\Ab)$ the category of cochain complexes of abelian groups, with its standard
symmetric monoidal structure. 
By a {\em dg-category} we mean a category $\Ac$ enriched in $C(\Ab)$. For such $\Ac$ we have
the ordinary categories $Z^0(\Ac)$,  $H^0(\Ac)$ with the same objects as $\Ac$ and
\[
\Hom_{Z^0(\Ac)}(x,y) \,=\, Z^0 \Hom^\bullet_\Ac(x,y) , \quad 
\Hom_{H^0(\Ac)}(x,y) \,=\, H^0 \Hom^\bullet_\Ac(x,y). 
\]
Here $Z^0$ is the subgroup of $0$-cocycles in the $\Hom$-complex. 
 

A dg-category $\Ac$ gives an $\infty$-category 
$N_\dg(\Ac)$   known as the {\em dg-nerve} of $\Ac$. 
As a simplicial set, $N_\dg(\Ac)$ was introduced in \cite{HS}. 
For a given $n\geq 0$ the set
$N_\dg(\Ac)_n$ consists of weakly commutative $n$-simplices in $\Ac$ (called {\em Sugawara simplices}  in \cite{HS}), 
which are data
of:
\[
\begin{gathered}
x_0, \cdots, x_n\in \Ob(\Ac); 
\\
u_{ij}\in\Hom^0_\Ac(x_i, x_i), \,\,\, d(u_{ij})=0, \quad i<j; 
\\u_{ijk}\in\Hom^{-1}_\Ac(x_i, x_k), \,\,\, d(u_{ijk}) = u_{jk}u_{ij}-u_{ik}, \quad i<j<k; \\
\text{ and so on.}
\end{gathered}
\]
It was shown in \cite{HA} that $N_\dg(\Ac)$ is in fact a $\oo$-category.  By construction, we have
\[
\Ho(N_\dg(\Ac)) \,\simeq \, H^0(\Ac). 
\]

\subsection{The derived $\oo$-category of an abelian category}
\label{par:dercat}

Let $\A$ be a Grothendieck abelian category. In particular, $\A$ has enough injectives. 
Denote by $\Co(\A)$  the dg-category of all cochain complexes over $\A$.
Thus $Z^0(\Co(\A))$ is the ``usual'' category of complexes (morphsims = morphisms of complexes) and 
$H^0(\Co(\A))$ is the homotopy category.
The classical (unbounded) derived category of $\A$, denoted $\D(\A)$,  is defined as the categorical localization
of $H^0(\Co(\A))$ by the class of quasi-isomorphisms. It is a triangulated category. 

 The  {\em (unbounded) derived $\oo$-category} of $\A$, denoted $\D(\A)$,  can be  defined 
 in one of two equivalent ways, see \cite{HA} \S 1.3.5, esp. Prop. 1.3.5.16 and before. 
 \begin{itemize}
 \item[(i)] As the $\oo$-categorical localization of the  usual (abelian) category  $Z^0(C(\A))$ by the
 class of quasi-isomorphisms.
 
 \item[(ii)] As the full $\oo$-subcategory in $N_\dg(\Co(\A))$ spanned by {\em fibrant complexes}. 
 A fibrant complex is a possibly unbounded
 complex of injective objects with some additional properties, see  \cite{HA} \S 1.3.5 and \cite{spaltenstein}.
 \end{itemize}
 We have
 \[
 \Ho (\D_\oo(\A)) \,\simeq \, \D(\A). 
 \]

\subsection{Stable $\infty$-categories}

The derived $\infty$-categories from \S \ref{par:dercat} are examples of stable $\infty$-categories. 
Here, we recall the definition of a stable $\infty$-category from \cite{HA} and discuss some basic
results that we will use.
Let $\D$ be a pointed $\infty$-category and consider a square
\begin{equation}\label{eq:stablesquare}
		\begin{tikzcd}
			X \ar{r}{f} \ar{d} & Y \ar{d}{g}  \\
			0 \ar{r} & Z
		\end{tikzcd}
\end{equation}
in $\D$ where $0$ is a zero object. The square is called a {\em fiber sequence} if it is a pullback
square. In this case, the morphism $f$ is called a fiber of $g$. Dually, the square is called a
{\em cofiber sequence} if it is a pushout square. In this case, we say that $g$ is a cofiber of
$f$. The category $\D$ is called {\em stable} if
\begin{enumerate}[label=\arabic*.]
	\item every morphism admits a fiber and a cofiber,
	\item a square of the form \eqref{eq:stablesquare} is a fiber sequence if and only if it is
		a cofiber sequence.
\end{enumerate}
We collect some basic results about stable $\infty$-categories (cf. \cite{HA}):

\begin{prop}\label{prop:basic} Let $\D$ be a stable $\infty$-category. Then:
	\begin{enumerate}
		\item $\D$ admits finite limits and colimits. 
		\item The homotopy category of $\D$ admits a triangulated structure.
		\item\label{prop:basic.3} A square 
			\[
			\begin{tikzcd}
				X \ar{r} \ar{d} & Y \ar{d}  \\
				X' \ar{r} & Y' 
			\end{tikzcd}
			\]
			in $\D$ is Cartesian if and only if it is coCartesian. 
	\end{enumerate}
\end{prop}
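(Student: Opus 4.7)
The statements of Proposition \ref{prop:basic} are foundational results on stable $\oo$-categories due essentially to Lurie, so my plan is to follow the strategy of \cite{HA}, Chapter 1, while highlighting where each axiom of stability is used. The plan proceeds in three stages, each building on the previous.

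First I would handle the existence of a zero object and of pushouts/pullbacks along the zero map, which are immediate from the hypotheses: the category $\D$ is pointed (has a zero object $0$), and for any morphism $f: X \to Y$ the fiber square
\[
	\begin{tikzcd}
		F \ar{r}\ar{d} & X \ar{d}{f} \\
		0 \ar{r} & Y
	\end{tikzcd}
\]
exists by the fiber axiom and is simultaneously a cofiber square by stability. This provides adjoint suspension and loop functors $\Sigma, \Omega: \D \to \D$ with $\Sigma X = \cone(X \to 0)$ and $\Omega Y = \fib(0 \to Y)$; the stability axiom forces the unit and counit of this adjunction to be equivalences, so $\Sigma$ is an autoequivalence of $\D$. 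This is the key structural input for everything that follows.

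Next I would establish (1) and (2) together. For finite (co)limits, the crucial step is to promote fibers/cofibers to arbitrary pullbacks and pushouts. Given a cospan $X \to Z \leftarrow Y$, one constructs a candidate pullback by taking the fiber of the induced map $X \oplus Y \to Z$, once biproducts are shown to exist; biproducts in turn arise by combining $\Sigma$-stability with the identification $X \amalg Y \simeq X \times Y$ which follows from a diagram chase using the fact that cofiber squares with $0$ in one corner are simultaneously fiber squares. For the triangulated structure on $\Ho(\D)$, the shift $[1]$ is induced by $\Sigma$, and distinguished triangles are defined to be images of cofiber sequences $X \to Y \to \cone(f)$. Axioms (TR1)–(TR3) are routine; the octahedral axiom is proved by constructing the required diagram as a pasting of pushout squares and invoking the pasting lemma for coCartesian squares, then passing to the homotopy category. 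The main obstacle here is (TR4), where one has to be careful that the coherent $\oo$-categorical diagrams correctly descend to the distinguished triangles required by the axiomatics.

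Finally, for (3), I would argue by a pasting argument. Given a commutative square
\[
	\begin{tikzcd}
		X \ar{r}\ar{d} & Y \ar{d} \\
		X' \ar{r} & Y'
	\end{tikzcd}
\]
form its cofiber column and fiber row simultaneously, producing an expanded $3 \times 3$ diagram whose outer rows and columns are fiber/cofiber sequences. If the square is coCartesian, the induced map $\cone(X \to Y) \to \cone(X' \to Y')$ is an equivalence; by stability this is equivalent to the map $\fib(X \to X') \to \fib(Y \to Y')$ being an equivalence, which is equivalent to the square being Cartesian. The hardest point here is verifying functoriality of these iterated cone/fiber constructions in a suitably coherent $\oo$-categorical sense — in our proof we would simply invoke Lurie's \cite[1.1.3.4]{HA} rather than unpack the coherence data by hand.
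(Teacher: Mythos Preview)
Your sketch is a correct outline of the standard argument from \cite{HA}, Chapter 1. Note, however, that the paper does not give its own proof of this proposition: it is stated as a collection of basic facts with the preamble ``We collect some basic results about stable $\infty$-categories (cf.\ \cite{HA})'' and no proof is supplied. So there is nothing to compare against beyond the citation; your plan simply unpacks what the paper takes for granted, and does so along the same lines as Lurie's original treatment.
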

Cartesian squares in a stable $\infty$-category (which are hence also coCartesian) will be called
biCartesian squares. The statement of Proposition \ref{prop:basic} \ref{prop:basic.3} has a useful generalization to
higher--dimensional cubes: Let $\D$ be a stable $\infty$-category, $n \ge 1$, and let
$\P(\{1,\dots,n\})$ be the poset of all subsets of the set $\{1,\dots,n\}$. Consider a 
diagram
\[
	q: \N(\P(\{1,\dots,n\})) \lra \D
\]
which, due to the apparent isomorphism $\N(\P(\{1,\dots,n\})) \cong (\Delta^1)^n$, has the shape of
an $n$-dimensional cube. Note that, we may either interpret $q$ as
\begin{enumerate}[label=\arabic*.]
	\item a cone over the diagram $q|\N(\P(\{1,\dots,n\}) \setminus \{\emptyset\})$, or
	\item a cone under the diagram $q|\N(\P(\{1,\dots,n\}) \setminus \{\{1,\dots,n\}\})$.
\end{enumerate}
If the first cone is a limit cone, then we call $q$ {\em Cartesian}, if the second cone is a colimit cone,
then we call $q$ {\em coCartesian}. We recall some results from \cite{HA}:

\begin{prop}\label{prop:cube} A cube $q$ is Cartesian if and only if it is coCartesian. 
\end{prop}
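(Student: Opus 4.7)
The strategy is induction on $n$. The base case $n=2$ is exactly Proposition \ref{prop:basic}\ref{prop:basic.3}, and the case $n=1$ is trivial since both conditions reduce to the requirement that $q(\emptyset) \to q(\{1\})$ be an equivalence. For the inductive step, the key idea is to decompose an $n$-cube as a morphism between two $(n-1)$-cubes. Concretely, writing $\P_{n-1}$ for $\P(\{1,\dots,n-1\})$, any $n$-cube $q: \N(\P(\{1,\dots,n\})) \to \D$ is equivalent to the data of a natural transformation $\alpha: q_0 \to q_1$ of $(n-1)$-cubes $q_0, q_1: \N(\P_{n-1}) \to \D$, where $q_i(S) = q(S \cup \{n\}^{\epsilon_i})$ with $\epsilon_0 = 0$ and $\epsilon_1 = 1$.

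First, I would prove a recursive criterion: the $n$-cube $q$ is Cartesian if and only if $q_1$ is Cartesian (as an $(n-1)$-cube) and $q_0$ is a pointwise limit, in the sense that the square
\[
\begin{tikzcd}
q_0(\emptyset) \ar{r} \ar{d} & q_1(\emptyset) \ar{d} \\
\lim (q_0|\N(\P_{n-1} \setminus \{\emptyset\})) \ar{r} & \lim (q_1|\N(\P_{n-1} \setminus \{\emptyset\}))
\end{tikzcd}
\]
is Cartesian in $\D$. This follows by computing the limit of $q|\N(\P(\{1,\dots,n\}) \setminus \{\emptyset\})$ as a pullback of the limits of the two sub-cubes $q_0|(\P_{n-1}\setminus\{\emptyset\})$ and $q_1$, using an $\infty$-cofinal decomposition of the indexing poset. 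Dually, $q$ is coCartesian if and only if $q_0$ is coCartesian and the analogous square built from colimits at $\{1,\dots,n\}$ is coCartesian.

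Now apply the inductive hypothesis: $q_0$ is coCartesian iff it is Cartesian, and similarly for $q_1$. Hence in both characterizations we may freely replace "Cartesian" by "coCartesian" (or vice versa) for the $(n-1)$-cubes. It remains to identify the two resulting auxiliary squares. Both of them are squares in $\D$, and by the case $n=2$ (Proposition \ref{prop:basic}\ref{prop:basic.3}), for such squares being Cartesian and coCartesian are equivalent. A short diagram chase, using that the arrows in these squares are the canonical maps between limits/colimits induced by $\alpha$, shows that the Cartesian condition for the limit square and the coCartesian condition for the colimit square are in turn equivalent, because each can be rephrased as the vanishing (up to shift) of the total fiber, respectively total cofiber, of the morphism of cubes $\alpha$—and these two "total" constructions agree up to a shift by $n$ in the stable setting.

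The main obstacle is bookkeeping rather than conceptual: one must carefully justify the cofinal decompositions of $\N(\P(\{1,\dots,n\}) \setminus \{\emptyset\})$ and its dual used to reduce the $n$-dimensional limit/colimit to a $2$-dimensional pullback/pushout of $(n-1)$-dimensional ones, and then verify that the "total fiber equals total cofiber up to shift" identification is functorial in the morphism of cubes. Once this is in place, the two-out-of-three behavior of biCartesian squares in a stable $\infty$-category closes the induction.
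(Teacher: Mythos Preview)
The paper's own proof is just a citation to \cite[1.2.4.13]{HA}, so there is no detailed argument to compare against; you are supplying the content that the paper outsources.

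Your overall strategy---induction on $n$ via the decomposition of an $n$-cube into a morphism $\alpha: q_0 \to q_1$ of $(n-1)$-cubes, combined with the identification of total fiber and total cofiber up to shift---is the standard one and is sound. However, your stated recursive criterion is wrong. You claim that $q$ is Cartesian if and only if $q_1$ is Cartesian \emph{and} the displayed square is Cartesian. The extra condition on $q_1$ is spurious: already for $n=2$ it would force every pullback square to have its right vertical arrow an equivalence, which is plainly false (take $X \to X$ over $0 \to 0$ with $X \neq 0$). The correct statement is simply that $q$ is Cartesian if and only if the square
\[
\begin{tikzcd}
q_0(\emptyset) \ar{r} \ar{d} & q_1(\emptyset) \ar{d} \\
\lim\bigl(q_0|\N(\P_{n-1}\setminus\{\emptyset\})\bigr) \ar{r} &
\lim\bigl(q_1|\N(\P_{n-1}\setminus\{\emptyset\})\bigr)
\end{tikzcd}
\]
is Cartesian, with no condition on $q_1$. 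This follows from the identification
\[
\lim_{\P_n\setminus\{\emptyset\}} q \;\simeq\; q_1(\emptyset)\;\times_{\lim_{\P_{n-1}\setminus\{\emptyset\}} q_1}\;\lim_{\P_{n-1}\setminus\{\emptyset\}} q_0,
\]
which is what the cofinal decomposition of the punctured cube actually gives you (the piece $\{S:n\in S\}\cong\P_{n-1}$ contributes $q_1(\emptyset)$, not a punctured limit).

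With the corrected criterion, the argument you sketch in your last two paragraphs goes through cleanly: one shows by the same induction that the fiber of $q(\emptyset)\to\lim_{\P_n\setminus\{\emptyset\}}q$ is the total fiber $\operatorname{tfib}(q)$, hence $q$ is Cartesian iff $\operatorname{tfib}(q)\simeq 0$; dually for total cofiber; and $\operatorname{tfib}(q)\simeq\operatorname{tcofib}(q)[-n]$ by a one-line induction using $\operatorname{fib}\simeq\operatorname{cofib}[-1]$. So the fix is local: drop the erroneous $q_1$-condition and rely directly on the total fiber/cofiber identification you already invoke.
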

\begin{proof} \cite[1.2.4.13]{HA}
\end{proof}

We will refer to cubes in a stable $\infty$-category which are Cartesian (and hence coCartesian) as
{\em biCartesian} generalizing the above terminology in the case $n=2$. We further recall:

\begin{prop}\label{prop:cuberecursive} An $n$-cube is biCartesian if and only if the $(n-1)$-cube,
	obtained by passing to cofibers along all morphisms parallel to one coordinate axis, is
	biCartesian.  
\end{prop}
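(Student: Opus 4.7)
The plan is to fix the coordinate axis to be the $n$-th direction and to split the $n$-cube $q$ along it into two $(n-1)$-cubes $q_0, q_1 \colon \P(\{1,\dots,n-1\}) \to \D$ — with $q_0(S) = q(S)$ and $q_1(S) = q(S \cup \{n\})$ — together with a natural transformation $f \colon q_0 \to q_1$; the $(n-1)$-cube obtained by pointwise cofibers is then $q' \simeq \cof(f)$. By Proposition~\ref{prop:cube}, I may replace ``biCartesian'' by ``coCartesian'' throughout the argument.

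First I will compute the colimit of the punctured $n$-cube $q^\circ$ by decomposing the indexing poset according to whether $n \in S$. The poset $\P(\{1,\dots,n\}) \setminus \{\{1,\dots,n\}\}$ can be written as the pushout of posets
\[
	K_0 \sqcup_{K_0^\circ} K_1,
\]
where $K_0 = \P(\{1,\dots,n-1\})$ is the full $(n-1)$-cube indexing $q_0$, the sub-poset $K_0^\circ \subset K_0$ is obtained by removing the terminal element, and $K_1$ is another copy of the punctured $(n-1)$-cube indexing $q_1$ on proper subsets of $\{1,\dots,n-1\}$. Commuting colimits with colimits and using that $K_0$ has a terminal object then yields
\[
	\colim(q^\circ) \;\simeq\; q(\{1,\dots,n-1\}) \sqcup_{\colim(q_0^\circ)} \colim(q_1^\circ).
\]
Consequently, $q$ is coCartesian if and only if the commutative square
\[
	\begin{tikzcd}
		\colim(q_0^\circ) \ar{r} \ar{d} & \colim(q_1^\circ) \ar{d}\\
		q(\{1,\dots,n-1\}) \ar{r} & q(\{1,\dots,n\})
	\end{tikzcd}
\]
is coCartesian in $\D$.

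The final step translates this condition into the coCartesianess of $q'$. By Proposition~\ref{prop:basic}\ref{prop:basic.3}, in the stable $\infty$-category $\D$ the above square is coCartesian if and only if it is biCartesian, equivalently, the induced map between cofibers of the two rows is an equivalence. Since cofibers commute with colimits in stable $\infty$-categories, the cofiber of the top row is $\colim((q')^\circ)$ and the cofiber of the bottom row is $q'(\{1,\dots,n-1\})$, and the induced map is precisely the canonical comparison exhibiting the coCartesianess of $q'$ as an $(n-1)$-cube. Combining the two steps shows that $q$ is coCartesian if and only if $q'$ is coCartesian, which is the desired statement.

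The main technical obstacle lies in the first step: justifying the poset-level decomposition of the punctured $n$-cube and the ensuing Fubini-type commutation of colimits that produces the formula for $\colim(q^\circ)$. While the combinatorics is elementary, the decomposition is more naturally phrased as a Grothendieck construction / lax colimit than a strict pushout in $\infty$-categories, so one either argues cofinality of the relevant subdiagrams carefully or bypasses the explicit decomposition by working directly with total cofibers and reducing the claim to the identification of the total cofiber of $q$ with that of $q'$.
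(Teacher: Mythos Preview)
The paper does not prove this itself; it simply cites \cite[1.2.4.15]{HA}. Your sketch supplies the content of that argument, and the overall strategy---compute $\colim(q^\circ)$ as a pushout of three pieces, then compare cofibers of the rows---is correct and standard.

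There is one genuine mis-statement in your first step. The punctured $n$-cube $\P' = \P(\{1,\dots,n\}) \setminus \{\{1,\dots,n\}\}$ is \emph{not} the pushout $K_0 \amalg_{K_0^\circ} K_1$: since your map $K_0^\circ \to K_1$ is an isomorphism, that pushout (in posets or in $\infty$-categories) collapses to $K_0$. What does work is the monotone map $\sigma\colon \P' \to \{a \leftarrow c \rightarrow b\}$ sending $\{1,\dots,n-1\} \mapsto a$, the rest of $K_0$ to $c$, and every subset containing $n$ to $b$. Left Kan extending $q^\circ$ along $\sigma$ gives precisely your three terms $q_0(\{1,\dots,n-1\})$, $\colim(q_0^\circ)$, and $\colim(q_1^\circ)$; for the last, one checks that the inclusion of the $n$-containing subsets into $\sigma^{-1}(\{c \to b\})$ is $\infty$-cofinal (for each $S \in K_0^\circ$ the relevant coslice has initial object $S \cup \{n\}$). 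Taking the colimit over the span then yields your displayed formula for $\colim(q^\circ)$, and your Step~2 proceeds as written. You correctly flag this as the main obstacle and point toward the Grothendieck-construction fix; the above is one concrete way to carry it out.

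A minor point: Step~2 uses, for a \emph{square}, the equivalence ``biCartesian $\Leftrightarrow$ induced map on row-cofibers is an equivalence'', which is the $n=2$ instance of the proposition being proved. This is not circular---the $n=2$ case follows directly from the pasting law for pushouts---but it is worth making that independence explicit.
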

\begin{proof}\cite[1.2.4.15]{HA} 
\end{proof}
 
We also note the following immediate consequences of Proposition \ref{prop:cuberecursive}.

\begin{prop}\label{prop:cubefaces} Suppose we are given an $n$-cube $q$ with one face $f$ biCartesian.
	Then $q$ is biCartesian if and only if the face parallel to $f$ is also biCartesian.  
\end{prop}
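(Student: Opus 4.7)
The plan is to view the $n$-cube $q$ as a morphism between its two parallel $(n-1)$-dimensional faces $f$ and $f'$ in the stable $\infty$-category $\mathcal{E} = \Fun((\Delta^1)^{n-1}, \D)$ of $(n-1)$-cubes; here the coordinate direction along which $q$ is interpreted as a morphism is the one perpendicular to $f$ and $f'$. My first move will be to apply Proposition \ref{prop:cuberecursive} with respect to this same coordinate direction: this reduces the biCartesianness of the $n$-cube $q$ to the biCartesianness of the $(n-1)$-cube $g := \cof(f \to f')$ formed in $\mathcal{E}$.

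Next, I will use that inside $\mathcal{E}$ the three cubes $f$, $f'$, and $g$ fit into a cofiber sequence $f \to f' \to g$. The key point, which I plan to establish, is that the property of being biCartesian for an $(n-1)$-cube satisfies a $2$-out-of-$3$ principle with respect to cofiber sequences in $\mathcal{E}$. To prove this, I will consider the \emph{total cofiber} functor
\[
	T \colon \mathcal{E} \lra \D
\]
obtained by iteratively taking cofibers along each of the $n-1$ coordinate directions. Since each cofiber functor on $\Fun(\Delta^1, -)$ is exact in a stable $\infty$-category, $T$ is itself exact, and a straightforward induction based on Proposition \ref{prop:cuberecursive} (with the base case being the defining square) shows that an $(n-1)$-cube $C \in \mathcal{E}$ is biCartesian if and only if $T(C) \simeq 0$ in $\D$.

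With this in hand, applying the exact functor $T$ to the cofiber sequence $f \to f' \to g$ produces a cofiber sequence $T(f) \to T(f') \to T(g)$ in $\D$, for which vanishing of any two terms forces the vanishing of the third. Under the hypothesis that $f$ is biCartesian, we have $T(f) \simeq 0$, whence $T(f') \simeq T(g)$, and therefore
\[
	f' \text{ biCartesian} \iff T(f') \simeq 0 \iff T(g) \simeq 0 \iff g \text{ biCartesian} \iff q \text{ biCartesian},
\]
concluding the proof.

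The main technical obstacle will be making the identification ``biCartesian $\Leftrightarrow$ $T(C) \simeq 0$'' precise; this is essentially a formal unwinding using Proposition \ref{prop:cuberecursive}, but one has to be careful to keep track of the coordinate direction used at each step of the induction. Everything else is immediate from the stability of $\D$ and the exactness of cofiber functors.
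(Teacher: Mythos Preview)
Your argument is correct and is a careful fleshing-out of what the paper leaves implicit: the paper records this proposition simply as an ``immediate consequence of Proposition \ref{prop:cuberecursive}'' without further detail, and your use of the total cofiber functor $T$ to convert ``biCartesian'' into the vanishing condition $T(C)\simeq 0$ is exactly the kind of unwinding one needs to make that claim precise. One minor remark: you could shorten the argument by noting that, in a stable $\infty$-category, finite limits commute with finite colimits, so the comparison map for $g=\cof(f\to f')$ is the cofiber of the comparison maps for $f$ and $f'$; this avoids the induction defining $T$ and gives the $2$-out-of-$3$ statement in one step. But your version is entirely fine and arguably more transparent.
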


\begin{prop}\label{prop:2out3}
	The property for cubes being biCartesian satisfies the two-out-of-three property with
	respect to pasting of cubes. 
\end{prop}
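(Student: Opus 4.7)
The plan is to prove the statement by induction on the dimension $n$ of the cubes being pasted, using Proposition \ref{prop:cuberecursive} as the workhorse to reduce dimensionality one step at a time. The base case is $n = 2$, where the statement amounts to the two-out-of-three property for pasting of biCartesian squares in a stable $\oo$-category, which is a classical fact already invoked earlier in the paper as \cite[4.4.2.1]{HTT}.

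For the inductive step, suppose we are given two $n$-cubes $Q_1, Q_2$ with $n \geq 3$, pasted along a common $(n-1)$-dimensional face in some coordinate direction, say direction $i$, producing the composite $n$-cube $Q$. Since $n \geq 3$, we may pick a coordinate direction $j \neq i$. Apply Proposition \ref{prop:cuberecursive} by passing to cofibers along all morphisms parallel to direction $j$. This produces $(n-1)$-cubes $\widetilde Q_1$, $\widetilde Q_2$, $\widetilde Q$, and, by that proposition, each of $Q_1, Q_2, Q$ is biCartesian if and only if the corresponding $\widetilde{(\cdot)}$ is biCartesian.

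The key observation is that pasting in direction $i$ and cofibering in direction $j$ commute, because they act on disjoint coordinate slots of the combinatorial cube $(\Delta^1)^n$: the pasting corresponds to a factorization in the $i$-th slot, whereas the cofiber is a colimit in the $j$-th slot, and these operations can be freely exchanged. Consequently, $\widetilde Q$ is obtained by pasting $\widetilde Q_1$ and $\widetilde Q_2$ along a common face in direction $i$. Applying the inductive hypothesis to the three $(n-1)$-cubes $\widetilde Q_1$, $\widetilde Q_2$, $\widetilde Q$ completes the step.

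The only real subtlety is making the commutation of cofibering and pasting precise; this is the point where one needs to be a little careful but not deep, since both operations are described by elementary manipulations of diagrams on products of $\Delta^1$'s, and the required exchange is a direct instance of commuting colimits with restriction to a subdiagram in an orthogonal coordinate. No further obstacle arises, and the induction terminates at $n = 2$.
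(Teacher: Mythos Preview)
Your proof is correct and follows precisely the route the paper intends: the paper states that Proposition~\ref{prop:2out3} is an ``immediate consequence'' of Proposition~\ref{prop:cuberecursive} without spelling out the details, and your induction on $n$ using cofibers in a direction orthogonal to the pasting direction is exactly the natural way to unpack that remark. The base case $n=2$ via \cite[4.4.2.1]{HTT} is the standard pasting law for biCartesian squares, and the commutation of cofibering in direction $j$ with pasting in direction $i \neq j$ is indeed a formality.
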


\subsection{Limits and Kan extensions} 

Let $\C$ be an $\oo$-category. As usual, a ($\oo$-) functor $F: I\to\C$ where $I$ is a (small) $\oo$-category, will
be called a {\em diagram} in $\C$. We will also use the notation $(F_i)_{i\in I}$ for such a diagram, with $F_i=F(i)$, $i\in\Ob(I)$. 
The $\oo$-categorical limit and colimit of $(F_i)_{i\in I}$ (when they exist)  will be denoted by
\[
	\varprojlim_{i\in I}{}^\C \, F_i, \quad\quad  \varinjlim_{i\in I} {}^\C\,  F_i, 
\]
or, in the functor notation, simply $\varprojlim F$, $\varinjlim F$. 
 
Let $\alpha: I\to J$ be a functor of small $\oo$-categories and $F: I\to\C$ be another functor.  In
this case we can speak about the {\em left} and {\em right Kan extensions} which are functors
\[
	\alpha_! F, \alpha_* F: J\lra \C
\]
characterized by universal properties. More precisely, the functors (when they exist)
\[
	\alpha_!: \Fun(I,\C)\to \Fun(J, \C), \,\, F\mapsto \alpha_!F, \quad  \alpha_*:  \Fun(I,\C)\to \Fun(J, \C), \,\, F\mapsto \alpha_* F
\]
are, resectively, left and right adjoints to the pullback functor
\[
	\alpha^*: \Fun(J,\C) \lra \Fun(I,\C), \quad G\mapsto \alpha^*G := G\circ \alpha.
\]
see \cite{HTT} \S 4.3.  While the general concept of adjunction in the $\oo$-categorical context is
somewhat subtle, see \cite{HTT} \S 5.2,  it implies  identifcations (weak equivalences) of mapping
spaces having the familiar shape, which in our case read:
\begin{equation}
	\label{eq:kan-adj}
	\Map_{\Fun(J,\C)} (G,\alpha_*F) \,\simeq\, \Map_{\Fun(I,\C)} (\alpha^* G, F), \quad \Map  (\alpha_!F, G) \,\simeq \,
	\Map  (F, \alpha^*G)
\end{equation}
for any $F\in \Fun(I,\C)$, $G\in\Fun(J, \C)$. 

We recall the {\em pointwise formulas} for Kan extensions which describe their values on an object
$j\in J$. More precisely, assuming the existence of all the relevant (co)limits, we have
\begin{equation}
	\label{eq:kan-pointwise}
	(\alpha_! F)(j) \,=\, \varinjlim{}_{i\in \alpha/j}^\C \, \,F(j), \quad \quad
	(\alpha_*(F)_j \,=\,  \varprojlim{}_{i\in j/\alpha }^\C  \,\,F(i),
\end{equation}
where $\alpha/j$ resp. $j/\alpha$ are the {\em overcategory} and {\em undercategory}, whose objects
are pairs 
\[
	\bigl(i\in \Ob(I), a: \alpha(i)\to j\bigr), \text{ resp. }  \bigl(i\in\Ob(I), b: j\to \alpha(i)\bigr),
\]
see \cite{HTT} \S 1.2.9.   
 
Recall, further, that $\alpha$ is called {\em $\oo$-cofinal}, resp. {\em $\oo$-coinitial}, if any
$j/\alpha$,
resp. $\alpha/j$ is contractible, (i.e., its nerve is a contractible simplicial set). If this is the case, then
for any $G; J\to\C$ we have equivalences 
\[
	\varinjlim \alpha^*G \,\simeq \, \varinjlim \, G, \quad \text{resp.} \quad 
  	\varprojlim \alpha^*G \,\simeq \, \varprojlim \, G. 
\]
in $\C$.
 
The following result \cite[4.3.2.15]{HTT} will be a fundamental tool for us to establish equivalences of $\oo$-categories.
 
\begin{prop}\label{prop:kanres}
	Let $J$ be an $\infty$-category and $I \subset J$ a full subcategory. Let $\C$ be an
	$\infty$-category with colimits let $\E \subset \Fun(I,\C)$ be a full subcategory and let $\Ec^!$  be the full
 	subcategory in $ \Fun(J,\C)$ spanned by functors of the form $\alpha_! F$ for $F$ in $\Ec$.
	Then the restriction functor $\Ec^! \to \Ec$ is an equivalence. The analogous statement
	holds for right Kan extensions if $\C$ has limits. 
\end{prop}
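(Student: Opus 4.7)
The plan is to derive this statement as a straightforward consequence of the adjunction $\alpha_! \dashv \alpha^*$ together with the observation that the left Kan extension along a fully faithful inclusion is itself fully faithful. This is the standard proof strategy for \cite[4.3.2.15]{HTT} and, indeed, this is how Lurie establishes it, so I would essentially reproduce that argument in the language adapted to the notation of the paper.

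The key step is to verify that the unit of the adjunction
\[
\eta: \id_{\Fun(I,\C)} \Longrightarrow \alpha^* \alpha_!
\]
is an equivalence whenever $\alpha: I \hookrightarrow J$ is the inclusion of a full $\oo$-subcategory. To check this, I would fix $F \in \Fun(I, \C)$ and $i \in I$ and appeal to the pointwise formula \eqref{eq:kan-pointwise}:
\[
(\alpha_! F)(\alpha(i)) \;\simeq\; \varinjlim{}_{(j, \beta) \in \alpha/\alpha(i)}^{\C}\; F(j).
\]
Since $I \subset J$ is full, the overcategory $\alpha/\alpha(i)$ admits the pair $(i, \id_{\alpha(i)})$ as a terminal object, so this colimit is canonically equivalent to $F(i)$, and one verifies that this equivalence is precisely the component of $\eta$ at $F$ evaluated at $i$. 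Thus $\eta$ is a natural equivalence, which is equivalent to saying that $\alpha_!: \Fun(I,\C) \to \Fun(J,\C)$ is fully faithful (cf. the characterization of fully faithful left adjoints in \cite[5.2.7]{HTT}).

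Granting this, the proof concludes as follows. By definition, $\Ec^! \subset \Fun(J,\C)$ is the essential image of the restriction of $\alpha_!$ to $\Ec$, so the corestriction $\alpha_!|_{\Ec}: \Ec \to \Ec^!$ is essentially surjective by construction and fully faithful as a restriction of the fully faithful functor $\alpha_!$; hence it is an equivalence of $\oo$-categories. Moreover, the unit equivalence $\eta$ immediately exhibits $\alpha^*|_{\Ec^!}: \Ec^! \to \Ec$ as a homotopy inverse, so the restriction functor of the proposition is an equivalence. The statement for right Kan extensions and limits follows by passing to the opposite $\oo$-categories $I^{\op} \subset J^{\op}$ and $\C^{\op}$, under which left Kan extension with colimits is interchanged with right Kan extension with limits, and the same argument applies verbatim.

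The only genuinely delicate point is the $\oo$-categorical input that a left adjoint whose unit is an equivalence is fully faithful; everything else is pointwise manipulation via the formula \eqref{eq:kan-pointwise} and the universal property of colimits over a category with a terminal object. Since this input is standard (see \cite[\S 5.2]{HTT}), there is no serious obstacle, and the bulk of the write-up would consist of carefully invoking the results from \cite{HTT} rather than constructing new arguments.
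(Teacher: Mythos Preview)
Your proposal is correct and unpacks exactly the standard argument behind \cite[4.3.2.15]{HTT}, which is what the paper cites in lieu of giving its own proof. There is nothing to add; the paper treats this as a black-box reference and your write-up simply makes the content of that reference explicit.
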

 
We also note, for future use, the following fact \cite[4.4.4.10]{HTT}. 
 
\begin{lem}\label{lem:conlim} Let $K$ be a weakly contractible simplicial set, and let $\C$ be an $\infty$-category.
	Let 
	\[
		F: K \to \C
	\]
	be a diagram sending every edge of $K$ to an equivalence in $\C$. Then a cone
	\[
		F^+: K^{\triangleright} \lra \C
	\]
	is a colimit cone if and only if every edge from a vertex in $K$ to the cone vertex $*$ is
	mapped to an equivalence in $\C$. In particular, for every vertex $k$ of $K$, the value
	$F(k)$ is a colimit of $F$. \qed
\end{lem}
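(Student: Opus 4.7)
The plan is to reduce to the well-known fact that a diagram valued in an $\infty$-groupoid and indexed by a weakly contractible shape is essentially constant, and then appeal to the mapping-space characterization of colimits. First I observe that, since $F$ sends every edge of $K$ to an equivalence in $\C$, the map $F$ factors through the maximal Kan subcomplex $\C^{\simeq} \subset \C$. Because $\C^{\simeq}$ is a Kan complex and $K$ is weakly contractible, for any vertex $k \in K$ the evaluation map $\Map(K, \C^{\simeq}) \to \Map(\{k\}, \C^{\simeq}) = \C^{\simeq}$ is a weak equivalence. In particular, any two vertices of $K$ are connected in $\C^{\simeq}$ by an essentially unique zig-zag of equivalences, so once one edge from a vertex of $K$ to $*$ in $F^+$ is an equivalence, all such edges are (by two-out-of-three for equivalences, applied in $\Ho(\C)$).

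Next I recall the standard Yoneda-style characterization: a cone $F^+ \colon K^{\triangleright} \to \C$ with cone point $c = F^+(*)$ is a colimit cone if and only if for every $c' \in \C$ the induced map
\[
	\Map_{\C}(c,c') \lra \varprojlim_{k \in K} \Map_{\C}(F(k), c')
\]
is a weak equivalence of spaces. Applying the functor $\Map_{\C}(-,c')$ to the diagram $F$ yields a diagram $K^{\op} \to \Sp$ that again sends every edge to an equivalence, hence factors through $\Sp^{\simeq}$; by the principle from the first paragraph (applied to $K^{\op}$, which is again weakly contractible), the evaluation
\[
	\varprojlim_{k \in K} \Map_{\C}(F(k), c') \,\overset{\simeq}{\lra}\, \Map_{\C}(F(k_0), c')
\]
at any chosen vertex $k_0 \in K$ is an equivalence.

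Combining these two observations, $F^+$ is a colimit cone if and only if for every $c' \in \C$ the canonical map $\Map_{\C}(c, c') \to \Map_{\C}(F(k_0), c')$ induced by the edge $F(k_0) \to c$ in $F^+$ is an equivalence. By the $\infty$-categorical Yoneda lemma this is in turn equivalent to the edge $F(k_0) \to c$ itself being an equivalence in $\C$. Together with the first paragraph, which ensures that one such edge being an equivalence is equivalent to all of them being equivalences, this yields the desired biconditional, and the final ``in particular'' clause is immediate (take $F^+$ constant at $F(k)$ along the tautological identity edges). The step requiring the most care is the reduction of the limit $\varprojlim_K \Map_{\C}(F(-), c')$ to the value at a single vertex: this is the heart of the argument and relies on the fact that a functor into an $\infty$-groupoid out of a weakly contractible simplicial set is essentially constant, which is a formal consequence of $\Sp^{\simeq}$ being a Kan complex together with the contractibility of $\Map(K, \Sp^{\simeq}) \to \Sp^{\simeq}$.
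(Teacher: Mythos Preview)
The paper does not supply its own argument: the lemma is stated as a citation of \cite[4.4.4.10]{HTT} and closed with a bare \qed. Your plan is therefore not competing with a proof in the paper but filling one in, and the route you take via the mapping-space characterization of colimits is sound.

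Two places deserve a little more care. First, the reduction $\varprojlim_{K^{\op}} \Map_\C(F(-),c') \simeq \Map_\C(F(k_0),c')$ is not quite a ``formal consequence'' of $\Map(K^{\op},\Sp^{\simeq})\simeq \Sp^{\simeq}$: that equivalence tells you the diagram is homotopic, inside the Kan complex $\Fun(K^{\op},\Sp^{\simeq})$, to the constant diagram at $\Map_\C(F(k_0),c')$; you then need the extra observation that such a homotopy is a natural equivalence in $\Fun(K^{\op},\Sp)$, so the limits agree, and that the limit of a constant diagram over a weakly contractible shape returns the constant value. This is straightforward but should be said, lest the step look circular (you are, after all, invoking the special case $\C=\Sp$ of the very lemma). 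Second, for the ``in particular'' clause you cannot simply declare a cone with identity legs; you must produce a map $K^{\triangleright}\to\C$. The clean way is to note that $K\hookrightarrow K^{\triangleright}$ is a trivial cofibration when $K$ is weakly contractible, so $F\colon K\to\C^{\simeq}$ extends over $K^{\triangleright}$ inside $\C^{\simeq}$, and your ``if'' direction then applies.
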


\subsection{$\oo$-categorical localization}
Let $\C$ be an $\oo$-category and $W\subset\Mor(\C)$ be a set of $1$-morphisms. 
For any $\oo$-category $\E$ we denote
\[
\Fun(\C, \E)_{[W\to\Eq]} \,\subset \, \Fun(\C,\E)
\]
the full $\oo$-subcategory spanned by ($\oo$-)functors that take elements of $W$ to
equivalences in $\E$.  

\begin{defi}\label{def:oo-localiz}
Let $\pi:\C\to\Di$ be an $\oo$-functor. We say that $\pi$ {\em exhibits $\Di$ an an $\oo$-categorical
localization of $\C$ by $W$}, or, simply, that $\pi$ is an {\em $\oo$-localization of $\C$ by $W$},
if:

\begin{itemize}
\item[(1)] $\pi \in \Fun(\C, \Di)_{[W\to\Eq]}$. 

\item[(2)] For any $\oo$-category $\E$, composition with $\pi$ gives an equivalence
\[
\pi^*: \Fun(\Di,\E) \lra \Fun(\C, \E)_{[W\to\Eq]}.
\]

\end{itemize}
\end{defi}
Given $\C$ and $W$, the datum $(\Di, \pi)$ as above, is known to exist and be unique up to
a contractible space of choices, see \cite{HA}, \S 5.2.7. We will therefore  denote such $\Di$ by
$\C[W^{-1}]_\oo$. 


We will be particularly interested in the case when $\C$ is a usual category. In this case
$\C[W^{-1}]_\oo$ is the $\oo$-categorical analog of the Dwyer-Kan simplicial localization
\cite{dwyerkan, DKSS}. In particular, 
\[
\Ho \,\C[W^{-1}]_\oo \,= \, \C[W^{-1}]
\]
 is the usual categorical localization of $\C$ by $W$. 
 
 
 We will be further interested in the cases when $\C[W^{-1}]_\oo$ is equivalent to a usual 
 category, i.e., reduces to $\C[W^{-1}]$. 
 
 Given a functor $\pi: \C\to\Di$ of usual categories and an object $d\in\Di$, we denote by
 \[
 j_d: (d/\pi)^\iso \hookrightarrow d/\pi
 \]
 the embedding of the full subcategory spanned by pairs $(c\in\C, a: d\to\pi(c))$
 for which $a$ is an isomorphism in $\Di$. Then we have (cf. \cite{walde}):
 
 \begin{prop}\label{prop:oo-localiz}
 Let $\pi: \C\to\Di$ be a functor of usual categories and $W\subset\Mor(\C)$. Suppose that:
 \begin{itemize}
 \item[(1)] Elements of $W$ are precisely the morphims of $\C$ sent by $\pi$ into isomorphisms. 
 
 \item[(2)] For any $d\in\Di$ the category $(d/\pi)^\iso$ is contractible. 
 
 \item[(3)] For any $d\in\Di$ the functor $j_d$ is $\oo$-coinitial. 
 
 \end{itemize}
 Then $\pi$ exhibits $\Di$ as the $\oo$-categorical localization of $\C$ by $W$. 
 \end{prop}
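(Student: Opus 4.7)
The plan is to construct an explicit inverse to the pullback functor $\pi^*: \Fun(\Di, \E) \to \Fun(\C, \E)_{[W \to \Eq]}$ via a right Kan extension along $\pi$. Assume first that $\E$ admits all small limits (the general case will be handled at the end). Then, for any $F \in \Fun(\C, \E)$, the right Kan extension $\pi_* F : \Di \to \E$ exists, with pointwise formula
\[
	(\pi_* F)(d) \; = \; \lim_{(c,a) \in d/\pi} F(c).
\]
The three hypotheses are designed precisely to make this limit degenerate to $F(c)$ for any $(c, a: d \overset{\cong}{\to} \pi(c)) \in (d/\pi)^\iso$, provided $F$ sends $W$ to equivalences.

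To establish this degeneration, I would first invoke hypothesis (3): the $\oo$-coinitiality of $j_d$ reduces the above limit to one indexed by $(d/\pi)^\iso$. Next, hypothesis (1) forces every morphism in $(d/\pi)^\iso$, from $(c,a)$ to $(c',a')$, to be given by a map $f : c \to c'$ with $\pi(f) = a' \circ a^{-1}$ an isomorphism, hence $f \in W$, so $F(f)$ is an equivalence. The restricted diagram $F|(d/\pi)^\iso$ therefore factors through the subgroupoid of equivalences of $\E$. Combined with the contractibility of $(d/\pi)^\iso$ from hypothesis (2) and the dual form of Lemma \ref{lem:conlim}, one concludes $(\pi_* F)(d) \simeq F(c)$ for any such $(c,a)$.

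With this identification in hand, the triangle identities are checked pointwise. For the counit $\pi^*\pi_* F \to F$ at $c \in \C$: the object $(c, \id_{\pi(c)}) \in (\pi(c)/\pi)^\iso$ realizes $(\pi_* F)(\pi(c)) \simeq F(c)$. For the unit $G \to \pi_* \pi^* G$ at $d \in \Di$: the limit $\lim_{(c,a) \in (d/\pi)^\iso} G(\pi(c))$ has every transition equivalent to $G(d)$, since each $a$ is invertible in $\Di$ and $G$ is functorial, so Lemma \ref{lem:conlim} again forces the unit to be an equivalence. Thus $\pi_*$ is an inverse to $\pi^*$.

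Finally, to remove the hypothesis that $\E$ has limits, I would embed $\E \hookrightarrow \widehat{\E} := \Fun(\E^\op, \Sp)$ via the Yoneda embedding, run the argument in $\widehat{\E}$, and note that the formula $(\pi_* F)(d) \simeq F(c)$ shows $\pi_* F$ lands in the essential image of $\E$, so the equivalence restricts. The main obstacle I anticipate is precisely this coherence issue: a naive pointwise prescription $d \mapsto F(c_d)$ using a chosen $(c_d, a_d) \in (d/\pi)^\iso$ does not manifestly assemble into an $\oo$-functor, so one must rely on the Kan extension machinery to produce the coherent diagram, which is the reason the Yoneda reduction is essential. Once this is in place, hypotheses (1)--(3) fit together so cleanly — coinitiality to shrink the index, (1) to turn morphisms into equivalences in $\E$, and (2) to make the resulting diagram of equivalences contractible — that the verification of the triangle identities becomes essentially formal.
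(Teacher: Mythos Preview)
Your proposal is correct and follows essentially the same route as the paper: use the right Kan extension $\pi_*$ as inverse, reduce via hypothesis (3) to a limit over $(d/\pi)^\iso$, then use (1) and (2) together with Lemma~\ref{lem:conlim} to identify that limit with a single value, and check both unit and counit pointwise. The only difference is in the final step: to drop the assumption that $\E$ has limits, the paper simply observes that every limit actually used in the argument is canonically identified with a specific object of $\E$ and hence exists automatically, whereas you embed into presheaves via Yoneda --- both work, but the paper's observation avoids the extra embedding.
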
 
 
 \begin{proof} {\bf Step 1.} Let $\Ec$ be any $\oo$-category with limits and $G: \Di\to\E$
 be an $\oo$-functor. Then the natural transformation 
 $G\to \pi_*\pi^*G$ is an equivalence.
 Indeed, by the pointwise formula for Kan extensions and $\oo$-coinitiality of $j_d$, we have
 \[
 (\pi_*\pi^* G)(d) \,=\,\varprojlim_{c\in d/\pi} G(\pi(c)) \,=\,\varprojlim_{c\in (d/pi)^\iso} G(\pi(c)). 
 \]
 But $(d/\pi)^\iso$ consists of isomorphisms $d\to\pi(c)$, so by inverting them, we can say that it
 consists of isomorphisms $\pi(c)\buildrel\simeq \over \to d$. So we have
 \[
  (\pi_*\pi^* G)(d) \,=\,\varprojlim_{\{\pi(c)\buildrel\simeq \over \to d\}} G(\pi(c)) \,=\, G(d),
 \]
 since the last limit is taken over a cone-shaped diagram (one with an initial object).

 {\bf Step 2.} Further, let $F: \C\to\E$ be any $\oo$-functor which takes elements
 of $W$ into equivalences. Then the natural transformation $\pi^* \pi_* F\to F$ is an equivalence.
 Indeed, as above, for any $c\in\C$, 
 \be\label{eq:p^*p_*}
 (\pi^*\pi_* F)(c) \,=\,\varprojlim_{c'\in \pi(c)/\pi} F(c') \, = \, 
 \varprojlim_{c'\in (\pi(c)/\pi)^\iso} F(c').
 \ee
 But $ (\pi(c)/\pi)^\iso$ has, as objects, isomorphisms $\pi(c)\buildrel b\over\to \pi(c')$, while a morphism
\[
[\pi(c)\buildrel b\over\to \pi(c'_1)] \,\lra \, [\pi(c)\buildrel b\over\to \pi(c'_2)]
\] 
between two such objects is a morphism $u: c'_1\to c'_2$ in $\C$ such that the diagram
\[
\xymatrix{
& \pi(c'_1) \ar[d]^{\pi(u)}
\\
\pi(c) \ar[r]_{b_2} 
\ar[ur]^{b_1}& \pi(c'_2)
}
\]
commutes. This means that $\pi(u)$ is an isomorphism and so $u\in W$ by the assumption (1). 
Thus $F(u)$ is an equivalence. So the limit in \eqref{eq:p^*p_*} is a limit of a diagram of
equivalences parametrized by a category that is contractible by assumption (3). 
So (e.g., by Lemma \ref{lem:conlim} for limits instead of colimits) it is identified with any term
of the diagram, in particular, the natural map from this limit to $F(c)$ is an equivalence.

{\bf Step 3.}  Now consider the pullback functor
 \[
 \pi^*: \Fun(\Di,\E) \lra\Fun(\C, \E).
 \]
Step 1 implies that $\pi^*$ is fully faithful (the embedding of a full $\oo$-subcategory).
Step 2 means that the essential image of $F$ is $\Fun(\C, \E)_{[W\to\Eq]}$. This means that
$\pi$ satisfies the condition (2) of Definition \ref{def:oo-localiz} for any $\E$ with limits. 

Finally, we note that in the above reasoning it is not necessary to require that $\E$
has all limits as all the limits we need, automatically exist and are explicitly identified.
This proves Proposition \ref{prop:oo-localiz}. 
\end{proof}

Fir future use, we note a dual version of  Proposition \ref{prop:oo-localiz}.
For a functor $\pi: \C\to\Di$  of usual categories and an object $d\in\Di$, we consider the embedding
\[
j^d: (\pi/d)^\iso \hookrightarrow \pi/d
\]
where $(\pi/d)^\iso$ is the full subcategory of $\pi/d$ formed by pairs
$(c\in\C, b: \pi(c)\to d)$ for which $b$ is an isomorphism.

\begin{prop}\label{prop:oo-localiz-dual}
Let $\pi: \C\to\Di$ be a functor  of usual categories. Suppose that:
\begin{itemize}
 \item[(1)] Elements of $W$ are precisely the morphims of $\C$ sent by $\pi$ into isomorphisms. 
 
 \item[(2)] For any $d\in\Di$ the category $(\pi/di)^\iso$ is contractible. 
 
 \item[(3)] For any $d\in\Di$ the functor $j^d$ is $\oo$-cofinal. 
 
 \end{itemize}
 Then:
 \begin{itemize}
 \item[(a)]  $\pi$ exhibits $\Di$ as the $\oo$-categorical localization of $\C$ by $W$. 
 
 \item[(b)] For any $\oo$-category $\E$ with colimits and any functor $G: \Di\to\E$
 the natural transformaton $\pi_!\pi^* G\to \G$ is an equivalence.
 
 \item[(c)] For any functor $F: \C\to \E$ sending elements of $W$ to equivalences,
 the natural transformation $F\to \pi^*\pi_!F$ is an equivaence.
 
 \end{itemize}
\end{prop}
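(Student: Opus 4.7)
The plan is to mirror the proof of Proposition \ref{prop:oo-localiz} with systematic dualization: replace right Kan extension $\pi_*$ by left Kan extension $\pi_!$, replace limits by colimits, and use $\oo$-cofinality of $j^d$ in place of $\oo$-coinitiality of $j_d$. The key technical input is the colimit analog of Lemma \ref{lem:conlim}, which identifies the colimit of a diagram of equivalences over a contractible category with any of its values.

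First I would prove part (b). For $G : \Di \to \E$ and $d \in \Di$, the pointwise formula for the left Kan extension gives
\[
(\pi_! \pi^* G)(d) \,=\,\varinjlim_{c \in \pi/d}\, G(\pi(c)).
\]
By condition (3), $j^d$ is $\oo$-cofinal, so this colimit may be computed over $(\pi/d)^{\iso}$. On this subcategory every structure map $\pi(c) \to d$ is an isomorphism, so $G$ sends it to an equivalence in $\E$; inverting these identifications shows that the colimit is taken over a contractible diagram (by condition (2)) whose edges are all equivalences. The colimit version of Lemma \ref{lem:conlim} then identifies it with $G(d)$, and one checks that the resulting equivalence is the counit map.

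Next I would prove part (c) by the analogous argument at the level of $\C$. For $F : \C \to \E$ sending $W$ to equivalences and $c \in \C$, the pointwise formula gives
\[
(\pi^* \pi_! F)(c) \,=\,\varinjlim_{c' \in \pi/\pi(c)}\, F(c') \,=\,\varinjlim_{c' \in (\pi/\pi(c))^{\iso}}\, F(c'),
\]
the second equality using $\oo$-cofinality of $j^{\pi(c)}$. A morphism $(c'_1, b_1) \to (c'_2, b_2)$ in $(\pi/\pi(c))^{\iso}$ is a morphism $u : c'_1 \to c'_2$ in $\C$ with $b_2 \circ \pi(u) = b_1$; since $b_1, b_2$ are isomorphisms, $\pi(u)$ is an isomorphism, and condition (1) gives $u \in W$. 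Thus $F(u)$ is an equivalence, and the colimit is taken over a contractible diagram of equivalences, which by the colimit form of Lemma \ref{lem:conlim} is identified with $F(c)$ via the object $(c,\id_{\pi(c)})$.

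Finally, part (a) follows formally: the counit equivalence of (b) shows that $\pi^* : \Fun(\Di, \E) \to \Fun(\C,\E)$ is fully faithful (in fact, $\pi_!$ is fully faithful as a left adjoint with invertible counit), while (c) says its essential image is precisely $\Fun(\C,\E)_{[W \to \Eq]}$, verifying condition (2) of Definition \ref{def:oo-localiz}. Condition (1) is immediate from hypothesis (1). As in the proof of Proposition \ref{prop:oo-localiz}, one observes that although the argument nominally assumes $\E$ has all colimits, the only colimits actually needed exist for formal reasons (they are colimits of diagrams of equivalences indexed by contractible categories), so the conclusion applies to an arbitrary target $\oo$-category. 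The only real subtlety — and the step I would expect to require the most care — is to make sure the identifications $(\pi/d)^{\iso} \to \E$ and $(\pi/\pi(c))^{\iso} \to \E$ are genuinely diagrams of equivalences in the $\oo$-categorical sense (rather than just levelwise), which is what allows the cofinality plus contractibility to produce the desired colimit comparison.
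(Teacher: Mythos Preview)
Your proposal is correct and matches the paper's approach exactly: the paper's own proof is the single line ``Obtained from that of Proposition \ref{prop:oo-localiz} by dualization,'' and you have carried out precisely that dualization. One small slip in your parenthetical for part (a): invertibility of the counit $\pi_!\pi^* \to \id$ makes the \emph{right} adjoint $\pi^*$ fully faithful, not $\pi_!$ --- but your main claim there is already stated correctly.
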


\begin{proof}
	Obtained from that of Proposition \ref{prop:oo-localiz} by dualization.
\end{proof}

\subsection{A covering lemma} 

We recall the following lemma which generalizes various classical statements of the kind that a
space is homotopy equivalent to the nerve of its sufficiently fine open covering. 

\begin{lem}\label{lem:vankampen} Let $T$ be a small category. Let $E$ be a topological space and let $\Op(E)$
	denote the poset of open subsets of $E$. Let
	\[
		\chi: T\lra \Op(E))
	\]
	be a functor. For any $e\in E$ let $\chi^{-1}(e)\subset T$ be full subcategory   spanned by $t$ such that
			$e \in \chi(t)$. Suppose that: 
 
	\begin{enumerate}
		\item for every $t \in T$, the open  set $\chi(t)$ is contractible,
		\item for every $e \in E$, the  category $\chi^{-1}(e)$ is  contractible.
	\end{enumerate}
	Then there is a weak homotopy equivalence $|\N(T)| \simeq E$.
\end{lem}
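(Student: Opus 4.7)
The plan is to compare both $|\N(T)|$ and $E$ with the intermediate space
\[
	Y \;=\; \hocolim_{t \in T} \chi(t),
\]
modeled concretely as the geometric realization of the simplicial space
\[
	[n] \;\longmapsto\; \coprod_{t_0 \to \cdots \to t_n} \chi(t_0),
\]
and to produce two natural maps out of $Y$. Collapsing each $\chi(t)$ to a point gives a map $q : Y \to |\N(T)|$, and the inclusions $\chi(t) \hookrightarrow E$ assemble into a map $\pi : Y \to E$.

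First I would show that $q$ is a weak equivalence. The natural transformation $\chi \Rightarrow *_T$ from the diagram $\chi$ to the constant point-valued diagram is, by hypothesis (1), objectwise a weak equivalence; homotopy invariance of the homotopy colimit (verified directly via the bar-construction model above, using that each $\chi(t_0) \to \pt$ is a homotopy equivalence and each face/degeneracy inclusion is a cofibration) then gives $q : Y \xrightarrow{\simeq} \hocolim_T *_T = |\N(T)|$.

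Second, and this is the main step, I would show that $\pi$ is a weak equivalence. The key observation is that, for each $e \in E$, the strict fiber
\[
	\pi^{-1}(e) \;\cong\; \bigl| [n] \mapsto \coprod_{t_0 \to \cdots \to t_n,\; e \in \chi(t_0)} \pt \bigr| \;=\; |\N(\chi^{-1}(e))|
\]
is contractible by hypothesis (2). To promote this pointwise statement to the assertion that $\pi$ is a weak equivalence globally, I would apply the Dugger--Isaksen theorem on topological hypercovers (or equivalently Segal's nerve-theorem argument in the numerable setting): the simplicial space above, augmented to $E$, is a hypercover whose matching spaces are (locally) contractible, and such a hypercover realizes to a weak equivalence onto its base. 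In all the applications of this Lemma in the paper, $E$ is an open subset of a space of embeddings or paths which is paracompact, so the numerability hypothesis needed for a partition-of-unity-style proof is automatic.

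Combining the two equivalences, $|\N(T)| \xleftarrow{q} Y \xrightarrow{\pi} E$ exhibits the desired weak equivalence. The principal obstacle is the second step: the purely categorical input of condition (2) only controls the strict fibers, and bridging the gap to actual homotopy fibers requires either invoking a general hypercover theorem or constructing a partition of unity subordinate to $\{\chi(t)\}_{t \in T}$ by hand; the former is the cleanest route, since it needs no extra hypotheses beyond the paracompactness already present in the intended applications.
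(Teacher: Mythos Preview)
Your overall architecture matches the paper's: both introduce an intermediate object modelling $\hocolim_{t\in T}\chi(t)$ and compare it on one side to $|\N(T)|$ via condition (1), and on the other side to $E$ via condition (2). The paper phrases the intermediate object as the relative nerve (i.e.\ the unstraightening/Grothendieck construction) of $t\mapsto \Sing(\chi(t))$, which is just an $\infty$-categorical packaging of your bar construction; the first comparison is then literally ``left fibration with contractible fibers is a trivial Kan fibration'', which is your homotopy-invariance-of-hocolim step.

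The substantive difference is in the second step. The paper invokes Lurie's Seifert--van Kampen theorem \cite[A.3.1]{HA}, whose hypothesis is \emph{exactly} condition (2) and whose conclusion is that $\Sing(E)$ is the $\infty$-categorical colimit of $t\mapsto\Sing(\chi(t))$, with no point-set assumptions on $E$. Your route through Dugger--Isaksen hypercovers or Segal's nerve theorem imports a paracompactness/numerability hypothesis that is not part of the lemma as stated; you acknowledge this and argue it holds in the intended applications, but strictly speaking you are then proving a weaker statement. The cleanest fix is to swap your hypercover invocation for \cite[A.3.1]{HA}: the paracompactness issue disappears and your argument becomes a complete proof of the lemma as written, essentially identical to the paper's.
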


Note that the assumption (2) implies, in particular, that the $\chi(t)$ form an open covering of $E$,
as a contractible category is nonempty. 

\begin{proof}
	Let $\pi: K \to \N(T)$ be the relative nerve (\cite[3.2.5]{HTT}) associated to the functor 
	\[
		\N(T) \to \Sp,\; t \mapsto \Sing(\chi(t)).
	\]
	Then $\pi$ is a left fibration whose fibers are, by assumption, contractible Kan complexes.
	By \cite[2.1.3.4]{HTT} it is a trivial Kan fibration so that $|K| \simeq |N(T)|$. On
	the other hand, $\Sing(|K|)$ is a model for the colimit of $\pi$ (\cite[3.3.4.6]{HTT}) which
	by Lurie's Seifert-van Kampen Theorem \cite[A.3.1]{HA} is weakly equivalent to
	$\Sing(E)$.
\end{proof}

%
%

. 

\vskip 1cm

\small{

T.D.:  Universit\"at Hamburg,
Fachbereich Mathematik,
Bundesstrasse 55,
20146 Hamburg, Germany. Email: 
{\tt tobias.dyckerhoff@uni-hamburg.de}

\smallskip

M.K.: Kavli IPMU, 5-1-5 Kashiwanoha, Kashiwa, Chiba, 277-8583 Japan. Email: 
\hfil\break
{\tt mikhail.kapranov@protonmail.com}

 \smallskip 
 
 Y.S.:  Dept.  Math.,  Kansas State University, Manhattan, KS 66506 USA.
 Email: {\tt soibel@math.ksu.edu}

 }

\end{document}